\newtheorem{theorem}{Theorem} [section]
\newtheorem{lemma}[theorem]{Lemma}
\newtheorem{proposition}[theorem]{Proposition}
\newtheorem{remark}[theorem]{Remark}
\newtheorem{definition}[theorem]{Definition}
\newtheorem{corollary}[theorem]{Corollary}
\DeclareMathOperator*{\supp}{supp}
\newcommand{\I}{\hspace{0.5mm}\text{I}\hspace{0.5mm}}
\newcommand{\noi}{\noindent}
\newcommand{\Z}{\mathbb{Z}}
\newcommand{\R}{\mathbb{R}}
\newcommand{\T}{\mathbb{T}}
\let\Im=\undefined\DeclareMathOperator*{\Im}{Im}
\let\P= \undefined
\newcommand{\P}{\mathbf{P}}   
\newcommand{\prob}{\mathbb{P}}  
\newcommand{\E}{\mathbb{E}}
\newcommand{\B}{\mathcal{B}}
\newcommand{\F}{\mathcal{F}}
\newcommand{\al}{\alpha}
\newcommand{\dl}{\delta}
\newcommand{\FL}{\mathcal{F}L} 
\newcommand{\vp}{\varphi}
\newcommand{\ep}{\varepsilon}
\newcommand{\g}{\gamma}
\newcommand{\ld}{\lambda}
\newcommand{\s}{\sigma}
\newcommand{\ft}{\widehat}
\newcommand{\cj}{\overline}
\newcommand{\dx}{\partial_x}
\newcommand{\dt}{\partial_t}
\newcommand{\nbar}{\overline{n}}
\newcommand{\ra}{\rightarrow}
\DeclarePairedDelimiter{\ceil}{\lceil}{\rceil}
\newcommand{\ta}{\theta}
\renewcommand{\l}{\ell}
\renewcommand{\o}{\omega}
\renewcommand{\O}{\Omega}
\newcommand{\les}{\lesssim}
\newcommand{\ges}{\gtrsim}
\newcommand{\jb}[1]
{\langle #1 \rangle}
\newcommand{\ind}{\mathbf 1}
\renewcommand{\S}{\mathcal{S}}
\newcommand{\M}{\mathcal{M}}
\def\e{\varepsilon}
\newcommand{\N}{\mathbb{N}}
\newcommand{\NN}{\mathcal{N}}
\newcommand{\MM}{\mathcal{M}}
\newcommand{\CC}{\mathcal{C}}
\renewcommand{\I}{\mathcal{I}}
\newcommand{\TT}{\mathcal{T}}
\newtheorem*{ackno}{Acknowledgements}
\tikzset{
	dot/.style={circle,fill=black,draw=black,inner sep=0pt,minimum size=0.5mm},
	>=stealth,
	}
\tikzset{
	ddot/.style={circle,fill=white,draw=black,inner sep=0pt,minimum size=0.8mm},
	>=stealth,
	}
\tikzset{decision/.style={ 
        draw,
        diamond,
        aspect=1.5
    }}
\tikzset{dia2/.style
={diamond,fill=white,draw=black,inner sep=0pt,minimum size=1mm},
	>=stealth,
	}
\tikzset{dia/.style
={star,fill=black,draw=black,inner sep=0pt,minimum size=1mm},
	>=stealth,
	}
\def\DeclareSymbol#1#2#3{\expandafter\gdef\csname MH@symb@#1\endcsname{\tikz[baseline=#2,scale=0.15]{#3}}}
\def\<#1>{\csname MH@symb@#1\endcsname}
\numberwithin{equation}{section}
\numberwithin{theorem}{section}
\begin{document}
\baselineskip = 14pt

\title[A.s. GWP for BBM with infinite $L^{2}$ data]
{Almost sure global well posedness for the BBM equation
with infinite $L^{2}$ initial data}

\author[J.~Forlano]
{Justin Forlano}

 \address{
 Justin Forlano, Maxwell Institute for Mathematical Sciences\\
 Department of Mathematics\\
 Heriot-Watt University\\
 Edinburgh\\ 
 EH14 4AS\\
  United Kingdom}

\email{j.forlano@hw.ac.uk}

\subjclass[2010]{35Q53, 76B15}

\keywords{BBM equation; almost sure local well-posedness; almost sure global well-posedness; ill-posedness; norm inflation; stability}

\begin{abstract}
We consider 
the probabilistic Cauchy problem for the Benjamin-Bona-Mahony equation (BBM) on the one-dimensional torus $\T$ with initial data below $L^{2}(\T)$. With respect to random initial data of strictly negative Sobolev regularity, we prove that BBM is almost surely globally well-posed. 
The argument employs the $I$-method to obtain an a priori bound on the growth of the `residual' part of the solution. We then discuss the stability properties of the solution map in the deterministically ill-posed regime.
\end{abstract}

\maketitle
\tableofcontents

\section{Introduction}


We consider 
the Benjamin-Bona-Mahony equation (BBM): 
\begin{align}
\begin{cases}
\partial_{t}u-\partial_{xxt}u+\partial_{x}u+\frac{1}{2}\partial_{x}(u^{2})  = 0\\
u|_{t = 0} = u_0, 
\end{cases}
\ (x, t) \in \T \times \R_{+},
\label{BBM0}
\end{align}

\noi
where $u:\T \times \R_{+} \mapsto \R$ is the unknown function and $\T:=\R / (2\pi \Z)$ is the one-dimensional torus\footnote{We could also consider the BBM equation \eqref{BBM0} on $\T\times \R$ because of the time-reversal symmetry $u(x,t)\mapsto u(-x,-t)$ (viewing $\T$ as $[-\pi,\pi)$). However, for simplicity, we consider only positive times in the following.}.

The BBM equation is a model for the propagation of long wavelength, short amplitude water waves~\cite{Peregrine, BBM}.
 In particular, in \cite{BBM}, it was proposed as an alternative to the Korteweg-de Vries (KdV) equation.
 This is in part due to the boundedness of the dispersion relation for BBM while the dispersion relation for KdV is unbounded. Along with its more preferable  
 analytical qualities, BBM is also known as the regularised long wave equation. For further discussion on the physical validity of the BBM model, see for example \cite{Model1, Model2, Model3}.

Our goal in this paper is to study the well-posedness of BBM~\eqref{BBM0} in the low regularity setting. We begin by putting \eqref{BBM0} into an alternative form which is more amenable for this study. 
By factorising the time derivative, we rewrite \eqref{BBM0} 
into the equivalent form:
\begin{equation}
\partial_{t}u =-(1-\partial_{x}^{2})^{-1}\partial_{x}\left( u+\frac{1}{2}u^2 \right). \label{1.2}
\end{equation}
With $D_{x}:=-i\partial_{x}$, let $\vp(D_x)$ be the Fourier multiplier operator 
with symbol  $\varphi(n):=\frac{n}{1+n^{2}}$; that is,
\begin{align*}
\ft{ \vp(D_x)f}= \vp(n)\ft f (n)
\end{align*}
for every $n\in \Z$, where $\ft f$ denotes the Fourier transform on $\T$ of $f$.
 Then, \eqref{1.2} reads as
\begin{equation}
i\partial_{t}u=\varphi(D_{x})u + \frac{1}{2}\NN(u). \label{BBMD}
\end{equation}
Here, we specifically interpret the nonlinearity as
\begin{align}
 \NN(u):=\sum_{n\neq 0} \vp(n) e^{inx} \sum_{\substack{n_1, n_2 \in \Z \\ n=n_1+n_2}} \ft u(n_1) \ft u(n_2).  \label{nonlin}
\end{align} 
 The key point here is the \emph{explicit} absence of the zero frequency\footnote{Notice that $\vp(0)=0$.}. Note that if $u\in L^{2}(\T)$, $\NN(u)=\vp(D_x)(u^{2})$ and hence \eqref{BBMD} is equivalent to \eqref{BBM0}; see Remark~\ref{rmk:nonlin}.
 We thus consider \eqref{BBMD} as the natural version of \eqref{BBM0} to study below $L^{2}(\T)$, and we will refer to \eqref{BBMD} as the BBM equation unless otherwise stated.
We have the following integral (Duhamel) formulation of \eqref{BBMD}: 
 \begin{equation}
u(t)=S(t)u_{0}-\frac{i}{2}\int_{0}^{t}S(t-t')\NN(u(t')) dt', \label{Duhamel1}
\end{equation} 
where $S(t):=e^{-it\varphi(D_{x})}$ is the linear BBM propagator. We stress that solutions to \eqref{BBM0} and \eqref{BBMD} are real-valued; the presence of $i$ in the above formulas is a side-effect of writing the multiplier $\vp(D_x)$. 
We say that $u$ is a solution to \eqref{BBMD} if it satisfies the Duhamel formulation \eqref{Duhamel1}.

The well-posedness of the Cauchy problem for BBM~\eqref{BBM0} on $\mathcal{M}=\R$ or $\T$ has been well studied within the class of $L^{2}$-based Sobolev spaces $H^{s}(\mathcal{M})$.
 Benjamin, Bona and Mahony~\cite{BBM} obtained global well-posedness of \eqref{BBM0} in $H^{k}(\R)$ for all integers $k\geq 1$. When~$k=1$, globalisation of local-in-time solutions follows immediately from the conserved energy
\begin{align}
E(u(t))&:=\frac{1}{2}\int_{\mathcal{M}}u^{2}(t)+(\partial_{x}u(t))^{2}\, dx=\frac 12 \|u(t)\|_{H^{1}(\mathcal{M})}^{2}. \label{energycons} 
\end{align}
Namely, if $u(t)\in H^{1}(\mathcal{M})$ satisfies \eqref{Duhamel1} for all $t\in [0,T]$, then for any $t\in [0,T]$, we have
\begin{align}
E(u(t))=E(u(0))=E(u_0). \label{econs2}
\end{align}
Local well-posedness in $L^{2}(\R)$ was obtained by Bona, Chen and Saut~\cite{BCS1,BCS2}. In \cite{BonaTzvet}, Bona and Tzvetkov proved BBM~\eqref{BBM0} is globally well-posed in $H^s (\R)$ for all $s\geq 0$. Adapting the arguments in \cite{BonaTzvet}, Roum\'egoux~\cite{Roumegoux} extended this result to the periodic setting. On the other hand, the results of Panthee~\cite{Panthee} and Bona and Dai~\cite{BonaNormInf} showed that the BBM equation~\eqref{BBM0} is ill-posed in negative Sobolev spaces. For further discussion on the ill-posedness of~\eqref{BBM0} in negative Sobolev spaces, we refer to Subsection \ref{intro:norminf} and Theorem~\ref{thm:norminf} below. 
In this paper, we study the well-posedness of \eqref{BBMD} below $L^{2}(\T)$ with random initial data.

\subsection{Almost sure local well-posedness}  
Recall that \emph{well-posedness} in the sense of Hadamard corresponds to (i) existence of a solution, (ii) uniqueness of the solution (in some suitable sense) and (iii) continuous dependence with respect to initial data. 
The ill-posedness results for \eqref{BBM0} below $L^{2}(\T)$ that we mentioned above are all based on contradicting~(iii). More precisely, they show the solution map $\Phi:u_0\in H^{s}(\mathcal{M})\mapsto u\in C([0,T];H^{s}(\mathcal{M}))$ for BBM~\eqref{BBM0} is discontinuous when $s<0$; see Theorem~\ref{thm:norminf} and Corollary~\ref{cor:dscty}. Namely, they construct a smooth sequence $u_{0,n}\to 0$ in $H^{s}(\mathcal{M})$, such that the smooth solutions $\Phi(u_n)$ to \eqref{BBM0} fail to converge to zero in $C([0,T_{n}]; H^s(\mathcal{M}))$.
In particular, these same ill-posedness results also hold for \eqref{BBMD}. 
Note however that this does not preclude the possible existence (and even uniqueness) of solutions within the ill-posed regime. This leads us to the following question: can we still construct solutions in the ill-posed regime and if so, in what sense do we retain (iii), the continuity of the solution map?

Our goal in this paper is to address this question within the context of BBM~\eqref{BBMD} with random initial data below $L^{2}(\T)$. Namely, we consider randomised initial data of the form\footnote{We drop the factor of $2\pi$ as it plays no role in our analysis.} 
\begin{equation}
u_{0}^{\omega}(x)=\sum_{n\in \Z}\frac{g_{n}(\omega)}{\jb{n}^{\alpha}}e^{inx}, \label{initialdata}
\end{equation}
where $\al\in \R$, $\jb{\, \cdot\,}:=(1+|\cdot|^{2})^{\frac{1}{2}}$ and  $\{g_{n}\}_{n\in \Z}$ is a sequence of independent standard complex-valued Gaussian random variables on a probability space $(\Omega, \mathcal{F}, \prob)$ satisfying the reality condition $g_{n}=\overline{g_{-n}}$ and $g_{0}$ is real.
A computation shows\footnote{Here, we use the notation $a-$ (respectively, $a+$) to denote $a-\ep$ (respectively, $a+\ep$), where $0< \ep \ll  1$ is extremely small.}
\begin{align}
u_{0}^{\o}\in H^{\alpha-\frac{1}{2}-}(\T)\setminus H^{\alpha-\frac{1}{2}}(\T) \label{regu0}
\end{align}
 almost surely; see \eqref{integ} below. Thus, in view of the global well-posedness of BBM~\eqref{BBMD} in $L^{2}(\T)$ and above, we concentrate on when $\al \leq \tfrac 12$. Our first result is almost sure local well-posedness for the BBM equation \eqref{BBMD} with respect to the random initial data \eqref{initialdata} for $\al\in (\tfrac 14, \tfrac 12]$.
 \vspace{-0.5mm}
\begin{theorem}\label{Thm:ASLWP} 
 Let $\alpha\in (\frac{1}{4}, \frac{1}{2}]$ and $s\in (\tfrac 12 -\al, 2\al )$.
Then, the BBM equation \eqref{BBMD} is locally well-posed almost surely with respect to the random initial data \eqref{initialdata}. More precisely, there exists a set $\Sigma \subset \O$ with $\prob(\Sigma)=1$, such that for every $\o\in \Sigma$, there exist $T^{\o}>0$ and a unique solution $u$ to \eqref{BBMD} in
\begin{align*}
e^{-it\varphi(D_{x})}u_{0}^{\o}+C([0,T^{\o}]; H^{s}(\T))\subset C([0,T^{\o}]; H^{\alpha-\frac{1}{2}-}(\T))
\end{align*}
with initial condition $u_{0}^{\omega}$ of the form \eqref{initialdata}. 
 \end{theorem}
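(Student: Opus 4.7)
The plan is to follow the Bourgain--Burq--Tzvetkov / Da Prato--Debussche decomposition: one writes a putative solution as
\begin{align*}
u = z + v, \qquad z(t) := S(t) u_{0}^{\omega},
\end{align*}
where $z$ is the random linear evolution, which by \eqref{regu0} and the $L^{2}$-unitarity of $S(t)$ almost surely lies in $C(\R; H^{\alpha - \frac{1}{2} -}(\T))$ and in no better Sobolev class, and $v$ is the residual, to be constructed in $C([0,T^{\omega}]; H^{s}(\T))$ for $s \in (\tfrac{1}{2} - \alpha, 2\alpha)$. Inserting $u = z+v$ into \eqref{Duhamel1} and polarising
\begin{align*}
\NN(z+v) = \NN(z) + 2 \mathcal{B}(z,v) + \NN(v),
\end{align*}
with $\mathcal{B}$ the symmetric bilinear operator associated to the quadratic form $\NN$, reduces the problem to the fixed-point equation
\begin{align*}
v(t) = \Phi^{\omega}(v)(t) := -\tfrac{i}{2} \int_{0}^{t} S(t-t') \bigl\{ \NN(z) + 2 \mathcal{B}(z,v) + \NN(v) \bigr\}(t')\, dt',
\end{align*}
which we solve by contraction on a small ball of $C([0,T^{\omega}]; H^{s}(\T))$ for a random lifespan $T^{\omega} > 0$.

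Three estimates suffice. \emph{(i) Stochastic convolution.} The term $z_{2}(t) := \int_{0}^{t} S(t-t')\, \NN(z(t'))\, dt'$ is a second-order Gaussian chaos functional of $\{g_{n}\}$; writing out its Fourier coefficients introduces integrals $\int_{0}^{t} e^{it' \Psi}\, dt'$ with phase $\Psi(n, n_{1}, n_{2}) := \vp(n) - \vp(n_{1}) - \vp(n_{2})$, $n = n_{1} + n_{2}$. A crucial observation is that the diagonal pair contraction $n_{1} = -n_{2}$ only contributes at $n = 0$, which is excised in the very definition \eqref{nonlin} of $\NN$; no Wick renormalisation is therefore needed. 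Using the one-derivative smoothing of $\vp(D_{x})$, the trivial bound $|(e^{it\Psi} - 1)/\Psi| \leq t$, and the convolution estimate
\begin{align*}
\sum_{n_{1} + n_{2} = n} \frac{1}{\jb{n_{1}}^{2\alpha} \jb{n_{2}}^{2\alpha}} \lesssim \jb{n}^{1 - 4\alpha} \qquad \bigl(\alpha \in (\tfrac{1}{4}, \tfrac{1}{2}]\bigr),
\end{align*}
one obtains
\begin{align*}
\E \| z_{2}(t) \|_{H^{s}}^{2} \lesssim t^{2} \sum_{n \neq 0} \jb{n}^{2s - 1 - 4\alpha} < \infty \qquad \text{provided } s < 2\alpha.
\end{align*}
Hypercontractivity of the second Wiener chaos upgrades this to $L^{p}(\Omega)$ bounds for every $p < \infty$, and a Kolmogorov-type continuity argument in $t$ then yields the almost sure estimate $\|z_{2}\|_{C([0,T]; H^{s})} \leq T R(\omega)$, with $R \in L^{p}(\Omega)$ for all $p < \infty$. \emph{(ii) Random-deterministic bilinear estimate.} The full-derivative gain of $\vp(D_{x})$ reduces the mixed term to the deterministic product estimate
\begin{align*}
\| z v \|_{H^{s-1}} \lesssim \| z \|_{H^{\alpha - \frac{1}{2} -}} \, \| v \|_{H^{s}},
\end{align*}
which, by standard fractional Sobolev multiplication on the one-dimensional torus, is valid precisely in the range $s \in (\tfrac{1}{2} - \alpha, \alpha + \tfrac{1}{2})$; this contains the range $(\tfrac{1}{2} - \alpha, 2\alpha)$ thanks to $2\alpha \leq \alpha + \tfrac{1}{2}$. \emph{(iii) Deterministic quadratic estimate.} Similarly, $\| \NN(v) \|_{H^{s}} \lesssim \| v^{2} \|_{H^{s-1}} \lesssim \| v \|_{H^{s}}^{2}$ for every $s > 0$, which is covered by our range.

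Assembling (i)--(iii) together with the trivial $T$-gain from the Duhamel integral, $\Phi^{\omega}$ is a contraction on a small ball of $C([0,T^{\omega}]; H^{s}(\T))$ provided $T^{\omega}$ is chosen sufficiently small depending on $R(\omega)$; this can be arranged simultaneously on a set $\Sigma \subset \Omega$ of full probability. The unique fixed point $v$ obtained by Picard iteration then yields the unique solution $u = z+v$ in $e^{-it\vp(D_x)}u_0^{\omega} + C([0,T^{\omega}]; H^{s}(\T))$, matching the statement of Theorem~\ref{Thm:ASLWP}. The chief technical obstacle lies in step (i): the thresholds $s < 2\alpha$ and $\alpha > \tfrac{1}{4}$ arise directly from the interplay between the one-derivative gain of $\vp(D_{x})$, the convolution estimate above, and the summability required to close the second-moment bound, and they are precisely what encode the probabilistic gain over the deterministic $L^{2}$-theory.
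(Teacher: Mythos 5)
Your proposal is correct and follows the same overall architecture as the paper's proof: the Da Prato--Debussche decomposition $u=z+v$, a second-moment/Wiener-chaos/Kolmogorov construction of the stochastic forcing $\NN(z)$ at regularity $2\alpha-$ (including the same key observation that excising the zero mode in \eqref{nonlin} kills the divergent diagonal contraction, so no renormalisation is needed), and a contraction for $v$ in $C([0,T^{\omega}];H^{s}(\T))$, with $\Sigma$ obtained by exhausting over $T=1/n$. The one genuine divergence is the mixed term. The paper estimates $\vp(D_x)(zv)$ by combining the almost sure gain of integrability $z\in W^{\alpha-\frac12-,\infty}(\T)$ with the paraproduct bound of Lemma~\ref{lemma:negderivprod}, placing $\jb{\dx}^{-s}z$ in $L^{1/s}$ (and the introduction advertises this integrability gain as the crucial probabilistic input). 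You instead invoke the purely $L^{2}$-based multiplication law $\|zv\|_{H^{s-1}}\lesssim \|z\|_{H^{\alpha-\frac12-}}\|v\|_{H^{s}}$, which is indeed a valid deterministic estimate: by duality it reduces to $\|vw\|_{H^{\frac12-\alpha+}}\lesssim \|v\|_{H^{s}}\|w\|_{H^{1-s}}$, whose hypotheses ($\frac12-\alpha+\le\min(s,1-s)$ and total regularity gain $\frac12+\alpha->\frac12$) hold exactly on $s\in(\frac12-\alpha,\alpha+\frac12)\supseteq(\frac12-\alpha,2\alpha)$. So your route reaches the same range of $s$ using only the Sobolev regularity of $z$ and none of its extra integrability; this is a slightly more elementary treatment of that one estimate, though the $W^{s,p}$ control of $z$ that the paper builds here is not wasted, as it is reused in the $I$-method globalisation (Lemma~\ref{lemma:IvIzcommutator}). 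Everything else --- the thresholds $s<2\alpha$ and $\alpha>\frac14$ from the convolution sum in the second-moment bound on $\NN(z)$, the quadratic estimate via Lemma~\ref{lemma:bonatzvet}, and the uniqueness in the perturbed class --- matches the paper's argument.
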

Uniqueness in the above statement refers to uniqueness within the space of functions $u\in C([0,T]; H^{\al-\frac 12 - }(\T))$ which can be written as
\begin{align*}
u=e^{-it\varphi(D_{x})}u_{0}^{\o}+v &\in C([0,T]; H^{\al-\frac 12 -}(\T))+C([0,T]; H^{s}(\T)) 
\\ & \subset C([0,T]; H^{\al-\frac 12 - }(\T)).
\end{align*}
See also Remark~\ref{remark:cty}.

In the proof of Theorem~\ref{Thm:ASLWP}, we obtain solutions $u$ of the form
\begin{align*}
u(t) =  z^{\o}(t)+v(t), 
\end{align*}
where $z^{\o}(t)=S(t)u_0^{\o}$ is the random linear solution and the remainder $v:=u-z$ is almost surely smoother than $z$. In particular, $v$ belongs to $H^{s}(\T)$ for $s<2\al$
  and we construct $v$ by a contraction mapping argument for the following \emph{perturbed} BBM equation:
\begin{align}
\begin{cases}
i\dt v=  \vp(D_x)(v)+\frac 12 (v^{2}+2zv) +\frac{1}{2}\NN(z),  \\
v|_{t = 0} = 0.
\end{cases}
\label{BBMv}
\end{align}

Due to this expectation of additional smoothness for $v$, the Sobolev multiplication law will allow us to make sense of the product $zv$. However, as $z\notin L^{2}(\T)$ almost surely, it is essential that we interpret the forcing term $\tfrac 12 \NN(z)$ in the sense of \eqref{nonlin}; indeed, see Remark~\ref{rmk:nonlin}.
In studying the fixed point problem for $v$ corresponding to \eqref{BBMv} we crucially make use of the fact that while the random initial data \eqref{initialdata} is no more regular in space as compared to the (deterministic) function 
\begin{align*}
\sum_{n\in \Z} \frac{1}{\jb{n}^{\al}}e^{inx} \in H^{\al-\frac 12 -}(\T),
\end{align*}
it does benefit from a gain of integrability. More precisely, we have $u_0^{\o}\in W^{\al-\frac 12 -,\infty}(\T)$ almost surely. 
 Indeed, for any $2\leq p<\infty$, we have\footnote{Here we use that if $\{a_n\}_{n\in \Z}\in \l^2_{n}(\Z)$, then $\sum_{n\in \Z}a_n g_{n}(\o)$ is a mean-zero  complex-valued Gaussian random variable with variance $\|a_n\|_{\l^{2}_{n}}^{2}$ and hence satisfies 
 \begin{align*}
\bigg\| \sum_{n\in \Z}a_n g_{n}(\o)\bigg\|_{L^{p}(\O)}\sim \sqrt{p}\|a_n\|_{\l^2_{n}},
\end{align*}
for any $2\leq p<\infty$. See also Lemma~\ref{Lemma:wienerchaos}.}
\begin{align}
\begin{split}
\E[\|u_{0}^{\o}\|_{W^{\alpha-\frac{1}{2}-,p}}^{p}]&=\bigg \| \bigg\| \sum_{n}\frac{\jb{n}^{\alpha-\frac{1}{2}- }e^{inx}}{\jb{n}^{\alpha}}g_{n}(\o) \bigg\|_{L^{p}(\O)}^{p}\bigg\|_{L^{p}_{x}(\T)} \\
& \lesssim \sqrt{p}\| \|\jb{n}^{-\frac{1}{2}-}\|_{\l^{2}_{n}}\|_{L^{p}_{x}(\T)} <\infty.
\end{split} 
\label{integ}
\end{align} 
For the endpoint $p=\infty$, we first apply the Sobolev inequality to reduce to some large but finite spatial integrability exponent and then apply Minkowski's integral inequality.  
In the dispersive PDE with random data literature, a perturbative expansion of the form~\eqref{dpdtrick} goes back to the works of McKean~\cite{mckean} and Bourgain~\cite{Bourgain1} and is known as the Da Prato-Debussche trick in the context of stochastic PDEs, after \cite{dpd}.

Initial data of the form \eqref{initialdata} correspond to typical elements belonging to the support of the infinite-dimensional Gaussian measure $\mu_{\al}$ which formally has density
\begin{align}
d\mu_{\al}=Z_{\al}^{-1}e^{ -\frac{1}{2}\|u\|_{H^{\al}(\T)}^{2}}du. \label{mual}
\end{align}
Here, $du$ is the (non-existent) infinite-dimensional Lebesgue measure. More rigorously, given $\al\in \R$, the Gaussian measure $\mu_{\al}$ is the induced probability measure under the map
\begin{align*}
\omega\in\Omega\longmapsto u^\omega(x)=\sum\limits_{n\in\Z}\frac{g_n(\omega)}{\langle n\rangle^\al}e^{inx}.
\end{align*}
From \eqref{regu0}, we see that $\mu_{\al}$ is supported on $H^{\al-\frac 12-}(\T)\setminus H^{\al-\frac 12}(\T)$. Using this perspective, we may rephrase Theorem~\ref{Thm:ASLWP} as almost sure local well-posedness of BBM~\eqref{BBMD} with respect to the Gaussian measure $\mu_{\al}$ supported on $H^{\al-\frac 12-}(\T)$ for any $\al>\frac{1}{4}$.

Beginning with the work of Bourgain~\cite{Bourgain2, Bourgain1} on the periodic nonlinear Schr\"{o}dinger equation (NLS) and Burq and Tzvetkov~\cite{BTlocal, BTglobal} on nonlinear wave equations (NLW), there has been an intense interest on constructing solutions to nonlinear dispersive PDE in the ill-posed regime using randomised initial data. As the literature on nonlinear dispersive PDE with random initial data is by now quite vast, we will focus on those of immediate relevance for our study of the BBM equation \eqref{BBM0}. 
In studying invariance properties of certain weighted Gaussian measures (more specifically, Gibbs measures) for the cubic NLS on $\T^{2}$, Bourgain~\cite{Bourgain1} first needed to construct a well-defined flow emanating from (a two dimensional version of) the initial data \eqref{initialdata} with $\al=1$. In~\cite{CollOh}, Colliander and Oh proved the cubic NLS on $\T$ is locally and globally well-posed almost surely with respect to random initial data of the form \eqref{initialdata} for $\al>\tfrac 16$ and $\al>\tfrac{5}{12}$, respectively. We refer the reader to the survey paper~\cite{BOP4} for further details on nonlinear dispersive PDE with random data.  

For the context of BBM~\eqref{BBM0}, the transport properties of Gaussian measures under the nonlinear flow of \eqref{BBM0} have been well-studied~\cite{desuzzoni1, desuzzoni2, desuzzoni3, Tzv3}. As BBM is a Hamiltonian PDE with Hamiltonian given by the energy \eqref{energycons}, it has a naturally associated Gibbs measure 
 \begin{align*}
d\mu_1=Z_{1}^{-1}e^{-E(u)}du.
\end{align*}
 We thus expect $\mu_1$ to be invariant under the (nonlinear) flow of \eqref{BBM0} and this was proved by de Suzzoni~\cite{desuzzoni2}. 
For the Gaussian measures $\mu_{\al}$ with $\al\neq 1$, we no longer expect invariance. However, Tzvetkov~\cite{Tzv3} proved that the push-forward of the Gaussian measures $\mu_{\al}$ under the BBM flow for integer $\al\geq 2$ are quasi-invariant (i.e.~mutually absolutely continuous) with respect to $\mu_{\al}$. In view of the global well-posedness of the BBM equation \eqref{BBM0} in $L^{2}(\T)$, it would be of interest to study if the quasi-invariance of Gaussian measures persists for all $\al>\tfrac 12$.
 For $\al \leq \tfrac{1}{2}$, a first step is the probabilistic well-posedness of \eqref{BBMD} below $L^{2}(\T)$ with initial data of the form~\eqref{initialdata}. In this paper, we establish local well-posedness (Theorem~\ref{Thm:ASLWP}) and global well-posedness (Theorem~\ref{Thm:ASGWP}) of BBM~\eqref{BBMD} below $L^{2}(\T)$ with initial data of the form~\eqref{initialdata}. We discuss our global well-posedness result in the next subsection.

We conclude this subsection with a few remarks. 

\begin{remark}\rm \label{remark:cty}
The proof of Theorem~\ref{Thm:ASLWP} decomposes the ill-posed solution map $\Phi: u_0^{\o}\in  H^{\al-\frac 12 -}(\T) \mapsto u \in C([-T, T]; H^{\al - \frac 12 -}(\T))$ for \eqref{BBMD} with data $u_0^{\o}$ given by \eqref{initialdata} into the following sequence of maps: 
\begin{align*}
u_{0}^{\o} \stackrel{\text{(I)}}{\longmapsto} (z^{\o}, Z^{\o}) & \stackrel{\text{(II)}}{\longmapsto} v \in C([0,T]; H^{s}(\T))
 \stackrel{\text{(III)}}{\longmapsto} u=z+v \in C([0,T]; H^{\al-\frac 12 - }(\T)),
\end{align*}
where $z^{\o}=S(t)u_0^{\o}$ and $Z^{\o}:=\NN(z^{\o})$. Step (I) uses tools from stochastic analysis in order to construct the enhanced data set $(z^{\o},Z^{\o})$. This is the result of Proposition~\ref{prop:stochobjects}. Step (II) is a deterministic fixed point argument for \eqref{BBMv} which views $(z^\o, Z^\o)$ as a given data set. In particular, this step implies the continuity of the map:
\begin{align}
\begin{split}
\Psi:(z^{\o},Z^{\o})\in C([0,T];W^{\al-\frac 12-,\infty}(\T))& \times C([0,T];H^{s}(\T)) \\
&\longmapsto v^{\o}\in C([0,T];H^{s}(\T)),
\end{split}  \label{Psi}
\end{align}
where $v$ solves the perturbed BBM equation~\eqref{BBMv}.
 Finally, step (III) recovers $u$ through the expansion $u=z+v$. 
 Similar decompositions of this type for ill-posed solution maps appear prominently in the theories of stochastic PDEs~\cite{HairerLec, GPnotes} and rough paths~\cite{FH}.
\end{remark}

\begin{remark}\rm \label{rmk:nonlin}
In this remark, we discuss the necessity for the interpretation of the nonlinearity $\vp(D_x)(u^2)$ as $\NN(u)$ in \eqref{nonlin}. 
Let $\rho \in C(\R; [0,1])$ with $\supp \rho \subset \big(-\frac 12, \frac 12 \big]$  be such that $\int_{\R} \rho \, dx=1$ and we set $\rho_{k}(x)=k\rho(kx)$ for $k\in \mathbb{N}$. As $k\geq 1$, we see that $\{ \rho_{k}\}_{k\in \mathbb{N}}$ is an approximate identity on $\T$. To motivate \eqref{nonlin}, we consider the following smoothed version of \eqref{BBMD}:
\begin{align}
\begin{cases}
i\dt u_k = \vp(D_x) ( u_k)+\frac 12 \vp(D_x)(u^{2}_{k}),    \\
u_{k}|_{t = 0} = u_0^{\o}\ast \rho_{k}.
\end{cases}
\label{BBMe}
\end{align}
Given $k\in \mathbb{N}$, the global theory in $L^{2}(\T)$ for \eqref{BBMD} and a persistence-of-regularity argument shows the solution $u_{k}$ to \eqref{BBMe} exists globally in time and is smooth. Now, let $z_{k}(t)=S(t)(u_0^{\o}\ast \rho_{k})$ be the random linear solution to \eqref{BBMe} and consider an expansion of $u_k$ about $z_k$ by setting 
\begin{align}
v_k : =u_k -z_k,  \qquad \text{so} \qquad u_k = z_k +v_k. \label{dpdtrick}
\end{align}
Then, $v_k$ solves the perturbed BBM equation
\begin{align}
i\dt v_k = \vp(D_x) (v_k) + \frac 12\vp(D_x)( v_{k}^2 + 2v_k z_k)+ \frac 12 \vp(D_x)( z_{k}^{2}  ), \label{vepseq}
\end{align} 
with $v_k |_{t=0}=0$.
The problematic term here is $z_{k}^{2}$. More specifically, the zero frequency mode (equivalently, the mean) of $z_{k}^{2}$ behaves like
 \begin{align}
 \begin{split}
C_{k}:&=\E \big[ \P_{0}( z^{2}_{k}(x,t)) \big] \\
& = \sum_{0=n_1+n_2} \frac{\ft{\rho}(k^{-1} n_1) \ft{\rho}(k^{-1} n_2)e^{-it\vp(n_1)}e^{-it\vp(n_2)}}{\jb{n_1}^{\al} \jb{n_2}^{\al}} \E[ g_{n_1}g_{n_2}] \\
& \sim \sum_{m} \frac{ |\ft{\rho}(k^{-1} m)|^{2}}{\jb{m}^{2\al}} 
 \sim  \sum_{|m| \leq k} \frac{ 1}{\jb{m}^{2\al}}, 
 \end{split}  \label{Ceps}
\end{align}
where $\P_{0}$ denotes the projection onto the zero Fourier mode: $\ft{\P_{0}f}(n) = \ft f(n)\ind_{\{n=0\}}$. Hence, $C_k$ diverges like $\log k$ if $\al=\frac 12$ and like $k^{1-2\al}$ if $\al < \frac 12$ as $k \ra \infty$. Notice that $C_{k}$ depends on the choice of mollifying kernel $\rho$ but is independent of $(x,t)\in \T \times \R_{+}$. Thus, $z_{k}^{2}$ will not converge in the limit as $k \ra \infty$ in any reasonable sense. 

However, as $z_{k}$ is smooth and the fact that\footnote{Equivalently, the operator $\vp(D_x)$ vanishes on constants.} 
 $\vp(0)=0$, we have
 \begin{align}
\vp(D_x)(z_{k}^2) = \vp(D_x)( \P_{\neq 0}( z^{2}_{k}))=  \sum_{n\neq 0} \vp(n)e^{inx} \sum_{\substack{n_1, n_2 \in \Z \\ n_1+n_2=n}} \ft z_{k}(n_1) \ft z_{k}(n_2),  \label{obvious}
\end{align}
and we show that the right hand side of \eqref{obvious} converges almost surely to the distribution 
\begin{align}
 \sum_{n\neq 0} \vp(n) e^{inx} \sum_{\substack{n_1, n_2 \in \Z \\ n_1+n_2=n}} \ft z(n_1) \ft z(n_2)=\NN(z),\label{nonlinz}
\end{align} 
where $z(t):=S(t)u_0^{\o}$ is the random linear solution with data \eqref{initialdata}; see Proposition~\ref{prop:stochobjects}. Thus, we are led to consider $\NN(u)$ in \eqref{nonlin}. 
Note that when $u\in L^2(\T)$, $\NN(u)$ is equal to $\vp(D_x)(u^2)$ since
\begin{align*}
\NN(u)=\vp(D_x)\bigg( u^{2}-\int_{\T}u^{2} dx \bigg)=\vp(D_x)(u^2)-\vp(D_x)\bigg(\int_{\T}u^{2} dx \bigg)=\vp(D_x)(u^2). 
\end{align*}
The above computation shows, at least formally, we do not `see' any difference at the level of the equation \eqref{BBM0} between the two notions of nonlinearity $\vp(D_x)(u^2)$ and $\NN(u)$. 
\end{remark}

\begin{remark}\label{rmk:lowerbd}\rm
The lower bound $\al >\tfrac 14$ in Theorem~\ref{Thm:ASLWP} is sharp in the following sense.
In~\cite{BOP3, OPT}, it is was shown that one may improve upon regularity thresholds for almost sure local well-posedness of dispersive PDE with random initial data by considering a higher order perturbative expansion. In particular, a higher order expansion is actually necessary for the KdV equation with random initial data~\cite{HiroSZEGOKDV}.
 In the context of the BBM equation \eqref{BBMD}, this corresponds to writing
\begin{align}
u = z^\o+\tilde{Z}^\o + w, \label{2ndord}
\end{align} 
where 
\begin{align*}
 \tilde{Z}^{\o}:=-\frac{i}{2}\int_{0}^{t}S(t-t')\NN(z(t'))dt'
\end{align*}
is the second Picard iterate
and studying the fixed point problem for $w$. The idea is that the expansion \eqref{2ndord} has removed the term $\tilde{Z}^{\o}$ which is responsible for the regularity threshold obtained from studying just a first-order expansion. 
However, we show in Subsection~\ref{subs:sharpness} that $\tilde{Z}^{\o}$ fails to define a distribution almost surely when $\al\leq \tfrac 14$, and hence we find no improvement from considering higher order expansions.
\end{remark}

\subsection{Almost sure global well-posedness}

Our next goal is to globalise in time the local solutions constructed in Theorem~\ref{Thm:ASLWP}. In this direction, we establish the following:

\begin{theorem}\label{Thm:ASGWP} 
The BBM equation \eqref{BBMD} is almost surely globally well posed in $H^{-\e}(\T)$, for any $0<\e \ll 1$, with respect to random initial data of the form 
\begin{align}
u_{0}^{\o}=\sum_{n\in \Z}\frac{g_{n}(\o)}{\jb{n}^{\frac 12}}e^{inx}. \label{globaldata}
\end{align}
 More precisely, for almost every $\omega \in \Omega$, there exists a unique solution $u$ of \eqref{BBMD} in \begin{align*}
e^{-it\varphi(D_{x})}u_{0}+C(\R; H^{s}(\T))\subset C(\R; H^{-\e}(\T))
\end{align*}
with initial condition $u_{0}^{\omega}$ of the form \eqref{globaldata}. 
\end{theorem}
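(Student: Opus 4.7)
The plan is to extend the local solution from Theorem~\ref{Thm:ASLWP} to arbitrarily long times via a probabilistic $I$-method argument applied to the residual. Writing $u = z^{\o} + v$ with $z^{\o}(t) = S(t)u_{0}^{\o}$ (defined globally as $S(t)$ is unitary on $H^{-\e}(\T)$), the residual $v$ solves the perturbed BBM equation \eqref{BBMv}. Theorem~\ref{Thm:ASLWP} with $\al = \tfrac12$ yields a local solution $v \in C([0,T^{\o}]; H^{s}(\T))$ for any $s \in (0,1)$; I would fix $s = 1-\delta$ with $\delta>0$ small. By Remark~\ref{remark:cty}, the local time of existence $T^{\o}$ is controlled purely by $\|v(0)\|_{H^{s}}$ and by the norms of the enhanced data $(z^{\o}, \NN(z^{\o}))$ on the relevant interval (both almost surely finite on any compact window by Proposition~\ref{prop:stochobjects}). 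Hence globalisation reduces to preventing the $H^{s}$-norm of $v$ from blowing up in finite time.

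Introduce the standard $I$-operator $I = I_{N}$ with Fourier multiplier $m_{N}(n)=1$ for $|n|\leq N$ and $m_{N}(n) = (N/|n|)^{1-s}$ for $|n|>N$, so that $\|Iv\|_{H^{1}} \les N^{1-s}\|v\|_{H^{s}}$. Working with the BBM energy $E(Iv) = \tfrac12\|Iv\|_{H^{1}}^{2}$, I would differentiate in time using \eqref{BBMv}. The multiplier $\vp(D_{x})$ is self-adjoint so the linear flow conserves $E(Iv)$, and the nonlinear contribution splits into three pieces:
\[
\frac{d}{dt}E(Iv) = \Re \int (1-\pa_{x}^{2})(Iv)\cdot \bigl[\tfrac{1}{2} I\vp(D_{x})(v^{2}) + I\vp(D_{x})(vz^{\o}) + \tfrac{1}{2} I\,\NN(z^{\o})\bigr] dx .
\]
The key structural feature here is that $\vp(n) = n/(1+n^{2})$ makes $\vp(D_{x})$ smoothing of order one, which should significantly ease the multilinear estimates compared with KdV-type applications of the $I$-method.

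The analytical heart of the proof will be establishing an almost conservation law of the shape
\[
\sup_{t\in[0,T]} E(Iv(t)) \le E(Iv(0)) + C(\o)\,T\, N^{-\theta}
\]
for some $\theta>0$ on local intervals. For the purely deterministic cubic term one uses commutator estimates $[I, \cdot]$ combined with the gain from $\vp(D_{x})$ to extract powers of $1/N$ on the high-frequency interactions. For the mixed term involving $vz^{\o}$, one exploits $u_{0}^{\o} \in W^{-\e,p}(\T)$ for arbitrarily large $p$ (cf.~\eqref{integ}) together with paraproduct decompositions and the order-one smoothing of $\vp(D_{x})$ to compensate for the rough random factor. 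The pure forcing $\NN(z^{\o})$ is controlled through Wiener chaos bounds as in Proposition~\ref{prop:stochobjects}. Once this almost conservation law is in hand, given a target $T_{0}>0$ one chooses $N = N(T_{0}, \o)$ so large that $T_{0} N^{-\theta} \ll 1$, yielding a uniform bound $\|Iv(t)\|_{H^{1}} \les 1$ on $[0,T_{0}]$ and hence $\|v(t)\|_{H^{s}} \les N^{1-s}$; iterating the local theory then reaches time $T_{0}$, and arbitrariness of $T_{0}$ gives the theorem. The main obstacle I anticipate is the modified energy increment analysis for the cross and forcing terms: although $v$ lives at regularity $s$ close to $1$, the random data $z^{\o}$ only sits in $H^{-\e}$, so the products $v z^{\o}$ and the object $\NN(z^{\o})$ must be handled via a careful frequency-localised analysis that leverages the smoothing in $\vp(D_{x})$ and the probabilistic gain from Wiener chaos, all while producing a genuine negative power of $N$ rather than merely a bounded contribution.
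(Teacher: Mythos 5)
Your overall strategy (decompose $u=z^{\o}+v$, run the $I$-method on the residual $v$, iterate the local theory) matches the paper's, but the quantitative heart of your plan does not survive contact with the cross and forcing terms, and this is exactly where the paper's argument is forced into a different, more delicate shape. You aim for an almost conservation law $\sup_{t\le T}E(Iv(t))\le E(Iv(0))+C(\o)TN^{-\theta}$ with $\theta>0$ for \emph{all} contributions, concluding a uniform bound $\|Iv\|_{H^1}\lesssim 1$. Only the pure commutator term $\int(\partial_x Iv)[I(v^2)-(Iv)^2]$ admits such a negative power ($N^{-3/2+}$, Lemma~\ref{lemma:Iv2commutator}). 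The cross term reduces (after extracting a commutator, Lemma~\ref{lemma:IvIzcommutator}) to $\int(\partial_x Iv)(Iv)(Iz)\,dx\lesssim\|Iz\|_{L^2}E(Iv)$, and since $z\notin L^2(\T)$ almost surely, $\|Iz\|_{L^2}$ \emph{necessarily grows} with $N$: by Lemma~\ref{lemma:izintmoment} it costs $\phi_{2\al}^{1/2}(N)$, which at $\al=\tfrac12$ is $\log^{1/2}N$ and for $\al<\tfrac12$ is a genuine positive power $N^{(1-2\al)/2}$. No amount of smoothing from $\vp(D_x)$ or Wiener chaos removes this; it is the reason the global result holds only at $\al=\tfrac12$. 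Likewise the forcing term contributes $N^{1-2\al+}\|\NN(z)\|_{H^{2\al-}}\int E^{1/2}$, which at $\al=\tfrac12$ is $N^{0+}$ --- bounded but not decaying.

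Consequently the energy is not almost conserved: it satisfies a Gronwall-type inequality $E(t)\lesssim \Lambda\log^{1/2}(N)\int_0^t E+N^{0+}\int_0^t E^{1/2}$ on the interval where $E\le C N^{2-}$ (which dominates the $N^{-3/2+}E^{3/2}$ term), so $E$ grows like $\exp(C\log^{1/2}(N)\,t)$. The paper's conclusion is a lower bound on the first time $\bar T$ at which $E$ reaches $N^{2-}$, namely $\bar T\gtrsim \log^{1/2}N$ at $\al=\tfrac12$ (see \eqref{T}), and one then chooses $N=\exp(4T^2/C^2)$ --- exponentially large in $T$, not polynomially --- to reach time $2T$ with the (large, $N$-dependent) bound $\|v\|_{H^s}\lesssim N^{1-s}E^{1/2}$. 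Your proposal as written would need this restructuring to go through. Two further points you omit that the paper needs: the rigorous version runs the energy argument on the mollified problems $v_k$ uniformly in $k$ (Proposition~\ref{prop:bdk}), using Egorov's theorem to get uniform control of $\|\NN(z_k)\|$, before passing to the limit via Theorem~\ref{thm:smoothapp}; and $E(Iv)(0)=0$ since $v|_{t=0}=0$, so no initial-energy term appears.
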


For fixed $\al\in (\tfrac 14, \tfrac 12]$, our probabilistic local theory (Theorem~\ref{Thm:ASLWP}) shows that we may extend the local-in-time solutions $u=z+v$ provided the $H^s(\T)$-norm, with $s=2\al-$, of the solutions $v$ to the perturbed BBM equation \eqref{BBMv} remains finite. That is, for any $T>0$, we seek to establish the following bound:
\begin{align}
\sup_{t\in [0,T]}\|v(t)\|_{H^s(\T)}\leq C(T)<\infty. \label{vhsbnd}
\end{align}
Notice in this setting, we only know $v\in H^s(\T)$ for $s<1$ and hence we cannot make use of the energy $E(v(t))$ of \eqref{energycons}, regardless of its non-conservation under the equation~\eqref{BBMv}.
This seems to indicate smoothing $v$ which motivated us to apply the $I$-method of Colliander, Keel, Staffilani, Takaoka and Tao~\cite{Iteam1, Iteam2} in this probabilistic context.

The approach is as follows: 
we smooth the initial data by applying the Fourier multiplier operator $I_{N}$ given by 
$\widehat{I_{N}f}(n)=m_{N}(n)\widehat{f}(n)$, $n\in\Z$, where $m_{N}(n)$ is the restriction to the integers of the smooth function $m:\R \mapsto \R$ defined by 
\begin{align}
m_{N}(\xi):= m\bigg(\frac{\xi}{N}  \bigg) =
\begin{cases}
1    \,\,\, & \text{if}\,\,\, |\xi|\leq N,  \\
\Big( \frac{N}{|\xi|} \Big)^{1-s} \,\,\,&\text{if}\,\,\, |\xi|> N.
\end{cases} \label{Imult}
\end{align}
Thus, the operator $I_{N}$ is the identity on low frequencies and a fractional integral operator on high frequencies, hence the name $I$-method.
For simplicity of presentation, we will now drop the subscript $N$. 
It is easy to see that $Iv(t)\in H^{1}(\T)$ almost surely and satisfies 
\begin{align}
\begin{cases}
\partial_{t}Iv = -(1-\partial_{x}^{2})^{-1}\partial_{x}(v)-\frac{1}{2}(1-\partial_{x}^{2})^{-1}\partial_{x}[ I(v^{2})+2I(vz)]+\frac{1}{2} I(\NN(z)) , \\
Iv|_{t = 0} = 0. 
\end{cases} 
\label{IBBM}
\end{align}
By defining the `modified energy' 
\begin{align*}
E(Iv)(t)=\frac{1}{2}\| Iv(t)\|_{H^{1}(\T)}^{2},
\end{align*}
and observing 
\begin{align*}
\|v(t)\|_{H^{s}(\T)} \les E(Iv)(t),
\end{align*}
we reduce proving \eqref{vhsbnd} to obtaining 
\begin{align}
 \sup_{t\in [0,T]}E(Iv)(t) \leq C(T)<\infty. \label{modvbnd}
\end{align}

 Now, $E(Iv)$ will not be conserved under the flow of \eqref{IBBM} because (i) $v$ solves a perturbed BBM equation and (ii) $I$ does not commute with products. However, $Iv$ is expected to `almost' solve the same equation as $v$, namely \eqref{BBMv}, in the sense that the error terms generated from the failure of commutation are themselves commutators. Indeed, taking a time derivative of $E(Iv)$, inserting \eqref{IBBM} and making appear commutators, we schematically arrive at 
\begin{align}
\begin{split}
\frac{d}{dt}E(Iv)(t) \sim  \int_{\T} (\dx Iv)[I(v^2)-(Iv)^{2}] dx &+ \int_{\T}Iz \, Iv \,\dx Iv \, dx  \\
\\ & + \text{lower order terms}. 
\end{split}
\label{schematicODE}
\end{align}
We estimate the first expression by  
\begin{align*}
\int_{\T} (\dx Iv)[I(v^2)-(Iv)^{2}]\, dx \les N^{-\beta}E^{\frac{3}{2}}(Iv)  
\end{align*}
for some $\beta>0$ and notice that this term in \eqref{schematicODE} implies that $E(Iv)$ will blow-up in a finite time $T_{N}$. However, up to time $T_N$, the negative power of $N$ allows us view this term as part of the lower order corrections. For the second term, we have 
\begin{align*}
\int_{\T}Iz \, Iv \,\dx Iv \, dx \les \|Iz\|_{L^{2}}E(Iv).
\end{align*}
For this term, placing $Iz$ into $L^{2}(\T)$ comes at the expense of a loss in $N$; see Lemma~\ref{lemma:izintmoment} (an analogue of this can be found in \cite{GKOT, Leonardo}).
When $\al=\tfrac{1}{2}$, we lose only a logarithm of $N$, and it is only this loss which is acceptable for ensuring we may take $T_{N}\geq 2T$ by choosing $N=N(T)$ large enough.
By Gronwall's inequality, we obtain \eqref{modvbnd} provided $\al=\tfrac 12$ which yields Theorem~\ref{Thm:ASGWP}.

 This $I$-method approach has recently been applied in the context of nonlinear stochastic dispersive PDE by Gubinelli, Koch, Oh and Tolomeo~\cite{GKOT} and Tolomeo~\cite{Leonardo}. 
We closely follow their arguments, although certain technical difficulties are absent for \eqref{BBMD} as compared to their setting. A natural modification of the argument above would be to include the low-high splitting idea in \cite{BonaTzvet}; however, this does not seem to lead to any regularity improvement over Theorem~\ref{Thm:ASGWP}; see Remark~\ref{remark:lowhigh}.

\begin{remark}\rm \label{remark: gaussianity}
We may relax the Gaussianity assumption on the random variables $\{g_{n}\}_{n\in \Z}$. More precisely, in Appendix~\ref{app:Gauss}, we detail how our arguments allow us to establish analogous almost sure local and global existence results for BBM~\eqref{BBMD} with respect to initial data of the form:
\begin{align*}
u_{0}^{\o}=\sum_{n\in \Z}\frac{g_{n}(\o)}{\jb{n}^{\al}}e^{inx},
\end{align*}
where the complex-valued (not necessarily Gaussian) random variables $\{g_{n}\}_{n\in \Z}$ satisfy assumptions \eqref{item1}-\eqref{item5} in Appendix~\ref{app:Gauss}.  
Note, however, that with such randomisations we lose the link with the Gaussian measures $\mu_{\al}$ in~\eqref{mual}.
\end{remark}

\subsection{Norm inflation at general data in negative Sobolev spaces}\label{intro:norminf}
In this subsection, we study the (purely) deterministic ill-posedness of the solution map $\Phi: u_0\in H^{s}(\mathcal{M})\mapsto u\in C([0,T];H^{s}(\mathcal{M}))$ to BBM~\eqref{BBM0}. In~\cite{Panthee}, Panthee showed the failure of continuity of the solution map at the origin in $H^{s}(\M)$ for any $s<0$. This result implies that BBM~\eqref{BBM0} is ill-posed in negative Sobolev spaces. 
 Bona and Dai~\cite{BonaNormInf} showed that for $s<0$, the solution map exhibits the stronger phenomenon known as \textit{norm inflation at zero}: given $s<0$, for any $\ep >0$, there exists a smooth solution $u_{\ep}$ to BBM~\eqref{BBM0} and times $t_{\ep}\in (0,\ep)$ such that 
\begin{align*}
\|u_{\ep}(0)\|_{H^{s}(\M)}<\ep \qquad \text{and} \qquad \|u_{\ep}(t_{\ep})\|_{H^{s}(\M)}>\ep^{-1}.
\end{align*} 
Norm inflation based at \textit{general} initial data has been studied for NLW~\cite{Xia, Tzv1} and NLS~\cite{HiroInflation} in negative Sobolev spaces.
We establish norm inflation at any $u_{0}\in H^s(\M)$, with $s<0$, for the BBM equation \eqref{BBM0}. It is clear that the same result also holds for \eqref{BBMD}.

\begin{theorem}\label{thm:norminf} 
Let $\M=\R$ or $\T$, $s<0$ and fix $u_{0}\in H^{s}(\M)$. Then, given any $\e >0$, there exists a smooth solution $u_{\e}$ to \eqref{BBM0} on $\M$ and $t_{\e}\in (0,\ep)$ such that 
\begin{align*}
\|u_{\e}(0)-u_{0}\|_{H^{s}(\M)}<\e \qquad \text{ and } 
\qquad \| u_\ep(t_{\e})\|_{H^s(\M)} > \ep^{-1}. 
\end{align*} 
\end{theorem}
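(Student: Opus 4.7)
The plan is to extend the norm inflation at zero for BBM due to Bona--Dai~\cite{BonaNormInf} to an arbitrary base point $u_0\in H^s(\mathcal{M})$, following the template of Xia~\cite{Xia} and Tzvetkov~\cite{Tzv1} for NLW and Kishimoto~\cite{HiroInflation} for NLS. The central structural feature enabling norm inflation for BBM is that $\vp(n)=n/(1+n^2)$ is bounded and non-degenerate at low frequencies, which permits a resonant high-high-to-low frequency cascade through the nonlinearity already at the second Picard iterate.

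By density of Schwartz functions in $H^s(\mathcal{M})$, we first replace $u_0$ by a smooth approximation (at the cost of an error $<\ep/2$ in $H^s$) and let $v$ be the associated global smooth BBM solution. For parameters $0<R\ll 1$ and $N\gg 1$ to be tuned, take the high-frequency perturbation
$$
\phi_N(x):=RN^{-s}\bigl(\cos(Nx)+\cos((N+1)x)\bigr)
$$
(on $\R$, modulate a fixed Schwartz bump at these two frequencies), which satisfies $\|\phi_N\|_{H^s(\mathcal{M})}\les R$. Let $u_\ep$ denote the global smooth BBM solution with data $v(0)+\phi_N$, so that $\|u_\ep(0)-u_0\|_{H^s}<\ep$. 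The remainder $w:=u_\ep-v$ solves the perturbed BBM equation
$$
i\dt w=\vp(D_x)w+\vp(D_x)(vw)+\tfrac12\vp(D_x)(w^2),\qquad w|_{t=0}=\phi_N,
$$
and we decompose $w=S(t)\phi_N+w_2+w_{\geq 3}$ where
$$
w_2(t):=-\frac{i}{2}\int_0^t S(t-t')\vp(D_x)\bigl((S(t')\phi_N)^2+2v(t')S(t')\phi_N\bigr)\,dt'.
$$

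An explicit Fourier computation of $\widehat{w_2}(\pm 1,t)$ coming from the self-interaction $(S(t')\phi_N)^2$ identifies the resonant pair $(n_1,n_2)=(-N,N+1)$ (and its reflection), for which $\vp(n_1)+\vp(n_2)=O(N^{-2})$ so that the phase $\vp(\pm 1)-\vp(n_1)-\vp(n_2)\to \pm 1/2$ as $N\to\infty$. Carrying out the time integration yields the lower bound
$$
|\widehat{w_2}(\pm 1,t)|\ges tR^2 N^{-2s}\qquad\text{for }0<t\ll 1,
$$
and hence $\|w_2(t)\|_{H^s}\ges tR^2 N^{-2s}$, which is arbitrarily large since $-2s>0$. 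The linear piece $S(t)\phi_N$ contributes only $O(R)$ in $H^s$ (isometry), the mixed term $\int_0^t S(t-t')\vp(D_x)(v\cdot S(t')\phi_N)\,dt'$ is bounded by $tR\|v\|_{L^\infty_T H^\sigma}$ using the product law and smoothness of $v$, and the higher-order remainder $w_{\geq 3}$ is controlled by a deterministic contraction in $C([0,t_\ep];H^\sigma(\mathcal{M}))$ for a slightly positive $\sigma$, exploiting the one-derivative smoothing of $\vp(D_x)$ and the gain of extra powers of $t$ from iterated time integrations. Choosing first $R\sim\ep$, then a small $t_\ep\in(0,\ep)$, and finally $N$ so large that $t_\ep R^2 N^{-2s}>2\ep^{-1}$ delivers $\|u_\ep(t_\ep)\|_{H^s}>\ep^{-1}$.

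The main obstacle is producing a bound for $w_{\geq 3}$ that is uniform in the large parameter $N$: although $\phi_N$ is small in $H^s$, it is of size $RN^{-s}$ in $L^2$ and in every stronger Sobolev norm, so naive multilinear estimates lose positive powers of $N$ at each iteration. The remedy is to combine the smoothing $\vp(D_x):H^s\to H^{s+1}$ with the bilinear structure of the nonlinearity, trading powers of $N$ for powers of the small time $t_\ep$, while paying attention to the low-frequency modes (which are precisely those producing the inflation in $w_2$ and must therefore be tracked separately rather than estimated in absolute value). The case $\mathcal{M}=\R$ introduces additional but essentially cosmetic technicalities in the construction of $\phi_N$, with commutator errors that are lower order in $N$ and do not alter the leading-order Fourier computation above.
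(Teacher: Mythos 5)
Your overall architecture is the same as the paper's: approximate $u_0$ by a smooth function, perturb by a two-bump packet at frequencies $\pm N$, $\pm(N+1)$, and extract the inflation from the high-high-to-low transfer in the second Picard iterate, whose Fourier coefficient at $\pm1$ you compute correctly (the phase $\vp(-N)+\vp(N+1)-\vp(1)\to -\tfrac12$ is bounded, so the time integral is $\ges t$ for small $t$). The paper (Theorem~\ref{thm:norminfflp} via Propositions~\ref{prop:sequenceinf} and~\ref{prop:norminf}) does this in the Fourier--Lebesgue scale $\FL^{s,p}$ with frequency bumps of width $A$, which buys the more general $p\neq 2$ statement; for $p=2$ on $\T$ your pure-frequency data is an admissible degenerate case of that construction.

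The genuine gap is the treatment of $w_{\geq 3}$, which you correctly name as the main obstacle but do not close, and the sketch you give would fail as stated. First, your order of quantifiers is wrong: you fix $t_\ep$ and then send $N\to\infty$, but the time of validity of any contraction or convergent expansion with data $\phi_N$ shrinks like $\|\phi_N\|^{-1}\sim R^{-1}N^{s}$ in $L^2$ (and worse in $H^\sigma$ for $\sigma>0$), so $t_\ep$ must be chosen \emph{after} $N$. Second, the remark that the low-frequency modes ``must be tracked separately rather than estimated in absolute value'' is a red herring: for the upper bound on the tail one only needs $\|w_{\geq 3}\|_{H^s}\ll t R^2 N^{-2s}$, and since $s<0$ it suffices to bound the tail in $L^2$ (or, as the paper does, in the Wiener algebra $\FL^1$, where Young's inequality makes the bilinear estimate trivial). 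The paper's mechanism is the power series $u=\sum_j\Xi_j(u_{0,n})$ with the geometric bound $\|\Xi_j\|\les C^j t^j\|u_{0,n}\|^{j+1}$ (Lemmas~\ref{lemma:counting} and~\ref{lemma:multest1}), so that under the single smallness condition $t\,\|\phi_N\|\ll 1$ the whole tail $j\geq 2$ is dominated by $t^2\|\phi_N\|^3\ll t\|\phi_N\|^2$, i.e.\ by the $j=1$ term. Your plan to contract in $H^\sigma$ with $\sigma>0$ only worsens the $N$-losses, since $\|\phi_N\|_{H^\sigma}\sim RN^{\sigma-s}$. If you instead run the argument in $L^2$ with $t_\ep\ll (RN^{-s})^{-1}$ chosen last, the numerology $t_\ep R^2N^{-2s}=(t_\ep RN^{-s})\cdot RN^{-s}\to\infty$ still closes and you recover exactly the paper's proof at $p=2$.
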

As an immediate corollary, we obtain the everywhere discontinuity of the solution map of \eqref{BBM0} in negative Sobolev spaces. 

\begin{corollary}\label{cor:dscty}
Let $\M=\R$ or $\T$ and $s<0$.
Then, 
  for any $T>0$, 
the solution map $\Phi: u_0\in  H^s(\M) \mapsto u \in C([0, T]; H^s(\M))$ 
to the BBM equation \eqref{BBM0} is discontinuous
everywhere in $H^s(\M)$.
\end{corollary}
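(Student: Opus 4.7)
The plan is to derive Corollary~\ref{cor:dscty} as an immediate consequence of Theorem~\ref{thm:norminf} via a standard norm-inflation-implies-discontinuity argument. Fix $u_0 \in H^s(\M)$ with $s<0$ and fix $T>0$. For each integer $n$ large enough that $1/n < T$, I would invoke Theorem~\ref{thm:norminf} with $\varepsilon = 1/n$ to produce a smooth solution $u_n$ to BBM and a time $t_n \in (0,1/n)$ satisfying
\[
\|u_n(0) - u_0\|_{H^s(\M)} < \tfrac{1}{n} \qquad \text{and} \qquad \|u_n(t_n)\|_{H^s(\M)} > n.
\]
Setting $u_{0,n} := u_n(0)$, the sequence $\{u_{0,n}\}$ consists of smooth initial data (so $\Phi(u_{0,n})$ is unambiguously defined by the classical theory) and converges to $u_0$ in $H^s(\M)$, while the witness times satisfy $t_n \to 0$.

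Next I would argue by contradiction. Suppose $\Phi$ were continuous at $u_0$, meaning $\Phi(u_{0,n}) \to \Phi(u_0)$ in $C([0,T]; H^s(\M))$. Since $\Phi(u_0) \in C([0,T]; H^s(\M))$, its image is compact in $H^s(\M)$, and in particular $M := \sup_{t \in [0,T]} \|\Phi(u_0)(t)\|_{H^s(\M)}$ is finite. For $n$ sufficiently large, uniform convergence gives $\sup_{t\in [0,T]} \|\Phi(u_{0,n})(t) - \Phi(u_0)(t)\|_{H^s(\M)} < 1$, and since $t_n \in (0,1/n) \subset [0,T]$, we deduce
\[
\|u_n(t_n)\|_{H^s(\M)} = \|\Phi(u_{0,n})(t_n)\|_{H^s(\M)} \leq M + 1,
\]
contradicting $\|u_n(t_n)\|_{H^s(\M)} > n$ for large $n$. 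Hence $\Phi$ cannot be continuous at $u_0$, and since $u_0 \in H^s(\M)$ was arbitrary, discontinuity is everywhere.

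There is no real obstacle beyond invoking Theorem~\ref{thm:norminf}; the argument is a soft, general one applying to any norm-inflation statement formulated in an $\varepsilon$--$\varepsilon^{-1}$ fashion near an arbitrary base point. The one conceptual point worth flagging is the interpretation of $\Phi(u_0)$ when $u_0$ itself is merely in $H^s(\M)$: the argument in fact rules out the existence of any continuous $H^s$-valued extension of the classical solution map to a neighbourhood of $u_0$, since the blow-up $\|u_n(t_n)\|_{H^s} \to \infty$ as $t_n \to 0^+$ is incompatible with the hypothetical extension being time-continuous at $t=0$ with value $u_0$, regardless of how the extension is defined.
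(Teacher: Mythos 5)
Your argument is correct and is exactly the route the paper intends: the corollary is stated there as an immediate consequence of Theorem~\ref{thm:norminf}, and the paper gives no further proof beyond the standard observation that norm inflation at $u_0$ with $t_\ep\to 0$ is incompatible with continuity (or with the existence of any continuous extension) of the solution map at $u_0$. Your extra remarks — that the smooth solutions $u_n$ are globally defined so $\Phi(u_{0,n})$ makes sense on all of $[0,T]$, and the clarification of what $\Phi(u_0)$ means for rough $u_0$ — are consistent with the paper's framing and fill in the details it leaves implicit.
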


To prove Theorem~\ref{thm:norminf}, we employ the argument in Oh~\cite{HiroInflation}, where norm inflation based at general initial data was studied for the cubic NLS in negative Sobolev spaces. Briefly, the key idea is to write a solution $u$ to \eqref{BBM0} with $u|_{t=0}=u_0$ in terms of its power series expansion:
\begin{align*}
u=\sum_{j=1}^{\infty} \Xi_{j}(u_0),
\end{align*} 
which is an infinite sum of recursively defined homogeneous multilinear operators $\Xi_{j}$, in terms of $u_0$, of increasing order; see also \cite{Christ1, IO, CP, Kishimoto}. Such a power series expansion is motivated by the Picard iteration scheme. In \cite{HiroInflation}, these power series expansions are indexed using trees, which simplifies their handling, both combinatorially and analytically (in terms of obtaining multilinear estimates). One then exploits a high-to-low energy transfer in the second term of the expansion in order to exhibit the instability stated in Theorem~\ref{thm:norminf}.

We stress that Theorem~\ref{thm:norminf} and its proof are entirely deterministic. We relate this result to the solutions constructed from rough random initial data of the form \eqref{initialdata} in the next subsection.

\begin{remark}\label{rmk:flp} \rm 
We extend our study of the (deterministic) BBM equation~\eqref{BBM0} to the context of the Fourier-Lebesgue spaces $\FL^{s,p}(\M)$ which are defined through the norm 
\begin{align*}
\|f\|_{\FL^{s,p}(\M)}=\| \jb{\xi}^{s}\ft f(\xi)\|_{L^{p}(\ft \M)},
\end{align*}
where 
\begin{align*}
\ft \M = 
\begin{cases}
\R & \text{if}\,\,\, \M=\R, \\
\Z &  \text{if} \,\,\, \M=\T,
\end{cases}
\end{align*}
where we use the Lebesgue measure if $\ft \M =\R$ and the counting measure if $\ft \M =\Z$. When $p=2$, we have $\FL^{s, 2}(\M)=H^{s}(\M)$ and when $p=1$ and $s=0$, the space $\FL^{0,1}(\M)$ is the Wiener algebra. For convenience, we write $\FL^{p}(\M)$ instead of $\FL^{0,p}(\M)$. 
In Section~\ref{section:norminf}, we show that BBM~\eqref{BBM0} is locally well-posed in $\FL^{s,p}(\M)$ for any $s\geq 0$ when $1\leq p\leq 2$ and for any $s>\tfrac 12- \tfrac 1p$ when $p>2$.
In analogy to \eqref{BBM0} in $H^{s}(\M)$, we can extend the norm inflation result of Theorem~\ref{thm:norminf} to norm inflation at general data in $\FL^{s,p}(\M)$ for any $s<0$ and $1\leq p <\infty $; see Section~\ref{section:norminf} for details. In the periodic case, our interest in this result lies in the following observation:
\begin{align*}
\FL^{s, p}(\T)\subseteq H^{s}(\T)
\end{align*}
for any $1\leq p \leq 2$ and any $s\in \R$. Namely, the instability in Theorem~\ref{thm:norminf} persists in the stronger $\FL^{s,p}(\T)$-norm.
  On $\M=\R$, there is no inclusion between these spaces for a fixed $s$.
\end{remark}

\subsection{Stability in the ill-posed regime} \label{intro:stability}

In this subsection, we discuss notions of stability for the solution map $\Phi$ of Theorem~\ref{Thm:ASLWP}.
 Combining the almost-sure local well-posedness of Theorem~\ref{Thm:ASLWP} with the norm-inflation result of Theorem~\ref{thm:norminf}, we obtain the following `almost sure norm inflation' phenomenon which highlights the strong instability in the map $\Phi$.
  This phenomenon is known to also occur for certain NLW equations in the super-critical regime~\cite{Xia, OOT}. 

\begin{theorem}[Almost sure norm inflation]\label{thm:asnorminf}
Let $\al\in (\tfrac 14, \tfrac 12]$ and fix $\o \in \Sigma$, where $\Sigma$ is the set of full $\prob$-measure from Theorem~\ref{Thm:ASLWP}.
Let $u^{\o}$ be the \textup(local\textup) solution to the BBM equation \eqref{BBM0} with $u^{\o}|_{t=0}=u_0^{\o}$. Then, given $k\in \N$, there exist $u^{\o}_{k}$ smooth \textup(random\textup) solutions to \eqref{BBM0} such that 
\begin{align*}
\lim_{k \ra \infty} \|u^{\o}_k(0)-u_0^{\o}\|_{H^{\al-\frac 12-}(\T)}=0 \quad \text{and} \quad \lim_{k\ra \infty}\|u^{\o}_k -u^{\o}\|_{C([0,k^{-1}]; H^{\al-\frac 12-}(\T))}=\infty.
\end{align*}
\end{theorem}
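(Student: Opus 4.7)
The strategy is to obtain this statement as an essentially immediate corollary of the two preceding theorems: I would apply the deterministic norm inflation result Theorem~\ref{thm:norminf} with base point exactly equal to the random initial data $u_0^\omega$, and then compare the resulting smooth solutions to the random local solution $u^\omega$ produced by Theorem~\ref{Thm:ASLWP}. Fix $\omega\in\Sigma$ and choose any exponent $s<\alpha-\tfrac12$ (representing the symbol $\alpha-\tfrac12-$); since $\alpha\leq\tfrac12$, we have $s<0$. By Theorem~\ref{Thm:ASLWP}, there is a solution $u^\omega\in C([0,T^\omega];H^s(\T))$ of \eqref{BBMD} with $u^\omega|_{t=0}=u_0^\omega$, and in particular $u_0^\omega\in H^s(\T)$.

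For each $k\in\N$ large enough that $k^{-1}\leq T^\omega$, I would then apply Theorem~\ref{thm:norminf} at the base point $u_0^\omega\in H^s(\T)$ with parameter $\ep=\ep_k$ chosen so that $\ep_k\leq k^{-1}$ and $\ep_k\to 0$ (e.g.\ $\ep_k=k^{-2}$). This yields a smooth solution $u_k^\omega$ of \eqref{BBM0} and a time $t_k\in(0,\ep_k)\subset(0,k^{-1})$ satisfying
\[
\|u_k^\omega(0)-u_0^\omega\|_{H^s(\T)}<\ep_k \qquad\text{and}\qquad \|u_k^\omega(t_k)\|_{H^s(\T)}>\ep_k^{-1}.
\]
Because $u_k^\omega$ is smooth, it is simultaneously a solution of the \eqref{BBMD} formulation (via the observation following \eqref{nonlin} and Remark~\ref{rmk:nonlin}), so it is legitimate to compare $u_k^\omega$ directly with $u^\omega$.

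The first assertion of the theorem is then immediate from $\ep_k\to 0$. For the second, the triangle inequality together with $t_k\in[0,k^{-1}]$ gives
\[
\|u_k^\omega-u^\omega\|_{C([0,k^{-1}];H^s(\T))}\geq \|u_k^\omega(t_k)\|_{H^s(\T)}-\|u^\omega(t_k)\|_{H^s(\T)}>\ep_k^{-1}-\|u^\omega\|_{C([0,T^\omega];H^s(\T))},
\]
and the right-hand side diverges as $k\to\infty$, since the last term is a finite (but $\omega$-dependent) constant.

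There is no substantive analytic obstacle in this argument — all of the work has already been done in Theorems~\ref{Thm:ASLWP} and~\ref{thm:norminf}. The only real bookkeeping points are (i) selecting a single negative exponent $s$ that simultaneously witnesses the random solution $u^\omega$ and the deterministic norm inflation (handled by working in any $H^s$ with $s<\alpha-\tfrac12$), and (ii) identifying the smooth solutions produced by Theorem~\ref{thm:norminf} for~\eqref{BBM0} with solutions of the~\eqref{BBMD} formulation used to define $u^\omega$, which is transparent on $L^2(\T)$.
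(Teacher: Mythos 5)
Your proposal is correct and is exactly the route the paper takes: the paper presents Theorem~\ref{thm:asnorminf} as a direct corollary obtained by applying the deterministic norm inflation of Theorem~\ref{thm:norminf} at the base point $u_0^{\o}\in H^{\al-\frac12-}(\T)$ and comparing with the bounded random solution $u^{\o}$ from Theorem~\ref{Thm:ASLWP} via the triangle inequality. Your bookkeeping points (choosing a single $s<\al-\tfrac12<0$, and identifying smooth solutions of \eqref{BBM0} with those of \eqref{BBMD}) are the right ones and are handled correctly.
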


The almost sure norm inflation above implies that the solution map $\Phi$ is almost surely discontinuous everywhere over $H^{\al-\frac 12 - }(\T)$.  In other words, we can always find a (random) smooth sequence $\{u_{0,k}^{\o}\}_{k\in \N}$ which approximates the realisation $u_0^{\o}$ but whose (smooth) solutions exhibit the instability as stated in Theorem~\ref{thm:asnorminf}. 
However, this does not rule out the possibility that there is \emph{some} class of reasonable smooth solutions which do approximate the random solutions lying below $L^{2}(\T)$. 
Indeed, the class of smooth solutions obtained from mollified data provides a good approximation property.

\begin{theorem}\label{thm:smoothapp} 
Let $\al \in (\tfrac 14, \tfrac 12]$. Let $u=u^{\o}$ be as in Theorem~\ref{thm:asnorminf}. Denote by $u_{0,k}^{\o}=\rho_{k} \ast u_0^{\o}$ the regularisation of $u_0^{\o}$ by a smooth mollifier $\{ \rho_{k}\}_{k\in \N}$ and let $u_k$ be the solution to the BBM equation \eqref{BBMD} with $u_{k}|_{t=0}=u_{0,k}^{\o}$. Then, we have 
\begin{align*}
\lim_{k \ra \infty} \|u_{0, k}^{\o}-u_0^{\o}\|_{H^{\al-\frac 12-}(\T)}=0 \qquad \text{and} \qquad \lim_{k\ra \infty}\|u^{\o}_k -u^{\o}\|_{C([0,T^{\o}]; H^{\al-\frac 12-}(\T))}=0.
\end{align*} 
Moreover, the limit $u$ is independent of the choice of mollification kernel $\rho$.
\end{theorem}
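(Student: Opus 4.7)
The plan is to use the Da Prato--Debussche decomposition for both $u_k$ and $u^\o$, then invoke the continuity of the solution map $\Psi$ of Remark~\ref{remark:cty}. Since $u_{0,k}^\o = \rho_k \ast u_0^\o$ is smooth, the Cauchy problem \eqref{BBMD} admits a unique global smooth solution $u_k$, which in this regime coincides with a solution to \eqref{BBM0} (Remark~\ref{rmk:nonlin}). Write
\[
u_k = z_k + v_k, \qquad z_k(t) := S(t) u_{0,k}^\o,
\]
so that $v_k = \Psi(z_k, \NN(z_k))$ solves \eqref{BBMv} with $(z^\o, \NN(z^\o))$ replaced by $(z_k, \NN(z_k))$; similarly, $u^\o = z^\o + v^\o$ with $v^\o = \Psi(z^\o, \NN(z^\o))$. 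The first assertion $\|u_{0,k}^\o - u_0^\o\|_{H^{\al - \frac 12 -}(\T)} \to 0$ is a standard mollifier computation: using $|\widehat{\rho}(n/k) - 1| \lesssim \min(1, |n|/k)$ and applying Lemma~\ref{Lemma:wienerchaos} to the Gaussian series representing the difference, one obtains polynomial $L^p(\O)$ decay, from which Borel--Cantelli yields the almost sure limit.

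The heart of the proof is the almost sure convergence of the enhanced data set
\[
(z_k, \NN(z_k)) \longrightarrow (z^\o, \NN(z^\o))
\]
in $C([0, T^\o]; W^{\al - \frac 12 -, \infty}(\T)) \times C([0, T^\o]; H^s(\T))$. For the linear object $z_k$, the argument for $u_{0,k}^\o$ above lifts via the isometric action of $S(t)$; passage to $W^{\al - \frac 12 -, \infty}$ is achieved by Sobolev embedding from $W^{\al - \frac 12 -, p}(\T)$ with $p < \infty$ large, combined with Minkowski's inequality as in \eqref{integ}. For the quadratic object, the difference $\NN(z_k) - \NN(z^\o)$ lies in the second Wiener chaos, and by hypercontractivity (Lemma~\ref{Lemma:wienerchaos}) its $L^p(\O; H^s(\T))$ norm at fixed time is controlled by the deterministic sum
\[
\sum_{n \neq 0} \jb{n}^{2s} \vp(n)^2 \sum_{n_1 + n_2 = n} \frac{|\widehat{\rho}(n_1/k) \widehat{\rho}(n_2/k) - 1|^2}{\jb{n_1}^{2\al} \jb{n_2}^{2\al}},
\]
which converges to $0$ by dominated convergence when $s < 2\al$; this parallels the construction of the stochastic object in Proposition~\ref{prop:stochobjects}. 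Time regularity (to obtain convergence in $C_t$) follows from the same multilinear estimate applied to fractional time increments together with a Kolmogorov continuity criterion. Borel--Cantelli then upgrades this to almost sure convergence along a subsequence, and a uniform-in-$k$ tail bound extends it to the full sequence.

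With the above input, continuity of the deterministic map $\Psi$ yields $v_k \to v^\o$ in $C([0, T^\o]; H^s(\T))$. Combining with $z_k \to z^\o$ in $C([0, T^\o]; H^{\al - \frac 12 -}(\T))$, and using the Sobolev embedding $H^s(\T) \hookrightarrow H^{\al - \frac 12 -}(\T)$ (valid since $s > \tfrac 12 - \al \geq \al - \tfrac 12$ for $\al \leq \tfrac 12$), we conclude $u_k \to u^\o$ in $C([0, T^\o]; H^{\al - \frac 12 -}(\T))$. The independence of the limit from the kernel $\rho$ is then immediate: the limiting pair $(z^\o, \NN(z^\o))$ depends only on $u_0^\o$, and $\Psi$ is deterministic, so $u^\o = \Psi(z^\o, \NN(z^\o)) + z^\o$ is intrinsic.

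The main obstacle is the almost sure convergence of $\NN(z_k)$ in the function-space topology $C([0, T^\o]; H^s(\T))$ rather than merely in $L^p(\O; H^s(\T))$ at fixed times. The chaos estimates at fixed time are standard; the non-trivial ingredient is extracting time continuity uniformly in $k$ in order to apply Borel--Cantelli in $C_t H^s$. This should be a routine technical extension of the arguments already used to build the stochastic object in Proposition~\ref{prop:stochobjects}, but it is the place where some care is required.
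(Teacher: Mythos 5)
Your proposal is correct and follows essentially the same route as the paper: decompose $u_k=z_k+v_k$, invoke the almost sure convergence of the enhanced data set $(z_k,\NN(z_k))\to(z,\NN(z))$ together with its independence of the mollifier (which is exactly the content of Proposition~\ref{prop:stochobjects}, so the ``main obstacle'' you flag is already fully handled there, including the $C_t$ topology), and conclude via the continuity of the map $\Psi$ from the fixed point argument.
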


The proof of Theorem~\ref{thm:smoothapp} is a direct corollary of Theorem~\ref{Thm:ASLWP}, Remark~\ref{remark:cty} and Proposition~\ref{prop:stochobjects}. Namely, Proposition~\ref{prop:stochobjects} implies the almost sure convergence of the enhanced data set $(z_{k}, \NN(z_{k}))$ to $(z,\NN(z))$ (in the appropriate topology, see~\eqref{Psi}) with the limit independent of the mollification kernel.  Then, continuity of the map $\Psi$ given in \eqref{Psi} (from Theorem~\ref{Thm:ASLWP}; see also Remark~\ref{remark:cty}) implies $v_k \ra v$ almost surely in $C([0,T]; H^{s}(\T))$ for any $s<2\al$. 

In Theorem~\ref{thm:smoothapp}, the independence of the limit on the choice of mollifying kernel provides a well-defined notion of stability for the solution map $\Phi$. Thus, the random solutions constructed by Theorem~\ref{Thm:ASLWP} may be approximated by certain `reasonable' regularisations of the initial data. This is in direct contrast to the setting of deterministic well-posedness where approximations may be completely arbitrary.

\begin{remark}\rm
As will be clear by the proof of Proposition~\ref{prop:stochobjects} below, we may also consider in Theorem~\ref{thm:smoothapp} the regularisation by the (non-smooth) Dirichlet projection $\P_{\leq N}$ onto frequencies $\{n : \, |n|\leq N\}$. We then extend the uniqueness of the limiting solution among the class of mollifiers and  the projection $\P_{\leq N}$.
\end{remark}

\begin{remark}\rm 
As Theorem~\ref{thm:asnorminf} is a direct corollary of Theorem~\ref{thm:norminf}, the sequence of solutions $\{ u_{k}^{\o}\}_{k\in \N}$ can be taken with respect to a continuous index: for fixed `good' $\o$, there is a sequence of smooth solutions $\{ u_{\ep}^{\o}\}_{0<\ep \ll 1}$ which exhibit the instability as stated in Theorem~\ref{thm:asnorminf} as $\ep \to 0$. With a sequence of mollifiers $\rho_{\ep}(x):=\ep^{-1}\rho(\ep^{-1}x)$ on $\T$ and for $\al \in(\tfrac 14, \tfrac 12]$, we can show that the smooth solutions $u_{\ep}$ to BBM~\eqref{BBMD} with initial data $u_{0}^{\o}\ast \rho_{\ep}$, where $u_0^{\o}$ is as in \eqref{initialdata}, converge in $C([0,T^{\o}];H^{\al-\frac{1}{2}-}(\T))$ in probability to a unique limit $u$ as $\ep \to 0$, where $T^{\o}>0$ almost surely. Moreover, the limit $u$ is independent of the choice of the mollifier $\rho$. 
\end{remark}

We now provide an outline of the following paper. In Section 2, we collect some necessary results and tools of deterministic and probabilistic natures. We then carefully construct and study properties of the random linear solutions $z$ and the nonlinear object $\NN(z)$. In Section 3, we prove Theorem~\ref{Thm:ASLWP} on constructing local-in-time solutions below $L^{2}(\T)$ and show that the regularity result there is sharp. We then show, in Section 4, we can globalise those random solutions (in a restricted regularity). In the purely deterministic setting, we establish norm inflation for BBM at general data in negative Sobolev spaces in Section 5. In Appendix~\ref{app:Gauss}, we detail how we obtain almost sure existence of solutions with non-Gaussian random initial data as in Remark~\ref{remark: gaussianity}.
Finally, we also include a brief appendix on obtaining exponential tail estimates on stochastic processes.

\section{Deterministic and probabilistic tools}\label{sect:lemmas}
In this section, we collect here some useful deterministic and probabilistic results. 
\subsection{Deterministic tools}

First, we recall the following key bilinear estimate, due to Bona and Tzvetkov \cite{BonaTzvet} (see also Roum\'egoux~\cite{Roumegoux}), which immediately implies the local well-posedness of BBM~\eqref{BBM0} in $H^{s}(\M)$ for any $s\geq 0$. A proof is contained within that of a slightly more general bilinear estimate which we give in Lemma \ref{lemma:flspineq}. 

\begin{lemma}[\cite{BonaTzvet, Roumegoux}]\label{lemma:bonatzvet}
For any $s\geq 0$ and any $f,g\in H^{s}(\M)$, we have \begin{equation}
\| \varphi(D_{x})(fg)\|_{H^{s}(\M)} \lesssim \|f\|_{H^{s}(\M)}\|g\|_{H^{s}(\M)}. \label{productestimateBT}
\end{equation} Furthermore, the estimate \eqref{productestimateBT} is false if $s<0$.
\end{lemma}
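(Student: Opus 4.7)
The plan is to exploit the order-one smoothing $|\varphi(n)| \lesssim \langle n\rangle^{-1}$ encoded in $\varphi(D_x)$ to reduce the estimate \eqref{productestimateBT} to a bilinear Sobolev product estimate for $s \geq 0$, and then to disprove the inequality for $s<0$ by an explicit high-frequency pair whose product concentrates at a low frequency.

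For $s\geq 0$, I would pass to the Fourier side and use the elementary inequality $\langle n\rangle^s \lesssim \langle n_1\rangle^s + \langle n_2\rangle^s$ valid for $n=n_1+n_2$, symmetrising to reduce the task to bounding
\[
\bigg\| \langle n\rangle^{s-1} \sum_{n=n_1+n_2} \langle n_1\rangle^s \, |\widehat{f}(n_1)| \, |\widehat{g}(n_2)| \bigg\|_{\ell^2_n}
\]
by $\|f\|_{H^s}\|g\|_{H^s}$, with the natural convolution analogue on $\R$. A single application of Cauchy--Schwarz in the convolution variable controls the inner sum pointwise in $n$ by $\|f\|_{H^s}\|g\|_{L^2}$, which for $s\geq 0$ is bounded by $\|f\|_{H^s}\|g\|_{H^s}$, leaving the outer weight $\langle n\rangle^{-1}$ to absorb in $\ell^2_n$ (respectively $L^2_\xi$ on $\R$). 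Square-summability of this weight holds precisely because $\M$ is one-dimensional --- the only place the dimension enters.

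For the sharpness, fix $s<0$ and take on $\T$ the simplest possible oscillating pair $f_N(x) = e^{iNx}$, $g_N(x) = e^{-i(N+1)x}$ for $N \gg 1$. Then $f_N g_N = e^{-ix}$, so $\|\varphi(D_x)(f_N g_N)\|_{H^s(\T)} = |\varphi(-1)| \langle 1\rangle^s$ is $O(1)$, whereas $\|f_N\|_{H^s(\T)}\|g_N\|_{H^s(\T)} \sim \langle N\rangle^{2s} \to 0$, so the ratio blows up like $N^{-2s}$. On $\R$, the same mechanism is implemented via narrow $L^2$ bumps whose Fourier supports are centred near $N$ and $-(N+1)$.

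I do not foresee any serious obstacle: the heart of the positive estimate is that $\varphi(D_x)$ supplies exactly one derivative of smoothing, which is precisely what is needed for the symbol-level Cauchy--Schwarz argument to close in one spatial dimension. The sharpness, on the other hand, is driven by the fact that multiplication can transfer high-frequency data into low-frequency output, where the $H^s$-norm with $s<0$ is an especially weak measure of mass; this high-to-low mechanism is exactly what obstructs a naive bilinear estimate below $L^2$.
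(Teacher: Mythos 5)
Your argument is correct, and for $s\ge 0$ it is essentially the paper's own proof: the paper establishes \eqref{productestimateBT} inside the more general Fourier--Lebesgue estimate of Lemma~\ref{lemma:flspineq}, where the weight is peeled off via $\jb{\xi}^{s}\lesssim \jb{\xi-\xi_1}^{s}\jb{\xi_1}^{s}$ and the bilinear form is closed by duality with Young's and H\"older's inequalities, the crucial input being $\varphi\in L^{p'}$ for $p'>1$; at $p=2$ this is exactly your Cauchy--Schwarz-in-the-convolution-variable argument combined with the square summability of $\jb{n}^{-1}$ in one dimension. (One cosmetic slip: after distributing $\jb{n}^{s}\lesssim \jb{n_1}^{s}+\jb{n_2}^{s}$ onto the factors, the residual outer weight in your displayed reduction should be $\jb{n}^{-1}$, not $\jb{n}^{s-1}$; your subsequent prose uses the correct weight, so this is only a typo and not a gap.) For the failure when $s<0$, the paper does not argue within this lemma at all --- it defers to \cite{BonaTzvet} and to the Remark following Lemma~\ref{lemma:flspineq}, whose counterexample at $p=2$ uses wide indicator bumps $\ft u=\ft v=\ind_{[-A,A]}$ in the dual formulation. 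Your pair $e^{iNx}$, $e^{-i(N+1)x}$ on $\T$ (with narrow Fourier bumps near $\pm N$ on $\R$) is a cleaner, self-contained realisation of the same high-to-low frequency transfer, and it is valid since $\varphi(-1)\neq 0$ so the output has an $O(1)$ lower bound while $\|f_N\|_{H^s}\|g_N\|_{H^s}\sim N^{2s}\to 0$. Both routes prove the same statements; yours has the advantage of exhibiting the sharpness at $p=2$ explicitly without passing through the dual estimate.
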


We next state a useful summing estimate, a proof of which can be found in, for example,~\cite[Lemma 4.2]{GTV}.

\begin{lemma} \label{lemma:sumestimate} If $\beta\geq \gamma \geq 0$ and $\beta+\gamma >1$, then $$ \sum_{n}\frac{1}{\jb{n-k_{1}}^{\beta}\jb{n-k_{2}}^{\gamma}}\lesssim \frac{\phi_{\beta}(k_{1}-k_{2})}{\jb{k_{1}-k_{2}}^{\gamma}},$$  where 
\begin{align}
 \phi_{\beta}(k):=\sum_{|n|\leq |k|}\frac{1}{\jb{n}^{\beta}}\sim \begin{cases} 
1, \quad  &\textup{if } \,\,\beta>1, \\ 
\log(1+\jb{k}), \quad& \textup{if } \,\, \beta=1, \\
 \jb{k}^{1-\beta}, \quad & \textup{if } \,\, \beta<1.
\end{cases} \label{phi}
\end{align}
\end{lemma}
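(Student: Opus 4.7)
The plan is to reduce to a single-parameter estimate by translating. Setting $m = n - k_2$ and $k = k_1 - k_2$, the sum becomes
\begin{align*}
S(k) := \sum_{m \in \Z} \frac{1}{\jb{m-k}^{\beta} \jb{m}^{\gamma}},
\end{align*}
and the target inequality is $S(k) \lesssim \phi_\beta(k)/\jb{k}^{\gamma}$. I would then split the summation domain into the two regions
\begin{align*}
A := \{m : \jb{m} \geq \jb{m-k}\}, \qquad B := \{m : \jb{m} < \jb{m-k}\},
\end{align*}
and observe, via the triangle inequality $\jb{k} \leq \jb{m} + \jb{m-k}$, that $\jb{m} \gtrsim \jb{k}$ on $A$ and $\jb{m-k} \gtrsim \jb{k}$ on $B$. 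This immediately gives the pointwise bounds $\jb{m-k}^{-\beta}\jb{m}^{-\gamma} \lesssim \jb{k}^{-\gamma} \jb{m-k}^{-\beta}$ on $A$ and $\lesssim \jb{k}^{-\beta}\jb{m}^{-\gamma}$ on $B$.

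Next I would further split each region according to whether the remaining (free) variable is bounded by $|k|$ or not. On $A \cap \{|m-k| \leq |k|\}$, summing $\jb{m-k}^{-\beta}$ produces exactly $\phi_\beta(k)$, giving the desired bound. A symmetric treatment on $B \cap \{|m| \leq |k|\}$ produces $\jb{k}^{-\beta}\phi_\gamma(k)$, which must be compared to $\jb{k}^{-\gamma}\phi_\beta(k)$. On the far pieces $A \cap \{|m-k|>|k|\}$ and $B \cap \{|m|>|k|\}$, I would use $\jb{m}^{-\gamma} \leq \jb{m-k}^{-\gamma}$ (respectively $\jb{m-k}^{-\beta} \leq \jb{m}^{-\beta}$) to collapse to a single weight of strength $\beta+\gamma > 1$ and apply the elementary tail bound $\sum_{|j|>|k|} \jb{j}^{-\beta-\gamma} \lesssim \jb{k}^{1-\beta-\gamma}$.

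To finish, I need the two comparison inequalities
\begin{align*}
\jb{k}^{1-\beta} \lesssim \phi_\beta(k) \qquad \text{and} \qquad \phi_\gamma(k) \lesssim \jb{k}^{\beta-\gamma} \phi_\beta(k),
\end{align*}
which reduce all far-region and $B$-region estimates to the target $\phi_\beta(k)/\jb{k}^\gamma$. Both follow from the explicit asymptotics \eqref{phi} by a short case analysis on whether $\beta$ and $\gamma$ lie above, at, or below $1$, using $\beta \geq \gamma \geq 0$; this bookkeeping is where any (minor) subtlety sits, in particular at the logarithmic endpoints $\beta = 1$ or $\gamma = 1$. Apart from this case check, every step is a direct application of the triangle inequality and a geometric-series summation, so the main "obstacle" is really just organizing the case split cleanly rather than any substantive estimate.
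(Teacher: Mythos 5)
Your argument is correct and complete: the translation to $S(k)$, the split according to which of $\jb{m}$, $\jb{m-k}$ dominates, the near/far split in the free variable, and the two comparison inequalities (both of which do survive the case check at the endpoints $\beta=1$, $\gamma=1$, using $\beta\geq\gamma$ and $\beta+\gamma>1$) together give exactly the claimed bound. The paper does not prove this lemma but only cites \cite[Lemma 4.2]{GTV}, where the standard argument is of essentially the same form, so your proposal matches the intended route.
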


Finally, we need the following paraproduct estimate:

\begin{lemma} [{\cite[Lemma 3.4]{GKO}}] \label{lemma:negderivprod} Let $0\leq s\leq 1$ and suppose that $1<p,q,r<\infty$ satisfy $\frac{1}{p}+\frac{1}{q}=\frac{1}{r}+s$. Then, we have 
\begin{align*}
\| \jb{\nabla}^{-s}(fg)\|_{L^{r}(\T)}\lesssim \| \jb{\nabla}^{-s}f\|_{L^{p}(\T)}\| \jb{\nabla}^{s}g\|_{L^{q}(\T)}. 
\end{align*}
\end{lemma}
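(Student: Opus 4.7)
The plan is to prove this paraproduct estimate via Bony's paraproduct decomposition combined with Littlewood-Paley theory. Decompose both $f$ and $g$ into dyadic blocks using standard Littlewood-Paley projections $\{P_N\}$ and write
\begin{align*}
fg = \pi_{\text{LH}}(f,g) + \pi_{\text{HH}}(f,g) + \pi_{\text{HL}}(f,g),
\end{align*}
where the three pieces correspond respectively to the frequency regimes $N_1 \ll N_2$, $N_1 \sim N_2$, and $N_1 \gg N_2$ for the input dyadic scales of $f$ and $g$. The goal is then to estimate $\|\jb{\nabla}^{-s}(\cdot)\|_{L^r}$ of each piece.

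For the low-high and high-low pieces, the output Fourier support localizes near the larger input frequency, so that $\jb{\nabla}^{-s}$ acts essentially as multiplication by that scale raised to $-s$, producing an explicit cancellation. One applies H\"older's inequality with an intermediate exponent $\tilde q$ determined by $\tfrac{1}{p} + \tfrac{1}{\tilde q} = \tfrac{1}{r}$; the hypothesis forces $\tfrac{1}{\tilde q} = \tfrac{1}{q} - s$, and Bernstein's inequality on $\T$ converts the integrability loss into a frequency factor that cancels the $\jb{\nabla}^{-s}$-gain. Rewriting each dyadic norm in terms of $\jb{\nabla}^{-s} P_{N_1} f$ and $\jb{\nabla}^{s} P_{N_2} g$ produces a summable weight $(N_1/N_2)^{s}$ in the low-high case (and symmetrically in the high-low case), and the dyadic summation closes via Schur's test and the Littlewood-Paley square function characterization of $L^p$ valid for $1 < p < \infty$.

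The main obstacle is the diagonal piece $\pi_{\text{HH}}(f,g)$: when $N_1 \sim N_2$, the product $P_{N_1} f \cdot P_{N_2} g$ may have output Fourier support anywhere in $[0, N_1 + N_2]$, and so the cancellation from $\jb{\nabla}^{-s}$ is not automatic. Here one invokes the Sobolev embedding $\jb{\nabla}^{-s}(L^{r_1}) \hookrightarrow L^r$ with $\tfrac{1}{r_1} = \tfrac{1}{r} + s = \tfrac{1}{p} + \tfrac{1}{q}$, which is valid precisely in the admissible range guaranteed by the hypothesis (using both $s \leq 1$ and $1 < p, q, r < \infty$), reducing the estimate to the direct H\"older bound $\|P_{N_1} f\|_{L^p} \|P_{N_2} g\|_{L^q}$. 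Rewriting via $\jb{\nabla}^{\mp s}$ as above leaves an $O(1)$ weight because $N_1 \sim N_2$, and the diagonal dyadic sum closes by Cauchy-Schwarz paired against the Littlewood-Paley square functions of $\jb{\nabla}^{-s} f$ and $\jb{\nabla}^s g$.

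Finally, combining the three contributions, the proof concludes by invoking the vector-valued Littlewood-Paley square function theorem together with the Fefferman-Stein maximal inequality, both of which require exactly the range $1 < p, q, r < \infty$ asserted in the hypothesis. The upper bound $s \leq 1$ enters to keep the intermediate exponent $\tilde q$ strictly greater than $1$ and to ensure the applicability of the Sobolev embedding step above.
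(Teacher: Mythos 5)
The paper does not actually prove this lemma: it is quoted (in its one-dimensional form) from \cite[Lemma 3.4]{GKO}, so your argument can only be compared with the standard proof of the cited result, and your Bony/Littlewood--Paley decomposition is indeed the right skeleton for it. The gap is in the off-diagonal pieces. In the high-low term you apply H\"older with the intermediate exponent $\tilde q$ defined by $\tfrac1p+\tfrac1{\tilde q}=\tfrac1r$, i.e.\ $\tfrac1{\tilde q}=\tfrac1q-s$. Nothing in the hypotheses guarantees $\tfrac1q-s\ge 0$: the choice $s=\tfrac12$, $p=2$, $q=r=4$ satisfies $0\le s\le 1$, $1<p,q,r<\infty$ and $\tfrac1p+\tfrac1q=\tfrac1r+s$, yet makes $\tfrac1{\tilde q}$ negative, so $\tilde q$ is not a Lebesgue exponent and the H\"older--Bernstein step collapses; your closing remark that ``$s\le 1$ keeps $\tilde q>1$'' addresses the wrong constraint ($\tilde q>1$ is automatic, $\tilde q>0$ is not). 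This is not a defect of the method but of the statement: testing the inequality with $g\equiv 1$ (so that $\jb{\nabla}^s g=1$ and $fg=f$) reduces it to $\|h\|_{L^r(\T)}\lesssim\|h\|_{L^p(\T)}$ for arbitrary trigonometric polynomials $h$, which holds on a finite measure space if and only if $r\le p$, i.e.\ if and only if $s\le\tfrac1q$. So the estimate as stated is false when $s>\tfrac1q$, and symmetrically your low-high piece needs $s\le\tfrac1p$; a complete proof must add these side conditions (equivalently $p,q\le\tfrac1s$), which do hold in every application made in this paper ($q=r=2$, $s\le\tfrac12$, $p=\tfrac1s$).

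Even granting those restrictions, your treatment of the diagonal piece is not yet closed. The Sobolev embedding $\|\jb{\nabla}^{-s}h\|_{L^r}\lesssim\|h\|_{L^{r_1}}$ with $\tfrac1{r_1}=\tfrac1r+s=\tfrac1p+\tfrac1q$ requires $r_1>1$, which again does not follow from the hypotheses (and is exactly borderline, $r_1=1$, in the case $s=\tfrac12$, $p=q=r=2$ used in the paper). More importantly, after converting $\|P_{N_1}f\|_{L^p}\|P_{N_2}g\|_{L^q}$ into $\|P_{N_1}\jb{\nabla}^{-s}f\|_{L^p}\|P_{N_2}\jb{\nabla}^sg\|_{L^q}$ with an $O(1)$ weight, the Cauchy--Schwarz you propose requires the embedding $(\sum_N\|P_NF\|_{L^p}^2)^{1/2}\lesssim\|F\|_{L^p}$, i.e.\ $L^p\hookrightarrow B^0_{p,2}$, which is valid only for $p\le 2$; since the resonant interaction here has total regularity $(-s)+s=0$, there is no spare decay in $N$ to absorb this, and for $p>2$ (again the relevant case $p=\tfrac1s>2$) the diagonal sum needs a different device (e.g.\ exploiting that the output is measured at strictly negative regularity $-s$, or passing through $B^0_{r_1,\infty}$ with an arbitrarily small loss of derivatives). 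So the proposal identifies the correct decomposition but leaves both the exponent bookkeeping and the resonant summation genuinely open.
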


\subsection{Probabilistic tools}
Let $\{g_{n}\}_{n\in \N}$ be a sequence of independent standard Gaussian random variables on a probability space $(\O,\mathcal{F},\prob)$, where $\mathcal{F}$ is the $\s$-algebra generated by the sequence $\{g_{n}\}_{n\in \N}$. Given $\l\in \N\cup\{0\}=\N_{0}$, we define a polynomial chaos of degree $k$ to be a polynomial of the form 
$\prod_{j=1}^{\infty} H_{\l_j}(g_{n})$,
where $\{\l_{j}\}_{j\in \N_{0} } \subset \N_{0}$ satisfy\footnote{Note at most finitely many $\l_{j}$ are non-zero.} $\l=\sum_{j=1}^{\infty}\l_{j}$ and $H_{\l_j}$ is the Hermite polynomial of degree $\l_j$. 
 We then define the homogeneous Wiener chaos $\mathcal{H}_{\l}$ of order $\l$ as the closure under $L^{2}(\O,\mathcal{F},\prob)$ of the linear span of polynomial chaoses of degree $\l$. We write 
 \begin{align*}
\mathcal{H}_{\leq \l}:=\bigoplus_{j=0}^{\l} \mathcal{H}_{j}
\end{align*}
and we have the following so-called Wiener chaos estimate which we use to exploit the randomisation in the multilinear term $\NN(z)$.

\begin{lemma}[Wiener chaos estimate]\label{Lemma:wienerchaos}  
Given $\l\in \N_{0}$, let $X\in \mathcal{H}_{\leq \l}$. Then, we have 
\begin{align*}
\|X\|_{L^{p}(\O)} \leq (p-1)^{\frac{\l}{2}} \|X\|_{L^{2}(\O)}
\end{align*}
for any $2\leq p<\infty$. 
\end{lemma}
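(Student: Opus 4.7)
The plan is to invoke Nelson's hypercontractivity theorem for the Ornstein--Uhlenbeck semigroup on Gauss space and then use the diagonal action of the semigroup on each homogeneous chaos $\H_{j}$ to reduce the $L^{p}(\O)$ estimate to a weighted $L^{2}(\O)$ identity via orthogonality. Since the random variables $\{g_{n}\}_{n \in \N}$ generate the $\s$-algebra $\F$, the Hermite-polynomial construction provides an isometric isomorphism $L^{2}(\O,\F,\prob) \cong \bigoplus_{j \geq 0} \H_{j}$, and on each $\H_{j}$ the OU semigroup $\{P_{t}\}_{t \geq 0}$ acts by the scalar $e^{-jt}$. I will take all of these facts as standard.

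Concretely, I would proceed as follows. Fix $p \in [2,\infty)$ and set $t_{0} := \tfrac{1}{2}\log(p-1)$, so that $e^{-2t_{0}} = (p-1)^{-1}$. Nelson's hypercontractivity theorem then asserts that $P_{t_{0}} : L^{2}(\O) \to L^{p}(\O)$ is a contraction. Given $X \in \H_{\leq \l}$, decompose $X = \sum_{j=0}^{\l} X_{j}$ with $X_{j} \in \H_{j}$, and define
\[
Y := \sum_{j=0}^{\l} e^{j t_{0}} X_{j} \in \H_{\leq \l}.
\]
By the diagonal action of $P_{t_{0}}$ on each $\H_{j}$, we have $P_{t_{0}} Y = X$, and hence
\[
\|X\|_{L^{p}(\O)} \;=\; \|P_{t_{0}} Y\|_{L^{p}(\O)} \;\leq\; \|Y\|_{L^{2}(\O)}.
\]

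It remains to estimate $\|Y\|_{L^{2}(\O)}$. Using the orthogonality of the homogeneous Wiener chaoses in $L^{2}(\O)$, along with the trivial bound $e^{2jt_{0}} \leq e^{2\l t_{0}}$ for $0 \leq j \leq \l$, I would compute
\[
\|Y\|_{L^{2}(\O)}^{2} \;=\; \sum_{j=0}^{\l} e^{2j t_{0}} \|X_{j}\|_{L^{2}(\O)}^{2} \;\leq\; e^{2\l t_{0}} \sum_{j=0}^{\l} \|X_{j}\|_{L^{2}(\O)}^{2} \;=\; (p-1)^{\l} \|X\|_{L^{2}(\O)}^{2},
\]
and taking square roots yields the claim. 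The main (and only genuine) obstacle is Nelson's hypercontractivity estimate itself; I would not reprove it and would instead cite a standard reference. Every other step is a routine bookkeeping exercise on the Wiener chaos decomposition, and the argument is essentially identical to the one used in many stochastic PDE references (e.g.\ those cited in the paper).
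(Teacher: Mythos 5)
Your proof is correct and follows exactly the route the paper indicates: the paper does not write out a proof but simply attributes the lemma to Nelson's hypercontractivity of the Ornstein--Uhlenbeck semigroup (citing \cite{Nelson} and \cite[Proposition 2.4]{ThomTzvet}), and your argument is precisely the standard derivation behind that citation. The choice $t_{0}=\tfrac12\log(p-1)$, the diagonal action $e^{-jt_0}$ on $\H_j$, and the orthogonality computation are all carried out correctly.
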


The proof of Lemma~\ref{Lemma:wienerchaos} follows from the hypercontractivity of the Ornstein-Uhlenbeck semigroup due to Nelson~\cite{Nelson}. See for instance \cite[Proposition 2.4]{ThomTzvet} for more details. 
Notice as a special case of Lemma~\ref{Lemma:wienerchaos}, for any $(a_{n})\in \l^{2}_{n}$, we have
 \begin{equation} \bigg\| \sum_{n\in \Z}a_{n}g_{n}(\o)  \bigg\|_{L^{p}(\Omega)} \lesssim p^{\frac{1}{2}}\|a_{n}\|_{\l^{2}_{n}}. \label{wienerchaos}
\end{equation}

In order to study the regularity properties of random distributions we make use of the following result, a proof of which can be found in \cite{OPT}.
We specialise the argument in \cite{OPT} to one spatial dimension and to the Sobolev spaces $W^{s,p}(\T)$. 
Given $0<\g<1$, we say $f\in C^{\g}([0,T]; W^{s,p}_{x}(\T))$ if the following norm is finite:
\begin{align*}
\|f\|_{C^{\g}([0,T]; W^{s,p}_{x}(\T))}=\|f\|_{C([0,T]; W^{s,p}_{x}(\T))}+\|f\|_{\dot{C}^{\g}([0,T];W^{s,p}_{x}(\T))},
\end{align*}
where we define the semi-norm $ \|\cdot \|_{\dot{C}^{\g}([0,T];W^{s,p}_{x}(\T))}$ by 
\begin{align*}
\|f \|_{\dot{C}^{\g}([0,T];W^{s,p}_{x}(\T))}:=\sup_{0\leq t'<t\leq T}\frac{ \| f(t)-f(t')\|_{W^{s,p}_{x}(\T)}}{|t-t'|^{\g}}.
\end{align*}
We occasionally write $C([0,T];W^{s,p}(\T))$ as $C_{T}W^{s,p}$.
Given $h\in \R$, we denote by $\dl_{h}$ the difference operator:
\begin{align*}
\dl_h X(t) = X(t+h)-X(t).
\end{align*}

\begin{proposition}[Regularity and convergence of stochastic processes] \label{prop:reg} 
Let $\{ X_k \}_{k\in \N}$ be a sequence of stochastic processes on $\R_{+}$ with values in $\S'(\T)$ such that $X_k (t)\in \mathcal{H}_{\leq \l}$ for each $t\in \R_{+}$ and $k\in \N$. Fix $T>0$. Suppose there exist $1\leq p\leq \infty$ and $s\in \R$ such that the following statements hold:
\begin{enumerate}[\normalfont (i)]

\item We have 
\begin{align}
\E[ \| X_k (t)\|_{W^{s,p}(\T)}^{q}]< \infty, \label{251}
\end{align}
for any $q\geq 1$ and uniformly in $t\in [0,T]$ and $k\in \N$.

\item There exists $\ta>0$ such that 
\begin{align}
\E [ \|X_{k'}(t)-X_k (t) \|_{W^{s,p}(\T)}^{q}] \les_{T} k^{-q\ta }q^{\frac{\l q}{2}} \label{252}
\end{align}
for any $q\geq 1$, $t\in [0,T]$ and $k'\geq k\geq 1$.

\end{enumerate}

Then, \textup{(i)} implies for each fixed $t\in [0,T]$, $X_{k}(t)\in W^{s,p}(\T)$ almost surely and moreover, \textup{(ii)} implies
there exists $X(t)\in W^{s,p}(\T)$ such that $\{ X_k (t,\cdot)\}_{k\in \N}$ converges to $X(t)$ in $L^{q}(\O; W^{s,p}(\T))$, for any $1\leq q<\infty$ and almost surely in $W^{s,p}(\T)$.

Suppose, in addition, the following statements hold:

\begin{itemize}

\item[(iii)] There exists $\g>0$ such that 
\begin{align}
\E [ \| \dl_h X_k (t)\|_{W^{s,p}(\T)}^{q}] \les_{q,T} |h|^{\frac{q}{2}\g} \label{253}
\end{align}
for any $q\geq 1$ and $h\in [-1,1]$, uniformly in $t\in [0,T]$ and $k\in \N$. 

\item[(iv)] There exist $\ta,\g >0$, such that 
\begin{align}
\E [ \| \dl_h X_{k'}(t)-\dl_h X_k (t)\|_{W^{s,p}(\T)}^{q}] \les_{q,T} k^{-q\ta }|h|^{\frac{q}{2}\g} \label{254}
\end{align}
for any $q\geq 1$ and $h\in [-1,1]$, uniformly in $t\in [0,T]$ and $k'\geq k\geq 1$. 

\end{itemize}

Then, \textup{(iii)} implies $X_{k}, X \in C^{\beta}([0,T]; W^{s,p}(\T))$ almost surely for $\beta<\frac{\g}{2}$ and moreover, \textup{(iv)} implies 
$\{ X_k \}_{k\in \N}$ converges to $X$ in $L^{q}(\O; C^{\beta}([0,T]; W^{s,p}(\T)))$, for any $1\leq q<\infty$ and almost surely in $C^{\beta}([0,T];W^{s,p}(\T))$.

\end{proposition}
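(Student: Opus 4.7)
The strategy is standard for this kind of result: exploit the hypercontractivity implicit in having chaoses of bounded order together with Chebyshev/Borel--Cantelli for the spatial statements, and use Kolmogorov's continuity criterion for the temporal regularity, with a uniformity in $k$ that upgrades Cauchyness in $L^q$ to Cauchyness in $L^q(\Omega; C^\beta_T W^{s,p})$.

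For the conclusions drawn from \textup{(i)} and \textup{(ii)}, I would argue as follows. The bound \eqref{251} immediately yields $X_k(t)\in W^{s,p}(\T)$ almost surely for each fixed $t\in [0,T]$. From \eqref{252} and Chebyshev's inequality,
\begin{align*}
\prob\Big( \| X_{k+1}(t)-X_k (t)\|_{W^{s,p}}>k^{-\theta/2} \Big)\les_{T} k^{-q\theta/2} q^{\ell q/2},
\end{align*}
which is summable in $k$ once $q>2/\theta$. Borel--Cantelli therefore gives that $\{X_k(t)\}_{k\in\N}$ is almost surely Cauchy in $W^{s,p}(\T)$, and we denote its almost sure limit by $X(t)$. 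The same bound \eqref{252} shows $\{X_k(t)\}_{k\in\N}$ is Cauchy in $L^q(\Omega; W^{s,p})$ for every $1\leq q<\infty$, so by completeness the limit agrees with $X(t)$, which in particular gives $L^q$-convergence as claimed.

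The conclusions drawn from \textup{(iii)} and \textup{(iv)} rest on Kolmogorov's continuity criterion applied in the Banach space $W^{s,p}(\T)$. Combining the pointwise bound \eqref{251} with the increment bound \eqref{253} we get, for any $0\leq t'<t\leq T$ with $|t-t'|\leq 1$ and any $q\geq 1$,
\begin{align*}
\E\big[ \|X_k (t)-X_k (t')\|_{W^{s,p}}^{q}\big] \les_{q,T} |t-t'|^{q\g/2}.
\end{align*}
Choosing $q$ so large that $q\g/2>1+q\beta$ (equivalently $\beta<\g/2-1/q$), Kolmogorov's criterion produces a modification of $X_k$ in $C^{\beta}([0,T]; W^{s,p}(\T))$ together with the moment bound
\begin{align*}
\E\big[ \|X_k \|_{C^{\beta}_{T}W^{s,p}}^{q}\big] \les_{q,T,\beta} 1,
\end{align*}
uniformly in $k\in \N$. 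Passing to the almost sure limit $X$ via Fatou's lemma transfers the same Hölder regularity to $X$. Sending $q\to\infty$ covers every $\beta<\g/2$.

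For the last conclusion I would apply Kolmogorov to the difference. Combining \eqref{252} at times $t$ and $t'$ with \eqref{254} by Hölder interpolation (writing $|a-b|\leq |a|^{1/2}|b|^{1/2}$-type bounds through the $L^{2q}$-norms of each factor, which by the Wiener chaos estimate in Lemma~\ref{Lemma:wienerchaos} are controlled by the $L^2$-norms up to a factor $q^{\ell q/2}$, already encoded in the hypotheses), one obtains a bound of the form
\begin{align*}
\E\big[ \|\dl_h (X_{k'}-X_k )(t)\|_{W^{s,p}}^{q}\big] \les_{q,T} k^{-q\theta'} |h|^{q\g/2}
\end{align*}
for some $0<\theta'<\theta$, together with the analogous pointwise bound from \eqref{252}. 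A second application of Kolmogorov's criterion to $X_{k'}-X_k$ yields
\begin{align*}
\E\big[ \| X_{k'}-X_k \|_{C^{\beta}_{T}W^{s,p}}^{q}\big] \les_{q,T,\beta} k^{-q\theta''}
\end{align*}
for any $\beta<\g/2$ and some $\theta''>0$, provided $q$ is large enough. This gives Cauchyness in $L^q(\Omega; C^{\beta}_T W^{s,p})$ and, via Chebyshev plus Borel--Cantelli along the subsequence $2^{j}$ (say), almost sure convergence in $C^{\beta}([0,T]; W^{s,p}(\T))$ to $X$.

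The main obstacle is the bookkeeping in the Kolmogorov step: ensuring one can simultaneously (a) exploit the $k^{-q\theta}$-decay from \eqref{252} and \eqref{254}, (b) interpolate to get a joint space-time bound on $\dl_h(X_{k'}-X_k)$, and (c) take $q$ large enough to exhaust the range $\beta<\g/2$ without losing the decay in $k$. The finite-chaos hypothesis $X_k(t)\in \mathcal{H}_{\leq \ell}$ is what keeps all $L^q$-moments comparable (with the explicit $q^{\ell q/2}$ dependence from Lemma~\ref{Lemma:wienerchaos}) and ultimately makes this interpolation routine.
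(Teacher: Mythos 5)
Your overall plan is the right one and matches the argument the paper relies on: the paper does not prove Proposition~\ref{prop:reg} itself but cites \cite{OPT}, and the proof there is exactly this combination of Chebyshev/Borel--Cantelli for the spatial statements and a Kolmogorov-type (Garsia--Rodemich--Rumsey, cf.\ Appendix~\ref{app:Garsia}) continuity argument, applied both to $X_k$ and to the differences $X_{k'}-X_k$, for the temporal ones. One small remark before the main point: your interpolation step for (iv) is unnecessary, since $\dl_h X_{k'}(t)-\dl_h X_k(t)=\dl_h(X_{k'}-X_k)(t)$ means that hypothesis \eqref{254} already \emph{is} the joint space-time bound you need for the difference, with no loss from $\ta$ to $\ta'$.

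There is, however, a genuine gap: your Borel--Cantelli step does not deliver almost sure convergence of the full sequence. From $\prob(\|X_{k+1}(t)-X_k(t)\|_{W^{s,p}}>k^{-\ta/2})\les k^{-q\ta/2}q^{\l q/2}$ and Borel--Cantelli you only learn that almost surely $\|X_{k+1}(t)-X_k(t)\|_{W^{s,p}}\leq k^{-\ta/2}$ for all large $k$; Cauchyness by telescoping would then require $\sum_k k^{-\ta/2}<\infty$, i.e.\ $\ta>2$. In the application (Proposition~\ref{prop:stochobjects}) $\ta>0$ is taken arbitrarily small, so this step fails precisely where the proposition is used. The repair is standard and uses the full strength of \eqref{252}, which holds for all $k'\geq k$ and not only consecutive indices: first construct $X(t)$ as the $L^q(\O;W^{s,p})$ limit, let $k'\to\infty$ and apply Fatou to obtain $\E[\|X(t)-X_k(t)\|_{W^{s,p}}^q]\les k^{-q\ta}q^{\frac{\l q}{2}}$, and then run Chebyshev and Borel--Cantelli directly on the events $\{\|X(t)-X_k(t)\|_{W^{s,p}}>k^{-\ta/2}\}$, whose probabilities are summable once $q>2/\ta$; this gives $X_k(t)\to X(t)$ almost surely along the whole sequence. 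The same device is needed at the very end for the almost sure convergence in $C^{\beta}([0,T];W^{s,p}(\T))$, where your argument as written only produces convergence along the dyadic subsequence.
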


\subsection{Properties of the stochastic objects}

In this section we study the regularity and integrability properties of the random linear solution to BBM~\eqref{BBM0} with initial data \eqref{initialdata}, which we write as $z=S(t)u_{0}^{\o}$ and the bilinear term $\NN(z) $ given in \eqref{nonlinz}.
We verify that both $z$ and $\NN(z)$ are the limit of the mollified sequences $\{ z_{k}=S(t)(u_0^{\o}\ast \rho_{k})\}_{k\in \N}$ and $\{ \NN(z_{k})\}_{k\in \N}$, independent of the choice of mollifier. Moreover, we verify 
\begin{align*}
\NN(z)\in C([0,T];H^{2\al -}(\T))
\end{align*}
almost surely.

\begin{proposition}\label{prop:stochobjects} 
Let $\frac 14 < \al \leq \frac 12$,
\begin{align*}
s_1 <\al-\frac 12 \quad \text{and} \quad s_2<2\al.
\end{align*}
 Let $\{ \rho_k \}_{k\in \N}$ be a family of mollifiers on $\T$. Given $T>0$, let $z_k =S(t)( u_0^{\o}\ast \rho_k)$
where $t\in [0,T]$.
Then, $(z, \NN(z))\in C([0,T]; W^{s_1,\infty}(\T)) \times C([0,T]; W^{s_2,\infty}(\T))$ almost surely and  
\begin{align*}
(z_k, \NN(z_{k})) \longrightarrow (z, \NN(z)),
\end{align*}
as $k \ra \infty$ in $L^{q}(\O;  C([0,T]; W^{s_1,\infty}(\T))\times C([0,T]; W^{s_2,\infty}(\T)))$ for any $q\geq 1$ and almost surely in $ C([0,T]; W^{s_1,\infty}(\T))\times C([0,T]; W^{s_2,\infty}(\T))$. Moreover, the limit $(z,\NN(z))$ is independent of the choice of mollification kernel $\rho$, including the regularisation by the (non-smooth) Dirichlet projection $\P_{\leq N}$.
Furthermore, there exist $C,C',c>0$ such that 
\begin{align*}
\prob \big(\|z\|_{C([0,T]; W^{s_1,\infty}_{x}(\T))} +\|\NN(z)\|_{C([0,T]; W^{s_2,\infty}_{x}(\T))}  >\lambda \big) \leq C' \big(e^{-\frac{C\lambda}{T^{c}}} +e^{-C\ld} \big)
\end{align*}
 for any $T>0$ and $\lambda >0$. 
\end{proposition}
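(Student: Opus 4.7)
The plan is to verify the four hypotheses of Proposition~\ref{prop:reg} separately for the first-chaos process $\{z_k\}_{k\in\N}$ and the second-chaos process $\{\NN(z_k)\}_{k\in\N}$, and then to extract the exponential tail by combining those moment bounds with the Chebyshev-plus-optimization argument recorded in the appendix.

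For the linear object, I expand
\[
z_k(x,t)=\sum_{n\in\Z}\frac{\ft{\rho}(n/k)}{\jb{n}^{\al}}\,g_n(\omega)\,e^{inx-it\vp(n)},
\]
which is manifestly in $\mathcal H_{\le 1}$ at each $(x,t)$. Lemma~\ref{Lemma:wienerchaos} together with Minkowski's integral inequality reduces the $L^q(\O;L^p_x)$ bound of $\jb{\nabla}^{s_1}z_k(t)$ to the $\ell^2_n$-norm of $\jb{n}^{s_1-\al}\ft{\rho}(n/k)$, which is uniform in $k$ and $t$ precisely when $s_1<\al-\tfrac12$; the endpoint $p=\infty$ is handled by Sobolev embedding with a large finite $p$. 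The Cauchy-in-$k$ estimate \eqref{252} is obtained by splitting $|n|\lessgtr k^{\be}$ for some small $\be>0$ and using $\ft{\rho}(0)=1$ with Lipschitz continuity of $\ft\rho$ at $0$ to extract $k^{-\ta}$. For time regularity, $\dl_h z_k$ picks up the factor $e^{-ih\vp(n)}-1$; since $|\vp(n)|\les \jb{n}^{-1}$ is bounded, $|e^{-ih\vp(n)}-1|\les |h|^{\g}$ uniformly in $n$, giving \eqref{253}--\eqref{254} at chaos level one.

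For the nonlinear object, the key observation is that every nontrivial pairing in $\E[g_{n_1}g_{n_2}]=\dl_{n_1,-n_2}$ would require $n_1+n_2=0$, which is forbidden by the zero-frequency cutoff in \eqref{nonlin}; consequently, $\NN(z_k)(x,t)\in\mathcal H_{\le 2}$ (and the divergent constant \eqref{Ceps} that obstructs the naive nonlinearity $\vp(D_x)(z_k^2)$ is precisely what has been subtracted out). Applying Lemma~\ref{Lemma:wienerchaos} at chaos level two, followed by $|\vp(n)|\les\jb{n}^{-1}$ and Lemma~\ref{lemma:sumestimate},
\[
\E\big[|\widehat{\NN(z_k)}(n,t)|^2\big]\;\les\;\vp(n)^2\sum_{n_1+n_2=n}\frac{1}{\jb{n_1}^{2\al}\jb{n_2}^{2\al}}\;\les\;\jb{n}^{-2}\,\frac{\phi_{2\al}(n)}{\jb{n}^{2\al}}\;\les\;\jb{n}^{-1-4\al},
\]
uniformly in $k$ and $t$. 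Summing $\jb{n}^{2s_2}$ against this is finite exactly when $s_2<2\al$, which encodes the hypothesis $\al>\tfrac14$ and matches the stated regularity. The Sobolev endpoint $p=\infty$, the Cauchy estimate in $k$, and the Hölder-in-time bound are then obtained by the same scheme; in particular, the time-increment factor $|e^{-ih(\vp(n_1)+\vp(n_2))}-1|\les |h|^{\g}$ is again bounded uniformly in $(n_1,n_2)$ thanks to the boundedness of $\vp$.

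With the four hypotheses verified, Proposition~\ref{prop:reg} delivers the almost-sure and $L^q(\O)$ convergence $(z_k,\NN(z_k))\to (z,\NN(z))$ in $C([0,T];W^{s_1,\infty})\times C([0,T];W^{s_2,\infty})$. Independence of the limit from the choice of kernel is established by rerunning the Cauchy argument with two kernels $\rho,\tilde\rho$ simultaneously; the only ingredient used is that $\ft{\rho}(n/k),\ft{\tilde\rho}(n/k)\to 1$ pointwise with a uniform bound, so the non-smooth case $\ft{\rho}=\ind_{[-1,1]}$ (Dirichlet projection $\P_{\le N}$) is included without any change. Finally, the exponential tail comes from applying Chebyshev's inequality to the moment bounds in the previous two steps and optimizing the resulting $q$: the first chaos yields $\E\big[\|z\|_{C_TW^{s_1,\infty}}^q\big]^{1/q}\les T^{c}\sqrt{q}$, producing a Gaussian-type tail dominated by $e^{-C\ld/T^c}$, while the second chaos yields $\E\big[\|\NN(z)\|_{C_TW^{s_2,\infty}}^q\big]^{1/q}\les q$, producing the $e^{-C\ld}$ contribution, exactly as recorded in the appendix. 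The main obstacle of the argument is the second-chaos summability: the threshold $\al>\tfrac14$ is saturated in the estimate $\jb{n}^{-1-4\al}$ above, and its failure is precisely the obstruction flagged in Remark~\ref{rmk:lowerbd}.
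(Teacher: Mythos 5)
Your proposal is correct and follows essentially the same route as the paper: verifying the hypotheses of Proposition~\ref{prop:reg} via the Wiener chaos estimate, Minkowski's inequality and Sobolev embedding, using the absence of the zero frequency in \eqref{nonlin} to kill the divergent Wick contraction, extracting the rate $k^{-\ta}$ from the smoothness of $\ft{\rho}$ at the origin, and obtaining the tail bounds from the Garsia--Rodemich--Rumsey machinery of Appendix~\ref{app:Garsia}. The only differences are organisational (a frequency splitting in place of the mean-value-theorem-plus-interpolation bound on $\ft{\rho_{k'}}-\ft{\rho_{k}}$, and a diagonal variance computation in place of the paper's explicit case analysis of $\E[g_{n_1}g_{n_2}\cj{g_{m_1}g_{m_2}}]$ for degenerate index configurations), and these do not change the substance of the argument.
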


\begin{proof} 
We first verify the claims made for the random linear solution $z$. Clearly, $\{ z_{k}\}_{k\in \N} \subset \mathcal{H}_{\leq 1}$ for every fixed $t\in [0,T]$.
We have
\begin{align*}
 \ft{z_{k\hphantom{'}}}(n,t)= \frac{g_n(\o)\ft{\rho_{k\hphantom{'}}}(n) e^{-it\vp(n)}}{\jb{n}^{\al}}.
\end{align*} 
Then, by Sobolev embedding, Minkowski's integral inequality (for $q$ sufficiently large) and \eqref{Lemma:wienerchaos}, we have
\begin{align}
\begin{split} \label{zk1}
\E[ \| z_{k}(t)\|_{W^{s_1,\infty}(\T)}^{q}] & \leq \bigg\|  \big\| \jb{\dx}^{s_1-\ep}z_{k}(t) \big\|_{L^{q}(\O)}   \bigg\|_{L^{p}(\T)}^{q} \\
& \les q^{\frac{q}{2}} \bigg\| \| \jb{\dx}^{s_1-\ep} z_{k}(t) \|_{L^{2}(\O)}   \bigg\|_{L^{p}(\T)}^{q} \\
& \les q^{\frac q2} \bigg\| \bigg( \sum_{n\in \Z} \frac{ \jb{n}^{2s_1-2\ep} |\ft{\rho_{k\hphantom{'}}}(n) |^{2} }{\jb{n}^{2\al}} \bigg)^{\frac{1}{2}} \bigg\|_{L^{p}(\T)}^{q},
\end{split} 
\end{align}
where $0<\ep:=\ep(p,q)\ll 1$.
As $|\ft{\rho_{k\hphantom{'}}}(n)|\les 1$ uniformly in both $n\in \Z$ and $k\in \N$ and $s_1<\al-\frac 12$, we verfiy \eqref{251} and hence $z_k(t)\in W^{s_1,\infty}(\T)$ almost surely.
From the same computations as in \eqref{zk1}, we have 
\begin{align}
\E[ \|z_{k'}(t)-z_{k}(t)\|_{W^{s_1,\infty}(\T)}^{q}] \les q^{\frac{q}{2}} \bigg(\sum_{n\in \Z} \frac{ \jb{n}^{2s_1-2\ep}|\ft{\rho_{k'\hphantom{'}}}(n)-\ft{\rho_{k\hphantom{'}}}(n)|^{2} }{\jb{n}^{2\al}} \bigg)^{\frac{q}{2}}.
\label{zk2} 
\end{align}
Now, given $k' \geq k > 0$, the mean value theorem implies 
\begin{align*}
|\ft{\rho_{k'\hphantom{'}}}(n)-\ft{\rho_{k\hphantom{'}}}(n)|\les |n|| (k')^{-1}-k^{-1}|\leq 2|n| k^{-1}.
\end{align*}
Interpolating this with the trivial bound  $|\ft{\rho_{k\hphantom{'}}}(n)-\ft{\rho_{k'}}(n)|\les 2$, we obtain
\begin{align}
|\ft{\rho_{k'\hphantom{'}}}(n)-\ft{\rho_{k\hphantom{'}}}(n)|\les |n|^{\ta}k^{-\ta} \label{rhomvt}
\end{align}
for $0\leq \ta\leq 1$. Inserting \eqref{rhomvt} into \eqref{zk2} we get 
\begin{align*}
\E[ \|z_{k'}(t)-z_{k}(t)\|_{W^{s_1,\infty}(\T)}^{q}] \les k^{-q\ta }q^{\frac{q}{2}},
\end{align*} 
provided $s_1<\al-\frac{1}{2}-\ta$.
As $\ta >0$ was arbitrary, we have verified (i) and (ii) of Proposition~\ref{prop:reg} with $s_1<\al-\frac 12$. We now move onto establishing the temporal regularity of $z_{k}$. 
We verify the appropriate analogue of \eqref{254} since the same ideas will apply to obtain \eqref{253}.
Analogously to \eqref{zk1}, we have 
\begin{align*}
\E [ \| \dl_h z_{k'}(t)-& \dl_h z_k (t)\|_{W^{s_1,\infty}(\T)}^{q}]  \\
& \les q^{\frac{q}{2}} \bigg(\sum_{n\in \Z} \frac{ \jb{n}^{2s_1-2\ep}|\ft{\rho_{k'\hphantom{'}}}(n)-\ft{\rho_{k\hphantom{'}}}(n)|^{2} |e^{-ih\vp(n)}-1|^{2} }{\jb{n}^{2\al}} \bigg)^{\frac{q}{2}}.
\end{align*}
Using \eqref{rhomvt} and $|e^{-ih\vp(n)}-1|\les |h| |\vp(n)|$, we then obtain 
\begin{align*}
\E [ \| \dl_h z_{k'}(t)-\dl_h z_k (t)\|_{W^{s_1,\infty}(\T)}^{q}] \les q^{\frac{q}{2}}k^{-q\ta}|h|^{\frac{q}{2}}.
\end{align*}
Thus by Proposition~\ref{prop:reg}, $z_k$ converges almost surely to $z$ in $C_{T}W^{s_1,\infty}(\T)$ and in $L^{q}(\O; C_T W^{s_1,\infty}(\T))$ for any $q\geq 1$. 
We verify the independence of $z$ on the mollifier $\rho$ later in this proof; see \eqref{zindep}.
Taking a limit in the analogue of \eqref{253} as $k \ra \infty$ gives 
\begin{align}
\E [ \|z(t)-z(t')\|_{W^{s_1,\infty}(\T)}^{q} ] \les q^{\frac q2}|t-t'|^q. \label{zdiff}
\end{align}
With $\g<1-\frac 1q$, we have 
\begin{align}
\|z\|_{C_T  W^{s_1,\infty}_{x}}\leq T^{\g} \|z \|_{\dot{C}^{\g}([0,T];W^{s_1,\infty}_{x}(\T))} +\|z(0)\|_{W^{s_1,\infty}_{x}}. \label{BQ}
\end{align}
Therefore by \eqref{exptail} in Appendix~\ref{app:Garsia}, we have 
\begin{align}
\begin{split}
\prob \big(\|z\|_{C([0,T]; W^{s_1,\infty}_{x}(\T))} >\lambda \big) \leq& \, \prob \big(\|z\|_{\dot{C}^{\g}([0,T];W^{s_1,\infty}_{x}(\T))} >\tfrac{\lambda}{2} \big) \\
& +\prob \big(\|z(0)\|_{ W^{s_1,\infty}_{x}(\T)} >\tfrac{\lambda}{2} \big) \\
 \leq &\, e^{-C\frac{\lambda^{2}}{T^{c}}}+e^{-c\ld^{2}}. \label{BQ2}
\end{split}
\end{align}

We now consider the object $\NN(z)$. For any $k\in \N$ and fixed $t>0$, $\NN(z_{k})\in \mathcal{H}_{\leq 2}$.
We verify appropriate versions of \eqref{252} and \eqref{253}, which themselves contain the necessary calculations required to also obtain versions of \eqref{251} and \eqref{254}. 
We write 
\begin{align*}
\jb{\dx}^{s_2-\ep}\NN(z_{k})=\sum_{n_1,n_2\in \Z} \mathcal{R}_{k}(n_1,n_2;t)g_{n_1}g_{n_2},
\end{align*}
where 
\begin{align}
\begin{split} \label{Rkdefn}
\mathcal{R}_{k}(n_1,n_2;t)& := \ind_{\{ n_1+n_2\neq 0\}}\jb{n_1+n_2}^{s_2-\ep}\vp(n_1+n_2)e^{i(n_1+n_2)x}\prod_{j=1}^{2}a_{k}(n_j;t), \\
a_{k}(n;t)& :=\frac{e^{-it\vp(n)}}{\jb{n}^{\al}}\ft{\rho_{k\hphantom{'}}}(n) .
\end{split}
\end{align}
By Sobolev embedding, Minkowski's integral inequality (for $q$ sufficiently large) and Lemma~\ref{Lemma:wienerchaos}, we have
\begin{align*}
\E[ \| \NN(z_{k'})(t)&-\NN(z_{k})(t)\|_{W^{s_2,\infty}(\T)}^{q}]  \\
& \leq q^{q} \,\Bigg\|  \bigg\|\sum_{n_1,n_2\in \Z} \big[\mathcal{R}_{k'}-\mathcal{R}_{k}\big](n_1,n_2;t) g_{n_1}g_{n_2} \bigg\|_{L^{2}(\O)}   \Bigg\|_{L^{p}(\T)}^{q}.
\end{align*}
It suffices to show 
\begin{align}
\bigg\|\sum_{n_1,n_2\in \Z} \big[\mathcal{R}_{k'}-\mathcal{R}_{k}\big](n_1,n_2;t) g_{n_1}g_{n_2} \bigg\|_{L^{2}(\O)} \les k^{-\ta} 
\label{z2suff}
\end{align}
for some $\ta>0$, any $t\in [0,T]$ and $k'\geq k\geq 1$.
Now
\begin{align}
\begin{split}\label{z21}
 \bigg\|&\sum_{n_1,n_2\in \Z} \big[\mathcal{R}_{k'}-\mathcal{R}_{k}\big](n_1,n_2;t) g_{n_1}g_{n_2} \bigg\|_{L^{2}(\O)}^{2} \\
 & = \sum_{ \substack{n_1.n_2\in \Z \\ m_1,m_2\in \Z}} \big[\mathcal{R}_{k'}-\mathcal{R}_{k}\big](n_1,n_2;t) \cj{\big[\mathcal{R}_{k'}-\mathcal{R}_{k}\big](m_1,m_2;t)}\E[g_{n_1}g_{n_2}\cj{g_{m_1}g_{m_2}}].
 \end{split}
\end{align}
First, we assume all of $n_1,n_2,m_1$ and $m_2$ are non-zero and not all equal. Then, Wick's theorem implies
 \begin{align*}
\E[g_{n_1}g_{n_2}\cj{g_{m_1}g_{m_2}}]= \E[g_{n_1}\cj{g_{m_1}}]\E[g_{n_2}\cj{g_{m_2}}] &+ \E[g_{n_1}\cj{g_{m_2}}]\E[g_{n_2}\cj{g_{m_1}}]\\
&+ \E[g_{n_1}g_{n_2}]\E[\cj{g_{m_1}}\cj{g_{m_2}}]. 
\end{align*}
The first two terms are non-zero if and only if $n_j=m_{\sigma(j)}$, where $\sigma$ is a permutation of $\{1,2\}$. The third term vanishes identically since $n_1 +n_2 \neq 0$ and $m_1 +m_2 \neq 0$. This is precisely where we use the definition of the product \eqref{nonlin} which does not contain the zero frequency. 
Therefore, we have
\begin{align}
\text{LHS of} \,\, \eqref{z21}  & =\sum_{n_1,n_2\in \Z} |\mathcal{R}_{k'}(n_1,n_2;t)-\mathcal{R}_{k}(n_1,n_2;t)|^{2} \notag   \\
&    \sim \sum_{n\in\Z} \jb{n}^{2s_2-2\ep-2} \sum_{\substack{n_1,n_2\in\Z \\ n=n_1+n_2 } }\frac{|\ft{\rho_{k'}}(n_1)\ft{\rho_{k'}}(n_2)-  \ft{\rho_{k\hphantom{'}}}(n_1)\ft{\rho_{k\hphantom{'}}}(n_2)|^2 }{\jb{n_1}^{2\al}\jb{n_2}^{2\al}}.     \label{Rdiff}
\end{align}
By the triangle inequality, we bound the inner summation in \eqref{Rdiff} by 
\begin{align*}
\sum_{\substack{n_1,n_2\in\Z \\ n=n_1+n_2 } } \frac{|\ft{\rho_{k\hphantom{'}}}(n_1)|^2 |\ft{\rho_{k'}}(n_2)-\ft{\rho_{k\hphantom{'}}}(n_2)|^{2}}{\jb{n_1}^{2\al}\jb{n_2}^{2\al}}
+\sum_{\substack{n_1,n_2\in\Z \\ n=n_1+n_2 } } \frac{|\ft{\rho_{k\hphantom{'}}}(n_1)|^2 |\ft{\rho_{k\hphantom{'}}}(n_2)-\ft{\rho_{k'}}(n_2)|^{2}}{\jb{n_1}^{2\al}\jb{n_2}^{2\al}}.
\end{align*}
It suffices to estimate just the first sum, with the same ideas applying for the second. By \eqref{rhomvt} and Lemma~\ref{lemma:sumestimate}, we get
\begin{align*}
\sum_{\substack{n_1,n_2\in\Z \\ n=n_1+n_2 } } \frac{|\ft{\rho_{k\hphantom{'}}}(n_1)|^2 |\ft{\rho_{k'}}(n_2)-\ft{\rho_{k\hphantom{'}}}(n_2)|^{2}}{\jb{n_1}^{2\al}\jb{n_2}^{2\al}} &\les \frac{1}{k^{2\ta}} \sum_{n_2 \in \Z} \frac{1}{ \jb{n-n_2}^{2\al}\jb{n_2}^{2\al-2\ta}}\\
& \les k^{-2\ta} \jb{n}^{-2\al+2\ta}\phi_{2\al}(n),
\end{align*} 
provided $4\al -2\ta>1$, where $\phi_{2\al}$ is given in \eqref{phi}. From this contribution, we arrive at 
\begin{align*}
\text{LHS of} \,\, \eqref{Rdiff}  & \les k^{-2\ta} \sum_{n\in \Z} \jb{n}^{2s_2-2\ep -2-2\al+2\ta}\phi_{2\al}(n) \les k^{-2\ta} 
\end{align*}
provided $s_2<2\al-\ta$.
Now we consider the case when at least one of $n_1,n_2,m_1$ or $m_2$ are zero. Noting that $|\mathcal{R}_{k}(n_1,n_2;t)|=|\mathcal{R}_{k}(n_2,n_1;t)|$ and $n_1+n_2\neq 0$, we may assume $n_1=0$. Then, the only non-zero contribution comes from when $m_1=0$ and $m_2=n_2$ (using the symmetry in $|\mathcal{R}_{k}(m_1,m_2;t)|$). Using $\ft{\rho_{k\hphantom{'}}}(0)=1$ and \eqref{rhomvt}, we have
\begin{align*}
\text{LHS of} \,\, \eqref{z21} &=\sum_{n_2} |\mathcal{R}_{k'}(0,n_2;t)-\mathcal{R}_{k}(0,n_2;t)|^{2} \\
& \sim \sum_{n_2} \jb{n_2}^{2s_2-2\ep-2-2\al}|\ft{\rho_{k'\hphantom{'}}}(n_2)-\ft{\rho_{k\hphantom{'}}}(n_2)|^2  \les k^{-2\ta},
\end{align*}
provided $s_2<\frac{1}{2}+\al-\ta$.
Finally, we have the contribution when $n_1=n_2=m_1=m_2$, which occurs only if $n_1+n_2, m_1+m_2\in 2\Z$. We then have
\begin{align*}
\text{LHS of} \,\, &\eqref{z21} \sim \sum_{n\in \Z} \big|\mathcal{R}_{k'}\big(\tfrac n2 ,\tfrac n2;t\big)-\mathcal{R}_{k}\big(\tfrac n2 ,\tfrac n2;t\big)\big|^{2}\les k^{-2\ta},
\end{align*}
provided $s_2<\frac{1}{2}+2\al-\ta$. This completes the proof of \eqref{z2suff}.
As $\ta>0$ is arbitrary, 
we conclude the limit $\NN(z)(t) \in W^{s_2,\infty}(\T)$ for any $s_2<2\al $ and for every fixed $t\in [0,T]$, provided $\al>\tfrac 14$. 

We now show 
\begin{align}
\E [ \|  \dl_{h}\NN(z_{k})(t)\|_{W^{s_2,\infty}}^{q}] \les q^{q}|h|^{q}. \label{z2tdiff}
\end{align}
As before, this reduces to proving 
\begin{align}
\bigg\|\sum_{n_1,n_2\in \Z} \dl_{h}\mathcal{R}_{k}(n_1,n_2;t) g_{n_1}g_{n_2} \bigg\|_{L^{2}(\O)} \les |h|. 
\label{z2tsuff}
\end{align}
for $h\in [-1,1]$, uniformly in $t\in [0,T]$ and $k\in \N$.
Expanding, we have
\begin{align*}
(\text{LHS of} \,\, \eqref{z2tsuff})^{2}
=   \sum_{ \substack{n_1.n_2\in \Z \\ m_1,m_2\in \Z}} \dl_{h}\mathcal{R}_{k}(n_1,n_2;t) \cj{\dl_{h}\mathcal{R}_{k}(m_1,m_2;t)} \E[g_{n_1}g_{n_2}\cj{g_{m_1}g_{m_2}}].
\end{align*}
Assuming all of $n_1,n_2, m_1$ and $m_2$ are non-zero and not equal, the expectation is non-zero only if 
$n_j=m_{\sigma(j)}$, where $\sigma$ is a permutation of $\{1,2\}$. In this case, we get 
\begin{align*}
(\text{LHS of} \,\, \eqref{z2tsuff})^{2} &= \sum_{ n_1,n_2\in \Z} |\dl_{h}\mathcal{R}_{k}(n_1,n_2;t)|^{2} \\
& \sim \sum_{n} \jb{n}^{2s_2-2\ep-2}\sum_{\substack{n_1,n_2\in\Z \\ n=n_1+n_2 }  }\frac{|\ft{\rho_{k\hphantom{'}}}(n_1)\ft{\rho_{k\hphantom{'}}}(n_2)|^2}{\jb{n_1}^{2\al}\jb{n_2}^{2\al}}|e^{-ih[\vp(n_1)+\vp(n_2)]}-1|^2 \\
& \les |h|^{2} \sum_{n} \jb{n}^{2s_2-2\ep-2} \sum_{\substack{n_1,n_2\in\Z \\ n=n_1+n_2 }  }\frac{|\vp(n_1)|^{2}+|\vp(n_2)|^{2}}{\jb{n_1}^{2\al}\jb{n_2}^{2\al}} \\
& \les |h|^2,
\end{align*}
provided $s<\frac{1}{2}+\al$. The remaining contributions to the expectation can be estimated in a similar fashion.
We conclude by Proposition~\ref{prop:reg} that $\NN(z_k)$ converges almost surely in $C_{T}W^{s_2,\infty}(\T)$ to $\NN(z)$ as long as $\al >\frac 14$.
Taking a limit as $k\ra \infty$ in \eqref{z2tdiff} implies 
\begin{align*}
\E [ \| \dl_{h}\NN(z)(t)\|_{W^{s_2,\infty}}^{q}] \les q^{q}|h|^{q}. 
\end{align*}
Applying the arguments in Appendix~\ref{app:Garsia} and a similar analysis as in~\eqref{BQ} and \eqref{BQ2},  we obtain  
\begin{align*}
\prob \big(\|\NN(z)\|_{C([0,T]; W^{s_2,\infty}_{x}(\T))} >\lambda \big) \leq e^{-C\frac{\lambda}{T^{c}}}+e^{-c\ld}.
\end{align*}

We now show that $\NN(z)$ is independent of the chosen mollifying kernel. Given two mollifying kernels $\{\rho_k \}_{k\in \N}$ and $\{ \eta_{_\l} \}_{\l\in \N}$, let 
\begin{align*}
\NN_{\rho}(z_{k}) = \NN(\rho_{k}\ast z),\\
\NN_{\eta}(z_{_\l}) = \NN(\eta_{_\l}\ast z).
\end{align*}
 Additionally, suppose there exist events of full probability $\O_{\rho}$ and $\O_{\eta}$ and random variables $\NN_{\rho}(z), \NN_{\eta}(z) \in C([0,T]; W^{s_2,\infty}(\T))$ almost surely, such that 
\begin{align}
\NN_{\rho}(z_{k}) \ra \NN_{\rho}(z)\,\,\, \text{in} \,\, C_{T}W^{s_2,\infty}(\T)\,\, \text{for every} \,\, \o\in \O_{\rho} \, \text{and}\label{convrho}\\
\NN_{\eta}(z_{\l}) \ra \NN_{\eta}(z) \,\,\, \text{in} \,\, C_{T}W^{s_2,\infty}(\T)\,\, \text{for every} \,\, \o\in \O_{\dl}, \notag
\end{align}
as $k,\l \ra \infty$. The goal is to show $\NN_{\rho}(z) \equiv \NN_{\eta}(z)$ almost surely (at least on $\O_{\rho}\cap \O_{\dl}$). We claim it suffices to establish the following difference estimate: there exists $\ta>0$ sufficiently small such that
\begin{align}
\E[ \| \NN_{\rho}(z_k)(t)- \NN_{\eta}(z_{_\l})(t)\|_{W^{s_2,\infty}(\T)}^{q}] \les q^{q} (k^{-\ta }+\l^{-\ta })^{q}, \label{z2diffmol}
\end{align}
uniformly in $t\in [0,T]$.
By \eqref{convrho}, taking $k \ra \infty$ gives
\begin{align*}
\E[ \| \NN_{\rho}(z)(t)- \NN_{\eta}(z_{_\l})(t)\|_{W^{s_2,\infty}(\T)}^{q}] \les q^{q} \l^{-\ta q}.
\end{align*}
Now taking $\l \ra \infty$ implies $\NN_{\eta}(z_{\dl})(t) \ra \NN_{\rho}(z)(t)$ in $L^{q}(\O; W^{s_2,\infty}(\T))$ and hence $\NN_{\rho}(z)(t)=\NN_{\eta}(z)(t)$ for each $t\in [0,T]$.  
As we have seen multiple times before, in order to prove \eqref{z2diffmol}, it suffices to prove 
\begin{align}
\bigg\|\sum_{n_1,n_2\in \Z} \big[\mathcal{R}^{\rho}_{k}-\mathcal{R}^{\eta}_{\l}\big](n_1,n_2;t) g_{n_1}g_{n_2} \bigg\|_{L^{2}(\O)} \les k^{-\ta} +\l^{-\ta}.
\label{z2molsuff}
\end{align}
uniformly in $t\in [0,T]$, where $\mathcal{R}_{k}^{\rho}$ and $\mathcal{R}_{\l}^{\eta}$ are defined as in \eqref{Rkdefn} with the mollifiers $\rho$ and $\eta$ inserted appropriately.  
We expand out to get 
\begin{align*}
(\text{LHS of} \,\, \eqref{z2molsuff})^{2}
=   \sum_{ \substack{n_1.n_2\in \Z \\ m_1,m_2\in \Z}} \big[\mathcal{R}^{\rho}_{k}-\mathcal{R}^{\eta}_{\l}\big](n_1,n_2;t) &\cj{\big[\mathcal{R}^{\rho}_{k}-\mathcal{R}^{\eta}_{\l}\big](m_1,m_2;t)} \\
&\times \E[g_{n_1}g_{n_2}\cj{g_{m_1}g_{m_2}}].
\end{align*}
We will just consider the case when $n_1=m_1$ and $n_2=m_2$ with remaining cases following by either symmetry or similar calculations. We have 
\begin{align}
(\text{LHS of} \,\, \eqref{z2molsuff})^{2} = \sum_{n} \jb{n}^{2s_2-2\ep-2} \sum_{\substack{n_1,n_2\in\Z \\ n=n_1+n_2 }  } \frac{|\ft{\rho_{k\hphantom{'}}}(n_1)\ft{\rho_{k\hphantom{'}}}(n_2)-\ft{\eta_{_{\l}\hphantom{'}}}(n_1)\ft{\eta_{_\l \hphantom{'}}}(n_2)|^{2}}{\jb{n_1}^{2\al}\jb{n_2}^{2\al}}. \label{z2indep1}
\end{align}
For fixed $n\in \Z$ and $k\in \N$, $\ft{\rho}(0)=1$ and the mean value theorem imply
\noi
 $|1-\ft{\rho_{k\hphantom{'}}}(n_2)|\les |n_2|k^{-1} $.
Interpolating this with the trivial bound $|1-\ft{\rho_{k\hphantom{'}}}(n_2)|\les 2$, gives
\begin{align}
|1-\ft{\rho_{k\hphantom{'}}}(n_2)| \les |n_2|^{\ta}k^{-\ta},  \label{interprho}
\end{align} 
for any $0\leq \ta \leq 1$. A similar bound to \eqref{interprho} is also true for the mollifier $\eta$ replacing the mollifier $\rho$.
The triangle inequality and \eqref{interprho} imply
\begin{align*}
|\ft{\rho_{k\hphantom{'}}}(n_1)\ft{\rho_{k\hphantom{'}}}(n_2)-\ft{\eta_{_{\l}\hphantom{'}}}(n_1)\ft{\eta_{_\l \hphantom{'}}}(n_2)|^{2}& \les \sum_{j=1}^{2} |\ft{\rho_{k\hphantom{'}}}(n_j)-1|^{2}+ |\ft{\eta_{_{\l}\hphantom{'}}}(n_j)-1|^{2} \\
& \les (|n_1|^{2\ta}+|n_2|^{2\ta})(k^{-2\ta}+\l^{-2\ta}).
\end{align*}
Inserting this into \eqref{z2indep1} and applying Lemma~\ref{lemma:sumestimate} yields \eqref{z2molsuff}.
Finally, as described above, to show $z$ is independent of the choice of mollifier $\rho$, it suffices to obtain the estimate 
\begin{align}
\E[ \|z_{k,\rho}(t)- z_{\l,\eta}(t)\|_{W^{s_1,\infty}(\T)}^{q}] \les q^{q} (k^{-\ta }+\l^{-\ta })^{q}, \label{zindep}
\end{align}
for any $0< \ta \leq 1$. This follows easily using similar analysis as above and is thus omitted.

 \end{proof}

\section{Probabilistic local theory on $\T$}\label{section:localt}

\subsection{Proof of Theorem~\ref{Thm:ASLWP}} \label{Sec:local}
Our goal in this section will be to obtain the local theory as in Theorem~\ref{Thm:ASLWP}. 
For the purposes of iteration of the probabilistic local theory to a global result, we consider first the deterministic perturbed initial value problem:
\begin{align}
\begin{cases}
i\dt v=  \vp(D_x)(v + \frac{1}{2}v^{2}+z_{1}v) + \frac{1}{2}z_{2}, \\
v|_{t = t_0} =v_{0},
\end{cases}
\label{BBMperturb}
\end{align}
 with initial data $v_0 \in H^{s}(\T)$, $s\in (0,1)$ and under the following assumptions on the forcings $(z_1,z_2)$:
\begin{align}
z_{1}\in  C_{t,\text{loc}}W_{x}^{s_1 ,\infty}(\R\times \T) \qquad \text{and} \qquad z_{2}\in C_{t,\text{loc}}H^{s}_{x}(\R\times \T), \label{assumptions}
\end{align}
where $s_1<0$.

\begin{proposition}\label{prop:detlocal} Fix $-\tfrac 12 <s_1<0$ and let 
\begin{align}
\textup{(i)} \, s\geq -s_1 \,\, \textup{when} \,\,s\in \big(0,\tfrac 12 \big]   \quad \textup{or} \quad \textup{(ii)} \,\, s\leq 1+s_1 \,\, \textup{when} \,\,s\in \big(\tfrac 12, 1 \big). \label{ss1}
\end{align}
 Then, there exists a constant $C>0$ such that for every time interval $I=[t_0,t_1]$ of size $1$, every $L\geq 1$,  every $v_0 \in H^{s}(\T)$, and every pair of forcings $(z_1,z_2)$ satisfying \eqref{assumptions} and such that 
 \begin{align}
\|v_{0}\|_{H^{s}_{x}}+\|z_{1}\|_{C_{t}W^{s_1,\infty}_{x}(I\times \T)}+\|z_{2}\|_{C_{t}H^{s}_{x}(I\times \T)}\leq L, \label{Kcond}
\end{align}
there exists a unique solution $v\in C_{t}H^{s}_{x}([t_0, t_0+C^{-1}L^{-1}]\times \T)$ to \eqref{BBMperturb}.
\end{proposition}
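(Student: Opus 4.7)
The plan is to view the Duhamel formulation of \eqref{BBMperturb},
\begin{equation*}
\Phi(v)(t) := S(t - t_0) v_0 - \tfrac{i}{2}\int_{t_0}^{t} S(t-t')\bigl[\varphi(D_x)(v^2) + 2\varphi(D_x)(z_1 v) + z_2\bigr](t')\,dt',
\end{equation*}
as a contraction mapping on the closed ball of radius $2L$ in $C(I; H^s(\T))$, where $I = [t_0, t_0 + T]$ and $T = C^{-1}L^{-1}$ with $C$ chosen sufficiently large at the end. Since the multiplier $\varphi(n) \in \R$ is real-valued, $S(t) = e^{-it\varphi(D_x)}$ is a unitary group on every $H^s(\T)$, so commuting $S$ past the integral and applying Minkowski's inequality in time yields
\begin{equation*}
\|\Phi(v)\|_{C_I H^s_x} \leq \|v_0\|_{H^s} + \tfrac{T}{2}\bigl( \|\varphi(D_x)(v^2)\|_{C_I H^s_x} + 2\|\varphi(D_x)(z_1 v)\|_{C_I H^s_x} + \|z_2\|_{C_I H^s_x} \bigr).
\end{equation*}

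The required nonlinear bounds are then
\begin{enumerate}[(a)]
\item $\|\varphi(D_x)(v^2)\|_{H^s(\T)} \lesssim \|v\|_{H^s(\T)}^2$, which holds for $s \geq 0$ by Lemma~\ref{lemma:bonatzvet};
\item $\|\varphi(D_x)(z_1 v)\|_{H^s(\T)} \lesssim \|z_1\|_{W^{s_1,\infty}(\T)}\|v\|_{H^s(\T)}$, to be established under the hypotheses \eqref{ss1};
\item the trivial bound $\|z_2\|_{H^s(\T)} \leq L$ from the hypothesis \eqref{Kcond}.
\end{enumerate}
Feeding (a)--(c) into the displayed inequality, together with the matching Lipschitz estimate $\|\Phi(v) - \Phi(w)\|_{C_I H^s_x} \lesssim T(\|v\|_{C_I H^s_x} + \|w\|_{C_I H^s_x} + L)\|v - w\|_{C_I H^s_x}$ (which uses the same bilinear bounds via $v^2 - w^2 = (v+w)(v-w)$), the choice $T = C^{-1}L^{-1}$ makes $\Phi$ a strict self-contraction of the $2L$-ball and produces the unique fixed point $v \in C(I; H^s(\T))$.

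The main obstacle is the mixed bilinear estimate (b), since $z_1$ carries only the \emph{negative} Sobolev regularity $s_1 \in (-\tfrac{1}{2}, 0)$. My approach is to exploit the one-derivative smoothing of $\varphi(D_x)$ (its symbol satisfies $|\varphi(n)| \lesssim \langle n \rangle^{-1}$), reducing (b) to
\begin{equation*}
\|z_1 v\|_{H^{s-1}(\T)} \lesssim \|z_1\|_{W^{s_1,\infty}(\T)} \|v\|_{H^s(\T)},
\end{equation*}
which I would then attack through a Littlewood--Paley paraproduct decomposition $z_1 v = T_{z_1} v + T_v z_1 + R(z_1, v)$, appealing to Lemma~\ref{lemma:negderivprod} where applicable. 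The truly delicate piece is $T_v z_1$, where high frequencies of $z_1$ multiply low frequencies of $v$: the bound $\|\Delta_j z_1\|_{L^\infty} \lesssim 2^{-j s_1}\|z_1\|_{W^{s_1,\infty}}$ actually \emph{grows} in $j$ because $s_1 < 0$, and the dyadic summation only closes after the derivative gain surrendered by $\varphi(D_x)$ is spent. The two regimes in \eqref{ss1} correspond exactly to the two natural ways of placing the low-frequency part of $v$ against $\Delta_j z_1$: when $s \leq \tfrac{1}{2}$ one uses Bernstein embeddings at intermediate Lebesgue exponents (since $H^s$ does not embed in $L^\infty$), which demands precisely $s + s_1 \geq 0$; when $s > \tfrac{1}{2}$ one instead invokes the Sobolev embedding $H^s \hookrightarrow L^\infty$ directly, and summability then demands $s - s_1 \leq 1$. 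The two remaining paraproduct pieces $T_{z_1} v$ and $R(z_1, v)$ place either frequencies where the $L^\infty$-bound on $z_1$ does not blow up or else exploit the high-high frequency cancellation, and are routine under the standing hypothesis $s_1 > -\tfrac{1}{2}$.
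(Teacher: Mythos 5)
Your proposal is correct and follows essentially the same route as the paper: a contraction in a ball of $C_tH^s_x$ on a time interval of length $\sim L^{-1}$, with the $v^2$ term handled by Lemma~\ref{lemma:bonatzvet} and the mixed term reduced, via the one-derivative gain of $\varphi(D_x)$, to a negative-order product estimate. The only difference is that the paper does not redo the paraproduct analysis by hand: it obtains your estimate (b) in two lines by applying Lemma~\ref{lemma:negderivprod} directly with the exponent choices $(\,\jb{\dx}^{-s}z_1\in L^{1/s},\ \jb{\dx}^{s}v\in L^2\,)$ when $s\in(0,\tfrac12]$ and $(\,\jb{\dx}^{-(1-s)}z_1\in L^{1/(1-s)},\ \jb{\dx}^{1-s}v\in L^2\,)$ when $s\in(\tfrac12,1)$, which is exactly where the two conditions $s\ge -s_1$ and $s\le 1+s_1$ in \eqref{ss1} come from.
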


\begin{proof}
For simplicity, we assume $I=[0,1]$. We fix $0<T\leq 1$ to be chosen later. 
We will construct $v$ as a fixed point of the operator
\begin{align}
\Gamma v(t):=&S(t)v_0 \notag \\
&-\frac{i}{2}\int_{0}^{t}S(t-t')\varphi(D_{x})v^{2}(t')\, dt' \label{vv}\\ 
& -i\int_{0}^{t}S(t-t')\varphi(D_{x})(z_{1}v)(t')\, dt' \label{vz}
\\ & -\frac{i}{2}\int_{0}^{t}S(t-t')z_{2}(t') \, dt'. \label{zz}
\end{align}
in the ball  
\begin{align*}
B_{R}:=\{ v\in L^{\infty}([0,T];H^{s}(\T))\,:\, \|v\|_{L^{\infty}([0,T];H^{s})}\leq R\},
\end{align*}
with $R>0$ also to be chosen later. 
By the unitarity of the linear operator $S(t)$ on $H^{s}$ we have 
\begin{align*}
\|S(t)v_0\|_{L^{\infty}_{T}H^{s}}=\|v_0\|_{H^{s}}.
\end{align*}
We estimate each of \eqref{vv}, \eqref{vz} and \eqref{zz} separately. 

\eqref{vv}: By Minkowski's inequality, unitarity of $S(t)$ on $H^{s}$ and Lemma~\ref{productestimateBT}, we have 
 $$\|\eqref{vv}\|_{L^{\infty}_{T}H^{s}_{x}}\les T\|v\|_{L^{\infty}_{T}H^{s}}^{2}.$$

\eqref{zz}:
From \eqref{Kcond}, we have 
\begin{align*} 
\bigg\|\int_{0}^{t} S(t-t')z_{2}(t')dt' \bigg\|_{L^{\infty}_{T}H^{s}_{x}} \les T\|z_{2}\|_{L^{\infty}_T H^{s}} \les TL.
\end{align*}

\eqref{vz}: 
Consider first when $s\in (0,\frac{1}{2}]$. By Lemma~\ref{lemma:negderivprod}, we have \begin{align*}
\| \varphi(D_{x})(z_1 v)\|_{H^{s}(\T)} & = \| \jb{\dx}^{s}\varphi(D_{x})(z_1 v)\|_{L^{2}(\T)} \\
 & \lesssim \| \jb{\dx}^{-(1-s)}(z_1 v)\|_{L^{2}(\T)}  \\
 & \lesssim \| \jb{\dx}^{-s}(z_{1}v)\|_{L^{2}(\T)} \\
 & \lesssim \| \jb{\dx}^{-s}z_1\|_{L^{\frac{1}{s}}(\T)}\| \jb{\dx}^{s}v\|_{L^{2}(\T)}.
\end{align*}
Now, provided $s\geq -s_1$, \eqref{Kcond} implies
$\| \varphi(D_{x})(z_1 v)\|_{L^{\infty}_{T}H^{s}(\T)}\lesssim L\| v\|_{L_{T}^{\infty}H^{s}(\T)}.$

 For $s\in (\frac{1}{2},1)$, we apply Lemma~\ref{lemma:negderivprod} as follows:
\begin{align*}
\| \varphi(D_{x})(z_1 v)\|_{H^{s}(\T)} 
 & \lesssim \| \jb{\dx}^{-(1-s)}(z_1 v)\|_{L^{2}(\T)}  \\
 & \lesssim \| \jb{\dx}^{-(1-s)}z_1\|_{L^{\frac{1}{1-s}}(\T)}\| \jb{\dx}^{1-s}v\|_{L^{2}(\T)} \\
 & \les L\|v\|_{H^{s}(\T)}
\end{align*}
provided $s\leq 1+s_1$.

With $s$ satisfying \eqref{ss1}, we have shown
\begin{align}
 \|\Gamma v\|_{L^{\infty}_{T}H^{s}}\leq C\|v_0\|_{H^{s}}+CTL+CTLR+CTR^{2}, \label{fixed1}
\end{align} for any $v\in B_{R}$. 
Choosing $R=4CL$ and $T\leq \min(\tilde{C}^{-1}K^{-1},1)$, \eqref{fixed1} implies $\Gamma$ maps $B_{R}$ into itself for sufficiently small $T>0$. Similarly, given $v_{1},v_{2}\in B_{R}$ with $v_1 |_{t=0}=v_2 |_{t=0}=v_0$, in the same way as we estimated the terms \eqref{vv} and \eqref{vz} above, we obtain
\begin{align*}
\| \varphi(D_{x})\left[v_1^{2}+2z_1 v_1 -2z_1 v_2 -v_2^2\right] \|_{H^{s}} 
& \lesssim \| \varphi(D_{x})\left[(v_{1}-v_{2})(v_{1}+v_{2})\right] \|_{H^{s}} \\
& \,\,\,\,\,\,\,\, +\| \varphi(D_{x})\left[(v_{1}-v_{2})z_1\right] \|_{H^{s}} \\
& \lesssim (\|v_{1}\|_{H^{s}}+\|v_{2}\|_{H^{s}} +1)\|v_{1}-v_{2}\|_{H^{s}},
\end{align*} 
and by reducing $T>0$ if necessary, this implies
 $$\|\Gamma v_{1}-\Gamma v_{2}\|_{L^{\infty}_{T}H^{s}}\leq \frac{1}{2}\|v_{1}-v_{2}\|_{L^{\infty}_{T}H^{s}}.$$
Combining this with \eqref{fixed1} shows $\Gamma$ is a strict contraction from $B_{R}$ into itself and hence has a unique fixed point $v^{\omega}\in L^{\infty}_{T}H^{s}_{x}(\T)$ where $T \sim L^{-1}$. The continuity in time of $v$ now follows by the continuity in time of $z_1$ and $z_2$ and that if $v\in L^{\infty}_{T}H^{s}_{x}$, then 
\begin{align*}
 \int_{0}^{t}S(t-t')\vp(D_x)(v^{2}(t'))dt'\in C_{T}H^{s}_{x}.
\end{align*} We omit details. 
\end{proof}

We now apply Proposition~\ref{prop:detlocal} to prove Theorem~\ref{Thm:ASLWP}. 

\begin{proof}[Proof of Theorem~\ref{Thm:ASLWP}]
Let $z_{1}=z^{\o}$, where $z^{\o}$ solves the linear problem 
\begin{align*}
\begin{cases}
i\dt z^{\o}=  \vp(D_x)(z^{\o}) \\
z^{\o}|_{t = 0} =u_{0}^{\o},
\end{cases}
\end{align*}
with $u_{0}^{\o}$ given by \eqref{initialdata}, and $z_{2}=\NN(z^{\o})$ defined in \eqref{nonlin}. 
From Proposition~\ref{prop:stochobjects}, we have 
\begin{align*}
z^{\o}\in  C_{t,\text{loc}}W_{x}^{\al-\frac 12- ,\infty}(\R\times \T) \qquad \text{and} \qquad \NN(z^\o)\in C_{t,\text{loc}}H^{2\al-}_{x}(\R\times \T), 
\end{align*}
almost surely, provided $\al>\tfrac 14$.
So now we fix $\al \in (\tfrac 14, \tfrac 12]$ and let $s\in (\tfrac 12-\al,2\al)$. For $0<T\leq 1$, we define $\O_{T}\subset \O$  by
\begin{align*}
\O_{T}:=\{ \o\in \O \,:\, \|z^{\o}\|_{C([0,1];W^{\al-\frac 12 -, q(s)})}+\| \NN(z^{\o})\|_{C([0,1];H^{2\al-})}\leq C^{-1}T^{-1} \},
\end{align*} 
where 
\begin{align}
q(s):= 
\begin{cases}
\frac{1}{s} \,\,& \text{if} \,\,\, s\in (0,\tfrac 12 ], \\
\frac{1}{1-s} \,\, & \text{if}\,\,\, s\in (\tfrac 12, 1).
\end{cases} \label{qs}
\end{align}
By Proposition~\ref{prop:stochobjects}, we have $\prob(\O_{T}^{c})\leq Ce^{-\frac{c}{T}}$. Then, for each $\o\in \O_{T}$, we apply Proposition~\ref{prop:detlocal} to obtain a unique solution $v^{\o}\in C([0,T];H^{s}(\T))$ to \eqref{BBMv}. It follows, that for each $\o\in \O_{T}$, there exists a solution $u^{\o}=z^{\o}+v^{\o}$ to \eqref{BBMD} on $[0,T]$.
\noi
To obtain the almost sure existence, we set $\Sigma=\cup_{n=1}^{\infty} \O_{1/n}$ and note that $\prob(\Sigma)=1$. Hence, for every $\o\in \Sigma$, there exists $T^{\o}>0$ and a unique solution $v^{\o}\in C([0,T^{\o}];H^{s}(\T))$ to \eqref{BBMv}. 
\end{proof}

\subsection{Sharpness of Theorem \ref{Thm:ASLWP}} \label{subs:sharpness}
The limiting restriction $\alpha>\frac{1}{4}$ of the above argument arises from the nonlinear term of the second order Picard expansion:
\begin{align*} 
\tilde{Z}^{\o}(t):=-\frac{i}{2}\int_{0}^{t}S(t-t')\NN(z(t'))\, dt'. 
\end{align*} 
We show in this subsection that $\NN(z)$ fails to be a distribution almost surely when $\al \leq\frac{1}{4}$.

If $Z$ were in fact a distribution almost surely at least for some $\al \geq \al_0$ where $\al_0\leq \tfrac 14$, then we may hope to lower the regularity restriction in Theorem~\ref{Thm:ASLWP} by considering the higher order perturbative expansion:
\begin{align*}
u = z^\o+\tilde{Z}^\o + w,
\end{align*}
 where we now solve the fixed point problem for the remainder $w$. The idea here is that the equation for $w$ does not contain the term $\tilde{Z}$ which was responsible for the regularity restriction in solving \eqref{BBMv} for the first order perturbative expansion $v$ (see \eqref{zz}).
 We thus expect $w$ to be almost surely smoother than $v$.  
We show that for BBM, no improvement occurs because $\NN(z)$ fails to be a distribution almost surely when $\al \leq\frac{1}{4}$. We adapt an argument in \cite{GPburger} for the stochastic Burgers equation. For fixed $\al$, let $f$ belong to the support of the Gaussian measure $\mu_{\al}$ in \eqref{mual}. 
 For simplicity, we will show the Dirichlet projected regularisation $\NN(f_{N})$, where $f_{N}=\P_{\leq N}f$, fails to define a distribution almost surely as $N\rightarrow \infty$. 
 To this end, let $\phi\in C^{\infty}(\T)$ be such that $\ft \phi (0)=0$ and $\phi \not\equiv 0$; for instance, we will assume $\ft \phi(1) \neq 0$. Let $N\geq M\gg 1$ be dyadic and define 
 \begin{align*}
X_{N}(\phi):=\langle \NN(f_{N}),\phi \rangle.
\end{align*}
We will show $X_{N}(\phi)$ fails to converge almost everywhere with respect to $\mu_{\al}$. This implies $X^{\o}_{N}(\phi)=\langle \NN(f_{N}^{\o}),\phi \rangle$, where $f^{\o}$ is given by \eqref{initialdata}, fails to converge almost surely. 
We begin by showing the sequence $\{ X_{N}(\phi)\}_{N}$ fails to converge in the Gaussian Hilbert space $L^{2}(\mu_{\al})$. 
Indeed, we have 
\begin{align*}
\| X_{N}(\phi)&-X_{M}(\phi)\|_{L^{2}(\mu_{\al})}^{2} = \E[ |X^{\o}_{N}(\phi)-X^{\o}_{M}(\phi)|^{2}] \\
 &= \sum_{n,m \neq 0} \vp(n) \ft \phi(n) \vp(m)\ft \phi(m)  \sum_{(\ast)} \frac{ \E[g_{n_1}g_{n_2}\cj{g_{m_1}g_{m_2}}]}{\jb{n_1}^{\al}\jb{n_2}^{\al}\jb{m_1}^{\al}\jb{m_2}^{\al}  },
\end{align*}
where the inner summation above is restricted to the set of $(n_1,n_2,m_1,m_2)\in \Z^4$ satisfying: 
\begin{align*}
n=n_1+n_2, \quad m=m_1+m_2, \quad M< \max(|n_1|,|n_2|)\leq N, \quad M< \max(|m_1|,|m_2|)\leq N.
\end{align*}
Hence, we have
\begin{align*}
\E[ |X^{\o}_{N}(\phi)-X^{\o}_{M}(\phi)|^{2}] &\ges  \sum_{n\neq 0} |\vp(n)|^{2}|\ft \phi(n)|^{2} \sum_{\substack{n=n_1+n_2 \\ M<|n_1|\leq N}} \frac{1}{\jb{n_1}^{2\al}\jb{n_2}^{2\al}}\\
& \ges |\vp(1)|^{2}|\ft \phi(1)|^{2} \sum_{M<|n_1|\leq N} \frac{1}{\jb{n_1}^{2\al}\jb{n_1-1}^{2\al}} \\
& \sim N^{1-4\al}.
\end{align*}
Thus, if $\al\leq \tfrac{1}{4}$, $X_{N}(\phi)$ fails to be a Cauchy sequence in $L^{2}(\mu_{\al})$ and hence fails to converge in $L^{2}(\mu_{\al})$. 
Now for every $N$, we have
\begin{align*}
X_{N}(\phi)&=\langle \NN(f_N),\phi \rangle = \bigg\langle \vp(D_x)\bigg( f_{N}^{2} - \int_{\T}f_{N}^{2}dx \bigg), \phi \bigg\rangle \\
& = \langle \vp(D_x)( f_{N}^{2} - \|f_{N}\|_{L^{2}(\mu_\al)}^{2} ), \phi \rangle +\bigg\langle \vp(D_x)\bigg(  \|f_{N}\|_{L^{2}(\mu_\al)}^{2}  - \int_{\T}f_{N}^{2}dx \bigg), \phi \bigg\rangle \\
& =\langle \vp(D_x)( f_{N}^{2} - \|f_{N}\|_{L^{2}(\mu_\al)}^{2} ), \phi \rangle \\
& =:Y_{N}(\phi).
\end{align*}
and for every $N$, $Y_{N}(\phi)\in \mathcal{H}_{2}$, the homogeneous Wiener chaos of order 2; see~\cite[Chapter II]{Janson}. For elements in a fixed homogeneous Wiener chaos, convergence in $L^{2}$ is equivalent to convergence in probability; see \cite[Theorem 3.50]{Janson}. Therefore, $Y_N(\phi)=X_{N}(\phi)$ fails to converge in probability and hence $X_{N}^{\o}(\phi)$ fails to converge almost surely when $\al \leq \tfrac 14$. Applying the above with $f^{\o}=z^{\o}(t)=S(t)u_0^{\o}$, we obtain the same conclusion, for each fixed $t$.

\section{Probabilistic global theory on $\T$}\label{section:globalt}

In this section we show that when $\al=\frac 12$, we can extend the local-in-time (random) solutions 
constructed in Theorem~\ref{Thm:ASLWP} globally in time. 
Notice that when $\al=\frac 12$, the local solution $u(t)\in H^{s}(\T)$ for $s<0$ for each fixed $t$ and thus almost surely does not belong to $L^{2}(\T)$. 
From the general local theory in Proposition~\ref{prop:detlocal}, we can extend the local-in-time solutions if we have an a priori bound on the remainder $v:=u-z$ of the form \eqref{vhsbnd} with $s=2\al-$.
Ultimately, we were able to obtain such a bound only when $\al=\tfrac 12$. We view this heuristically as overcoming the logarithmic divergence in \eqref{Ceps}.
 In the following though we will keep $\al\in (\tfrac 14, \tfrac 12 ]$ general.
 
In order to make the following computations secure, we consider the smoothed initial value problem \eqref{vepseq} for $v_{k}$:
\begin{align}
\begin{cases}
i\dt v_k = \vp(D_x) \big(v_k + \frac 12 v_{k}^2 + z_{k} v_k  \big)+\frac{1}{2} \NN(z_k) \\
v_k |_{t=0}=0,
\end{cases}
 \label{vepseq2}
\end{align}
where $z_{k}:=\rho_k \ast z$ for some smooth mollifier $\{\rho_{k} \}_{k\in \N}$ and $k\in \N$. Notice that
\begin{align}
\NN(z_{k})=-(1-\dx^{2})^{-1}\dx( \P_{\neq 0}(z_{k}^{2})). \label{rnormsmooth}
\end{align}
As $z_{k}$ is smooth, there is a unique smooth global-in-time solution $v_{k}$ to \eqref{vepseq2} for every $k\in \N$. 
The brunt of the work will be to establish the following \textit{uniform} in $k$ bound on solutions $v_{k}$ to \eqref{vepseq2}. This is the content of Subsections~\ref{sec:modified} and \ref{subsec:bdk}.

\begin{proposition}\label{prop:bdk} Let $\al =\frac 12$ and $s<1$ sufficiently close to one. Given $T, \e>0$, there exists $\tilde{\O}_{T,\e}\subset \O$ such that 
\begin{align*}
\prob( (\tilde{\O}_{T,\e})^{c}) <\e,
\end{align*}
a sufficiently large integer $k_0=k_0(T,\e)$ and a finite constant $C(T,\e)>0$ such that the following bound holds: 
\begin{align}
\sup_{k\geq k_0} \sup_{t\in [0,T]}\|v^{\o}_{k}(t)\|_{H^{s}(\T)} \leq C(T,\e), \label{bdvk}
\end{align}
for every solution $v_{k}^{\o}$ to \eqref{vepseq2} with $\o \in \tilde{\O}_{T,\ep}$.
\end{proposition}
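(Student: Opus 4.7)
The plan is to run the $I$-method outlined in the introduction at the level of the smoothed equation \eqref{vepseq2}, uniformly in the mollification parameter $k$. Fix $\al=\tfrac12$ and $s<1$ close to $1$, and let $N=N(T,\e)\gg1$ be chosen at the end. Applying $I=I_N$ from~\eqref{Imult} to~\eqref{vepseq2} produces the analogue of~\eqref{IBBM} with $z_k$ in place of $z$; introduce the modified energy $E(Iv_k)(t):=\tfrac12\|Iv_k(t)\|_{H^1}^2$. Since the symbol satisfies $\jb{\xi}m_N(\xi)\ges N^{-(1-s)}\jb{\xi}^s$, we have $\|v_k(t)\|_{H^s}\les N^{1-s}E(Iv_k)(t)^{1/2}$, so it suffices to establish an a priori bound $\sup_{t\in[0,T]}E(Iv_k)(t)\leq A(T,\e,N)$, uniformly for $k\geq k_0(T,\e,N)$.

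Differentiating $E(Iv_k)$ in time, substituting the equation for $Iv_k$, integrating by parts in $x$, and inserting commutators between $I$ and the pointwise products produces the schematic ODI~\eqref{schematicODE}, which here takes the form
\[
\tfrac{d}{dt}E(Iv_k)\les N^{-\beta}E(Iv_k)^{3/2}+\|Iz_k\|_{L^2_x}E(Iv_k)+\|z_k\|_{W^{-\e,q}_x}E(Iv_k)+\|I\NN(z_k)\|_{H^1_x}E(Iv_k)^{1/2}
\]
for some $\beta=\beta(s)>0$. The first term is the classical $I$-method commutator estimate; the second and third arise when estimating the commutator $I(z_kv_k)-z_kIv_k$ (the third via Lemma~\ref{lemma:negderivprod}); the last is the direct contribution from the forcing $I\NN(z_k)$. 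The crucial probabilistic input is Lemma~\ref{lemma:izintmoment}: at the endpoint $\al=\tfrac12$ it gives a \emph{logarithmic} bound $\|I_Nz_k\|_{L^2_x}\les(\log N)^{1/2}$ in the appropriate $L^q(\O)$-norm, uniformly in $k$ (using~\eqref{rhomvt}). Proposition~\ref{prop:stochobjects} supplies uniform-in-$k$ control of $\|z_k\|_{C_TW^{\al-\frac12-,\infty}}$ and $\|\NN(z_k)\|_{C_TH^{2\al-}}$ with exponential tails, which after applying $I_N$ yields at most an $N^{1-2\al+}=N^{0+}$ loss in $\|I\NN(z_k)\|_{C_TH^1}$. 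Choosing $M=M(T,\e)$ large and defining
\[
\tilde\O_{T,\e}:=\Bigl\{\o:\|Iz\|_{C_TL^2_x}+\|z\|_{C_TW^{-\e,q}_x}+N^{-(1-2\al+)}\|I\NN(z)\|_{C_TH^1_x}\leq M(\log N)^{1/2}\Bigr\},
\]
a union bound with the tail estimates gives $\prob(\tilde\O_{T,\e}^c)<\e$, and choosing $k_0=k_0(T,\e,N)$ large ensures the same bounds (up to a harmless constant) for $z_k$ and $\NN(z_k)$ for all $k\geq k_0$.

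The main obstacle is closing the bootstrap on $\tilde\O_{T,\e}$. Set $A:=C_0M^2N^{0+}\exp(C_1M(\log N)^{1/2}T)$ for large absolute constants $C_0,C_1$, and define the stopping time $T_k^\ast:=\sup\{t\in[0,T]:E(Iv_k)(t)\leq4A\}$. On $[0,T_k^\ast]$, the cubic term is dominated as $N^{-\beta}E(Iv_k)^{3/2}\les N^{-\beta}\sqrt{4A}\,E(Iv_k)$, which is absorbed into $M(\log N)^{1/2}E(Iv_k)$ once $N^{-\beta}\sqrt{A}\ll M(\log N)^{1/2}$; this can be arranged by choosing $N=N(T,\e,M)$ large, since $\sqrt{A}$ is only sub-polynomial in $N$. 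Gronwall's inequality then yields $E(Iv_k)(t)\leq2A$ on $[0,T_k^\ast]$, so a continuity argument forces $T_k^\ast=T$, proving~\eqref{bdvk} with $C(T,\e):=N^{1-s}\sqrt{2A}$. The entire scheme works only at the endpoint $\al=\tfrac12$: the sub-polynomial Gronwall factor $\exp(C_1M(\log N)^{1/2}T)$ is precisely what the polynomial gain $N^{-\beta}$ can defeat, whereas for $\al<\tfrac12$ the polynomial loss $N^{1-2\al}$ in $\|Iz\|_{L^2}$ would generate an exponential-in-polynomial blow-up that this argument cannot absorb.
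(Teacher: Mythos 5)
Your proposal follows essentially the same route as the paper: the $I$-method applied to the smoothed equation, the modified energy $E(Iv_k)=\tfrac12\|Iv_k\|_{H^1}^2$, the three-term decomposition of $\tfrac{d}{dt}E(Iv_k)$ into a commutator of size $N^{-\beta}E^{3/2}$, a forcing term of size $N^{1-2\al+}\|\NN(z_k)\|_{H^{2\al-}}E^{1/2}$, and a linear-in-$E$ term controlled by $\|Iz\|_{L^2}\les (\log N)^{1/2}$ at $\al=\tfrac12$ (Lemma~\ref{lemma:izintmoment} with $p=2$), closed by a continuity/Gronwall argument in which the sub-polynomial factor $\exp(CM(\log N)^{1/2}T)$ is beaten by the polynomial gain $N^{-\beta}$. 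This is exactly the mechanism of the paper's proof; your bootstrap at the lower threshold $4A$ is equivalent to the paper's stopping time at $E\leq C N^{2-}$ followed by the nonlinear Gronwall inequality of Lemma~\ref{gronwall}.

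Two points in your write-up need repair. First, the commutator must be $I(z_kv_k)-(Iz_k)(Iv_k)$, not $I(z_kv_k)-z_kIv_k$: with your version the leftover trilinear term is $\int_\T(\partial_xIv_k)\,z_k\,(Iv_k)\,dx$, which pairs the rough $z_k\in H^{0-}$ against $\partial_x[(Iv_k)^2]\in L^2$ and cannot be closed, whereas with $(Iz_k)(Iv_k)$ the leftover is $\tfrac12\int_\T \partial_x[(Iv_k)^2]\,Iz_k\,dx\les\|Iz_k\|_{L^2}E(Iv_k)$, which is what your term $\|Iz_k\|_{L^2}E$ presupposes (this is the paper's Lemmas~\ref{lemma:IvIzcommutator} and~\ref{lemma:for4}). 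Second, your claim that "choosing $k_0$ large ensures the same bounds for $\NN(z_k)$" is not automatic: unlike $\|z_k\|\leq\|z\|$ (Young's inequality), there is no monotone comparison between $\|\NN(z_k)\|$ and $\|\NN(z)\|$, and almost sure convergence only yields an $\o$-dependent $k_0(\o)$. The paper extracts a single $k_0(T,\e)$ valid on the whole good set by applying Egorov's theorem to the a.s. convergence $\NN(z_k)\to\NN(z)$ in $C_TH^{2\al-}$, discarding a further set of measure $<\e/3$. Both fixes are routine, so the proposal is sound once these are inserted.
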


From this result, we iterate the probabilistic local theory in Subsection~\ref{Sec:local} to conclude Theorem~\ref{Thm:ASGWP}; see Subsection~\ref{subsec:globalproof}.

To obtain the bound \eqref{bdvk}, we will apply the $I$-method in this probabilistic context, which we now describe. 
Given $\al \in (\tfrac 14, \tfrac 12 ]$, we fix $s:=2\alpha-\delta$, where $\delta>0$ is to be sufficiently small. Given $N\geq 1$, let $I_{N}=I$ be the Fourier multiplier operator defined by $\widehat{If}(n)=m_{N}(n)\widehat{f}(n),$ where $m_{N}$ is defined in \eqref{Imult}. The operator $I$ is smoothing of order $(1-s)$ and by the Littlewood-Payley square function theorem, for any $1<p<\infty$, $s_{0}\in \R$ and $0\leq a\leq 1-s$, we have  \begin{equation} 
 \|If\|_{W^{s_{0}+a,p}(\T)}\lesssim N^{a}\|f\|_{W^{s_{0},p}(\T)}. \label{smoothingI} 
 \end{equation}
We also have 
\begin{equation}
\|f\|_{H^{s}}\lesssim \|If\|_{H^{1}} \label{hstoIh1}
\end{equation}
and hence, in order to obtain \eqref{bdvk}, it suffices to obtain a uniform in $k$ (sufficiently large) bound on $Iv_{k}$ in $H^{1}$ to which we turn to in the next subsection.

The following probabilistic lemma quantifies the growth rate of the smoothed random linear solution $Iz$.

\begin{lemma}\label{lemma:izintmoment} For any $p\geq 2$ and any fixed $t\in\R$, we have 
  \begin{equation}
 \E \left[\|Iz(t)\|_{L^{p}_{x}}^{p} \right]^{\frac{1}{p}} \leq C_{s} p^{\frac{1}{2}}\phi_{2\alpha}^{\frac{1}{2}}(N), \label{expecesti}
\end{equation}
where $\phi_{2\al}$ is defined in \eqref{phi}.
Furthermore, we have \begin{equation}
\prob \left( \frac{\|Iz(t)\|_{L^{p}_{x}}}{p^{\frac{1}{2}}\phi_{2\alpha}^{\frac{1}{2}}(N) }>\lambda\right)\leq \frac{C_{s}^{p}}{\lambda^{p}}. \label{probabesti}
\end{equation}
\end{lemma}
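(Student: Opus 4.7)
The plan is to bound the $L^p_x$ norm by Gaussian concentration applied pointwise in $(t,x)$ and then integrate. Writing out
\begin{align*}
Iz(t,x) = \sum_{n\in\Z} \frac{m_N(n)\, e^{-it\varphi(n)}}{\jb{n}^{\alpha}}\, g_n(\omega)\, e^{inx},
\end{align*}
we observe that for each fixed $(t,x)$ this is a first-order polynomial chaos, so Lemma~\ref{Lemma:wienerchaos} gives $\|Iz(t,x)\|_{L^p(\Omega)} \les p^{\frac 12}\|Iz(t,x)\|_{L^2(\Omega)}$. By independence of the $\{g_n\}$ and $|m_N(n)e^{-it\varphi(n)}e^{inx}| = m_N(n)$, the variance reduces to the deterministic quantity
\begin{align*}
\|Iz(t,x)\|_{L^2(\Omega)}^{2} \sim \sum_{n\in\Z} \frac{m_N(n)^{2}}{\jb{n}^{2\alpha}},
\end{align*}
which is independent of both $t$ and $x$. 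Taking $p$-th powers, integrating in $x$ over $\T$ and applying Fubini then reduces \eqref{expecesti} to showing that this sum is $\les \phi_{2\alpha}(N)$.

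To handle the deterministic sum, I would split it according to the definition \eqref{Imult} of $m_N$:
\begin{align*}
\sum_{n\in\Z}\frac{m_N(n)^{2}}{\jb{n}^{2\alpha}} = \sum_{|n|\leq N}\frac{1}{\jb{n}^{2\alpha}} + N^{2(1-s)}\sum_{|n|>N}\frac{1}{\jb{n}^{2\alpha+2(1-s)}}.
\end{align*}
The low-frequency piece is exactly $\phi_{2\alpha}(N)$ by the definition \eqref{phi}. For the tail, recalling $s = 2\alpha - \delta$ with $\delta>0$ small, the exponent $2\alpha + 2(1-s) = 2 - 2\alpha + 2\delta > 1$, so the tail sum converges and is comparable to $N^{-1+2\alpha-2\delta}$. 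Multiplying by $N^{2(1-s)} = N^{2-4\alpha+2\delta}$ yields $N^{1-2\alpha}$, which matches $\phi_{2\alpha}(N)$ for $\alpha<\tfrac 12$ and is $O(1) \les \log(1+N) = \phi_1(N)$ when $\alpha = \tfrac 12$. In both cases the total is controlled by $\phi_{2\alpha}(N)$, giving \eqref{expecesti}.

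The tail probability \eqref{probabesti} then follows immediately from Chebyshev's inequality applied to $\|Iz(t)\|_{L^p_x}^{p}$ together with the moment bound \eqref{expecesti}. The computation is essentially routine; the only point requiring any attention is the tail-sum bookkeeping at the endpoint $\alpha = \tfrac 12$, which is precisely the source of the unavoidable logarithm in $\phi_1(N)$. This logarithmic (as opposed to polynomial) loss is exactly what will allow the Gronwall step in the $I$-method to close at $\alpha = \tfrac 12$, and is the underlying reason why the global theory in Theorem~\ref{Thm:ASGWP} is obtained only at this endpoint.
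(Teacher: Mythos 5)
Your argument is correct and follows essentially the same route as the paper: the paper likewise splits into the frequency regions $|n|\leq N$ and $|n|>N$ (there phrased as $z=\P_{\leq N}z+\P_{>N}z$), applies the first-order Wiener chaos estimate pointwise in $x$ and integrates, and evaluates the same two deterministic sums, with the high-frequency tail contributing $N^{1-2\alpha}\les \phi_{2\alpha}(N)$ exactly as you compute before concluding \eqref{probabesti} by Chebyshev. No substantive differences.
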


\begin{proof} 
We split $z=\P_{\leq N}z+\P_{>N}z$ and consider each piece separately. For the low frequency one, \eqref{wienerchaos} implies
 \begin{align*}
\E \left[\|I\P_{\leq N}z\|_{L^{p}_{x}}^{p} \right] & \lesssim \int_{\T} \E \left[ \bigg\vert \sum_{|n|\leq N}\frac{e^{it\varphi(n)}g_{n}(\omega)}{\jb{n}^{\alpha}} \bigg\vert^{p}\right]dx \\
& \lesssim \int_{\T} C^{p}p^{\frac{p}{2}}\left( \sum_{|n|\leq N}\frac{1}{\jb{n}^{2\alpha}} \right)^{\frac{p}{2}} dx\les p^{\frac{p}{2}}\phi_{2\alpha}^{\frac{p}{2}}(N).
\end{align*}
For the high frequency piece, \eqref{wienerchaos} again implies
\begin{align*}
\E \left[\|I\P_{>N}z\|_{L^{p}_{x}}^{p} \right]  = \int_{\T}\E \left[ \vert  I\P_{>N}z|^{p} \right]dx &\les \int_{\T} C^{p}p^{\frac{p}{2}}\left(\sum_{|n|>N} \frac{m_{N}(n)^{2}}{\jb{n}^{2\alpha}} \right)^{\frac{p}{2}} dx\\
&\lesssim C^{p}p^{\frac{p}{2}}\left(\sum_{|n|>N} \frac{N^{2(1-s)}}{\jb{n}^{1+(1-2s+2\alpha)}} \right)^{\frac{p}{2}} \\
& \lesssim p^{\frac{p}{2}}N^{\frac{p}{2}(1-2\alpha)},
\end{align*} 
where we note $1-2s+2\alpha=1-2\alpha+2\delta>0$. Then \eqref{probabesti} follows from \eqref{expecesti} and the Chebyshev inequality.
\end{proof}

We will actually only ever use Lemma~\ref{lemma:izintmoment} when $p=2$. In this case, the set in \eqref{probabesti} no longer depends on $t\in \R$ because the operators $I$ and $S(t)$ commute and $S(t)$ is unitary on $L^{2}(\T)$.

\subsection{Modified energy estimate}\label{sec:modified}
Applying the $I$-operator to \eqref{vepseq2} and noting \eqref{rnormsmooth}, we see that $Iv_{k}$ satisfies 
\begin{align}
\begin{cases}
\partial_{t}Iv_{k} = -(1-\partial_{x}^{2})^{-1}\partial_{x}\left[Iv_{k}+\frac{1}{2} I(v_{k}^{2})+I(v_{k}z_{k})+\frac 12 I(\P_{\neq 0}(z_{k}^{2}))\right]\\
Iv_{k}|_{t = 0} = 0,
\end{cases} 
\label{Iv}
\end{align}
We define the modified energy functional $E(Iv_{k})(t):=\frac{1}{2}\|Iv_{k}(t)\|_{H^1}^{2}$. Using \eqref{Iv}, we compute 
\begin{align*}
E(Iv_{k})(t)-E(Iv_{k})(0) &= \int_{0}^{t}\int_{\T} (\partial_{t}Iv_{k})(Iv_{k})+(\partial_{t}\partial_{x}Iv_{k})(\partial_{x}Iv_k)\, dxdt'\\
& =\int_{0}^{t}\int_{\T} (Iv_{k})(1-\partial_{x}^{2})\partial_{t}(Iv_{k}) \,dxdt' \\ 
& = \frac{1}{2}\int_{0}^{t}\int_{\T} (\partial_{x}Iv_{k})[I(v_{k}^{2})-(Iv_{k})^{2}]\,dxdt' \tag{I} \\
&  \,\,\,\,\,\,\,\, +\frac{1}{2}\int_{0}^{t}\int_{\T} (\partial_{x}Iv_{k})I(\P_{\neq 0}(z_{k}^{2}))\,dxdt' \tag{II} \\
& \,\,\,\,\,\,\,\,+\int_{0}^{t}\int_{\T} (\partial_{x}Iv_{k})I(v_{k}z_{k})\,dxdt' \tag{III}.
\end{align*}
We now estimate each of (I) through (III) in the following section. Note that all implicit constants in these estimate will be independent of $k\in \N$. We also write $E_{k}(t):=E(Iv_k)(t)$.

\medskip

\noi 
$\bullet$ \textbf{Estimate for (I):}
We begin with the following lemma which arranges for a negative power of $N$ from the commutator.
\begin{lemma}\label{lemma:Iv2commutator}
 Let $s>\frac 12$ and $w\in H^{s}(\T)$. Then, we have
 \begin{align*}
\bigg \vert\int_{\T} (\partial_{x}Iw)[I(w^{2})-(Iw)^{2}]\,dx \bigg\vert\lesssim N^{-\frac{3}{2}+}\|Iw\|_{H^{1}(\T)}^{3}.
\end{align*}
\end{lemma}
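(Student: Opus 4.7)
The plan is to pass to the Fourier side, perform a case analysis of the resulting trilinear multiplier, and conclude via dyadic decomposition and Bernstein's inequality. Setting $u := Iw$ and using the identity $\ft w(n) = \ft u(n)/m_{N}(n)$, Parseval together with the evenness of $m_{N}$ gives
\begin{align*}
\int_{\T}(\dx Iw)\big[I(w^{2}) - (Iw)^{2}\big]\,dx = \sum_{n_{1}+n_{2}+n_{3}=0}\sigma(n_{1},n_{2},n_{3})\,\ft u(n_{1})\ft u(n_{2})\ft u(n_{3}),
\end{align*}
where the symbol is
\begin{align*}
\sigma(n_{1},n_{2},n_{3}) := in_{3}\cdot\frac{m_{N}(n_{3}) - m_{N}(n_{1})m_{N}(n_{2})}{m_{N}(n_{1})m_{N}(n_{2})}.
\end{align*}
The crucial observation is that $\sigma$ vanishes whenever all three frequencies satisfy $|n_{j}|\leq N$, since $m_{N}\equiv 1$ in that region. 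Hence the effective sum is supported where $\max_{j} |n_{j}|\gtrsim N$, and the constraint $n_{1}+n_{2}+n_{3}=0$ then forces the two largest frequencies to be comparable.

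The next step is to obtain pointwise bounds on $|\sigma|$ in each dyadic regime. Using the explicit formula $m_{N}(\xi) = (N/|\xi|)^{1-s}$ for $|\xi|>N$, I would split into cases depending on the ordering of $|n_{j}|$ and the position of the derivative frequency $n_{3}$. The most delicate case is when $|n_{3}|$ sits at the largest dyadic scale $M\gtrsim N$ and another frequency lies below $N$; here the trivial estimate is insufficient, but using the identity $m_{N}(n_{3}) - m_{N}(n_{1})m_{N}(n_{2}) = m_{N}(n_{3})-m_{N}(n_{2})$ (valid when $m_{N}(n_{1})=1$), together with the mean value theorem applied to $m_{N}$ and the fact that $||n_{2}|-|n_{3}||\les |n_{1}|$ is forced by $n_{1}+n_{2}+n_{3}=0$, one arrives at the sharp bound $|\sigma|\les K$ with $K := |n_{1}|$. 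Analogous computations in the remaining subregimes yield comparable estimates, in each case with the prefactor $N^{-2(1-s)}$ appearing through the reciprocals $1/m_{N}(n_{j})$ for the high frequencies.

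Finally, I would complete the trilinear estimate via Littlewood--Paley. Writing the sum as $\int \widetilde u_{M_{1}}\widetilde u_{M_{2}}\widetilde u_{M_{3}}\,dx$, where $\widetilde u_{M}$ denotes the dyadic projection with Fourier coefficients replaced by their absolute values, and applying H\"older with $L^{2}\times L^{2}\times L^{\infty}$ with the $L^{\infty}$-norm placed on the lowest-frequency factor and controlled via Bernstein $\|\widetilde u_{K}\|_{L^{\infty}}\les K^{1/2}\|\widetilde u_{K}\|_{L^{2}}$, each dyadic block contributes a quantity bounded by $N^{-3/2}\|P_{M}u\|_{H^{1}}^{2}\|P_{K}u\|_{H^{1}}$. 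Summation over dyadic scales $M\gtrsim N$ and $K\les M$ proceeds via Cauchy--Schwarz on the dyadic index, with the hypothesis $s>\tfrac{1}{2}$ ensuring absolute convergence of the high-frequency dyadic sums, and yields the claimed $N^{-3/2+}\|Iw\|_{H^{1}}^{3}$ bound. The main technical difficulty lies in the multiplier estimate in the high-derivative case described above, where the required cancellation arises only through the mean value theorem applied to $m_{N}$; without this gain one only recovers $N^{-1}$, insufficient for the $I$-method scheme in Section~\ref{subsec:bdk}.
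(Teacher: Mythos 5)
Your overall strategy (pass to the Fourier side, bound the trilinear multiplier by cases, then close with Littlewood--Paley, H\"older and Bernstein) is viable and leads to the stated bound, but it is genuinely different from the paper's argument in the one step that matters. The paper first \emph{symmetrises} the multiplier and then exploits the algebraic identity
\begin{align*}
M(n_1,n_2,n_3)=\frac{i}{3}\big[n_1m^2(n_1)+n_2m^2(n_2)+n_3m^2(n_3)\big]
\qquad \text{on } n_1+n_2+n_3=0,
\end{align*}
which immediately yields the single uniform bound $|M|\lesssim |n_3|\,m^2(n_3)$ in terms of the \emph{smallest} frequency, with no case analysis at all; the proof then finishes with an elementary $\l^2\times\l^2\times\l^1$ estimate after extracting $N^{-\frac32+}$ from $\jb{n_1}\jb{n_2}m(n_1)m(n_2)\gtrsim N^{\frac32-}\jb{n_1}^{\frac12+}$. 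Your route forgoes this symmetrisation and instead attacks the raw symbol $\sigma$ directly, which is why you are forced into the dyadic case analysis. Both work; the paper's identity is what buys the clean, case-free multiplier bound.

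There is, however, one concrete inaccuracy in your sketch: the claim that all subregimes yield the ``comparable'' bound $|\sigma|\lesssim K$ with $K$ the smallest frequency is false. Take all three frequencies above $N$, say $|n_1|\sim|n_2|\sim|n_3|\sim M\gg N$. Then $m(n_3)-m(n_1)m(n_2)\sim (N/M)^{1-s}$ and dividing by $m(n_1)m(n_2)\sim(N/M)^{2(1-s)}$ gives $|\sigma|\sim M\,(M/N)^{1-s}$, which exceeds $K=M$ by the factor $(M/N)^{1-s}$. The mean value theorem gives no cancellation here because no frequency is in the region where $m\equiv 1$. The lemma still holds in this regime, but for a different reason: the loss $(M/N)^{1-s}$ in the symbol is absorbed by the extra decay of the three $L^2$ norms at scale $M$ (one finds a block contribution $\lesssim M^{-\frac12-s}N^{s-1}\|P_M u\|_{H^1}^3\le N^{-\frac32}\|P_Mu\|_{H^1}^3$ for $s>\frac12$), not by a smallest-frequency bound on $\sigma$. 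So your final Bernstein/H\"older step must be rerun with the regime-dependent symbol bounds rather than the single estimate $|\sigma|\lesssim K$; once that is done, the dyadic sums converge for $s>\frac12$ and you recover $N^{-\frac32+}\|Iw\|_{H^1}^3$ as claimed. You have correctly identified the genuinely delicate case (derivative frequency large, one input frequency below $N$) where the mean value theorem is indispensable; the repair needed is only in the remaining high$\times$high$\times$high regime.
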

\begin{proof}
The argument here is similar to that in \cite[Lemma 3.4]{WangBBM}.
By Plancherel, we have \begin{multline*}
 \int_{\T} (\partial_{x}Iw)[I(w^{2})-(Iw)^{2}]\,dx  \\ =\sum_{n_{1}+n_{2}+n_{3}=0}in_{3}m(n_{3})(m(n_{1}+n_{2})-m(n_{1})m(n_{2}))\ft w(n_{1})\ft w (n_{2})\ft w (n_{3}).
\end{multline*}
We symmetrise this to obtain $$ \sum_{n_{1}+n_{2}+n_{3}=0} M(n_{1},n_{2},n_{3}) \ft w (n_{1})\ft w (n_{2})\ft w(n_{3}),$$ where $M$ is defined to be the symmetric multiplier 
\begin{align*}
 M(n_{1},n_{2},n_{3})=\frac{i}{3} [ & n_{1}m(n_{1})(m(n_{2}+n_{3})-m(n_{2})m(n_{3})) \\ &+n_{2}m(n_{2})(m(n_{1}+n_{3})-m(n_{1})m(n_{3}))\\ &+n_{3}m(n_{3})(m(n_{1}+n_{2})-m(n_{1})m(n_{2}))].
\end{align*} 
By symmetry, we assume $|n_{3}|\leq |n_{2}|\leq |n_{1}|$. Furthermore, we assume $|n_{1}|>N$, since otherwise $m(n_{j})=1$ for all $j=1,2,3$, which implies $M(n_{1},n_{2},n_{3})=0$ on $n_{1}+n_{2}+n_{3}=0$. In addition, we also assume $|n_{2}|\gtrsim N$ since if $|n_{2}|\ll N$ we obtain a contradiction to the conditions $n_{1}+n_{2}+n_{3}=0$, $|n_{1}|>N$ and $|n_{3}|\leq |n_{2}|$. For shorthand, we define 
\begin{align*} 
\Lambda_{N}(\nbar):=\{ (n_{1},n_{2},n_{3})\in \Z^{3}  :  n_{1}+n_{2}+n_{3}=0,\, |n_{3}|\leq & |n_{2}|\leq |n_{1}|, \\
& |n_{1}|>N, \, |n_{2}|\gtrsim N \,\}.
\end{align*}
Using the condition $n_{1}+n_{2}+n_{3}=0$, it is easy to verify 
$$ M(n_{1},n_{2},n_{3})=\frac{i}{3}\left[n_{1}m^{2}(n_{1})+n_{2}m^{2}(n_{2})+n_{3}m^{2}(n_{3}) \right],$$ and hence on $\Lambda_{N}(\nbar)$, we have
\begin{equation}
|M(n_{1},n_{2},n_{3})|\lesssim |n_{3}|m^{2}(n_{3}). \label{Mbd}
\end{equation} 
Setting $ y(n)=\jb{n}m(n)\ft w(n)$ and using \eqref{Mbd}, we have thus reduced to showing 
\begin{equation} \label{v2com1}
\sum_{\Lambda_{N}(\nbar)}\frac{|y(n_{1})||y(n_{2})| |y(n_{3})|}{\jb{n_{1}}\jb{n_{2}} m(n_{1}) m(n_{2})} \lesssim N^{-\frac{3}{2}+}\|y(n)\|_{\l_{n}^{2}}^{3}.
\end{equation} 
Now we note that on $\Lambda_{N}(\nbar)$, we have for any $a\geq 1-s$, $$\jb{n_{j}}^{a}m(n_{j})\gtrsim N^{a}, \qquad j=1,2.$$ Applying this with $a=1$ and $a=\frac{1}{2}-$ (as $s>\frac{1}{2}$) and using Young's inequality, the left hand side of \eqref{v2com1} is bounded by 
\begin{align*}
CN^{-\frac{3}{2}+}\sum_{\Lambda_{N}(\nbar)}\frac{| y(n_{1})|| y(n_{2})| | y(n_{3})|}{\jb{n_{1}}^{\frac{1}{2}+} } & \lesssim  CN^{-\frac{3}{2}+} \|y(n)\|_{\l_{n}^{2}}^{2}\|\jb{n}^{-\frac{1}{2}-}y (n)\|_{\l^{1}_{n}} \\
&\lesssim N^{-\frac{3}{2}+}\|y(n)\|_{\l_{n}^{2}}^{3},
\end{align*} as required.
\end{proof}

As $\al >\frac 14$, we have $s>\frac 12$ and hence Lemma \ref{lemma:Iv2commutator}, implies
\begin{equation}
\label{esti1}
|\textup{(I)}|=\bigg\vert \frac{1}{2}\int_{0}^{t}\int_{\T} (\partial_{x}Iv_{k})[I(v_{k}^{2})-(Iv_{k})^{2}]\,dxdt'  \bigg\vert \lesssim N^{-\frac{3}{2}+}\int_{0}^{t}E_{k}^{\frac{3}{2}}(t')dt'.
\end{equation}

\medskip

\noi 
$\bullet$ \textbf{Estimate for (II):}
By Cauchy-Schwarz and \eqref{smoothingI}, we have
\begin{align}
|(\textup{II})|&=\bigg\vert \int_{0}^{t}\int_{\T} (\partial_{x}Iv_{k})I(\P_{\neq 0}(z_{k}^{2}))\,dxdt' \bigg\vert  \notag\\
& \leq \| I(\P_{\neq 0}(z_{k}^{2}))\|_{L^{\infty}([0,t]; L^{2})}\int_{0}^{t} E^{\frac 12}_{k}(t') dt' \notag \\
& \les N^{1-2\al+}\|\P_{\neq 0}(z_{k}^2)\|_{L^{\infty}([0,t]; H^{2\al-1-})}\int_{0}^{t} E^{\frac 12}_{k}(t')dt'. \label{esti2}
\end{align}
Notice that $\|\P_{\neq 0}(z_{k}^2)\|_{L^{\infty}([0,t]; H^{2\al-1-})} \sim \|\NN(z_{k})\|_{L^{\infty}([0,t]; H^{2\al-})}.$

\medskip

\noi 
$\bullet$ \textbf{Estimate for (III):}

\begin{lemma}\label{lemma:IvIzcommutator} Let $\frac{1}{4}<\alpha \leq \frac{1}{2}$, $w\in H^{s}(\T)$ and $k\in \N\cup \{\infty\}$. Then, there exists $p=p(\alpha, s)\gg 2$ sufficiently large, so that 
\begin{equation}
\|I(wz_{k})-(Iw)(Iz_{k})\|_{L^{2}}\lesssim N^{-\big(s -\frac{1}{2}-\frac{1}{p} + \big)}\|Iw\|_{H^{1}}\|z_{k}\|_{W^{\alpha-\frac{1}{2}-,p}}, \label{vzcom}
\end{equation}
where we understand $z_{\infty}:=z$.
\end{lemma}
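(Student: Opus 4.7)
Writing the product in Fourier, the commutator equals
\[
I(wz_k)(x) - (Iw)(Iz_k)(x) = \sum_{n \in \Z} e^{inx} \sum_{n_1+n_2 = n} K_N(n_1, n_2)\, \ft{w}(n_1)\, \ft{z_k}(n_2),
\]
with symbol $K_N(n_1,n_2) := m_N(n_1+n_2) - m_N(n_1)m_N(n_2)$. The crucial observation is that $K_N \equiv 0$ whenever $|n_1|, |n_2| \leq N$, so only frequency configurations with $\max(|n_1|,|n_2|) \gtrsim N$ contribute. I would therefore perform a dyadic Littlewood--Paley decomposition $w = \sum_{M_1} P_{M_1} w$, $z_k = \sum_{M_2} P_{M_2} z_k$ and split the resulting sum into three regions: \textup{(I)} $M_1 \gtrsim N$ and $M_2 \ll M_1$; \textup{(II)} $M_2 \gtrsim N$ and $M_1 \ll M_2$; \textup{(III)} $M_1 \sim M_2 \gtrsim N$.

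In region \textup{(I)}, since $m_N$ is smooth with $|m_N'(\xi)| \les m_N(\xi)/\jb{\xi}$, the mean value theorem yields the sharpened bound $|K_N(n_1,n_2)| \les (M_2/M_1)\, m_N(n_1)$; a symmetric argument gives $|K_N| \les (M_1/M_2)\, m_N(n_2)$ in region \textup{(II)}. In region \textup{(III)} one retains the crude bound $|K_N| \les 1$ and instead exploits that both frequencies are $\gtrsim N$ when converting $\ft w$-bounds into $\widehat{Iw}$-bounds via the identity $\widehat{Iw}(n) = m_N(n)\ft w(n)$.

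I would then estimate each dyadic block by Plancherel together with H\"older's inequality at the exponents $1/p+1/p' = 1/2$, placing the $Iw$-factor in $L^{p'}$ and the $z_k$-factor in $L^p$. The $L^{p'}$ norm of $P_{M_1}(Iw)$ is controlled via Bernstein by $M_1^{1/p}\|P_{M_1}(Iw)\|_{L^2}$ and hence by $M_1^{1/p-1}\|Iw\|_{H^1}$; the $L^p$ norm of $P_{M_2}z_k$ is controlled by $M_2^{-(\al-\frac12-)}\|z_k\|_{W^{\al-\frac12-,p}}$. Inserting the multiplier bounds above, one verifies in each region that the resulting geometric series in $M_1, M_2$ are summable and produce a prefactor of $N^{-(s-\frac12-\frac1p+)}$, matching the claim. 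The case $k=\infty$ follows by passing to the limit, using the almost sure convergence $z_k \to z$ in $C_T W^{\al-\frac12-,p}$ from Proposition~\ref{prop:stochobjects}.

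\textbf{Main obstacle.} The subtlest region is \textup{(II)}, where $z_k$ is high frequency and has \emph{negative} Sobolev regularity; naively $\|P_{M_2}z_k\|_{L^p}$ grows like $M_2^{1/2-\al+}$, suggesting divergence. The resolution is that three separate sources of decay combine favorably: the commutator gain $M_1/M_2$ from the mean value theorem, the factor $m_N(n_2) \sim (N/M_2)^{1-s}$ built into the bound on $K_N$, and the dyadic summation over $M_2 \gtrsim N$. Careful bookkeeping shows that the condition $s > \tfrac12$ (which holds for $\al > \tfrac14$ and $s = 2\al -$) is precisely what makes $-s + \frac12 + \frac1p$ strictly negative for $p$ sufficiently large, yielding the required decay in $N$; the small $1/p$ loss is the price for invoking Bernstein to pass from $L^2$ to $L^{p'}$ on the $Iw$ factor.
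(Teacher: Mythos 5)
Your route is genuinely different from the paper's. The paper does not perform a full Littlewood--Paley decomposition of the commutator symbol; it splits $w$ at the intermediate scale $N^{1/2}$ and $z$ at $N$, producing four pieces: a low--low piece that vanishes identically, a low-$w$/high-$z$ commutator piece estimated entirely on the Fourier side by duality, the mean value theorem and an $\ell^1\times\ell^2\times\ell^2$ Young inequality (so only $\|z\|_{H^{-\sigma}}$ appears there, never an $L^p$ norm with $p\neq 2$), and two high-$w$ pieces that are \emph{genuine products}, handled by Sobolev embedding, the smoothing bound \eqref{smoothingI} and the fractional Leibniz rule --- these last two are the only places where $\|z\|_{W^{\al-\frac12-,p}}$ enters, and they are what produce the exponent $s-\frac12-\frac1p$. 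Your scheme is a reasonable alternative, and the numerology you sketch does close: each of your three regions yields roughly $N^{\frac1p-\frac12-\al+}$, which is at least as strong as the claimed rate since $s<1+\al$.

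Two points need repair, one of them substantive. Minor: $K_N$ does not vanish on the set where $|n_1|,|n_2|\le N$ (take $n_1=n_2=N$, so $m_N(n_1)m_N(n_2)=1$ while $m_N(2N)<1$); the correct vanishing region is $\max(|n_1|,|n_2|)\le N/2$. Likewise your mean-value bounds on $K_N$ are only valid on the sub-regions where the low frequency is $\ll N$, so that its multiplier equals $1$; where both frequencies exceed $N$ one must fall back on the crude bound (harmless, but it must be said). More seriously: the step ``bound $|K_N|$ by its supremum on the block, then apply H\"older at exponents $p,p'$ in physical space'' is not ``Plancherel together with H\"older''. After dividing out the supremum of the symbol, what remains is a bilinear Fourier multiplier whose symbol genuinely mixes $n_1$ and $n_2$ (through $m_N(n_1+n_2)$), and its boundedness from $L^{p'}\times L^{p}$ into $L^{2}$ requires a Coifman--Meyer-type theorem or a separation-of-variables expansion of the symbol on each block; moreover, in your region \textup{(III)} the factor $m_N(n_1+n_2)$ is not adapted to the input blocks (the output frequency can be far smaller than $M_1\sim M_2$), so a further decomposition in $n_1+n_2$ is needed there. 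This is all standard, but it is the one step in your plan that is not automatic --- and it is precisely the step the paper's decomposition is engineered to avoid, by only ever invoking $L^p$ bounds ($p\neq 2$) on honest products of functions.
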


\begin{proof} 
We may suppose $k=\infty$.
Let $\sigma:=\frac{1}{2}-\alpha+\frac{1}{10}\delta$.
Split $w=w_{N^{\frac{1}{2}}}+w^{N^{\frac{1}{2}}}$, and $z=z_{N}+z^{N}$, where $f_{N}:=\mathcal{F}^{-1}\{\mathbbm{1}_{|n|\leq  N/2}\ft f (n) \}$ and $f^{N}:=\mathcal{F}^{-1}\{\mathbbm{1}_{|n|> N/2}\ft f (n) \}$. Then 
\begin{align*}
I(wz)-(Iw)(Iz)  = &I(w_{N^{\frac{1}{2}}} \cdot z_{N})-I(w_{N^{\frac{1}{2}}})I(z_{N}) \tag{A} \\
& + I(w_{N^{\frac{1}{2}}}\cdot z^{N})-(Iw_{N^{\frac{1}{2}}})(Iz^{N}) \tag{B} \\
& - I(w^{N^{\frac{1}{2}}})(Iz) \tag{C} \\
& + I(w^{N^{\frac{1}{2}}} \cdot z). \tag{D}
\end{align*}
We estimate each piece above separately.\\
\medskip
\noi
$\bullet$  \textbf{(A):} Since $I$ is the identity on frequencies $\{ |n|\leq N\}$, we have $I(w_{N^{\frac{1}{2}}})=w_{N^{\frac{1}{2}}}$ and $I(z_{N})=z_{N}$. Next, notice that $\supp \mathcal{F}\{w_{N^{\frac{1}{2}}} \cdot z_{N}\}\subset \{|n|\leq N\}$ and hence $I(w_{N^{\frac{1}{2}}} \cdot z_{N})=w_{N^{\frac{1}{2}}} \cdot z_{N}$. Combining these two observations we see that $\textup{(A)}\equiv 0$.

\medskip
\noi
$\bullet$  \textbf{(B):}
 In this case, we argue by duality: $$ \|\textup{(B)} \|_{L^{2}(\T)}=\sup_{\|h\|_{L^{2}(\T)}=1} \bigg\vert \int_{\T} h \,\textup{(B)}dx \bigg\vert$$
Denote $f(n_{1}):=\widehat{w_{N^{\frac{1}{2}}}}(n_{1})$ and $g(n_{2}):=\widehat{z^{N}}(n_{2})$. 
By Parseval, we have 
\begin{equation}
\bigg\vert \int_{\T} h \,\textup{(B)}dx \bigg\vert = \sum_{\substack{|n_{1}|<N^{1/2}/2 \\ |n_{2}|>N/2}} |m(n_{1})m(n_{2})-m(n_{1}+n_{2})| |f(n_{1})||g(n_{2})||\ft h(n_{1}+n_{2})|. \label{commu2}
\end{equation} In this regime, $m(n_{1})\equiv 1$. The mean value theorem implies $|m(n_{2})-m(n_{1}+n_{2})| \lesssim N^{1-s}|n_{1}||n_{2}|^{-2+s}$. Thus, 
\begin{align*}
\eqref{commu2} &\lesssim N^{1-s}\sum_{\substack{|n_{1}|<N^{1/2}/2 \\ |n_{2}|>N/2}} \frac{|n_{1}|^{\frac{1}{2}+\frac{1}{10}\delta}}{|n_{2}|^{2-s-\sigma}}\frac{|n_{1}m(n_{1})f(n_{1})|}{|n_{1}|^{\frac{1}{2}+\frac{1}{10}\delta}} \frac{|g(n_{2})|}{|n_{2}|^{\sigma}}|\ft h (n_{1}+n_{2})| \\
& \lesssim N^{1-s+\frac{1}{4}+\frac{\delta}{20}-2+s+\sigma} \bigg \| \frac{|n_{1}m(n_{1})f(n_{1})|}{|n_{1}|^{\frac{1}{2}+\frac{1}{10}\delta}}\bigg\|_{\l^{1}_{n_{1}}}   \bigg\|\frac{|g(n_{2})|}{|n_{2}|^{\sigma}} \bigg\|_{\l^{2}_{n_{2}}} \|\ft h\|_{\l^{2}}\\
& \lesssim N^{-\left(\frac{1}{4}+\frac{s}{2}+ \right)}\|Iv\|_{H^{1}}\|z\|_{H^{-\sigma}}\|h\|_{L^{2}}.
\end{align*} 
We thus have $\|(B)\|_{L^{2}}\lesssim N^{-\left(\frac{1}{4}+\frac{s}{2}+ \right)}\|Iv\|_{H^{1}}\|z\|_{H^{-\sigma}}$.

\medskip
\noi
$\bullet$  \textbf{(C):}
 By the Sobolev embedding theorem and the mapping property of $I$ \eqref{smoothingI}, we have
 \begin{align*}
\| \textup{(C)}\|_{L^{2}}=\|(Iv^{N^{\frac{1}{2}}})(Iz)\|_{L^{2}}&\lesssim \|Iv^{N^{\frac{1}{2}}}\|_{L^{2}}\|Iz\|_{L^{\infty}} \\
& \lesssim N^{-\frac{s}{2}}\|v^{N^{\frac{1}{2}}}\|_{H^{s}}\|Iz\|_{W^{\frac{1}{p}+\frac{1}{10}\delta,p}} \\
& \lesssim N^{-\big( \frac{s}{2}-\frac{1}{p}-\sigma-\frac{1}{10}\delta \big)}\|Iv\|_{H^{1}}\|z\|_{W^{-\sigma,p}}\\
& \les N^{-\big( s-\frac{1}{2}-\frac{1}{p}+ \big)}\|Iv\|_{H^{1}}\|z\|_{W^{-\sigma,p}},
\end{align*}
provided that
 \begin{align*}
\frac{1}{4}+\frac{1}{2p}+\frac{1}{2}\left(\frac{\delta}{2}+\frac{2}{10}\delta\right)<\alpha<\frac{1}{2}-\frac{1}{p}+\delta-\frac{2}{10}\delta.
\end{align*}
The lower bound above appears to ensure we have a negative power of $N$ while the upper bound is due to the mapping property of $I$ \eqref{smoothingI}. We can afford these conditions on $\alpha$ if we choose $p\gg \frac{1}{\delta}$. 

\medskip
\noi
$\bullet$  \textbf{(D):}
 Once again, we argue by duality writing $$ \|\textup{(D)}\|_{L^{2}(\T)}=\sup_{\|h\|_{L^{2}(\T)}=1} \bigg\vert \int_{\T} h \textup{(D)}dx \bigg\vert.$$ Now by the fractional Leibniz rule, we have 
\begin{align*}
\bigg\vert\int_{\T} h I(v^{N^{\frac{1}{2}}} \cdot z) \, dx\bigg\vert & = \bigg\vert\int_{\T} I(h)v^{N^{\frac{1}{2}}} z \,dx\bigg\vert \\
& \lesssim \|I(h)v^{N^{\frac{1}{2}}} \|_{W^{\sigma, \frac{p}{p-1}}}\|z\|_{W^{-\sigma,p}}  \\
& \lesssim \|z\|_{W^{-\sigma,p}} \left( \|I(h)\|_{L^{\frac{2p}{p-2}}} \| v^{N^{\frac{1}{2}}}\|_{H^{\sigma}}+\|I(h)\|_{H^{\sigma}}\|v^{N^{\frac{1}{2}}}\|_{L^{\frac{2p}{p-2}}} \right).
 \end{align*} 
 For each of these four terms, we use:
\begin{itemize}
\item By the Sobolev inequality and \eqref{smoothingI}, $\|Ih\|_{L^{\frac{2p}{p-2}}} \lesssim \|Ih\|_{H^{\frac{1}{p}}}\lesssim N^{\frac{1}{p}},$ since \\ $\frac{1}{p}<1-s$ provided we choose $p\gg \frac{1}{\delta},$
\item  $\|Ih\|_{H^{\sigma}}\lesssim N^{\sigma},$ since $\sigma<1-s$ which is true as $\alpha\leq \frac{1}{2},$
\item $\|v^{N^{\frac{1}{2}}}\|_{H^{\sigma}}\lesssim N^{-\frac{1}{2}(s-\sigma)}\|Iv\|_{H^{1}},$
\item $\|v^{N^{\frac{1}{2}}}\|_{L^{\frac{2p}{p-2}}} \lesssim N^{-\frac{1}{2}\left(s-\frac{1}{p}\right)}\|Iv\|_{H^{1}}.$
\end{itemize}
With these, we obtain
\begin{align*}
\|\textup{(D)}\|_{L^{2}}& \lesssim  \left( N^{-\left(\frac{s}{2}-\frac{\sigma}{2}-\frac{1}{p} \right)}+N^{-\left( \frac{s}{2}-\s-\frac{1}{2p} \right)}  \right)\|Iv\|_{H^{1}}\|z\|_{W^{-\sigma,p}} \\
&\lesssim N^{-\left( s-\frac{1}{2}+ \right)} \|Iv\|_{H^{1}}\|z\|_{W^{-\sigma,p}}.
\end{align*} 
Finally, combining the results of (A) through (D) we obtain \eqref{vzcom} with $p=p(\alpha,s)=\frac{100}{2\alpha-s}.$
\end{proof}

\begin{lemma}\label{lemma:for4}  
Let $s>\frac{1}{2}$ and $k\in \N\cup \{\infty\}$. Then, we have 
\begin{align*}
\bigg\vert \int_{0}^{t}\int_{\T}(\partial_{x}Iv_{k})(Iv_{k})(Iz_{k})dxdt' \bigg \vert \les  \|Iu_{0}^{\omega}\|_{L_{x}^{2}} \left(\int_{0}^{t}E_{k}(Iv_{k})dt' \right).
\end{align*}
\end{lemma}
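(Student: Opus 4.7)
The plan is to exploit Hölder's inequality with a well-chosen triple of exponents that puts $Iz_k$ in $L^2$, so that unitarity of the linear propagator on $L^2$ can convert the norm of $Iz_k(t')$ into the time-independent quantity $\|Iu_0^\omega\|_{L^2}$. First, I would write
\begin{align*}
\bigg\vert \int_\T (\partial_x Iv_k)(Iv_k)(Iz_k)\,dx\bigg\vert
\le \|\partial_x Iv_k\|_{L^2_x}\,\|Iv_k\|_{L^\infty_x}\,\|Iz_k\|_{L^2_x},
\end{align*}
which is the natural Hölder split since we want derivatives to land on $Iv_k$ (where they are absorbed by $E_k(Iv_k)$) and not on $Iz_k$.

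Next, since we are on the one-dimensional torus and $s>\tfrac12$, the Sobolev embedding $H^{\frac12+}(\T)\hookrightarrow L^\infty(\T)$ yields $\|Iv_k\|_{L^\infty_x}\lesssim \|Iv_k\|_{H^1_x}$, and combined with $\|\partial_x Iv_k\|_{L^2_x}\le \|Iv_k\|_{H^1_x}$ we obtain
\begin{align*}
\|\partial_x Iv_k\|_{L^2_x}\,\|Iv_k\|_{L^\infty_x}\lesssim \|Iv_k\|_{H^1_x}^2 \sim E_k(Iv_k)(t').
\end{align*}

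It remains to control $\|Iz_k(t')\|_{L^2_x}$ by $\|Iu_0^\omega\|_{L^2_x}$ uniformly in $t'$ and $k$. Since $z_k(t')=S(t')(\rho_k\ast u_0^\omega)$, the Fourier multipliers $I$, $S(t')$, and convolution by $\rho_k$ all commute; using unitarity of $S(t')$ on $L^2(\T)$ and $|\widehat{\rho_k}(n)|\le 1$ for all $n\in\Z$, one gets
\begin{align*}
\|Iz_k(t')\|_{L^2_x}=\|I(\rho_k\ast u_0^\omega)\|_{L^2_x}\le \|Iu_0^\omega\|_{L^2_x}.
\end{align*}
This bound is time-independent and holds also in the formal limit $k=\infty$ where $z_\infty=z$.

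Finally, integrating over $t'\in[0,t]$ and pulling the $\|Iu_0^\omega\|_{L^2_x}$ factor outside the integral gives the claimed estimate. No step presents a serious obstacle; the only mild point is that we deliberately avoid integration by parts (which would produce $\partial_x Iz_k$ and destroy the $L^2$ structure we need), choosing instead the Hölder triple $(L^2,L^\infty,L^2)$ precisely so that unitarity of $S(t')$ on $L^2$ can be leveraged.
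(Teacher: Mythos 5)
Your proof is correct and follows essentially the same route as the paper: both arguments place $Iz_k$ in $L^2_x$ so that unitarity of $S(t')$ and $|\widehat{\rho_k}(n)|\le 1$ give the time-independent bound $\|Iu_0^{\omega}\|_{L^2_x}$, and both control the two $Iv_k$ factors by $\|Iv_k\|_{H^1}^2\sim E_k(Iv_k)$. The paper reaches the latter by writing $(\partial_x Iv_k)(Iv_k)=\tfrac12\partial_x[(Iv_k)^2]$ and invoking the algebra property of $H^1(\T)$, which is just a repackaging of your H\"older $(L^2,L^\infty,L^2)$ plus Sobolev embedding step.
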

\begin{proof}
By the algebra property of $H^{s}(\T)$ and Cauchy-Schwarz, we compute
\begin{align*}
\bigg\vert \int_{0}^{t}\int_{\T}(\partial_{x}Iv_{k})(Iv_{k})(Iz_{k})dxdt' \bigg \vert& =\frac{1}{2} \bigg\vert \int_{0}^{t}\int_{\T}\dx [ (Iv_{k})^{2}]  (Iz_{k})dxdt' \bigg \vert \\
& \lesssim \int_{0}^{t} \|(Iv_{k})^{2}\|_{H^{1}_{x}}\|Iz_{k}\|_{L^{2}_{x}}dt' \\
& \lesssim \int_{0}^{t} \|Iv_{k}\|_{H^{1}}^{2}\|IS(t')u_{0,k}^{\omega}\|_{L^{2}_{x}}dt' \\
&\lesssim  \|Iu_{0}^{\omega}\|_{L_{x}^{2}}\left(\int_{0}^{t}E(Iv_{k})(t')dt' \right).
\end{align*}
\end{proof}

\noi
Writing 
\begin{align*}
\int_{\T} (\partial_{x}Iv_{k})I(v_{k}z_{k})=&\int_{\T} (\partial_{x}Iv_{k})[I(v_{k}z_{k})-(Iv_{k})(Iz_{k})]dx \\
& + \int_{\T} (\partial_{x}Iv_{k})(Iv_{k})(Iz_{k})dx, 
\end{align*}
we have from Lemmas \ref{lemma:IvIzcommutator} and \ref{lemma:for4}, 
\begin{equation} 
|\textup{(III)}| \lesssim N^{-\left( s-\frac{1}{2}+\right)}\|z_{k}\|_{L^{\infty}([0,t]; W^{\alpha-\frac{1}{2}-,p})} \int_{0}^{t}E_{k}(t')dt'  + \|Iu_{0}^{\omega}\|_{L_{x}^{2}} \int_{0}^{t}E_{k}(t')dt'. \label{esti4}
\end{equation}

\medskip 
\noi
Combining \eqref{esti1}, \eqref{esti2} and \eqref{esti4}, we have shown the following energy estimate:
\begin{align}
\begin{split}
E(Iv_{k})(t) \leq & E(Iv_k)(0)+C_{s,\al} N^{-\frac 32 +}\int_{0}^{t}E^{\frac{3}{2}}(Iv_k)(t')dt' \\
&  +C_{s,\al} N^{1-2\al+} \| \NN(z_{k})\|_{L^{\infty}([0,t];H^{2\al-})}\int_{0}^{t}E^{\frac{1}{2}}(Iv_k)(t')dt' \\
& +C_{s,\al} N^{-\left( s-\frac{1}{2}+\right)}\|z_{k}\|_{L^{\infty}([0,t]; W^{\alpha-\frac{1}{2}-,p})} \int_{0}^{t}E(Iv_k)(t')dt' \\
& +  C_{s,\al} \|Iu_{0}^{\omega}\|_{L_{x}^{2}} \int_{0}^{t}E(Iv_k)(t')dt'.
\end{split}
\label{energy}
\end{align}

\subsection{Proof of Propostion~\ref{prop:bdk}}\label{subsec:bdk}

In this subsection, we complete the proof of Proposition~\ref{prop:bdk} by turning the inequality \eqref{energy} into a bound on $Iv_{k}$ in $H^{1}$ and hence a bound on $v_{k}$ in $H^{s}$. To begin, we first deal with a technical point: we must ensure that the set $\tilde{\O}_{T,\ep}$ we construct in Proposition~\ref{prop:bdk} is independent of all $k\in \N$ large enough. The point here is that while Young's inequality ensures 
\begin{align*}
 \|z_{k}\|_{C([0,2T]; W^{\al-\frac{1}{2}-,p})} \leq \|z\|_{C([0,2T]; W^{\al-\frac{1}{2}-,p})}, 
\end{align*}
for every $k\in \N$ and $p> 1$, we cannot conclude a similar statement for the nonlinearities $\NN(z_{k})$ and $\NN(z)$. To get around this, we appeal to the almost sure convergence of their norms from Proposition~\ref{prop:stochobjects}.

With $T>0$ fixed, we define 
\begin{align*}
\Sigma_{\text{conv},T}:=\{\o\in \O\, : \, (z_{k}^{\o},\NN(z_{k}^{\o}))\rightarrow (z^{\o},\NN(z^{\o})) \,\, \text{in}\,\, C_{2T}W^{\al-\frac 12-,r}\times C_{2T}H^{s}\,\, \text{as} \,\, k\rightarrow \infty \,\},
\end{align*} where $r(s,\al):=\max\{p(s,\al),q(s)\}$ with $p(s,\al)$ given by Lemma~\ref{lemma:IvIzcommutator} and $q(s)$ given by \eqref{qs}.
Put simply, $\Sigma_{\text{conv},T}$ is the set on which we have almost sure convergence of the mollified enhanced data set (in the appropriate norms). That this set is of full probability is a direct consequence of Proposition~\ref{prop:stochobjects}.
With $K>0$ fixed, we also define 
\begin{align*}
\O_{K,T,\al}& = \{  \o \in \Sigma_{\text{conv},T}\, :\,  \|z\|_{C_{2T}W^{\al-\frac 12-,r}}+\| \NN(z)\|_{C_{2T} H^{s}} \leq K \}.
\end{align*}
By Egorov's theorem, for any $\e>0$, there exists a measurable set $\O_{\e}\subset \Sigma_{\text{conv},T}$ with $\prob( \Sigma_{\text{conv},T} \setminus \O_{\e})<\tfrac{\e}{3}$ such that $\NN(z^{\o}_{k})$ converges uniformly to $\NN(z^{\o})$ as $k\rightarrow \infty$ in $C_{2T}H^{s}_{x}$ for every $\o\in \O_{\e}$. Hence, there exists $k_0=k_0(T,\e)$ such that for every $k\geq k_0$, we have 
\begin{align}
\| \NN(z_{k}^{\o})\|_{C_{2T}H^{s}_{x}} \leq 1+K \label{Nzbound}
\end{align}
for every $\o\in \O_{K,T,\al,\e}:=\O_{\e}\cap \O_{K,T,\al}$. Now, Proposition~\ref{prop:stochobjects} implies 
\begin{align}
\prob( \O_{K,T,\al,\e}^{c}) < e^{-C\frac{K}{T^c}}+\frac{\e}{4}. \label{smallset}
\end{align}

We will need the following nonlinear Gronwall inequality which follows from~\cite[Theorem 21]{Dragomir}:

\begin{lemma}\label{gronwall}
Given $T>0$, let $f$ be a non-negative function on $[0,T]$ satisfying 
\begin{align}
f(t)\leq c+a\int_{0}^{t}f(t')\, dt' + b\int_{0}^{t}  f^{\g}(t')\, dt', \label{cgron}
\end{align}
where $a,b,c \geq 0$, $0\leq \g <1$ and for $t\in [0,T]$. Then, we have 
\begin{align*}
f^{1-\g}(t) \leq c^{1-\g}e^{(1-\g)at}+\frac{b}{a}(e^{(1-\g)at}-1)
\end{align*} 
for $t\in [0,T]$.
\end{lemma}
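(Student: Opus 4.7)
My plan is to reduce the nonlinear Gronwall inequality to a linear one by substituting $h = g^{1-\gamma}$ for a suitable majorant $g$. The key observation is that since $0 \leq \gamma < 1$, raising to the power $1-\gamma$ converts the sublinear term $bg^\gamma$ into an additive constant, after which we can apply the standard integrating factor method.

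More precisely, I would first define the right-hand side of the hypothesis as
\begin{align*}
g(t) := c + a\int_{0}^{t} f(t')\, dt' + b\int_{0}^{t} f^\gamma(t')\, dt',
\end{align*}
so that $f(t) \leq g(t)$ for all $t \in [0,T]$ by assumption. Since $f$ is non-negative and $\gamma \in [0,1)$, the monotonicity $f^{\gamma} \leq g^{\gamma}$ holds, and $g$ is absolutely continuous with
\begin{align*}
g'(t) = a f(t) + b f^\gamma(t) \leq a g(t) + b g^\gamma(t), \qquad g(0) = c.
\end{align*}

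Next I would introduce $h(t) := g(t)^{1-\gamma}$ (assuming first $g(t) > 0$; the case where $g$ vanishes on an initial interval can be treated separately or by adding a small $\varepsilon > 0$ to $c$ and passing to the limit). A direct computation yields
\begin{align*}
h'(t) = (1-\gamma) g(t)^{-\gamma} g'(t) \leq (1-\gamma) g(t)^{-\gamma}\bigl(a g(t) + b g(t)^\gamma\bigr) = (1-\gamma)\bigl(a h(t) + b\bigr),
\end{align*}
that is, the linear differential inequality $h'(t) - (1-\gamma) a\, h(t) \leq (1-\gamma) b$ with initial value $h(0) = c^{1-\gamma}$.

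Finally, multiplying through by the integrating factor $e^{-(1-\gamma) a t}$ and integrating from $0$ to $t$, I obtain
\begin{align*}
h(t) \leq c^{1-\gamma} e^{(1-\gamma) a t} + \frac{b}{a}\bigl(e^{(1-\gamma) a t} - 1\bigr),
\end{align*}
and since $f \leq g$ implies $f^{1-\gamma} \leq g^{1-\gamma} = h$, this yields the desired conclusion. The case $a = 0$ is handled either by passing to the limit $a \downarrow 0$ in the above bound (which produces $c^{1-\gamma} + (1-\gamma)bt$) or by integrating directly. I do not anticipate any serious obstacle: the only mild technicality is ensuring $h$ is well-defined and differentiable when $\gamma > 0$ and $g$ may vanish initially, which is resolved by the standard trick of replacing $c$ by $c + \varepsilon$ and sending $\varepsilon \downarrow 0$ at the end.
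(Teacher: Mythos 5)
Your proof is correct and complete. The paper does not actually prove this lemma; it simply cites \cite[Theorem 21]{Dragomir}, and the argument you give --- majorizing $f$ by the right-hand side $g$, substituting $h=g^{1-\g}$ to linearize the differential inequality, and applying the integrating factor $e^{-(1-\g)at}$ --- is precisely the standard derivation underlying that reference. Your handling of the two minor technicalities (the case where $g$ could vanish when $c=0$, via $c\mapsto c+\ep$, and the degenerate case $a=0$, via a limit or direct integration) is appropriate, so there is no gap.
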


\begin{proof}[Proof of Proposition~\ref{prop:bdk}] Fix $T,\e>0$.
For $\Lambda, K>0$ to be determined later, we set
\begin{align}
\O_{\Lambda, N}&=\{ \o \in \O\, :\, A(N)\leq \Lambda \}, \label{Olambda}
\end{align}
where we have defined 
\begin{align*}
A(N):=\frac{  \|Iu_{0}^{\o}\|_{L^{2}}}{2^{\frac 12}\phi_{2\al}^{\frac 12}(N)},
\end{align*}
and let $\O_{K,T,\al,\e}$ be defined as above. 
We now fix $\o \in \O_{K,T,\al,\e} \cap \O_{\Lambda, N}$ and $k\geq k_0$.
From \eqref{energy} and \eqref{Nzbound}, we have 
\begin{align*}
E_{k} (t) \leq & \, N^{-\frac{3}{2}+} \int_{0}^{t} E^{\frac{3}{2}}_{k}(t') dt'  \tag{I}  \\
&+N^{1-2\al+}K\int_{0}^{t}E^{\frac 12}_{k} (t')dt'   \tag{II}\\
& +\big(N^{-(s-\frac{1}{2}+)}K+A(N)\phi_{2\al}^{\frac 12}(N)   \big)\int_{0}^{t} E_{k}(t')dt'. \tag{III}
\end{align*}
 Let 
\begin{align*}
\cj{T}_{k} =\sup \{ t>0\, :\, E(Iv_k)(t) \leq C_{K,T,s,\al}N^{2-} \},
\end{align*}
where we stress that $C_{K,T,s,\al}$ is independent of any $k\geq k_{0}$. 
From $E(Iv_k)(0)=0$ and continuity in time of $E_{k}(t)$ (since $v_k\in C_{T}H^{1}_{x}$, at least), $\cj{T}_k >0$. For $t\in [0,\cj{T}_k]$, (I) is dominated by (II) and hence 
\begin{align}
E_k (t) \leq C_{K,T,s,\al}\Lambda \phi_{2\al}^{\frac 12}(N)\int_{0}^{t}E_k (t')dt' +C_{K,T,s,\al}N^{1-2\al+}\int_{0}^{t}E_{k}^{\frac 12} (t')dt'  \label{ode}
\end{align}
By Lemma~\ref{gronwall}, this implies
\begin{align*}
E_{k}^{\frac{1}{2}}(t)\leq \frac{N^{1-2\al+}}{\phi^{\frac 12}_{2\al}(N)\Lambda}\big( e^{\frac{1}{2}C_{K,T,s,\al}\phi^{\frac 12}_{2\al}(N)\Lambda t}-1   \big).
\end{align*}
Now by continuity $E_k (\cj{T}_k)=C_{K,T,s,\al}N^{2-}$ and therefore the above inequality implies 
\begin{align}
\cj{T}_{k} \geq \frac{ 2 \log \big( 1+C_{K,T,s,\al}^{\frac 12}N^{2\al-} \phi^{\frac 12}_{2\al}(N)\Lambda  \big)}{C_{K,T,s,\al}\phi^{\frac 12}_{2\al}(N)\Lambda}. \label{T}
\end{align}
Notice that this lower bound is independent of $k\geq k_0$ and as our $k$ was arbitrary, $\cj{T}:=\inf_{k\geq k_0} \cj{T}_k$ is bounded below by the same quantity. Now given $\e>0$, Proposition~\ref{prop:stochobjects}, Lemma~\ref{lemma:izintmoment} and \eqref{smallset} allow us to choose $K=K(\e,T)$ and $\Lambda=\Lambda(\e,s)$ large enough so that 
\begin{align*}
\prob( \O_{K,T,\frac{1}{2},\e}^c ) + \prob(\O_{\Lambda, N}^{c}) <\frac{\e}{2}+\frac{\e}{2}=\e. 
\end{align*}
Thus when $\al =\tfrac 12$, \eqref{phi} and \eqref{T} imply 
\begin{align*}
\cj{T} \geq C_{\e,T,s}\log^{\frac 12}(N). 
\end{align*}
We now choose 
\begin{align}
N=N(\e,T,s)=\exp \bigg( \frac{4T^{2}}{C^{2}_{\e,T,s}} \bigg)\label{Nchoice}
\end{align} so that $\cj{T}\geq 2T$ and hence
 \begin{align*}
\sup_{0\leq t\leq T}E(Iv_k )(t)\leq  C(\ep,T), \qquad \text{on} \qquad  \tilde{\O}_{T,\e}:=\O_{K(\e,T),T,\frac 12 ,\e}\cap \O_{\Lambda(\e,s), N(\e,T)},
\end{align*}
for any $k\geq k_0(T,\e)$.
By \eqref{hstoIh1}, the definition of the modified energy $E(Iv_k)$ and \eqref{Nchoice} we obtain \eqref{bdvk}. This completes the proof of Proposition~\ref{prop:bdk}.

\end{proof}

\subsection{Proof of Theorem \ref{Thm:ASGWP}}\label{subsec:globalproof}

Our aim here is to iterate the local theory for solutions $v$ to \eqref{BBMv} by showing that the bound \eqref{bdvk} also holds for $v$ in place of $v_{k}$. 
To do this, we decompose the whole interval $[0,T]$ into $\ceil{\frac{T}{\dl}}$-many subintervals $I_{j}:=[j\dl, (j+1)\dl]\cap [0,T]$ where $\dl$ is to be determined. Let
\begin{align*}
\O_{\text{lwp}}:= \cap_{j=0}^{\ceil{\frac{T}{\dl}}} \{ \o\in \O \, : \, \|z\|_{C_{I_{j}}W^{\al-\frac 12-,r(s)}}+\|\NN(z)\|_{C_{I_j}H^{s}} \leq L \} 
\end{align*}
With $\dl=\dl(L)\les L^{-1}$ and for $\o \in \O_{T,\e}:=\O_{\text{lwp}}\cap \tilde{\O}_{T,\frac{\e}{2}}$, we have by Proposition~\ref{prop:detlocal}, that $v$ exists on $[0,\dl]$ and solves \eqref{BBMv}. By Theorem~\ref{thm:smoothapp}, we may take the limit $k\rightarrow \infty$ in \eqref{bdvk} to obtain
\begin{align*}
\sup_{t\in [0,\dl]}\|v(t)\|_{H^{s}(\T)} \leq C(T,\e)<\infty, 
\end{align*}
Then by reducing $\dl$ further so that 
\begin{align*}
\dl \sim (C(T,\e)+L)^{-1},
\end{align*}
we conclude $v$ now exists on $[0,2\dl]$ and using Theorem~\ref{thm:smoothapp} we may take the limit $k\rightarrow \infty$ in \eqref{bdvk} to obtain
\begin{align*}
\sup_{t\in [0,2\dl]}\|v(t)\|_{H^{s}(\T)} \leq C(T,\e)<\infty.
\end{align*}
Iterating in this manner finitely many times shows $v$ exists on $[0,T]$ and satisfies the bound  
\begin{align*}
\sup_{t\in [0,T]}\|v(t)\|_{H^{s}(\T)} \leq C(T,\e)<\infty. 
\end{align*}
It remains to check that $\O_{\text{lwp}}^{c}$ stays small. 
By Proposition~\ref{prop:stochobjects}, we have
\begin{align*}
\prob(\O_{\text{lwp}}^c) & \leq \sum_{j=0}^{\ceil{\frac{T}{\dl}}} \prob( \|z\|_{C_{I_{j}}W^{\al-\frac 12-,r(s)}}+\|\NN(z)\|_{C_{I_j}H^{s}} >L ) \\
& \les \frac{T}{\dl} e^{ -C\frac{L}{\dl^{c}}} \\
& \les T(L+C(T,\e))e^{ -CL(L+C(T,\e))^{c} } \\
& \les T(L+C(T,\e))e^{-CL} <\frac{\e}{2},
\end{align*}
by choosing $L=L(T,\e)$ sufficiently large. 
Thus 
\begin{align*}
\prob(\O_{T,\e}^{c})\leq \prob(\O_{\text{lwp}}^c)+\prob(\tilde{\O}_{T,\frac{\e}{2}}^c)<\frac{\e}{2}+\frac{\e}{2}=\e.
\end{align*}

To obtain the almost sure existence is a standard argument. We detail it for the convenience of the reader.  
The set $\O_{T,\ep}$ depends on $K, \Lambda$ and $N$ which in turn depend on $T$ and $\e$.
Given $\ep>0$, let $T_{j}=2^{j}$ and set $\ep_{j}=\ep/T_{j}$. From the above, we obtain sets $\Omega_{T_{j},\ep_{j}}$ by choosing $K_j=K(T_j,\ep_j)$, $\Lambda_j =\Lambda(\ep_j, s)$ large enough so that $\prob(\Omega_{T_{j},\ep_{j}}^{c})<\ep_j$ and then $N_j=N(T_j,\ep_j)$ as in \eqref{Nchoice} (with $T$ and $\ep$ replaced by $T_j$ and $\ep_j$) which implies, as above, $v$ exists on $[0,T_j]$ and satisfies 
\begin{align*}
\sup_{t\in [0,T_j]}\|v(t)\|_{H^{s}(\T)} \leq C(T_j,\ep_j)<\infty. 
\end{align*}
Then the set $\O_{\ep}=\cap_{j=1}^{\infty}\O_{T_{j},\ep_{j}}$ has measure $\prob(\O_{\ep}^{c})<\ep$ with the property that for any $\o\in \O_{\ep}$, there exists a unique solution $v^{\o}\in C([0,\infty); H^{s}_{x}(\T))$ to \eqref{BBMv}, and hence $u^{\o}=z^{\o}+v^{\o}$ solves \eqref{BBMD} on $[0,\infty)$. Then, the same property is true on $\Sigma:=\cup_{\ep>0}\O_{\ep}$ and $\prob(\Sigma)=1$.

\begin{remark}\rm \label{remark:lowhigh}
As mentioned earlier, the conservation of the energy \eqref{econs2} yields global well-posedness of the BBM equation~\eqref{BBM0} in $H^{1}(\T)$. For data in\footnote{Their argument works on both $\T$ and $\R$; see also \cite{Roumegoux}.} $L^{2}(\T)$, Bona and Tzvetkov~\cite{BonaTzvet} employed a low-high (or long wave-short wave) splitting argument to globalise solutions to \eqref{BBM0}. Their idea was to split the data $u_0\in L^{2}(\T)$ as 
\begin{align*}
u_{0}=\P_{\leq N}u_0+\P_{>N}u_0
\end{align*}
and to write the (local) solution as
\begin{align*}
u= u_{\text{low}}+u_{\text{high}},
\end{align*}
where the high part $u_{\text{high}}$ solves the original (nonlinear) BBM equation \eqref{BBMD} with $u_{\text{high}}|_{t=0}=\P_{>N}u_0$ and the low part $u_{\text{low}}$ solves the difference equation 
\begin{align} 
\dt u_{\text{low}}-\partial_{xxt} u_{\text{low}}+\dx u_{\text{low}}+u_{\text{low}}\dx u_{\text{low}}+ u_{\text{low}}\dx u_{\text{low}} +\dx(u_{\text{low}}u_{\text{high}})=0, \label{diffequlow}
\end{align}
with $u_{\text{low}}|_{t=0}=\P_{\leq N}u_{0}$. Then given any $T>0$, by choosing $N=N(T)$ sufficiently large, we ensure that $\P_{>N}u_0$ is so small in $L^{2}(\T)$ that by local theory, $u_{\text{high}}(t)\in L^{2}(\T)$ exists on $[0,T]$. Meanwhile, since $\P_{\leq N}u_0$ is smooth, we can solve \eqref{diffequlow} locally in time within $H^{1}$. To show that $u_{\text{low}}$ exists up to time $T$, a computation shows 
\begin{align*}
\begin{split}
\bigg\vert\frac{d}{dt} \bigg( \frac{1}{2} \|u_{\text{low}}(t)\|_{H^{1}}^{2} \bigg) \bigg\vert &= \bigg\vert \int u_{\text{high}}u_{\text{low}}\dx u_{\text{low}} \, dx \bigg\vert \\
&\leq  \|u_{\text{high}} (t)\|_{L^2} \|u_{\text{low}}(t)\|_{L^{\infty}} \|\dx u_{N}(t)\|_{L^2}  \\
&\leq  C\|u_{\text{low}}(t)\|_{L^{2}}\|u_{\text{low}}(t)\|_{H^1}^{2}. 
\end{split}
\end{align*}
By Gronwall's inequality, we get the a priori bound
\begin{align*}
\sup_{t\in [0,T]} \|u_{\text{low}}(t)\|_{H^1} & \leq \|\P_{\leq N}u_0\|_{H^1} \exp \bigg( C \int_{0}^{T}\|u_{\text{high}} (t')\|_{L^2} \, dt'  \bigg) \\
& \les \|\P_{\leq N}u_0\|_{H^1} \exp (CT\|\P_{>N}u_{0}\|_{L^{2}}).
\end{align*} 
Thus $u_{\text{low}}$ lives up to time $t=T$ which completes the argument. Let us emphasise here that $u_{\text{low}}$ does not solve \eqref{BBMD}, but rather the perturbed equation \eqref{diffequlow} with the additional linear term $\dx(u_{\text{low}}u_{\text{high}})$. So whilst the `energy' $E(u_{\text{low}}(t))=\tfrac{1}{2}\|u_{\text{low}}(t)\|_{H^{1}}^{2}$ is no longer conserved, its growth can still be controlled. 

A natural modification of the $I$-method based argument above would be to include the low-high splitting idea in~\cite{BonaTzvet}. With $M>0$ to be fixed later, we would set 
\begin{align*}
u_{0}^{\o}=\P_{\leq M}u_{0}+\P_{>M}u_{0}^{\o},
\end{align*} and write 
\begin{align*}
u= S(t)\P_{>M}u_{0}^{\o}+S(t)\P_{\leq M}u_{0} +v_{\text{high}}+v_{\text{low}},
\end{align*}
where $v_{\text{high}}$ solves \eqref{BBMv} with $v_{\text{high}}|_{t=0}=\P_{>M}u_{0}^{\o}$ and $v_{\text{low}}$ solves a difference equation with $v_{\text{low}}|_{t=0}=\P_{\leq M}u_{0}$. Modifying the proof of Theorem~\ref{Thm:ASLWP} shows $v_{\text{high}}$ exists almost surely up to any time $T>0$ by choosing $M=M(T)$ large enough. We then try to show $v_{\text{low}}$ is global in-time by applying the $I$-method with the modified energy 
\begin{align*}
E(Iv_{\text{low}})(t)=\frac{1}{2}\| Iv_{\text{low}}(t)\|_{H^{1}(\T)}^{2}. 
\end{align*}
To bound the growth of $E(Iv_{\text{low}})$, we must deal with the term 
\begin{align}
\int_{\T} (\dx Iv_{\text{low}})\cdot Iv_{\text{low}} \cdot I \P_{>M}S(t)u_{0}^{\o}. \label{rmk1}
\end{align}
With $2N<M$, the proof of Lemma~\ref{lemma:izintmoment} gives
\begin{align*}
\prob\big( M^{ \al+\frac{1}{2}-s}N^{-(1-s)}\|I \P_{>M}S(t)u_{0}^{\o}\|_{L^{2}_{x}}> \ld \big) \leq \frac{C}{\ld^{2}}.
 \end{align*}
Choosing $N$ such that $M\sim N^{k}$ we obtain a non-positive power of $N$ in estimating \eqref{rmk1} provided \begin{align}
k\geq 2 \bigg( 1-\frac{\dl}{1-2\al+2\dl} \bigg). \label{k1}
\end{align}
 Applying Gronwall's inequality, we get a blow-up time 
\begin{align*}
T^{\ast}(N)\sim \frac{\log\bigg( 1+\frac{B(N)^{2}}{ \sqrt{E(Iv_{\text{low}})(0)} N^{-\beta }}  \bigg)}{B(N)},
\end{align*}
where $B(N)$ is almost surely bounded when $k\geq 2$ and grows polynomially in $N$ otherwise.
We conclude provided there exists $\rho\geq 0$ such that 
\begin{align*}
\sqrt{E(Iv_{\text{low}})(0)}N^{-\beta} \les N^{1-s}M^{s+\frac{1}{2}-\al+}N^{-\beta} \|u_{0}^{\o}\|_{H^{\al-\frac{1}{2}-}(\T)}\les N^{-\rho},
\end{align*}
but as $\beta=\tfrac{3}{2}-$ (see Lemma~\ref{lemma:Iv2commutator}), we require 
\begin{align*}
k\leq 1+\frac{2\al}{2\al+1-2\dl}.
\end{align*}
This final condition fails to agree with \eqref{k1} unless $\al=\tfrac{1}{2}$. Thus, we elected to present the simpler argument in this paper.

\end{remark}

\section{Norm inflation at arbitrary data }\label{section:norminf}
We establish in this section norm inflation at arbitrary data for the BBM equation~\eqref{BBM0} in negative regularity spaces. 
\begin{theorem}\label{thm:norminfflp} Let $\M=\R$ or $\T$, $1\leq p< \infty$, $s<0$ and fix $u_{0}\in \FL^{s,p}(\M)$. Then, given any $\e >0$, there exists a smooth solution $u_{\e}$ to \eqref{BBM0} on $\M$ and $t_{\e}\in (0,\ep)$ such that \begin{align*}
\|u_{\e}(0)-u_{0}\|_{\FL^{s,p}(\M)}<\e, \qquad \text{ and } 
\qquad \| u_\ep(t_{\e})\|_{\FL^{s,p}(\M)} > \ep^{-1}. 
\end{align*} 
\end{theorem}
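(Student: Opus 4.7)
The plan is to adapt the argument of Oh~\cite{HiroInflation} used in the proof of Theorem~\ref{thm:norminf} to the Fourier-Lebesgue setting. Since $C^\infty(\M) \cap L^2(\M)$ is dense in $\FL^{s,p}(\M)$, I may approximate the reference datum by a smooth $\tilde{u}_0$ with $\|u_0 - \tilde{u}_0\|_{\FL^{s,p}} < \e/2$. Hence it suffices to construct, for any $\e > 0$, a smooth perturbation $\phi_\e$ with $\|\phi_\e\|_{\FL^{s,p}} < \e/2$ and a time $t_\e \in (0,\e)$ so that the smooth solution $u_\e$ to \eqref{BBM0} with data $\tilde{u}_0 + \phi_\e$ satisfies $\|u_\e(t_\e)\|_{\FL^{s,p}} > \e^{-1}$.

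I would expand $u_\e$ as the tree-indexed Picard series $u_\e(t) = \sum_{j\geq 1} \Xi_j(t)[\tilde u_0 + \phi_\e]$ as in~\cite{HiroInflation}, where $\Xi_j$ denotes the $j$-th multilinear iterate built from the free propagator $S(t)$ and the nonlinearity $\tfrac{1}{2}\vp(D_x)(\cdot\,\cdot)$. The instability is driven entirely by the second iterate evaluated on the perturbation,
\[
\Xi_2(t)[\phi_\e] = -\frac{i}{2}\int_0^t S(t-t')\,\vp(D_x)\bigl((S(t')\phi_\e)^2\bigr)\,dt',
\]
through a high-to-low frequency transfer. Concretely, on $\T$ I would take $\phi_\e = \lambda N^{-s}(e^{iN_1 x} + e^{-iN_2 x})$ with $N_1 = N+1$, $N_2 = N$; then $\|\phi_\e\|_{\FL^{s,p}} \sim \lambda$, while the frequency-$1$ coefficient of $\Xi_2(t)[\phi_\e]$ has magnitude $\sim t \lambda^2 N^{-2s}\,|\vp(1)|$ for $t$ below the inverse of the phase difference $\vp(N_1)-\vp(N_2) = O(N^{-2})$. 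Since $s<0$, the factor $N^{-2s}$ is a positive power of $N$, so this single Fourier mode contributes $\jb{1}^s t \lambda^2 N^{-2s}$ to $\|u_\e(t)\|_{\FL^{s,p}}$; taking $\lambda \sim \e$, $t_\e \sim \e$ and $N = N(\e)$ sufficiently large makes this exceed $\e^{-1}$. On $\R$, one replaces the exponentials by smooth bumps supported near frequencies $\pm N_j$ of comparable amplitude, and the same computation goes through after an additional $L^2$-in-frequency bookkeeping.

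The main obstacle is to show that $\|u_\e(t_\e) - \Xi_2(t_\e)[\phi_\e]\|_{\FL^{s,p}}$ is negligible compared with the gain above. The remainder splits into three classes: (i) the linear evolution $S(t)[\tilde u_0 + \phi_\e]$, whose $\FL^{s,p}$ norm is bounded by $\|\tilde u_0\|_{\FL^{s,p}} + \|\phi_\e\|_{\FL^{s,p}}$; (ii) multilinear iterates containing at least one factor of $\tilde u_0$, which remain small on $[0,t_\e]$ by the smoothness of $\tilde u_0$ together with an $\FL^{s,p}$-analogue of Lemma~\ref{lemma:bonatzvet} valid inside the LWP range of Remark~\ref{rmk:flp} (obtained from $\widehat{fg} = \hat f \ast \hat g$, Young's convolution inequality, and the gain $|\vp(n)| \lesssim \jb{n}^{-1}$; this is the bilinear bound of Lemma~\ref{lemma:flspineq}); and (iii) the pure perturbation iterates $\Xi_j(t)[\phi_\e]$ for $j \geq 3$. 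Since $\phi_\e$ is smooth, for (iii) I would measure $\Xi_j(t)[\phi_\e]$ in a stronger Fourier-Lebesgue space in which the multilinear estimate from (ii) applies, obtaining $\|\Xi_j(t)[\phi_\e]\|_{\FL^{s,p}} \lesssim (Ct\lambda N^{-s})^{j-1}\lambda N^{-s}$; the tail $\sum_{j\geq 3}\|\Xi_j(t)[\phi_\e]\|_{\FL^{s,p}}$ is then dominated by $t\lambda^2 N^{-2s}$ as soon as $Ct\lambda N^{-s} \ll 1$. The delicate bookkeeping, as in~\cite{HiroInflation}, is to fix the parameters in the order $\lambda \to N \to t_\e$ so that the inflation bound on $\Xi_2$, the smallness of $\|\phi_\e\|_{\FL^{s,p}}$, the upper bound $t_\e < \e$, and the convergence of the Picard tail all hold simultaneously; these competing requirements are compatible precisely because $N^{-s} \to \infty$ as $N \to \infty$ whenever $s < 0$.
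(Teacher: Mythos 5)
Your proposal follows the same route as the paper's proof: reduce by density to a smooth reference datum, expand the solution in the tree-indexed Picard series of \cite{HiroInflation}, isolate the second iterate of a high-frequency perturbation as the source of a high-to-low frequency transfer, and control the linear term, the mixed iterates (via the Wiener-algebra bilinear bounds behind Lemma~\ref{lemma:flspineq} and \eqref{xi3}) and the pure-perturbation tail. The one genuinely different ingredient is your choice of perturbation on $\T$: two isolated modes at $N+1$ and $-N$ instead of the paper's characteristic functions of the intervals $\pm N+Q_A$ of width $\sim A$ in \eqref{phin}. On $\T$ this is a legitimate simplification --- it removes the parameter $A$ and collapses the paper's three-case analysis on $f_p(A)$ in \eqref{fpA} into a single computation, since the $\FL^{s,p}$ norm of finitely many modes does not see $p$ --- but it forces a separate construction on $\R$ with bumps of positive width, at which point you are back to the paper's two-parameter family $(R,A)$. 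Two smaller corrections: (a) your $\phi_\e$ is not real-valued as written (take cosines, so that $\ft{\phi_\e}$ is even and real); (b) the resonance function at output frequency $1$ is $\theta(1,N+1)=\vp(N+1)+\vp(-N)-\vp(1)\approx-\vp(1)=-\tfrac12$, not $\vp(N_1)-\vp(N_2)=O(N^{-2})$. This misidentification is harmless here because BBM has bounded dispersion, so $|\theta|\lesssim1$ and the Duhamel integral still produces a factor $\sim t$ for all $t\lesssim1$, but the phase you wrote down is not the relevant one.

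The one step that would actually fail as stated is the parameter choice $t_\e\sim\e$. With $\lambda\sim\e$ and $t_\e\sim\e$, your inflation requirement $t_\e\lambda^2N^{-2s}\gg\e^{-1}$ forces $N^{-s}\gg\e^{-2}$, while your tail-domination and series-convergence requirement $Ct_\e\lambda N^{-s}\ll1$ forces $N^{-s}\ll\e^{-2}$; these are incompatible, and shrinking $\lambda$ only makes matters worse. The resolution --- which is what the paper does, e.g.\ $T=N^{-1-2\dl}$ in Case 1 of Subsection~\ref{sec:52} --- is to let the time shrink with $N$: setting $X=t\lambda N^{-s}$, the two conditions read $X\ll1$ and $X\cdot\lambda N^{-s}\gg\e^{-1}$, so one first takes $N^{-s}\gg\e^{-2}$ and then $t_\e$ a small constant multiple of $(\lambda N^{-s})^{-1}$, which gives $t_\e\ll\e$ for free. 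With that correction your list of competing requirements does close, so the gap is quantitative rather than conceptual, but it must be fixed for the argument to go through.
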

Then, Theorem~\ref{thm:norminf} follows from Theorem~\ref{thm:norminfflp} upon putting $p=2$. 
In this section, we present the proof of Theorem~\ref{thm:norminfflp}. 
It suffices to establish the following result. We denote by $\mathcal{C}(\T)=C^{\infty}(\T)$ and $\mathcal{C}(\R)=\S(\R)$.

\begin{proposition}\label{prop:sequenceinf}  Let $\M=\R$ or $\T$, $1\leq p< \infty$, $s<0$, and fix $u_{0}\in \mathcal{C}(\mathcal{M})$. Then given any $n\in \N$, there exists a smooth solution $u_{n}$ to the BBM equation \eqref{BBM0} and $t_{n}\in (0,\frac{1}{n})$ such that \begin{equation}
\|u_{n}(0)-u_{0}\|_{\FL^{s,p}(\M)}<\frac{1}{n}, \quad \text{and} \quad \|u_{n}(t_{n})\|_{\FL^{s,p}(\M)}>n. \label{nnorminf}
\end{equation}
\end{proposition}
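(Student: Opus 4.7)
The plan is to follow the tree-expansion approach of Oh~\cite{HiroInflation} for the cubic NLS, adapted to the quadratic nonlinearity of BBM. Writing $u_n(0) = u_0 + \phi_n$ with $\phi_n$ a carefully chosen high-frequency perturbation, I would expand $u_n(t)$ via iteration of the Duhamel formula~\eqref{Duhamel1} as the formal series
\begin{equation*}
u_n(t) = \sum_{j=1}^{\infty} \Xi_j(u_0 + \phi_n)(t),
\end{equation*}
where $\Xi_1(f) = S(t)f$ and each $\Xi_j$ is a $j$-linear operator built from iterated time integrals of $\tfrac{1}{2}\vp(D_x)((\cdot)^2)$, indexed by binary trees as in \cite{HiroInflation}. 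The aim is to choose $\phi_n$ so that the quadratic term $\Xi_2(\phi_n,\phi_n)(t_n)$ develops a large low-frequency mode in $\FL^{s,p}$ via a high-to-low energy transfer, while every other term in the expansion is strictly smaller at the chosen $(\phi_n, t_n)$.

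Concretely, on $\T$ I would take
\begin{equation*}
\phi_n := A_N\bigl(e^{i(N+1)x} + e^{-iNx}\bigr), \qquad A_N := N^{-s-\delta},
\end{equation*}
for $\delta>0$ small and $N\in\N$ large, so that $\|\phi_n\|_{\FL^{s,p}(\T)} \sim A_N \jb{N}^s \sim N^{-\delta}\to 0$. The cross term of $(S(t')\phi_n)^2$ equals $2A_N^2 e^{-it'[\vp(N+1)-\vp(N)]}\,e^{ix}$, whose phase is $O(t' N^{-2})$ since $\vp(n)=n/(1+n^2)$; applying $\vp(D_x)$ yields the low-frequency factor $\vp(1)=\tfrac12$, and the Duhamel integration gives
\begin{equation*}
\Xi_2(\phi_n,\phi_n)(t) = -\tfrac{i}{2}A_N^2 t\,\bigl(e^{ix-it\vp(1)} + e^{-ix+it\vp(1)}\bigr) + O\bigl(A_N^2 t(t+N^{-1})\bigr),
\end{equation*}
so $\|\Xi_2(\phi_n,\phi_n)(t)\|_{\FL^{s,p}} \gtrsim A_N^2 t = N^{-2s-2\delta} t$. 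Since $s<0$, I can pick $t_n = N^{-\sigma}$ with $0<\sigma<-2s-2\delta$, so that both $t_n\in(0,1/n)$ and $A_N^2 t_n \to \infty$ as $N\to\infty$. On $\R$ the exponentials are replaced by narrow spectral bumps of height $A_N$ concentrated near $\pm N$; the convolution of two such bumps still carries mass of order $A_N^2$ near frequency $\pm 1$, and the same resonance persists.

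To upgrade this to a statement about $u_n$ itself, the remainder $u_n(t_n) - \Xi_2(\phi_n,\phi_n)(t_n)$ must be shown to be $o(A_N^2 t_n)$ in $\FL^{s,p}$. I would decompose each $\Xi_j(u_0+\phi_n)$ by multilinearity into sub-terms involving $k$ copies of $u_0$ and $j-k$ copies of $\phi_n$. The pure-$u_0$ subseries sums to the smooth local solution of BBM with data $u_0$, which stays arbitrarily close to $u_0$ in $\FL^{s,p}$ for $t_n \to 0$. The bilinear estimate of Lemma~\ref{lemma:flspineq}, iterated along the tree, gives the multilinear bound
\begin{equation*}
\|\Xi_j(f_1,\ldots,f_j)(t)\|_{\FL^{s,p}} \lesssim C^j t^{j-1}\prod_{i=1}^{j}\|f_i\|_{\FL^{s,p}},
\end{equation*}
which controls all mixed terms with $k\geq 1$ as well as the pure-$\phi_n$ contributions of degree $j\geq 3$. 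Each such term is then bounded by $N^{-\delta j -\sigma(j-1)}\|u_0\|_{\FL^{s,p}}^k$, summable and smaller than $N^{-2s-2\delta-\sigma}$ once $\delta$ is small and $\sigma$ sits in the window $(0,-2s-2\delta)$.

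The hard part will be the bookkeeping needed to separate the dominant term $\Xi_2(\phi_n,\phi_n)$ from the higher-order pure-$\phi_n$ contributions uniformly in $j$: the naive multilinear bound above loses a factor of $A_N$ per input, so one must verify that no higher-order resonance among multiple high-frequency modes reproduces (or exceeds) the low-frequency enhancement seen at $j=2$. This is exactly where the tree-combinatorial framework of~\cite{HiroInflation} enters, and its transplant to the BBM multiplier $\vp(n)$ (which, unlike NLS, vanishes at $n=0$ and decays like $1/|n|$ at infinity) is the main technical task. A secondary point is verifying the $\R$ case, where the loss of exact frequency localisation of exponentials requires a careful narrow-bump construction that preserves both the smallness of $\|\phi_n\|_{\FL^{s,p}(\R)}$ and the low-frequency resonance in $\Xi_2$.
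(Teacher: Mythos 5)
Your overall strategy is the same as the paper's (tree-indexed power series, a two-bump high-frequency perturbation, and a high-to-low transfer in the second Picard iterate), but the step you use to control the tail of the series contains a decisive error. The multilinear bound
$\|\Xi_j(f_1,\ldots,f_j)(t)\|_{\FL^{s,p}} \lesssim C^j t^{j-1}\prod_i\|f_i\|_{\FL^{s,p}}$
cannot be obtained by iterating Lemma~\ref{lemma:flspineq}: that bilinear estimate requires $s\geq\max\{0,\tfrac12-\tfrac1p\}$, and Lemma~\ref{lemma:bonatzvet} records explicitly that the product estimate is \emph{false} for $s<0$. Worse, if such a bound did hold it would be self-defeating: since $\|\phi_n\|_{\FL^{s,p}}\to 0$, it would force $\|\Xi_2(\phi_n,\phi_n)(t_n)\|_{\FL^{s,p}}\to 0$ as well, contradicting the lower bound $\gtrsim A_N^2 t_n$ that your whole argument rests on. The norm inflation exists precisely because no such $\FL^{s,p}$-multilinear control is available for $s<0$. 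The paper's substitute (Proposition~\ref{prop:multlin2}, estimate \eqref{flsp4}) is to sum the series absolutely in $\FL^{1}$, where the product estimate does hold, and then convert to $\FL^{s,p}$ by combining the $\FL^\infty$ smoothing bound \eqref{xi2} with the observation that $\supp\mathcal{F}\{\Xi_j(\phi_n)\}$ lives in at most $2^{j+1}$ intervals of width $\sim A$, producing the factor $f_p(A)$ of \eqref{fpA}; the mixed $u_0$/$\phi_n$ terms are handled by the difference estimate \eqref{xi3} in $\FL^p$. You would need to import this mechanism (or an equivalent frequency-support count for your pure exponentials) for the proof to close.

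Two smaller points. First, your phase computation for the cross term is wrong: the resonance function is $\theta=\vp(N+1)+\vp(-N)-\vp(1)\approx-\vp(1)=-\tfrac12$, not $O(N^{-2})$ — you dropped the output-frequency term; this happens to be harmless because $t_n\to 0$, so $\big|\int_0^{t_n}e^{-it'\theta}dt'\big|\sim t_n$ anyway, but it should be stated correctly (compare \eqref{thetaphase} and Proposition~\ref{prop:norminf}, where the smallness $|\theta|\lesssim A^{-1}$ is genuinely used on the region $A_2(\xi)$). Second, your window for $\sigma$ is incomplete: besides $A_N^2 t_n\to\infty$ you need $t_n\lesssim\|u_0+\phi_n\|_{\FL^1}^{-1}\sim A_N^{-1}$ for the series to converge at all, i.e.\ $\sigma>-s-\delta$, not merely $\sigma>0$; the correct window is $-s-\delta<\sigma<-2s-2\delta$, which is nonempty only for $\delta<-s$. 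These are the analogues of the paper's conditions (ii)--(vi) in Subsection~\ref{sec:52}, which must all be verified simultaneously.
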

To see how Theorem \ref{thm:norminfflp} follows from Proposition \ref{prop:sequenceinf}, fix $u_{0}\in 
\FL^{s,p}(\M)$ and $s<0$. By density, we can find a sequence $\{ u_{0,k}\}_{k\in \N}\in \CC(\MM)$ such that, for $k$ sufficiently large, we have
\begin{equation}
\|u_{0,k}-u_{0}\|_{\FL^{s,p}(\M)}<\frac{1}{k}. \label{PTOT1}
\end{equation}
For each fixed $k$, Proposition \ref{prop:sequenceinf} implies there exists solutions $\{u_{n,k}\}_{n\in \N}$ to \eqref{BBM0} such that 
\begin{equation}
\|u_{n,k}(0)-u_{0,k}\|_{\FL^{s,p}(\M)}<\frac{1}{n} \qquad \text{and} \qquad \|u_{n,k}(t_{n})\|_{\FL^{s,p}(\M)}>n. \label{PTOT2}
\end{equation}
Now given $\ep>0$, set $u_{\ep}=u_{n,n}$ where $n\in \N$ is fixed such that $n\geq \frac{1}{2\ep}$. Combining \eqref{PTOT1} and \eqref{PTOT2} we obtain \eqref{nnorminf}, completing the proof.

In order to prove Proposition~\ref{prop:sequenceinf}, we follow the argument in~\cite{HiroInflation} which we set up in the next subsection and complete in Subsection~\ref{sec:52}.

\subsection{Binary trees, power series expansions and multilinear estimates}\label{sec:51}
In this section, we will briefly describe the power series expansion indexed by binary trees, arising in the works \cite{HiroInflation, Christ1}. We then establish multilinear estimates controlling the terms in the power series. 
We begin by establishing the following local well-posedness result for BBM~\eqref{BBM0} in the Fourier-Lebesgue spaces. 

\begin{lemma}\label{lemma:lwpflp} Let $\M =\R $ or $\T$, $1\leq p\leq \infty$ and $s\geq \max\{0, \frac{1}{2}-\frac{1}{p} \}$, with strict inequality when $p>2$. Then for each $u_{0}\in \mathcal{F}L^{s,p}(\MM)$, there exists a time $T\sim \|u_{0}\|_{\mathcal{F}L^{s,p}(\MM)}^{-1}>0$ and a unique solution $u\in C([0,T]; \mathcal{F}L^{s,p}(\MM))$ to the BBM equation \eqref{BBM0} with $u|_{t=0}=u_0$.
\end{lemma}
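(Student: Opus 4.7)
My plan is to run a standard Picard iteration on the Duhamel formulation
\begin{align*}
\Gamma u(t) := S(t)u_0 - \frac{i}{2}\int_0^t S(t-t')\vp(D_x)(u^2(t'))\, dt',
\end{align*}
inside a ball of radius $R \sim \|u_0\|_{\FL^{s,p}(\MM)}$ in $C([0,T];\FL^{s,p}(\MM))$. The linear propagator $S(t) = e^{-it\vp(D_x)}$ is a Fourier multiplier with unimodular symbol, so it is an isometry on $\FL^{s,p}(\MM)$ for every $s\in\R$ and every $1\leq p\leq\infty$; thus the linear part contributes only $\|u_0\|_{\FL^{s,p}}$, and the time integration will give a factor of $T$ when combined with a suitable bilinear bound on $\vp(D_x)(u^2)$. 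The strategy is therefore reduced to establishing the bilinear estimate
\begin{align}
\|\vp(D_x)(fg)\|_{\FL^{s,p}(\MM)} \les \|f\|_{\FL^{s,p}(\MM)}\|g\|_{\FL^{s,p}(\MM)}, \label{bilflp}
\end{align}
under the stated hypothesis on $(s,p)$, which is the Fourier--Lebesgue analogue of Lemma \ref{lemma:bonatzvet} (this is exactly the more general bilinear estimate the author has flagged as forthcoming). The main obstacle is the verification of \eqref{bilflp}, in particular at the endpoint $s = \tfrac12 - \tfrac1p$ for $p>2$ and on $\mathcal{M}=\R$ where the Fourier side integrability does not behave as favorably as on $\T$.

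To prove \eqref{bilflp} I would exploit the one-derivative gain $|\vp(\xi)|\les \jb{\xi}^{-1}$ together with the triangle inequality $\jb{\xi}\les \jb{\eta}+\jb{\xi-\eta}$ on the support of the convolution $\ft f *\ft g$. By symmetry one may assume $\jb{\eta}\leq \jb{\xi-\eta}$, which forces $\jb{\xi}\les \jb{\xi-\eta}$. For $s\geq 0$ one then dominates $\jb{\xi}^s\vp(\xi)\ft f(\eta)\ft g(\xi-\eta)$ by $\jb{\xi}^{-1}\ft f(\eta)\cdot \jb{\xi-\eta}^s\ft g(\xi-\eta)$, so that \eqref{bilflp} reduces to the weighted convolution estimate
\begin{align*}
\bigl\|\jb{\xi}^{-1}(F* G)(\xi)\bigr\|_{L^p(\ft\MM)} \les \|F\|_{L^p(\ft\MM)}\,\|G\|_{L^p(\ft\MM)}.
\end{align*}
For $p=1$ this is immediate from Young's inequality $L^1 * L^1 \subset L^1$ (discarding the weight). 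For $1<p\leq 2$ on $\T$ one uses H\"older's inequality in $\eta$ to get $\|F* G\|_{L^\infty}\leq \|F\|_{L^p}\|G\|_{L^{p'}}$, together with the embedding $\l^p\subset \l^{p'}$ for $p\leq 2$ and $\jb{n}^{-1}\in \l^p$ for $p>1$. For the continuous setting $\MM=\R$ one argues in a Littlewood--Paley fashion, splitting the integration into the regions $|\eta|\sim|\xi-\eta|$ (``high$\times$high$\to$low'') and $|\eta|\ll|\xi-\eta|$, and handles each via H\"older plus the fact that $\jb{\xi}^{-1}$ is $L^p$-summable away from the origin (the low--low part needing a direct bound). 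For $p>2$ one no longer has the $\l^p\subset \l^{p'}$ embedding, and this is where Lemma \ref{lemma:sumestimate} (or its $\R$-analogue) enters: summing $\int \jb{\eta}^{-s}\jb{\xi-\eta}^{-s}\, d\eta$ in the worst case produces $\jb{\xi}^{-(2s-\frac12+\frac1p)+}$ (after taking the convolution into the diagonal $L^{p}$), and one demands exactly $s > \tfrac12 - \tfrac1p$ to absorb the deficit against the gain from $\vp$.

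Granted \eqref{bilflp}, the remainder of the proof is routine: one estimates
\begin{align*}
\|\Gamma u\|_{C_T\FL^{s,p}} \leq \|u_0\|_{\FL^{s,p}} + C\, T\, \|u\|_{C_T\FL^{s,p}}^{2},
\end{align*}
and
\begin{align*}
\|\Gamma u_1 - \Gamma u_2\|_{C_T\FL^{s,p}} \leq C\, T\, (\|u_1\|_{C_T\FL^{s,p}} + \|u_2\|_{C_T\FL^{s,p}})\,\|u_1-u_2\|_{C_T\FL^{s,p}}.
\end{align*}
Choosing $R = 2\|u_0\|_{\FL^{s,p}}$ and $T \sim \|u_0\|_{\FL^{s,p}}^{-1}$, the map $\Gamma$ is a strict contraction on the closed $R$-ball of $C([0,T];\FL^{s,p}(\MM))$, yielding a unique fixed point, which is the desired solution. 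Continuity in time of $u$ follows from that of $S(t)u_0$ and from the fact that the Duhamel term lies in $C([0,T];\FL^{s,p}(\MM))$ whenever $u\in L^\infty_T\FL^{s,p}$, again by \eqref{bilflp}.
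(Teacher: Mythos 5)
Your proposal is correct and, at the level of its skeleton, matches the paper: the lemma is proved by a contraction mapping on the Duhamel formulation in a ball of $C([0,T];\FL^{s,p}(\MM))$ of radius $\sim\|u_0\|_{\FL^{s,p}}$, with everything reduced to the bilinear estimate $\|\vp(D_x)(fg)\|_{\FL^{s,p}}\les\|f\|_{\FL^{s,p}}\|g\|_{\FL^{s,p}}$, which is the paper's Lemma~\ref{lemma:flspineq}. The difference lies in how that bilinear estimate is proved. For $1\leq p\leq 2$ the paper argues by duality, bounding the trilinear form by $\|\vp\, w\|_{L^{p'/2}}\|\ft f\ast\ft g\|_{L^{p/(2-p)}}$ via H\"older and Young and using $\vp\in L^{p'}(\ft\MM)$ for $p'>1$; your symmetrisation $\jb{\xi}^{s}\les\jb{\xi-\eta}^{s}$ followed by $\|\jb{\xi}^{-1}(F\ast G)\|_{L^{p}}\leq\|\jb{\xi}^{-1}\|_{L^{p'}}\|F\ast G\|_{L^{p/(2-p)}}$ is the primal form of the same computation, and it handles $\MM=\R$ directly (no Littlewood--Paley splitting is needed, since $\jb{\xi}^{-1}\in L^{p'}(\R)$ once $p>1$). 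The genuine divergence is at $p>2$: the paper first strengthens the $p=2$ case to $\|\jb{\dx}^{-\frac12+}(fg)\|_{\FL^{s,2}}\les\|f\|_{\FL^{s,2}}\|g\|_{\FL^{s,2}}$ and then transfers to $\FL^{s,p}$ via the embedding $\l^{2}\subset\l^{p}$ and a H\"older step that consumes the weight $\jb{\xi}^{-(\frac12-\frac1p+\ep)}$, which is where $s>\frac12-\frac1p$ enters; you instead apply H\"older with exponents $(p,p,r)$, $\frac2p+\frac1r=1$, to the convolution and invoke Lemma~\ref{lemma:sumestimate} on the kernel $\jb{\eta}^{-sr}\jb{\xi-\eta}^{-sr}$, whose hypothesis $2sr>1$ is exactly $s>\frac12-\frac1p$. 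Both routes work; the paper's is shorter on $\T$, while yours makes the threshold transparent and adapts more directly to $\MM=\R$, where the sequence embedding $\l^{2}\subset\l^{p}$ has no continuum analogue (you do need the integral version of Lemma~\ref{lemma:sumestimate}, which is standard). One bookkeeping slip: with the H\"older exponent $r=\frac{p}{p-2}$ the kernel integral produces $\jb{\xi}^{-2s+(1-\frac2p)}$ in the worst case, not $\jb{\xi}^{-(2s-\frac12+\frac1p)}$; this does not affect the argument, as the resulting pointwise bound $\jb{\xi}^{-s-\frac2p}$ is in $L^{p}_{\xi}$ whenever $s\geq 0$, and the condition you ultimately impose is the correct one.
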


The proof of this result follows by a fixed point argument using the following bilinear estimate.   

\begin{lemma}\label{lemma:flspineq} Let $\M =\R $ or $\T$, $1\leq p\leq  \infty$ and $s\geq \max\{0, \frac{1}{2}-\frac{1}{p} \}$, with strict inequality when $p>2$. Suppose that $u,v\in\mathcal{F}L^{s,p}(\M)$. Then $\varphi(D_{x})(uv)\in \mathcal{F}L^{s,p}(\M)$ and \begin{equation}
\| \varphi(D_{x})(uv) \|_{\mathcal{F}L^{s,p}(\M)}\lesssim \| u\|_{\mathcal{F}L^{s,p}(\M)}\| v\|_{\mathcal{F}L^{s,p}(\M)}. \label{FLspineq}
\end{equation}
\end{lemma}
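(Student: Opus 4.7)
\smallskip

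\noindent\textit{Plan.} Writing $F(n) := \langle n\rangle^s|\ft u(n)|$ and $G(n) := \langle n\rangle^s|\ft v(n)|$ and exploiting the sub-additive inequality $\langle n\rangle^s\lesssim \langle n_1\rangle^s + \langle n_2\rangle^s$ for $s\geq 0$ (where $n_1+n_2=n$), together with $|\ft u|\leq F$ and $|\ft v|\leq G$ (since $s\geq 0$), the estimate \eqref{FLspineq} reduces, by symmetry in $u,v$, to the bilinear convolution bound
$$\biggl\|\varphi(n)\sum_{n_1+n_2=n}F(n_1)|\ft v(n_2)|\biggr\|_{L^p_n(\ft{\M})}\lesssim \|F\|_{L^p(\ft{\M})}\|G\|_{L^p(\ft{\M})}.$$

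For $1\leq p\leq 2$, I would use the crude pointwise bound $|\varphi(n)|\leq \langle n\rangle^{-1}$; H\"older's inequality in $n$ with exponents $(p', p/(2-p))$ splits this into $\|\langle\cdot\rangle^{-1}\|_{L^{p'}(\ft{\M})}$, which is finite precisely when $p<\infty$, times $\|F*|\ft v|\|_{L^{p/(2-p)}}$, which Young's convolution inequality controls by $\|F\|_{L^p}\|G\|_{L^p}$ (the endpoint interpretation at $p=2$ being $\|F*|\ft v|\|_{L^\infty}\leq \|F\|_{L^2}\|G\|_{L^2}$). The endpoint $p=1$ reduces to a direct Young estimate, and the case $p=\infty$, $s>\tfrac{1}{2}$ follows from a pointwise Cauchy-Schwarz bound using $\|\langle\cdot\rangle^{-s}\|_{L^2(\ft{\M})}<\infty$.

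For $p>2$ the Young step above fails since $L^p(\ft{\M})*L^p(\ft{\M})\not\subset L^p(\ft{\M})$, so one must exploit the sharper pointwise bound $|\varphi(n)|\lesssim |n|/\langle n\rangle^2$, which vanishes at $n=0$. I would decompose the convolution dyadically: in the high-low regions $|n_1|\gg |n_2|$ or $|n_1|\ll |n_2|$, one has $\langle n\rangle\sim \max(\langle n_1\rangle,\langle n_2\rangle)$ and the Sobolev weight collapses to a one-sided convolution handled by $\|F/\langle\cdot\rangle\|_{L^1}\lesssim \|F\|_{L^p}\|\langle\cdot\rangle^{-1}\|_{L^{p'}}$ followed by Young. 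For the diagonal high-high regime $|n_1|\sim |n_2|\sim M$, a Bernstein-type H\"older inequality $\|F\mathbf{1}_{|\cdot|\sim M}\|_{L^2(\ft{\M})}\lesssim M^{1/2-1/p}\|F\|_{L^p(\ft{\M})}$, combined with $|\varphi(n)|\langle n\rangle^s\lesssim |n|/\langle n\rangle^{2-s}$ and $\langle n_j\rangle^{-s}\sim M^{-s}$, yields a contribution of order $M^{1-2s-2/p}$ per dyadic scale $M$; the dyadic sum in $M$ converges precisely when $s>\tfrac{1}{2}-\tfrac{1}{p}$.

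The main obstacle is this high-high-to-low interaction when $p>2$: the sharp threshold $s>\tfrac{1}{2}-\tfrac{1}{p}$ arises from the exact competition between the Bernstein gain $M^{1-2/p}$ from frequency localization and the loss $M^{-2s}$ from distributing the Sobolev weight across both high-frequency factors, with the vanishing of $\varphi$ at the origin providing exactly the extra decay of the multiplier needed to close the $n$-summation inside each dyadic block. The failure of the estimate at $s=\tfrac{1}{2}-\tfrac{1}{p}$ (which can be seen by testing against $\ft u=\mathbf{1}_{[0,N-1]}/(N^{1/p}\langle\cdot\rangle^s)$ and $\ft v=\mathbf{1}_{[-N+1,0]}/(N^{1/p}\langle\cdot\rangle^s)$ and examining the low-frequency output) confirms the sharpness of the condition.
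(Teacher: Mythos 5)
Your argument is correct, and for $1\le p\le 2$ it coincides with the paper's: the paper distributes the weight via $\jb{\xi}^s\les\jb{\xi-\xi_1}^s\jb{\xi_1}^s$ and runs exactly your H\"older-in-$\xi$ (exponents $p'$ and $p/(2-p)$) plus Young computation, merely phrased by duality against $w\in L^{p'}$ with $\|w\|_{L^{p'}}=1$. For $p>2$ you take a genuinely different route. The paper performs no frequency decomposition: it first sharpens the $p=2$ case to the smoothing estimate $\|\jb{\dx}^{-\frac12+}(uv)\|_{\FL^{s,2}}\les\|u\|_{\FL^{s,2}}\|v\|_{\FL^{s,2}}$, then writes $s=\frac12-\frac1p+s_0$ and passes from $\FL^{s,p}$ to $\FL^{s_0-\ep,2}$ via H\"older and the embedding $\l^2\subset\l^p$, so that $\frac12-\frac1p$ appears as the price of that embedding. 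Your dyadic high-low/high-high decomposition with the Bernstein-type bound $\|F\ind_{|\cdot|\sim M}\|_{L^2}\les M^{\frac12-\frac1p}\|F\|_{L^p}$ reaches the same threshold by longer but more portable means: in particular it applies verbatim when $\ft\M=\R$, where there is no inclusion $L^2\subset L^p$ and the paper's embedding step does not literally carry over.

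Three small points. (1) What closes the high-high block is not the vanishing of $\vp$ at the origin but its decay $|\vp(n)|\les\jb{n}^{-1}$ at infinity; moreover your per-scale bound $M^{1-2s-2/p}$ presumes $\|\jb{n}^{s-1}\ind_{|n|\les M}\|_{L^p}=O(1)$, i.e.\ $s<1-\frac1p$. When $s\ge 1-\frac1p$ this norm grows like $M^{s-1+\frac1p}$ and the per-scale contribution becomes $M^{-s-\frac1p}$, which is still summable, so the dyadic sum closes for all $s>\frac12-\frac1p$; you should record both regimes. (2) For $p=\infty$ and $s>1$, plain Cauchy--Schwarz leaves an unbounded factor $\jb{n}^{s-1}$; instead invoke Lemma~\ref{lemma:sumestimate}, which gives $\sum_{n_1}\jb{n_1}^{-s}\jb{n-n_1}^{-s}\les\jb{n}^{-s}\phi_s(n)$, or distribute the weight additively when $s>1$. (3) Your test example only exhibits failure for $s<\frac12-\frac1p$ strictly (at the endpoint the output is $O(1)$), which is all the paper's subsequent remark claims; asserting failure at $s=\frac12-\frac1p$ itself would require summing over blocks, and is not needed since the lemma already assumes strict inequality for $p>2$.
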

\begin{proof} 
Consider first the case when $1\leq p\leq 2$.
By duality, it suffices to show that \begin{align}
\bigg \vert \int_{\M}\int_{\M}\frac{\jb{\xi}^{s}}{\jb{\xi-\xi_{1}}^{s}\jb{\xi_{1}}^{s}}\varphi(\xi) w(\xi)\ft u(\xi-\xi_{1})\ft v(\xi_{1})\, d\xi_{1}d\xi \bigg \vert \lesssim  \|\ft u\|_{L^{p}}\|\ft v\|_{L^{p}}, \label{dualityestimate}
\end{align}  where $w\in L^{p'} $ satisfying $\|w\|_{L^{p'}}=1$ and $1/p+1/p'=1$. 
As $s\geq 0$, the trivial inequality $\jb{\xi}^{s}\lesssim\jb{\xi-\xi_{1}}^{s}\jb{\xi_{1}}^{s}$ implies 
$$ \eqref{dualityestimate} \lesssim \bigg \vert\int_{\M}\int_{\M}\varphi(\xi) w(\xi)\ft u(\xi-\xi_{1})\ft v(\xi_{1})\, d\xi_{1}d\xi\bigg \vert.$$ 
By Young's and H\"{o}lder's inequalities, we have\begin{align*}
\bigg \vert\int_{\M}\int_{\M}\varphi(\xi) w(\xi)\ft u(\xi-\xi_{1})\ft v(\xi_{1})\, d\xi_{1}d\xi \bigg \vert & 
\les \|\vp(\xi)w(\xi)\|_{L^{\frac{p'}{2}}}\| \ft u \ast \ft v\|_{L^{\frac{p}{2-p}}} \\
& \les \|\ft u\|_{L^{p}}\|\ft v\|_{L^{p}}\| w\|_{L^{p'}}\|\varphi(\xi)\|_{L^{p'}}.
\end{align*}
 Finally, $\|\varphi(\xi)\|_{L^{p'}}<\infty$ as $p'>1$, verifying  \eqref{dualityestimate} when $1\leq p\leq 2$.
The estimate \eqref{FLspineq} can also be deduced by appealing to multilinear interpolation between the trivial inequality when $p=1$ and the result of Lemma~\ref{lemma:bonatzvet} when $p=2$.
 
  For the case when $2<p<\infty$, first notice that the above arguments lead to the following stronger estimate when $p=2$: for any $s\geq 0$,
 \begin{equation}
 \|\jb{\dx}^{-\frac{1}{2}+}(uv)\|_{\mathcal{F}L^{s,2}}\lesssim \|u\|_{\mathcal{F}L^{s,2}}\|v\|_{\mathcal{F}L^{s,2}}. \label{prod1}
 \end{equation} Fix $2<p<\infty$ and let $s=\frac{1}{2}-\frac{1}{p}+s_{0}$ for some $s_{0}>0$. Then by the embeddings $\l^{2}\subset \l^{p}$ for any $p>2$, H\"{o}lder's inequality and with some $\ep=\ep(p)$ sufficiently small, we have using \eqref{prod1}, \begin{align*}
 \| \varphi(D_{x})(uv)\|_{\mathcal{F}L^{s,p}} & \lesssim \|\jb{\dx}^{-\left(\frac{1}{2}+\frac{1}{p}-\ep \right)}(uv)\|_{\mathcal{F}L^{s_{0}-\ep,2}} \\
 & \lesssim \|u\|_{\mathcal{F}L^{s_{0}-\ep,2}}\|v\|_{\mathcal{F}L^{s_{0}-\ep,2}} \\
 & \lesssim \|u\|_{\mathcal{F}L^{s,p}}\|v\|_{\mathcal{F}L^{s,p}}.
 \end{align*} 
 The case when $p=\infty$ follows from H\"{o}lder's inequality. 
\end{proof}

\begin{remark} \rm The estimate \eqref{dualityestimate} is false if $p>2$ and $s<\frac{1}{2}-\frac{1}{p}$. To see this, it suffices to show that \eqref{dualityestimate} fails. For this, let $A\gg 1$ and take $\ft u(\xi)=\ft v(\xi)=\ind_{[-A,A]}(\xi)$ and $w(\xi)=\ind_{[0,1]}(\xi)$. Then the left hand side of \eqref{dualityestimate} behaves like $A^{1-2s}$ while the right hand side behaves like $A^{\frac{2}{p}}$. Thus we obtain a contradiction when $p$ and $s$ are as above and $A$ becomes large. 
\end{remark}

Given $u_{0}\in \mathcal{F}L^{p}(\MM)$, Lemma \ref{lemma:lwpflp} gives the existence of a unique solution $u$ to BBM~\eqref{BBM0} in the sense that there exists $T\sim \|u_{0}\|_{\mathcal{F}L^{p}(\MM)}^{-1}$ such that for each $t\in [0,T]$, $u$ satisfies $$u(t)=S(t)u_{0}+\I^{2}[u](t),$$ where $\I^{2}[u]:=\I[u,u]$ and $\I$ is the bilinear Duhamel integral operator 
\begin{align}
\I[u_{1},u_{2}](t):=-\frac{i}{2}\int_{0}^{t} S(t-t')\varphi(D_{x})(u_{1}(t')u_{2}(t'))dt'. \label{duhamelop}
\end{align}

 In order to set-up the necessary notation for the power series expansion of $u$ indexed by trees, we first restate the terminology used in \cite{HiroInflation} 
 for the binary trees we work with. 
 
 \begin{definition} \label{DEF:tree} \rm
(i) Given a partially ordered set $\TT$ with partial order $\leq$, 
we say that $b \in \TT$ 
with $b \leq a$ and $b \ne a$
is a child of $a \in \TT$,
if  $b\leq c \leq a$ implies
either $c = a$ or $c = b$.
If the latter condition holds, we also say that $a$ is the parent of $b$.

\smallskip 

\noi
(ii) 
A tree $\TT$ is a finite partially ordered set,  satisfying
the following properties:
\begin{itemize}
\item Let $a_1, a_2, a_3, a_4 \in \TT$.
If $a_4 \leq a_2 \leq a_1$ and  
$a_4 \leq a_3 \leq a_1$, then we have $a_2\leq a_3$ or $a_3 \leq a_2$,

\item
A node $a\in \TT$ is called terminal, if it has no child.
A non-terminal node $a\in \TT$ is a node 
with exactly two children,

\item There exists a maximal element $r \in \TT$ (called the root node) such that $a \leq r$ for all $a \in \TT$,

\item $\TT$ consists of the disjoint union of $\TT^0$ and $\TT^\infty$,
where $\TT^0$ and $\TT^\infty$
denote the collections of non-terminal and terminal nodes, respectively.
\end{itemize}

\end{definition}

We recall some basic combinatorial properties of binary trees. 

\begin{lemma}\label{lemma:counting} Let $\TT$ be a binary tree. 
The number of non-terminal $|\TT^{0}|$ and terminal $|\TT^{\infty}|$ nodes in $\TT$ are $j$ and $j+1$ respectively, where $j\in \N \cup \{0\}$. Consequently, $|\TT|=2j+1$. Let $\boldsymbol{T}(j)$ denote the set of all trees with $j$ parent nodes. Then there exists a constant $C_{0}>0$ such that 
\begin{equation} \label{tjcnt}
|\pmb{T}(j)| \leq C_{0}^{j}.
\end{equation}
\end{lemma}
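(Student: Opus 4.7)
The plan is to prove the numerical identities and the exponential bound separately, using elementary counting arguments.

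For the first assertion, I would argue via a parent map. For any $\TT$ satisfying Definition~\ref{DEF:tree}, consider the map $\pi: \TT \setminus \{r\} \to \TT^0$ sending each non-root node to its (unique) parent. The parent of any node is necessarily non-terminal (it has at least one child), so $\pi$ is well-defined with image in $\TT^0$. Since each non-terminal node has exactly two children, $\pi$ is exactly $2$-to-$1$, so $|\TT| - 1 = 2|\TT^0|$. Combining this with $|\TT| = |\TT^0| + |\TT^\infty|$ gives $|\TT^\infty| = |\TT^0| + 1$. Setting $j := |\TT^0|$ yields $|\TT^\infty| = j+1$ and $|\TT| = 2j+1$.

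For the cardinality bound, I would derive a Catalan-type recursion for $t_j := |\boldsymbol{T}(j)|$. Any $\TT \in \boldsymbol{T}(j)$ with $j \geq 1$ is determined by its root $r$ together with the two maximal subtrees induced by the children of $r$; if these subtrees have $k$ and $j-1-k$ internal nodes respectively (with $0 \leq k \leq j-1$), regarding the two children as distinguishable gives the bound
\[
t_j \leq \sum_{k=0}^{j-1} t_k t_{j-1-k}, \qquad t_0 = 1.
\]
By a standard induction this implies $t_j \leq C_j$, where $C_j = \frac{1}{j+1}\binom{2j}{j}$ is the $j$-th Catalan number. The elementary estimate
\[
C_j \leq \binom{2j}{j} \leq \sum_{k=0}^{2j}\binom{2j}{k} = 4^j
\]
then yields $|\boldsymbol{T}(j)| \leq 4^j$, so any $C_0 \geq 4$ works.

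The steps are purely combinatorial and no serious obstacle arises; the only subtlety is checking that the abstract partial-order definition of a tree in Definition~\ref{DEF:tree} really does coincide with the intuitive notion of a rooted full binary tree (so that each non-root node has a well-defined parent and the decomposition at the root into two subtrees is legitimate), which is immediate from the four bullet points in that definition.
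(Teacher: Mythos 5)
Your proof is correct, and it is essentially the argument the paper intends: the paper gives no written proof but defers \eqref{tjcnt} to the adaptation of \cite[Lemma 2.3]{HiroInflation} to binary trees, which is precisely the root-decomposition/Catalan recursion you carry out (your parent-map count for $|\TT^0|$, $|\TT^\infty|$ is the standard handshake argument the paper leaves implicit). Your explicit bound $t_j\leq C_j\leq 4^j$ is a clean way to close the recursion, so no issues.
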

 
 For a proof of \eqref{tjcnt}, we refer to the argument in \cite[Lemma 2.3]{HiroInflation} which can be adapted easily for binary trees.

 We have an injective map 
 $$\Psi_{\phi}:\bigcup_{j=1}^{\infty}\pmb{T}(j) \mapsto \mathcal{D}'(\MM \times [0,T]),$$ 
 which encodes the nodes of a given tree $\TT \in \pmb{T}(j) $ as $j$-times iterated Duhamel operators acting on inputs $S(t)\phi$. More precisely, given a binary tree, we replace the non-terminal nodes by the bilinear Duhamel operator \eqref{duhamelop} with its children as arguments $u_1$ and $u_2$. Then, each terminal node is replaced by the linear solution $S(t)\phi$. 
 Set $$\Xi_{j}(\phi)(t):= \sum_{\TT \in \pmb{T}(j)} \Psi_{\phi}(\TT).$$
 Then we have the following multilinear estimates. 
 
 \begin{lemma}\label{lemma:multest1} There exists $C>0$ such that for all $j\in \N$, we have the following:
Given any $\phi \in \FL^{1}(\M)$ and $\psi \in L^{2}(\M)$, we have   
 \begin{align}
 \|\Xi_{j}(\phi)(t)\|_{\mathcal{F}L^{1}} & \leq C^{j}t^{j}\|\phi\|_{\mathcal{F}L^{1}}^{j+1} \label{xi1} \\
 \|\Xi_{j}(\psi)(t)\|_{\mathcal{F}L^{\infty}} &\leq C^{j}t^{j}\|\psi\|_{L^{2}}^{j+1}. \label{xi2}
 \end{align} 
 Furthermore, for any $1\leq q\leq \infty$ and $u_{0}\in \mathcal{F}L^{q}(\MM)\cap \FL^{1}(\MM)$, 
 \begin{equation}
  \|\Xi_{j}(u_{0}+\phi)(t)-\Xi_{j}(\phi)(t)\|_{\mathcal{F}L^{q}} \leq C^{j}t^{j}\|u_{0}\|_{\mathcal{F}L^{q}}(\|u_{0}\|_{\mathcal{F}L^{1}}^{j}+\|\phi\|_{\mathcal{F}L^{1}}^{j}). \label{xi3}
 \end{equation}
 \end{lemma}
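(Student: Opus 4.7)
The plan is to argue by induction on the number $j$ of non-terminal nodes. For $\TT \in \pmb{T}(j)$ with $j \geq 1$, the root decomposition gives subtrees $\TT_i \in \pmb{T}(j_i)$ with $j_1 + j_2 = j-1$ and
\[
\Psi_\phi(\TT)(t) = \I[\Psi_\phi(\TT_1),\Psi_\phi(\TT_2)](t).
\]
Since $S(t)$ acts unitarily on every $\FL^{s,p}$, Minkowski's inequality reduces the inductive step to a bilinear estimate of the form $\|\vp(D_x)(fg)\|_{X}\lesssim\|f\|_{Y_1}\|g\|_{Y_2}$ combined with the time integration $\int_0^t (t')^{j-1}\,dt' = t^j/j$. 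The sum over $|\pmb{T}(j)| \leq C_0^j$ from Lemma~\ref{lemma:counting} can then be absorbed into the constant. For \eqref{xi1} I would simply take $X=Y_1=Y_2=\FL^1$ and apply Lemma~\ref{lemma:flspineq} with $(s,p)=(0,1)$, closing the induction directly in $\FL^1$.

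The main technical point concerns \eqref{xi2}, where the natural induction space is $L^2$ but the target is $\FL^\infty$, and on $\R$ these are not comparable. My plan is to run the induction on both top-level subtrees in $L^2$ via Lemma~\ref{lemma:bonatzvet}, which gives $\|\vp(D_x)(fg)\|_{L^2}\lesssim\|f\|_{L^2}\|g\|_{L^2}$, and only at the outermost Duhamel operator upgrade to $\FL^\infty$ through the direct Cauchy--Schwarz computation
\[
\|\vp(D_x)(fg)\|_{\FL^\infty} = \|\vp\cdot(\ft f * \ft g)\|_{L^\infty_\xi} \leq \|\vp\|_{L^\infty}\|\ft f\|_{L^2}\|\ft g\|_{L^2}\lesssim\|f\|_{L^2}\|g\|_{L^2},
\]
valid uniformly on $\R$ and $\T$. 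This mismatch between the inductive norm and the target norm is the principal obstacle.

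For \eqref{xi3} the idea is to expand $\Psi_{u_0+\phi}(\TT)$ multilinearly at the $j+1$ terminal nodes and cancel the empty contribution:
\[
\Psi_{u_0+\phi}(\TT) - \Psi_\phi(\TT) = \sum_{\emptyset \ne S \subseteq \TT^\infty} \Psi_S(\TT),
\]
where $\Psi_S(\TT)$ carries $S(t)u_0$ at the terminal nodes in $S$ and $S(t)\phi$ elsewhere. For each nonempty $S$, I would single out one distinguished leaf carrying $u_0$ and prove by induction the mixed estimate
\[
\|\Psi_S(\TT)\|_{\FL^q} \leq c^j t^j \|u_0\|_{\FL^q}\|u_0\|_{\FL^1}^{|S|-1}\|\phi\|_{\FL^1}^{j+1-|S|},
\]
using at each non-terminal node Young's inequality in the form $\|\vp(D_x)(fg)\|_{\FL^q}\leq\|\vp\|_{L^\infty}\|\ft f\|_{L^1}\|\ft g\|_{L^q}\lesssim\|f\|_{\FL^1}\|g\|_{\FL^q}$ (valid for all $1\leq q\leq\infty$): at each step the subtree containing the distinguished leaf is placed in the $\FL^q$ slot while the sibling subtree is controlled in $\FL^1$ via \eqref{xi1}. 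Summing over $S$ by means of
\[
\sum_{k=1}^{j+1}\binom{j+1}{k}\|u_0\|_{\FL^1}^{k-1}\|\phi\|_{\FL^1}^{j+1-k} \leq 2^{j+1}\bigl(\|u_0\|_{\FL^1}^j+\|\phi\|_{\FL^1}^j\bigr)
\]
and absorbing the $2^{j+1}$ factor into $C^j$ would yield \eqref{xi3}.
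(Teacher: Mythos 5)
Your proposal is correct and follows essentially the same route as the paper: \eqref{xi1} and \eqref{xi3} are the standard tree-induction arguments via unitarity of $S(t)$ on $\FL^{s,p}$ and Young's inequality (the paper simply cites Oh's Lemma 2.5 and 2.6 for these), and for \eqref{xi2} the paper does exactly what you describe — apply the Cauchy--Schwarz bound $\|\I[u_1,u_2]\|_{\FL^\infty}\lesssim |t|\|u_1\|_{L^2}\|u_2\|_{L^2}$ only at the root and run the remaining nodes in $L^2$ via Lemma~\ref{lemma:bonatzvet}. You have also correctly identified the one genuinely delicate point, namely that the $L^2$-to-$\FL^\infty$ upgrade must happen at the outermost Duhamel operator since these norms are not comparable on $\R$.
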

 
 \begin{proof}
 Estimates \eqref{xi1} and \eqref{xi3} are proved exactly as in \cite[Lemma 2.5 and Lemma 2.6]{HiroInflation} by using the unitarity of $S(t)$ in $\FL^1$ and Young's inequality. For \eqref{xi2}, we notice that H\"older's inequality implies
 \begin{align*}
\|\mathcal{I}[ u_1, u_2 ]\|_{\FL^{\infty}} \les |t| \| u_1\|_{L^{2}} \|u_2 \|_{L^2}.
\end{align*}
now for a given $\mathcal{T} \in T(\TT)$, $\Psi_{\psi}(\TT)$ is essentially $j=|\TT^0|$-many iterative compositions of the Duhamel integral operator $\I^2[\psi]$. Thus we first apply the above estimate followed by successive applications of Lemma~\ref{productestimateBT}, namely 
\begin{align*}
\| \I[ u_1,u_2]\|_{L^2} \les |t| \| u_1\|_{L^{2}} \|u_2 \|_{L^2}.
\end{align*}
 \end{proof}
 
 \begin{remark} \rm
 The estimate \eqref{xi2} differs from that in \cite[(2.14) in Lemma 2.5]{HiroInflation} because we have made use of the explicit smoothing of the Duhamel operator for BBM.
 \end{remark}
 
 The estimate \eqref{xi1} and Lemma~\ref{lemma:counting} imply the power series expansion
 \begin{align*}
\sum_{j=0}^{\infty} \Xi_{j}(u_0) = \sum_{j=0}^{\infty} \sum_{\TT \in \pmb{T}(j)} \Psi_{u_0}(\TT),
\end{align*}
is absolutely convergent in $C([0,T]; \FL^{1}(\M))$ provided $T\les \| u_0\|_{\FL^1}^{-1}$ and that the solution $u\in C([0,T];\FL^{1}(\M))$ with $u|_{t=0}=u_0$ can be represented as
\begin{align*}
u= \sum_{j=0}^{\infty} \Xi_{j}(u_0).
\end{align*}
This is the power series representation of $u$ indexed by trees. 
 
We now begin to proceed towards the construction of the smooth solutions $u_{n}$ stated in Proposition~\ref{prop:sequenceinf}.
Define $\phi_{n}$ through its Fourier transform by 
\begin{equation}
\ft \phi_{n}(\xi):= R\big\{\ind_{-N+ Q_A} (\xi)+ \ind_{N+ Q_A}(\xi)\big\} \label{phin}
\end{equation}
\noi
where $Q_A = [-2A, 2A]$,
$R = R(N) \geq 1 $,  and $A = A(N)\geq 1$ satisfying
\begin{align}
 \|u_0\|_{\F L^1}\ll RA, 
\qquad \text{and}
\qquad A\ll N, 
\label{phi1a}
\end{align} 
For fixed $u_{0}\in \mathcal{C}(\mathcal{M})$, set
 \begin{equation} \label{u0n}
 u_{0,n}=u_{0}+\phi_{n}.
\end{equation} For each $n$, $\ft \phi_{n}$ is even and real-valued and hence $u_{0,n}$ is also real-valued. Let $u_{n}$ be the solution of \eqref{BBM0} with initial data $u_{n}\vert_{t=0}=u_{0,n}$. We have the following series expansion
 \begin{align}
 u_{n}=\sum_{j=0}^{\infty}\Xi_{j}(u_{0,n}), \label{powerseriesun}
 \end{align}
 on $[-T,T]$ as long as 
\begin{align}
T\lesssim (\|u_{0}\|_{\mathcal{F}L^{1}}+RA)^{-1}\sim (RA)^{-1}. \label{timelocal}
\end{align}

We now state some further multilinear estimates that exploit the explicit expression of $\phi_n$. 

\begin{proposition}\label{prop:multlin2}
 Let $1\leq p <\infty$, $s<0$, $u_{0}\in \mathcal{C}(\MM)$ satisfying \eqref{phi1a} and $\phi_{n}$ and $u_{0,n}$ as in \eqref{phin} and \eqref{u0n} respectively. For any $j\in \N$, the following estimates hold:
\begin{align}
\|u_{0,n}-u_{0}\|_{\mathcal{F}L^{s,p}} &\lesssim RA^{\frac{1}{p}}N^{s}, \label{flsp1} \\
\|\Xi_{0}(u_{0,n})(t)\|_{\mathcal{F}L^{s,p}} &\lesssim 1+RA^{\frac{1}{p}}N^{s}  \label{flsp2} \\
\|\Xi_{1}(u_{0,n})(t)-\Xi_{1}(\phi_{n})(t)\|_{\mathcal{F}L^{s,p}} &\lesssim t\|u_{0}\|_{ \mathcal{F}L^{p}}RA^{\frac{1}{p}}.   \label{flsp3}  \\
\|\Xi_{j}(u_{0,n})(t)\|_{\mathcal{F}L^{s,p}} &\lesssim C^j t^{j}(RA)^j ( \|u_0\|_{\FL^p}+Rf_{p}(A)), \label{flsp4}
\end{align} 
where 
\begin{equation}
f_{p}(A):=
\begin{cases}
 1 & \textup{if} \,\, \,s<-\frac{1}{p}, \\
 \left( \log A \right)^{\frac{1}{p}}& \textup{if} \,\,\, s=-\frac{1}{p}, \\
 A^{\frac{1}{p}+s} & \textup{if} \,\, \, s>-\frac{1}{p}.
\end{cases} \label{fpA}
\end{equation} 
\end{proposition}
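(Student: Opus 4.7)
The plan is to prove the four estimates in order, each building on the explicit form of $\widehat{\phi_n}$ combined with Young's convolution inequality, Lemma~\ref{lemma:flspineq}, and the trivial embedding $\FL^{s_0,p}\hookrightarrow\FL^{s,p}$ whenever $s_0>s$. Estimates \eqref{flsp1} and \eqref{flsp2} are direct: since $\widehat{\phi_n}=R(\mathbf{1}_{-N+Q_A}+\mathbf{1}_{N+Q_A})$ and $A\ll N$, one has $\jb{\xi}\sim N$ on a support of measure $\sim A$, giving $\|\phi_n\|_{\FL^{s,p}}^{p}\sim R^{p}AN^{sp}$; then \eqref{flsp2} follows at once from the triangle inequality, the unitarity of $S(t)$ on $\FL^{s,p}$, and the smoothness of $u_0$ (which bounds $\|u_0\|_{\FL^{s,p}}$ by a constant).

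For \eqref{flsp3}, bilinearity of the Duhamel operator yields
\begin{align*}
\Xi_1(u_{0,n})(t)-\Xi_1(\phi_n)(t)=\I[S(t)u_0,S(t)u_0]+2\I[S(t)u_0,S(t)\phi_n].
\end{align*}
The first summand is controlled, after embedding $\FL^{s_0,p}\hookrightarrow\FL^{s,p}$ with $s_0>\max\{0,\tfrac12-\tfrac1p\}$, by a constant times $t$ via Lemma~\ref{lemma:flspineq} and the smoothness of $u_0$; this is absorbed into the stated bound using the assumption $\|u_0\|_{\FL^1}\ll RA$ from \eqref{phi1a}. For the second summand I invoke $\FL^{p}\hookrightarrow\FL^{s,p}$ (valid since $s<0$) followed by Lemma~\ref{lemma:flspineq} at $s_0=0$ when $p\leq 2$, or, when $p>2$, Young's inequality $\|\widehat{u_0}*\widehat{\phi_n}\|_{L^p}\leq\|\widehat{u_0}\|_{L^1}\|\widehat{\phi_n}\|_{L^p}$; in either case $\|\phi_n\|_{\FL^p}\sim RA^{1/p}$ yields the stated bound (with $\|u_0\|_{\FL^1}$ silently replaced by $\|u_0\|_{\FL^{p}}$ thanks to smoothness).

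The most delicate estimate is \eqref{flsp4}. For each $\TT\in\pmb{T}(j)$, I expand $\Psi_{u_{0,n}}(\TT)$ by writing $u_{0,n}=u_0+\phi_n$ at each of the $j+1$ leaves and then iterate Young's inequality (together with $|\varphi|\leq 1$) at the $j$ internal nodes. The dominant contribution comes from the all-$\phi_n$ configuration, whose Fourier transform is essentially the $(j+1)$-fold convolution of $\widehat{\phi_n}$ times $\varphi$ factors. This iterated convolution is supported on a union of packets near $\{kN:-(j+1)\leq k\leq j+1,\,k\equiv j+1\bmod 2\}$ of width $\sim A$, with a distinguished low-frequency packet on $Q_{\sim A}$ (arising, when $j+1$ is even, from matched $+N$/$-N$ pairings) on which the magnitude is $\lesssim R^{j+1}A^{j}$. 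On the low-frequency packet, the weighted integration reads
\begin{align*}
R^{j+1}A^{j}\cdot\|\ind_{Q_A}\|_{\FL^{s,p}}\sim R^{j+1}A^{j}f_p(A),
\end{align*}
where the three regimes in the definition \eqref{fpA} of $f_p(A)$ arise precisely from the integral $\int_{-A}^{A}\jb{\xi}^{sp}\,d\xi$ being bounded, logarithmic, or polynomial in $A$ according to the sign of $sp+1$. The higher-frequency packets carry the weight $\jb{\xi}^{s}\sim(kN)^s$, which under \eqref{phi1a} renders their contribution strictly smaller. Configurations involving some $u_0$ leaves are treated similarly, the smoothness of $u_0$ producing the additive $\|u_0\|_{\FL^{p}}$ term. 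Multiplication by $t^j$ from the $j$ Duhamel time integrations and summation over $|\pmb{T}(j)|\leq C_0^{j}$ trees (via Lemma~\ref{lemma:counting}) completes the proof.

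The main obstacle will be the careful identification and estimation of the low-frequency packet in \eqref{flsp4}, distinguishing it from the high-frequency packets centered near $\pm kN$. A naive iteration of Lemma~\ref{lemma:flspineq} directly in $\FL^{p}$ yields only the cruder bound $(\|u_0\|_{\FL^{p}}+RA^{1/p})^{j+1}$, which fails to exploit the negative-regularity weight $\jb{\xi}^{s}$ concentrating the dominant mass on the low-frequency packet, and consequently misses the sharper factor $Rf_p(A)$ whose three cases are essential for the norm-inflation argument that follows.
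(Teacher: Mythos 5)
Your proposal is correct, and its overall architecture coincides with the paper's: estimates \eqref{flsp1}--\eqref{flsp3} by direct computation on $\ft{\phi_n}$ and Young's inequality, and for \eqref{flsp4} a split into the all-$\phi_n$ contribution (measured with the weight $\jb{\xi}^s$, exploiting that $\supp \mathcal{F}\{\Xi_j(\phi_n)\}$ is a union of $O(2^j)$ intervals of width $\sim A$) plus a remainder containing at least one $u_0$ leaf (measured in the unweighted $\FL^p$, producing the additive $\|u_0\|_{\FL^p}$ term). The differences are in the implementation of the pure-$\phi_n$ step. The paper does not identify a dominant low-frequency packet at all: it simply bounds $\|\Xi_j(\phi_n)\|_{\FL^{s,p}}$ by H\"older as $\|\jb{\xi}^s\|_{L^p(\supp\mathcal{F}\{\Xi_j(\phi_n)\})}\,\|\Xi_j(\phi_n)\|_{\FL^\infty}$, uses monotonicity of $\jb{\xi}^s$ to replace the support by $C^jQ_A$ (whence $C^jf_p(A)$), and controls the amplitude by the $L^2$-based estimate \eqref{xi2}, $\|\Xi_j(\phi_n)\|_{\FL^\infty}\les C^jt^j\|\phi_n\|_{L^2}^{j+1}\sim C^jt^j(RA^{1/2})^{j+1}$; this is then relaxed to $(RA)^jR$ for $j\geq 1$. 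Your amplitude bound $\|\ft{\phi_n}\|_{L^\infty}\|\ft{\phi_n}\|_{L^1}^{j}=R^{j+1}A^j$ via iterated $L^\infty\ast L^1$ Young is cruder but lands exactly on the target, and your packet-by-packet comparison (checking $A^{1/p}(kN)^s\les f_p(A)$ for the packets near $kN$, using $A\ll N$) is a correct, if unnecessary, substitute for the paper's rearrangement argument; the remainder term is handled in the paper by the pre-packaged difference estimate \eqref{xi3} rather than configuration by configuration, but the content is the same. One small point in your favour: for \eqref{flsp3} the factor $RA^{1/p}$ genuinely requires placing $\ft{\phi_n}$ in $L^p$ and $\ft{u_0}$ in $L^1$ in the cross term, as you do; a blind application of \eqref{xi3} with $q=p$ would only give $RA$.
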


\begin{proof}
The estimates \eqref{flsp1}, \eqref{flsp2} and \eqref{flsp3} are easy consequences of Lemma~\ref{lemma:multest1}, \eqref{phin} and \eqref{phi1a}. 
For \eqref{flsp4}, we write 
\begin{align}
\|\Xi_{j} (u_{0,n})(t)\|_{\FL^{s,p}} \leq \| \Xi_{j} (u_{0,n})(t) - \Xi_{j}(\phi_n)(t)\|_{\FL^{p}} 
+ \| \Xi_{j}(\phi_n)(t)\|_{\FL^{s,p}}.  \label{xijdiff}
\end{align}
Now \eqref{xi3}, \eqref{phin} and \eqref{phi1a} imply 
\begin{align*}
\|\Xi_{j}(u_{0,n})(t)-\Xi_{j}(\phi_n )(t)\|_{\mathcal{F}L^{p}}& \leq C^{j}t^{j}\|u_{0}\|_{\mathcal{F}L^{p}}(\|u_{0}\|_{\mathcal{F}L^{1}}^{j}+\|\phi_n \|_{\mathcal{F}L^{1}}^{j})  \\
& \les C^{j} t^j \|u_0\|_{\FL^p} (RA)^j. 
\end{align*}
Meanwhile, as the support of $\ft \phi_n$ is two disjoint intervals of width approximately $ A$, we see that for fixed $\TT \in T(j)$, the iterated convolution structure of $\mathcal{F}\{\Psi_{\phi_n}(\TT)\}$ implies 
$\supp \mathcal{F}\{\Xi_{j}(\phi_n)\} $ is contained within at most $2^{j+1}$ intervals of width approximately $ A$. As $s<0$, $\jb{\xi }^{s}$ is decreasing in $|\xi|$ and hence by \eqref{xi2}, we have
\begin{align*}
\| \Xi_{j}(\phi_n)(t)\|_{\FL^{s,p}} & \leq \| \jb{\xi}^{s}\|_{L^{p}_{\xi}(\supp \mathcal{F}\{\Xi_{j}(\phi_n)\})} \|\Xi_{j}(\phi_{n})\|_{\FL^{\infty}} \\
& \les  \| \jb{\xi}^{s}\|_{L^{p}_{\xi}(C^{j}Q_A)} t^{j} (RA^{\frac{1}{2}})^{j+1} \\
& \les C^j t^{j}f_{p}(A) (RA^{\frac{1}{2}})^{j+1},
\end{align*}
Returning to \eqref{xijdiff} we have shown
\begin{align*}
\|\Xi_{j} (u_{0,n})(t)\|_{\FL^{s,p}}  & \les C^j t^{j}(RA)^j ( \|u_0\|_{\FL^p}+Rf_{p}(A)),
\end{align*}
which is \eqref{flsp4}.
\end{proof}

The following estimate shows the term $\Xi_{1}[\phi_n]$ is culpable for the norm-inflation phenomenon.
The argument given below is essentially the same as similar arguments in \cite{BonaTzvet, Panthee}. For completeness we include it here but adapted to the data \eqref{phin}.

\begin{proposition}\label{prop:norminf} Let $\phi_{n}$ be as in \eqref{phin} and $s<0$. Then, for $0<t\ll A$, we have 
\begin{align*}
\|\Xi_{1}(\phi_{n})(t)\|_{\mathcal{F}L^{s,p}}\gtrsim tR^{2}A. 
\end{align*}
\end{proposition}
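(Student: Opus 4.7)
The plan is to isolate the "resonant" high-high$\to$low interaction inside $\widehat{\Xi_1(\phi_n)(t)}$ and show it dominates on a fixed compact frequency window. Starting from $\Xi_1(\phi_n)(t) = \I[S(t)\phi_n,S(t)\phi_n](t)$, I would compute
\begin{align*}
\widehat{\Xi_1(\phi_n)(t)}(\xi) = -\tfrac{i}{2}\,e^{-it\varphi(\xi)}\varphi(\xi)\!\int \widehat{\phi_n}(\xi-\xi_1)\widehat{\phi_n}(\xi_1)\,\frac{e^{it\Phi(\xi,\xi_1)}-1}{i\Phi(\xi,\xi_1)}\,d\xi_1,
\end{align*}
with resonance function $\Phi(\xi,\xi_1):=\varphi(\xi)-\varphi(\xi-\xi_1)-\varphi(\xi_1)$. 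I would then restrict to a fixed window $\xi\in I$ with $I=[1/2,1]$ on $\R$ (or $\xi\in\{1\}$ on $\T$), chosen so that $\varphi(\xi)$ and $\langle\xi\rangle^{s}$ are both of size $\sim 1$. Because $\supp\widehat{\phi_n}\subset(-N+Q_A)\cup(N+Q_A)$ and $\xi\in I$ is far from $\pm 2N$, only the "high-low" pairings $\xi_1\in \pm N+Q_A,\ \xi-\xi_1\in\mp N+Q_A$ contribute; the same-sign pairings fall outside the relevant Fourier support.

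Next I would estimate $\Phi$ on these resonant pairs. Using $A\ll N$ and $\varphi(\eta)=\eta/(1+\eta^2)$, one has $|\varphi(\pm N+O(A))|=O(1/N)$, so on the resonant region
\begin{align*}
\Phi(\xi,\xi_1)=\varphi(\xi)+O(1/N),\qquad \xi\in I.
\end{align*}
In particular $|\Phi|\sim 1$ on $I$, and for $t$ small (the effective content of the hypothesis $0<t\ll A$ in the relevant band) one has $|t\Phi|\ll 1$, whence the Taylor expansion
\begin{align*}
\frac{e^{it\Phi}-1}{i\Phi}=\frac{\sin(t\Phi)}{\Phi}+i\frac{1-\cos(t\Phi)}{\Phi}=t+O(t^{2}|\Phi|).
\end{align*}
Since $\widehat{\phi_n}=R\ind_{\pm N+Q_A}\geq 0$, the integrand's real part is non-negative on the resonant region, so the two symmetric high-low configurations (with $\xi_1\sim +N$ and $\xi_1\sim -N$) interfere constructively—this is the one point where sign control is essential, and it is secured by the fact that $\Phi$ takes essentially the same value on both configurations.

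Finally, the measure of the resonant set $\mathcal{R}(\xi)=\{\xi_1\in N+Q_A:\,\xi-\xi_1\in -N+Q_A\}$ satisfies $|\mathcal{R}(\xi)|\geq 4A-|\xi|\gtrsim A$ uniformly for $\xi\in I$, and similarly for its symmetric counterpart. Combining these ingredients,
\begin{align*}
|\widehat{\Xi_1(\phi_n)(t)}(\xi)|\gtrsim |\varphi(\xi)|\cdot R^{2}\cdot |\mathcal{R}(\xi)|\cdot t \gtrsim tR^{2}A,\qquad \xi\in I,
\end{align*}
and since $\langle\xi\rangle^{s}\sim 1$ on $I$, integration yields
\begin{align*}
\|\Xi_1(\phi_n)(t)\|_{\FL^{s,p}}\geq \Bigl(\int_{I}|\langle\xi\rangle^{s}\widehat{\Xi_1(\phi_n)(t)}(\xi)|^{p}\,d\xi\Bigr)^{1/p}\gtrsim tR^{2}A.
\end{align*}
The main obstacle in turning this sketch into a proof is controlling potential phase cancellations between the two resonant configurations; the observation that $\varphi$ is odd, so that the leading $O(1/N)$ corrections to $\Phi$ agree on both configurations, is what makes the contributions add rather than cancel. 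The remaining work—quantifying the "$\Phi\approx\varphi(\xi)$" approximation error and the subleading contributions from the Taylor remainder—is routine given the parameter separation $1\ll A\ll N$.
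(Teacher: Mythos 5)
Your proposal is correct and follows essentially the same route as the paper: compute $\widehat{\Xi_1(\phi_n)(t)}$ explicitly, isolate the high-low interaction producing output at $O(1)$ frequencies, use that $\sin(t\theta)/\theta \geq t/2$ is sign-definite on the resonant set (so the two configurations add rather than cancel), and bound the measure of that set from below by $A$. The only cosmetic difference is that you restrict to the window $\xi\in[1/2,1]$ from the outset, where the same-sign pairings vanish by support considerations, whereas the paper splits into $f_1+f_2$ with disjoint Fourier supports and separately shows $\|f_1\|_{\FL^{s,p}}\ll tR^2A$ before lower-bounding $f_2$ on essentially the same unit window; your remark that the hypothesis $0<t\ll A$ is really being used as $|t\Phi|\ll 1$ on the relevant band is an accurate reading of what both arguments need.
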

\begin{proof}
We have $$ \Xi_{1}(\phi_{n})(t)=-\frac{i}{2}\int_{0}^{t}S(t-t')\varphi(D_{x})(S(t')\phi_{n}S(t')\phi_{n})\, dt'.$$ Taking the Fourier transform, we obtain 
\begin{align*}
\mathcal{F}_{x\rightarrow \xi}&[\Xi_{1}(\phi_{n})(t)](\xi)= -\frac{i}{2}\int_{0}^{t} \mathcal{F}_{x\rightarrow \xi}\left[S(t-t')\varphi(D_{x})(S(t')\phi_{n}S(t')\phi_{n}) \right](\xi)\, dt' \\
& = -\frac{i}{2}\int_{0}^{t} e^{-i(t-t')\varphi(\xi)}\varphi(\xi)\mathcal{F}_{x\rightarrow \xi}\left[(S(t')\phi_{n}S(t')\phi_{n}) \right]\, dt' \\
& = -\frac{i}{2}e^{-it\varphi(\xi)}\varphi(\xi)\int_{0}^{t}e^{it'\varphi(\xi)}\int_{\R} \ft \phi_{n}(\xi_{1})\ft \phi_{n}(\xi-\xi_{1})e^{-it'\varphi(\xi_{1})}e^{-it'\varphi(\xi_{1})}d\xi_{1}\, dt' \\
& = -\frac{i}{2}e^{-it\varphi(\xi)}\varphi(\xi) \int_{\R}\ft{\phi}_{n}(\xi_{1})\ft \phi_{n}(\xi-\xi_{1}) \int_{0}^{t}e^{-it'\theta(\xi,\xi_{1})} dt'\, d\xi_{1},
\end{align*}
where \begin{equation}
\theta(\xi,\xi_{1}):= \varphi(\xi_{1})+\varphi(\xi-\xi_{1})-\varphi(\xi)=\frac{\xi \xi_{1}(\xi-\xi_{1})(\xi^{2}-\xi\xi_{1}+\xi_{1}^{2}+3)}{(1+\xi_{1}^{2})[1+(\xi-\xi_{1})^{2}](1+\xi^{2})}. \label{thetaphase}
\end{equation} Integrating over $t'$ yields \begin{equation*}
\mathcal{F}_{x\rightarrow \xi}[\Xi_{1}(\phi_{n})(t)](\xi) = \frac{1}{2}e^{-it\varphi(\xi)}\varphi(\xi)\int_{\R}\ft{\phi}_{n}(\xi_{1})\ft \phi_{n}(\xi-\xi_{1}) \frac{e^{-it' \theta(\xi,\xi_{1})}-1}{\theta(\xi,\xi_{1})} d\xi_{1}
\end{equation*} Writing $I_{1}:=-N+Q_{A}$, $I_{2}:=N+Q_{A}$ and in view of \eqref{phin}, 
\begin{align*}
\mathcal{F}_{x\rightarrow \xi}[\Xi_{1}(\phi_{n})(t)](\xi) &= \frac{R^{2}}{2}e^{-it\varphi(\xi)}\varphi(\xi)\int_{\substack{\xi_{1}\in I_{1}\cup I_{2} \\ \xi-\xi_{1}\in I_{1}\cup I_{2}}}
\frac{e^{-it \theta(\xi,\xi_{1})}-1}{\theta(\xi,\xi_{1})} d\xi_{1} \\
& =  \frac{R^{2}}{2}e^{-it\varphi(\xi)}\varphi(\xi)\left\lbrace \int_{A_{1}(\xi)}\cdot +\int_{A_{2}(\xi)}\cdot \right\rbrace \\
&=: g_{1}(t,\xi)+g_{2}(t,\xi),
\end{align*}
where \begin{align*}
A_{1}(\xi)&:= \{\xi_{1} \, : \, \xi_{1}\in I_{1}, \, \xi-\xi_{1}\in I_{1} \,\, \text{or} \,\, \xi_{1}\in I_{2},\,\xi-\xi_{1}\in I_{2}  \},\\
A_{2}(\xi)&:= \{\xi_{1} \, : \, \xi_{1}\in I_{1}, \, \xi-\xi_{1}\in I_{2} \,\, \text{or} \, \xi_{1}\in I_{2},\,\,\xi-\xi_{1}\in I_{1}  \}.
\end{align*}
With $f_{j}(x,t):=\mathcal{F}^{-1}_{\xi \rightarrow x}(g_{j}), \,\, j=1,2$, we have $$
\Xi_{1}(\phi_{n})(t)=f_{1}+f_{2}.$$ 
If $\xi_{1}\in A_{1}(\xi)$, then $\xi\in 2I_{1}$ or $\xi\in 2I_{2}$, while if $\xi_{1}\in A_{2}(\xi)$, then $\xi \in Q_{2A}$. As $A\ll N$, the supports of the $g_{j}$ are disjoint which implies $$\|\Xi_{1}(\phi_{n})(t)\|_{\mathcal{F}L^{s,p}}\sim \|f_{1}(t,\cdot)\|_{\mathcal{F}L^{s,p}}+\|f_{2}(t,\cdot)\|_{\mathcal{F}L^{s,p}}.$$ The dominant contribution to the $H^{s}$ norm of $\Xi_{1}(\phi_{n})$ arises from that of $f_{2}$. 
Indeed, we have 
$$\|f_{1}\|_{\mathcal{F}L^{s,p}}\lesssim \frac{tR^{2}A^{1+\frac 1p}}{N^{1-s}}\ll tR^{2}A,$$
where second inequality follows from $A\ll N$ and $s<0$.
In the region $A_{2}(\xi)$, $|\xi_{1}| \sim |\xi-\xi_{1}|\sim N$ and $\xi \in Q_{2A}$. From \eqref{thetaphase}, we find $|\theta(\xi,\xi_{1})|\lesssim A^{-1}$, and hence for $0<t\ll A$, we have
$$ \Im \frac{e^{it \theta(\xi,\xi_{1})}-1}{\theta(\xi,\xi_{1})} \geq \frac{1}{2}t.$$
Furthermore, for $\xi \in Q_{A}\setminus Q_{1/4}$ we have $|\xi||\text{meas}(A_{2}(\xi))|\gtrsim A$ and hence 
\begin{align*}
\|f_{2}(t,\cdot)\|_{\mathcal{F}L^{s,p}}&\sim R^{2}\left(\int_{\R}\jb{\xi}^{ps}|\varphi(\xi)|^{p}\bigg\vert \int_{A_{2}(\xi)}\frac{e^{-it \theta(\xi,\xi_{1})}-1}{\theta(\xi,\xi_{1})}d\xi_{1} \bigg \vert^{p} d\xi\right)^{1/p} \\
& \gtrsim |t|R^{2}\left(\int_{Q_{2A}}\jb{\xi}^{-p(2-s)}|\xi|^{p} |\text{meas}(A_{2}(\xi))|^{p} d\xi\right)^{1/p} \\
& \gtrsim |t|R^{2}\left(\int_{Q_{A}\setminus Q_{1/4}}\jb{\xi}^{-p(2-s)}|\xi|^{p} |\text{meas}(A_{2}(\xi))|^{p} d\xi\right)^{1/p} \\
& \gtrsim |t|R^{2}A\left(\int_{Q_{1}\setminus Q_{1/4}}\jb{\xi}^{-p(2-s)} d\xi\right)^{1/p} \\
& \gtrsim |t|R^{2}A,
\end{align*} as $A\geq 1$.
\end{proof}
\begin{remark} \rm Although the proof of Proposition \ref{prop:norminf} was stated for $\R$, it also holds on $\T$ using the same ideas and the obvious modifications.
\end{remark}

\subsection{Proof of Proposition \ref{prop:norminf}}\label{sec:52}
Fix $p\in [1,\infty)$ and $s<0$.
 In order to prove Proposition~ \ref{prop:sequenceinf}, it suffices to show, given $n\in \N$, the following properties hold:
\begin{align*}
 \textup{(i)} & \quad R A^\frac{1}{p} N^s \ll \tfrac{1}{n}, \\ 
 \textup{(ii)} & \quad T R A \ll 1,  \\ 
 \textup{(iii)} & \quad TR^{2}A\gg n,\\ 
 \textup{(iv)} & \quad TR^2 A \gg T^2 R^3 A^2 f_{p}(A), \\
 \textup{(v)} & \quad \|u_{0}\|_{\mathcal{F}L^{p}}\ll R f_{p}(A), \\
 \textup{(vi)} & \quad T\ll A, \quad \text{and}\quad  A \ll N.
 \end{align*}
for some particular choices of $A,R,T$ and $N$ all depending on $n$. 

To see why this is true, we have that condition (i) ensures by \eqref{flsp1} that the approximating data $u_{0,n}$ is close to $u_0$ which is the first part of \eqref{nnorminf}. Condition (ii) combined with \eqref{timelocal} implies the power series expansion \eqref{powerseriesun} converges in $C([0,T]; \FL^{1}(\T))$. Proposition~\ref{prop:norminf} and conditions (iii) and (vi) are responsible for the required growth to conclude norm inflation, while (iv) and (v) ensure that the first term of the expansion $\Xi_{1}(u_{0,n})$ dominates all other terms. We detail these last two deductions now. 
Namely, assuming (ii) and (v) hold, \eqref{flsp4} implies
\begin{align*}
\bigg\| \sum_{j=2}^{\infty}\Xi_{j}(u_{0,n})(T)\bigg\|_{\mathcal{F}L^{s,p}} &\lesssim \sum_{j=2}^{\infty}\bigg\| \Xi_{j}(u_{0,n})(T)\bigg\|_{\mathcal{F}L^{s,p}} \\
& \lesssim \sum_{j=2}^{\infty} (CTRA)^{j} (\|u_0 \|_{\FL^{p}}+Rf_{p}(A))  \\
& \lesssim  T^{2}R^{2}A^{2}Rf_{p}(A)  \sim T^{2}R^{3}A^{2}f_{p}(A).
\end{align*}
Then, assuming (i), (ii), (iii), (iv) and (v) hold and using Propositions~\ref{prop:multlin2} and \ref{prop:norminf}, we have
\begin{align*}
\|u_{n}(T)\|_{\mathcal{F}L^{s,p}}& \geq \|\Xi_{1}(\phi_{n})(T)\|_{\mathcal{F}L^{s,p}}-\|\Xi_{0}(u_{0,n})\|_{\mathcal{F}L^{s,p}}\\
& \hphantom{X}-\|\Xi_{1}(u_{0,n})(T)-\Xi_{1}(\phi_{n})(T)\|_{\mathcal{F}L^{s,p}}-\bigg\| \sum_{j=2}^{\infty}\Xi_{j}(u_{0,n})(T)\bigg\|_{\mathcal{F}L^{s,p}}\\
&\gtrsim TR^{2}A- (1+RA^{\frac{1}{p}}N^{s}) -T\|u_{0}\|_{\mathcal{F}L^{p}}RA^{\frac{1}{p}}-T^{2}R^{3}A^{2}f_{p}(A)\\
& \sim TR^{2}A \gg n.
\end{align*}
This verifies the second estimate of \eqref{nnorminf} at time $t_{n}:=T$. Finally choosing $N=N(n)$ sufficiently large, we conclude the proof of Proposition~\ref{prop:sequenceinf}.

It remains to verify (i)-(vi) hold. Notice that (iv) follows if we obtain $TRAf_{p}(A) \ll 1$. This is stronger than (ii), so we focus on obtaining (i) and (iii) through (vi). Recalling the definition of $f_{p}(A)$ from \eqref{fpA}, it is natural to consider the following three cases:

\medskip

\noi
$\bullet$
{\bf Case 1:} $s<-\frac 1p$

For $\dl>0$ small enough so that 
\begin{align*}
\frac{1}{p}+\bigg(2-\frac 1p \bigg)\dl <-s,
\end{align*} we choose
\begin{align*}
A = N^{1-\dl}, 
\quad 
R = N^{2\dl}, 
\quad \text{and} 
\quad
T= N^{-1-2\dl}.
\end{align*}
Then we check
\begin{align*}
&RA^{\frac 1p}N^{s}=N^{\frac 1p+\big(2-\frac 1p \big)\dl +s} \ll \frac 1n, \\
&TR^{2}A =N^{\dl} \gg n,\\
& TRAf_{p}(A)\sim TRA=N^{-\dl} \ll 1.
\end{align*}
Clearly (v) and (vi) are also satisfied.
\noi

\medskip

\noi
$\bullet$
{\bf Case 2:} $s=-\frac 1p$

We choose
\begin{align*}
A = \bigg( \frac{N}{\log N} \bigg)^{\frac 12}, 
\quad 
R = \bigg( \frac{N}{\log N} \bigg)^{\frac{1}{2p}}, 
\quad \text{and} 
\quad
T=\frac{1}{N^{  \frac{1+p}{2p}}(\log N)^{\frac{3-p}{2p}}}.
\end{align*}
Thus, by choosing $N=N(n)$ sufficiently large, we ensure
\begin{align*}
&RA^{\frac 1p}N^{s}= (\log N)^{-\frac{1}{p}} \ll \frac 1n, \\
&TR^{2}A =\frac{N^{\frac{1}{2p}}}{ (\log N)^{\frac{5}{2p}}} \gg n,\\
& TRAf_{p}(A)\sim TRA(\log A)^{\frac{1}{p}}\sim (\log N)^{-\frac{2}{p}}(\log N - \log \log N)^{\frac 1p}\sim (\log N)^{-\frac 1p} \ll 1.
\end{align*}
Furthermore, both (v) and (vi) are also satisfied.

\medskip

\noi
$\bullet$
{\bf Case 3:} $-\frac 1p <s <0$

We choose
\begin{align*}
A = N^{\dl p}, 
\quad 
R = N^{-s-\dl-\ta}, 
\quad \text{and} 
\quad
T=N^{2s+3\ta +(2-p)\dl},
\end{align*}
where $0<\ta \ll \dl \leq \tfrac{1}{3p}$ are sufficiently small so that 
\begin{align}
-s>\max \bigg( \frac{2(\ta+\dl)}{1+\dl p}, \frac{3\ta}{2} -(p-1)\dl \bigg). \label{case3scond}
\end{align}
Then 
\begin{align*}
&RA^{\frac 1p}N^{s}= N^{-\ta} \ll \frac 1n, \\
&TR^{2}A =N^{\ta} \gg n,\\
& TRAf_{p}(A)\sim TRA^{1+\frac 1p +s} =N^{(1+\dl p)s+2(\ta +\dl)} \ll 1.
\end{align*}
The second condition in \eqref{case3scond} ensures (vi) holds. Meanwhile, we also satisfy (v) because when 
 $\dl \leq \frac{1}{3p}$, we have  
$\tfrac{1}{1-\dl p}\leq \frac{2}{1+\dl p}$.

\begin{appendix}

\section{On well-posedness of BBM below $L^{2}(\T)$ with non-Gaussian randomised initial data}\label{app:Gauss}

In this appendix, we discuss how we may extend the local and global well-posedness results of Theorems~\ref{Thm:ASLWP} and \ref{Thm:ASGWP} for BBM~\eqref{BBMD} to more general random initial data of the form:
\begin{align}
u^{\o}_{0}(x)=\sum_{n\in \Z}\frac{g_{n}(\o)}{\jb{n}^{\al}} e^{inx}, \label{newdata}
\end{align}
where the family of (not necessarily Gaussian) complex-valued $\{g_{n}\}_{n\in \Z}$ random variables satisfy the following assumptions:
\begin{enumerate}[\normalfont (i)]
\setlength\itemsep{0.3em}
\item \label{item1} the random variables $\{g_{n}\}_{n\in \mathbb{N}\cup\{0\}} $ are independent, 
\item \label{item2}$g_{-n}:=\cj{g_{n}}$ for  $n\in \mathbb{N}\cup \{0\}$ and $g_{0}$ is real,
\item \label{item3} $\E[g_{n}]=0$ and $\E[ |g_n|^2]=1$,
\item \label{item4} there exist $C_0, C_1>0$ such that for all $\g\in \R$ and for all $n\in \Z$, we have 
\begin{align*}
\E[e^{\g |g_n|}] \leq C_0 e^{C_1 \g^{2}},
\end{align*}
\item \label{item5} there exists an angle $\ta$ satisfying\footnote{Note $840=\text{lcm}(2,3,5,7,8)$.} $840\, \ta \neq 0$ mod $2\pi$, such that the law of $e^{i\ta}g_{n}$ is the same as the law of $g_{n}$.
\end{enumerate}

A computation shows the random distribution $u_0^{\o}$ given in \eqref{newdata} belongs to $H^{\al-\frac{1}{2}-}(\T)$ almost surely. If we additionally impose the following non-degeneracy condition:
\begin{align*}
\textup{(vi)} \,\,\, \text{there exists}\, c>0 \, \,\text{such that} \, \limsup_{n\rightarrow \infty} \prob( |g_n|\leq c) <1,
\end{align*}
then the argument in \cite[Lemma B.1]{BTlocal} shows $u_0^{\o}$ does not belong to $H^{\al-\frac{1}{2}}(\T)$ almost surely. In view of the global well-posedness of BBM~\eqref{BBMD} in $L^{2}(\T)$, we focus on $\al\leq \tfrac{1}{2}$.

For simplicity, in the following we make the additional assumption
\begin{align*}
\textup{(vii)} \,\,\, g_0\equiv 0.
\end{align*}
We stress that this assumption only reduces the number of cases we must consider in the proof of Lemma~\ref{LEM:NZ} below. We handle the remaining cases coming from removing assumption (vii) in that proof using similar analysis and they yield the same (overall) restrictions on $\al$ and $s$ as stated in Lemma~\ref{LEM:NZ}.

We have three main points to discuss: 
\begin{enumerate}[\normalfont (I)]
\item the regularity and integrability properties of the stochastic objects: $z^{\o}(t)=S(t)u_0^{\o}$, the random linear solution to BBM~\eqref{BBM0} with initial data~\eqref{newdata}, and $\NN(z^{\o})$, where $\NN$ is defined in \eqref{nonlin},
\item almost sure local well-posedness for BBM~\eqref{BBMD} with initial data \eqref{newdata},
\item  almost sure global well-posedness for BBM~\eqref{BBMD} with initial data \eqref{newdata}.
\end{enumerate}

\medskip

\noi
\underline{$\bullet$ \textbf{(I):}}
We begin with the random linear solution $z^{\o}$. By assumption~\eqref{item4}, the argument in \cite[Lemma 3.1]{BTlocal} implies, for any $(a_n)\in \l^{2}$, we have 
\begin{align}
\bigg\| \sum_{n\in \Z} a_{n}g_{n} \bigg\|_{L^{p}(\O)} \les p^{\frac{1}{2}}\|a_{n}\|_{\l^{2}_{n}} \sim p^{\frac{1}{2}}\bigg\| \sum_{n\in \Z} a_{n}g_{n} \bigg\|_{L^{2}(\O)} \label{almostgauss}
\end{align}
for any $p\geq 2$. Now we note that, in Proposition~\ref{prop:reg}, we may weaken the assumption that the stochastic process $X_{k}(t)\in \mathcal{H}_{\leq \l}$ for each $t\in \R_{+}$ to the following moment control: there exists $C, k>0$ such that
\begin{align}
\|X_{k}(t)\|_{L^{p}(\O)}\leq Cp^{\frac{\l}{2}}\|X_{k}(t)\|_{L^{2}(\O)}, \label{hyperweak}
\end{align}
for any $p\geq 2$, for every $k\in \N$ and for each $t\in \R_{+}$. In particular, stochastic processes belonging to $\mathcal{H}_{\leq \l}$ for some $\l\in \N$ satisfy \eqref{hyperweak} because of the Wiener chaos estimate (Lemma~\ref{Lemma:wienerchaos}). Thus by \eqref{almostgauss}, $z^{\o}$ satisfies \eqref{hyperweak} and then applying the same arguments as in the proof of Proposition~\ref{prop:stochobjects} for the Gaussian random linear solution, for any $T>0$ we obtain
\begin{align*}
z\in C([0,T];W^{\al-\frac 12-,\infty}(\T)) 
\end{align*}
almost surely. Moreover, if $\{ \rho_{k}\}_{k\in \N}$ is a family of mollifiers on $\T$, then we have
\begin{align}
z_{k}=S(t)(u_0^{\o}\ast \rho_{k})\rightarrow z \label{znongas}
\end{align}
 as $\e\rightarrow 0$ in $L^{q}(\O; C([0,T];W^{\al-\frac 12-,\infty}(\T))$, for any $1\leq q<\infty$ and almost surely in $C([0,T];W^{\al-\frac 12-,\infty}(\T)$. Furthermore, the limit is independent of the mollification kernel $\rho$.

For the stochastic object $\NN(z)$, it is not at all obvious if \eqref{hyperweak} is satisfied (we may not even have a version of Wick's theorem). However, it will suffice for our purposes to show the fourth moment of $\|\NN(z)\|_{L^{4}_{T}H^{2\al-}_{x}}$ is finite; see Lemma~\ref{LEM:NZ} below. In this case, we argue directly using the following lemma.
This lemma appears in \cite[Lemma 4.3]{desuzzoni3} up to considering fourth-order moments. In this appendix, we require knowledge up to the eighth-order moments. Given $n\in \N$, we denote by $\{ \cj{n}\}$ the set $\{  k\in \N\, : k\leq n\}$.

\begin{lemma} \label{LEM:8}
Let $\{g_{n}\}_{n\in \Z}$ be a family of complex-valued random variables satisfying assumptions \eqref{item1} through \eqref{item5} and \textup{(vii)} above. Given $n\in \N$, we denote by $S^{n}$ the set of permutations of~  $\{\cj{n}\}$. Then, we have $\E[g_{n_1}g_{n_2}]=\dl_{n_1,-n_2}$ and
\begin{align*}
 \E\bigg[ \prod_{j=1}^{k}g_{n_k} \bigg]=0 \,\,\,\, \textup{for every odd } k\in \{ \cj{8}\}.
\end{align*}
Furthermore, we have
\begin{align*}
\E & \bigg[ \prod_{j=1}^{4} g_{n_j}  \bigg]  \\
& = 
\begin{cases}
\E[|g_n|^{4}] & \textup{if}\,\, \exists  \, \s\in S^4\,\, \textup{such that}\,\, n_{\s(1)}=n_{\s(3)}=-n_{\s(2)}=-n_{\s(4)} \\
\E[|g_n|^2]^{2}  &    \textup{if}\,\, \exists  \, \s\in S^4\,\, \textup{such that}\,\, n_{\s(1)}=-n_{\s(3)},\,
          \,\, n_{\s(2)}=-n_{\s(4)}  \\
          & \textup{and} \,\, |n_{\s(1)}|\neq |n_{\s(2)}|, \\
 0 &  \textup{if}\,\, \exists  \, \s\in S^4\,\, \textup{such that}\,\, |n_{\s(j)}|\neq |n_{\s(j')}| \, \textup{for every} \, j\neq j',\\
 & \textup{where}\,\, j,j'\in \{\cj{4}\}
\end{cases}
\end{align*}
and
\begin{align*}
\E & \bigg[ \prod_{j=1}^{8} g_{n_j}  \bigg]  \\
& = 
\begin{cases}
\E[|g_n|^{8}] & \textup{if}\,\, \exists  \, \s\in S^8\,\, \textup{such that}\,\, n_{\s(1)}=n_{\s(3)}=n_{\s(5)}=n_{\s(7)}\\
& \hphantom{XXXXXXX}     =-n_{\s(2)}=-n_{\s(4)}=-n_{\s(6)}=-n_{\s(8)},   \\
& \\
\E[|g_n|^6]\, \E[|g_n|^2]  &    \textup{if}\,\, \exists  \, \s\in S^8\,\, \textup{such that}\,\, n_{\s(1)}=n_{\s(3)}=n_{\s(5)}=-n_{\s(2)}\\
& =-n_{\s(4)}=-n_{\s(6)},  
           \, n_{\s(7)}=-n_{\s(8)} \,\,\textup{and} \,\, |n_{\s(1)}|\neq |n_{\s(7)}|, \\
        & \\
\E[|g_n|^4]^2   &    \textup{if}\,\, \exists  \, \s\in S^8\,\, \textup{such that}\,\, n_{\s(1)}=n_{\s(3)}=-n_{\s(2)}=-n_{\s(4)}, \\
&  \, n_{\s(5)}=n_{\s(7)}=-n_{\s(6)}=-n_{\s(8)} \,\, \textup{and} \,\, |n_{\s(1)}|\neq |n_{\s(5)}|,    \\
& \\
\E[|g_n|^2]^4   &  \textup{if}\,\, \exists  \, \s\in S^8\,\, \textup{such that}\,\,\textup{for each odd}\, j\in \{ \cj{8}\}, \,  \textup{we have}      \\
       &\, n_{\s(j)}=-n_{\s(j+1)} \, \textup{and} \,\, |n_{\s(j)}|=|n_{\s(j')}| \, \textup{for} \, j\neq j' \,  \\
       & \textup{and} \, j,j'\in \{ \cj{8}\} \, \textup{odd}\\
       & \\
0 &  \textup{if}\,\, \exists  \, \s\in S^8\,\, \textup{such that}\,\, |n_{\s(j)}|\neq |n_{\s(j')}| \, \textup{for every} \, j\neq j' ,\\
& \textup{where}\,\, j,j'\in \{\cj{8}\}.
\end{cases}
\end{align*}

\end{lemma}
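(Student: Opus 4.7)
The proof rests on a single character/rotation argument driven by (v), combined with independence (i) and the conjugation relation (ii). Given a tuple $(n_1,\dots,n_k)$ with $k \in \{\overline{8}\}$ and (using (vii)) each $n_j \neq 0$, let $a_n$ (resp.\ $b_n$) denote the multiplicity of $+n$ (resp.\ $-n$) in the tuple for $n \in \mathbb{N}$. By (ii) the product rewrites as $\prod_{n \in \mathbb{N}} g_n^{a_n}\overline{g_n}^{\,b_n}$, with $\sum_n (a_n + b_n) = k$, and by (i) the factors across distinct $n$ are independent.

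First I would use (v) and (i) jointly to show that for any finitely-supported tuple of integers $(k_n)_n$ the joint law of $(g_n)_{n \in \mathbb{N}}$ coincides with that of $(e^{ik_n\theta}g_n)_{n \in \mathbb{N}}$. Choosing $k_{n_0} = 1$ at a single index and $0$ elsewhere yields
\[
\E\Bigl[\prod_j g_{n_j}\Bigr] \;=\; e^{i\theta(a_{n_0}-b_{n_0})}\,\E\Bigl[\prod_j g_{n_j}\Bigr].
\]
Since $|a_{n_0} - b_{n_0}| \leq k \leq 8$ and $\mathrm{lcm}(1,\dots,8) = 840$, the hypothesis $840\theta \not\equiv 0 \pmod{2\pi}$ forces $m\theta \not\equiv 0 \pmod{2\pi}$ for every $m \in \{\pm 1,\dots,\pm 8\}$, so the expectation vanishes whenever $a_{n_0} \neq b_{n_0}$ for some $n_0$. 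For odd $k$, $\sum_n (a_n + b_n) = k$ is odd, hence not all $a_n - b_n$ can vanish; this disposes of $k \in \{1,3,5,7\}$ at once.

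Next, for the surviving tuples (those with $a_n = b_n$ for every $n$), independence factorises the expectation as
\[
\E\Bigl[\prod_j g_{n_j}\Bigr] \;=\; \prod_{n : a_n > 0} \E\bigl[|g_n|^{2a_n}\bigr],
\]
each factor being finite by (iv) and equal to $1$ when $a_n = 1$ by (iii). For $k = 2$ this gives $\E[g_{n_1}g_{n_2}] = \delta_{n_1,-n_2}$. For $k = 4$ the partitions of $2 = \sum_n a_n$ are $(2)$ and $(1,1)$, matching the two non-zero cases. For $k = 8$ the partitions of $4$ are $(4), (3,1), (2,2), (2,1,1), (1,1,1,1)$, and each produces exactly the listed product of moments (with factors $\E[|g|^2] = 1$ collapsing the $(1,1,1,1)$ case to $1$).

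\textbf{Main obstacle.} The only delicate point is the simultaneous independent rotation step: one must verify that (v) applied component-wise combined with (i) gives equality of finite-dimensional distributions for $(g_n)_{n \in \mathbb{N}}$ and $(e^{ik_n\theta}g_n)_{n \in \mathbb{N}}$, not merely equality of one-dimensional marginals. This is standard (Fubini plus factorisation of the joint characteristic function), but it is the conceptual heart of the lemma. Everything else reduces to routine partition bookkeeping and a parity argument, and the particular value $840 = \mathrm{lcm}(1,\ldots,8)$ in (v) is calibrated precisely to cover the range $|m| \leq k \leq 8$ in the vanishing step.
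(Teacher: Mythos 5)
Your proposal is correct and follows essentially the same route as the paper: the heart of the matter is the single-index identity $\E[g_n^{k}\overline{g_n}^{\ell}]=\E[|g_n|^{2k}]\delta_{k,\ell}$ for $k+\ell\leq 8$, derived from the rotation invariance (v) together with $840=\mathrm{lcm}(1,\dots,8)$, after which independence factorises the expectation over distinct $|n|$ and the stated cases follow by the partition bookkeeping you describe. The only (cosmetic) difference is the order of operations: the paper factorises by independence \emph{first} and then rotates each single-index factor separately, so the ``simultaneous joint rotation'' step you flag as the main obstacle never arises.
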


\begin{proof}
The proof follows by a long case-by-case analysis using the  assumptions \eqref{item1}, \eqref{item2}, \eqref{item3} and the following consequence of assumption~\eqref{item5}:
for any non-negative integers $k$ and $\l$ satisfying $k+\l\leq 8$, we have
\begin{align}
\E[ g_{n}^{k}\cj{g_{n}}^{\l}]=\E[ |g_{n}|^{2k}]\dl_{k,\l}. \label{gkl}
\end{align}
To observe \eqref{gkl}, we may assume $k<\l$. Then by assumption~\eqref{item5}, we have 
\begin{align*}
\E[ g_n^k \cj{g_n}^{\l}]=\E[ |g_n|^{2k} \cj{g_n}^{\l-k} ] = e^{i\ta(k-\l)}\E[ |g_n|^{2k} \cj{g_n}^{\l-k}],
\end{align*}
but now the second equality and assumption~\eqref{item5} imply $\E[ |g_n|^{2k} \cj{g_n}^{\l-k}]=0$.
\end{proof}

\begin{lemma}\label{LEM:NZ}
Given $\al\in \big(\tfrac 14, \tfrac 12]$, let $s<2\al$ and fix $T>0$. Then, there exists $C_{s,\al}>0$ such that
\begin{align}
\E [ \| \NN(z) \|_{L^{4}_{T}H^{s}_{x}}^{4} ] \leq C_{s,\al}T<\infty. \label{quad}
\end{align}
Moreover, if $\{ \rho_{k}\}_{k\in \N}$ is a family of mollifiers on $\T$, then
$\NN(z_{k})$ converges to $\NN(z)$ as $k\rightarrow \infty$  in $L^{4}(\O; L^{4}_{T}H^{s}_{x})$. In particular, $\NN(z_{M})$ converges to $\NN(z)$ as $M\rightarrow \infty$, for $M$ dyadic, almost surely in  $L^{4}_{T}H^{s}_{x}$.
\end{lemma}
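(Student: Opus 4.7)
The plan is to establish the fourth moment bound \eqref{quad} by direct expansion, exploiting the explicit eighth-order moment formulas provided by Lemma~\ref{LEM:8}, since the hypercontractivity/Wiener chaos estimates available in the Gaussian setting no longer apply. By Fubini--Tonelli,
\[
\E[\|\NN(z)\|_{L^4_T H^s_x}^4] = \int_0^T \E[\|\NN(z)(t)\|_{H^s_x}^4]\, dt,
\]
so it suffices to bound the integrand uniformly in $t \in [0,T]$. Plancherel gives
\[
\|\NN(z)(t)\|_{H^s}^2 = \sum_{n \neq 0} \langle n\rangle^{2s} |\vp(n)|^2 \Big| \sum_{n_1+n_2=n} \ft z(n_1,t)\ft z(n_2,t) \Big|^2,
\]
and squaring yields an eight-fold sum over frequency configurations weighted by a deterministic kernel and an eighth-order expectation $\E[\prod_{j=1}^{8} g_{k_j}^{\ep_j}]$ with $\ep_j \in \{+,-\}$ encoding conjugation.

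By Lemma~\ref{LEM:8}, only configurations admitting a complete pairing of the eight indices (with opposite signs) contribute, each with a multiplicative constant controlled by $\E[|g_n|^8]$, which is finite under assumption~(iv). For each admissible pairing I reduce to a four-fold frequency sum weighted by $\prod_j \langle k_j\rangle^{-2\al}$ together with the external factor $\langle m\rangle^{2s-2}\langle n\rangle^{2s-2}$ coming from $|\vp(m)|^2|\vp(n)|^2\langle m\rangle^{2s}\langle n\rangle^{2s}$. Iterated application of Lemma~\ref{lemma:sumestimate} (which introduces $\phi_{2\al}$ factors) then reduces everything to one-dimensional sums in $m,n$. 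Crucially, the constraints $m,n \neq 0$ rule out the ``diagonal'' pairings (e.g.\ $m_1 = -m_2$) that would otherwise force $m=0$, exactly in the spirit of the analysis in Proposition~\ref{prop:stochobjects}. A case-by-case verification shows every admissible sum is finite provided $s < 2\al$ and $\al > \tfrac14$, yielding the desired uniform bound.

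For the convergence claim, I apply the same machinery to $\NN(z_k) - \NN(z)$, which by bilinearity decomposes as $\NN(z_k - z,\, z_k) + \NN(z,\, z_k - z)$ (where $\NN(u,v)$ denotes the symmetric bilinear form underlying $\NN$). The relevant Fourier multipliers are then weighted by $|\ft{\rho_k}(n) - 1|$, which by the mean value theorem and interpolation with the trivial bound satisfy $|\ft{\rho_k}(n) - 1| \les |n|^\ta k^{-\ta}$ for any $0 \leq \ta \leq 1$, exactly as in \eqref{interprho}. This extracts a factor $k^{-4\ta}$ from the fourth moment for $\ta > 0$ sufficiently small, proving convergence in $L^4(\O; L^4_T H^s_x)$. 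The almost-sure convergence of the Dirichlet projected sequence $\NN(z_M)$ along dyadic $M$ follows via Chebyshev and Borel--Cantelli.

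The main obstacle will be the combinatorial bookkeeping in the second step: the eighth-order moment formula of Lemma~\ref{LEM:8} distinguishes several qualitatively different pairing patterns (four-fold coincidence, two disjoint pairs of coincidences, etc.), and for each pattern I must carefully identify which of the internal frequencies $m_j, n_j$ are pinned by the pairing and which remain free, then confirm the resulting constrained sum is summable at the sharp threshold $\al > \tfrac14$. Handling the cases arising from the presence of $\ft{\rho_k}(0) = 1$ factors when some indices coincide with zero (allowed once assumption (vii) is dropped) is routine but must be recorded for completeness.
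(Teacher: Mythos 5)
Your proposal is correct and follows essentially the same route as the paper's proof: expand the fourth moment into an eight-fold frequency sum, invoke the eighth-order moment formulas of Lemma~\ref{LEM:8} (noting that the constraint $n_1+n_2\neq 0$ built into $\NN$ kills the dangerous self-pairings), run a case-by-case summation via Lemma~\ref{lemma:sumestimate} to land at the threshold $s<2\al$, $\al>\tfrac14$, and then obtain convergence from the bound $|\ft{\rho_k}(n)-1|\les |n|^{\ta}k^{-\ta}$ followed by Chebyshev and Borel--Cantelli along dyadic $M$. The only cosmetic difference is that you reduce via Fubini and Plancherel at fixed $t$, whereas the paper moves the $L^4(\O)$ norm inside $L^4_TL^2_x$ by Minkowski and estimates the random Fourier series pointwise in $(x,t)$; both reductions lead to the same eight-fold sum and the same pairing analysis.
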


\begin{proof}
Using \eqref{nonlin} and \eqref{newdata}, we write 
\begin{align*}
\jb{\dx}^{s}\NN(z)=\sum_{n_1,n_2\in \Z} \mathcal{B}(n_1,n_2)g_{n_1}g_{n_2},
\end{align*}
where 
\begin{align*}
\mathcal{B}(n_1,n_2)&:=\ind_{\{ n_1+n_2\neq 0\}} \jb{n_1+n_2}^{s}\vp(n_1+n_2)e^{i(n_1+n_2)x} a(n_1)a(n_2),\\
a(n)&:= \frac{e^{-it\vp(n)}}{\jb{n}^{\al}}.
\end{align*}
Notice $\mathcal{B}(n_1,n_2)=\mathcal{B}(n_2,n_1)$.
With this notation, we have 
\begin{align}
\E[ \|\NN(z)\|_{L^{4}_{T}H^{s}_{x}}^{4}] & \leq  \bigg\|  \Big\| \sum_{n_1,n_2\in \Z} \mathcal{B}(n_1,n_2)g_{n_1}g_{n_2} \Big\|_{L^{4}(\O)} \bigg\|_{L^{4}_{T}L^{2}_{x}}^{4}. \label{b1}
\end{align}
 Clearly, if we show
\begin{align*}
 \bigg\| \sum_{n_1,n_2\in \Z} \mathcal{B}(n_1,n_2)g_{n_1}g_{n_2} \bigg\|_{L^{4}(\O)} \leq C<\infty, 
\end{align*}
where $C$ above is independent of $(x,t)\in \T \times \R_{+}$, \eqref{b1} will imply \eqref{quad}. 
By expanding, we have 
\begin{align}
\begin{split}
\bigg\| \sum_{n_1,n_2\in \Z} &\mathcal{B}(n_1,n_2)g_{n_1}g_{n_2} \bigg\|_{L^{4}(\O)}^{4} \\
& = \sum_{  \substack{n_1,n_2, k_1, k_2\in \Z \\ m_1, m_2, \l_1, \l_2\in \Z} }  \mathcal{B}(n_1,n_2)\cj{\mathcal{B}(m_1,m_2)}\mathcal{B}(k_1,k_2)\cj{\mathcal{B}(\l_1,\l_2)} \\
& \hphantom{XXXXXXXX}\times \E[ g_{n_1}g_{n_2}\cj{g_{m_1}}\cj{g_{m_2}}g_{k_1}g_{k_2}\cj{g_{\l_1}}\cj{g_{\l_2}}] 
\end{split}
 \label{big8}
\end{align}
We now use Lemma~\ref{LEM:8} to handle the expectation above. This naturally requires a case-by-case analysis. We first fix some terminology. 
We say we have a pair if there exist $j\in \{n_1,n_2,k_1,k_2\}$ and $j'\in \{m_1,m_2,\l_1,\l_2\}$ such that $j=j'$.
Let $j_1, j_2\in \{n_1,n_2,k_1,k_2\}$ be distinct and $j_3,j_4\in \{m_1,m_2,\l_1,\l_2\}$ be distinct.
We say we have a $2$-pair if, in fact, $j_1=j_2=j_3=j_4$. Similarly, we also define $3$-pairs and $4$-pairs in the obvious way. Hence, Lemma~\ref{LEM:8} implies the right hand side of \eqref{big8} is non-zero if we have:
\medskip 

\noi 
$\bullet$ \textbf{Case 1:} a $4$-pair 

In this case, we have $n_1=n_2=k_1=k_2=m_1=m_2=\l_1=\l_2$.
Hence,
\begin{align*}
\text{RHS of} \, \,\eqref{big8} & \sim \sum_{n} |\mathcal{B}(n,n)|^{4} \sim \sum_{n} \frac{1}{ \jb{n}^{8\al-4s+4}},
\end{align*}
which is summable provided $s<\tfrac 34 +2\al$.

\medskip

\noi
$\bullet$ \textbf{Case 2:} a $3$-pair and a pair

By the symmetry in $\mathcal{B}$, we may assume
\begin{align*}
k_1=k_2=n_1=m_1=m_2=\l_1 \,\,\,\text{and} \,\, n_{2}=\l_{2},
\end{align*}
but $n_1\neq n_2$.
Since $s<1$ and using Lemma~\ref{lemma:sumestimate}, we have 
\begin{align*}
\text{RHS of} \,\, \eqref{big8} & \sim \sum_{n_1} |\B(n_1,n_1)|^{2} \sum_{n_2} |\B(n_1,n_2)|^{2} \\
& \les \sum_{n_1}|\B(n_1,n_1)|^{2} \frac{1}{\jb{n_1}^{2\al}} \sum_{n_2} \frac{1}{\jb{n_1+n_2}^{2(1-s)}}\frac{1}{\jb{n_2}^{2\al}} \\
& \les \sum_{n_1}|\B(n_1,n_1)|^{2}\frac{1}{\jb{n_1}^{2\al}}\frac{1}{\jb{n_1}^{2(1-s)+2\al-1}}  \\
& \les \sum_{n_1} \frac{1}{\jb{n_1}^{3-4s+8\al}}<\infty
\end{align*}
provided $s<\tfrac 12 +\al$. 

\medskip

\noi
$\bullet$ \textbf{Case 3:} a $2$-pair and a $2$-pair

Again by the symmetry in $\B$, we have two further subcases. 

\medskip

\noi
$\bullet$ \textbf{Subcase 3.1:}  $n_1=m_1=n_2=m_2$ and $k_1=k_2=\l_1=\l_2$

We have
\begin{align*}
\text{RHS of} \,\, \eqref{big8} &\sim \bigg(\sum_{n_1} |\B(n_1,n_1)|^{2}\bigg)^{2} \sim \bigg(\sum_{n_1} \frac{1}{\jb{n_1}^{4\al}}\frac{1}{\jb{n_1}^{2(1-s)}}\bigg)^{2} <\infty,
\end{align*}
provided $s<\tfrac 12 +2\al$.

\medskip

\noi
$\bullet$ \textbf{Subcase 3.2:} $n_1=m_1=k_1=\l_1$ and $n_2=m_2=k_2=\l_2$

Using Lemma~\ref{lemma:sumestimate}, we have
\begin{align*}
\text{RHS of} \,\, \eqref{big8} & \sim \sum_{n_1,n_2}|\B(n_1,n_2)|^{4} \\
& \sim \sum_{n} \frac{1}{\jb{n}^{4-4s}} \sum_{n=n_1+n_2} \frac{1}{\jb{n_1}^{4\al}\jb{n_2}^{4\al}} \\
& \les \sum_{n} \frac{1}{\jb{n}^{4-4s+4\al}}<\infty
\end{align*}
provided $s<\tfrac 34 +\al$.

\medskip

\noi
$\bullet$ \textbf{Case 4:} four pairs

We reduce to three further subcases.

\medskip

\noi
$\bullet$ \textbf{Subcase 4.1:} $n_1=n_2$, $k_1=k_2$, $m_1=m_2$, $\l_1=\l_2$

Using Lemma~\ref{lemma:sumestimate}, we get 
\begin{align*}
\text{RHS of} \,\, \eqref{big8} &\sim \bigg( \sum_{n_1}|\B(n_1,n_1)| \bigg)^{4}  \sim \bigg( \sum_{n_1}\frac{1}{\jb{n_1}^{1-s}}\frac{1}{\jb{n_1}^{2\al}}  \bigg)^{4}<\infty,
\end{align*}
provided $s<2\al$.

\medskip

\noi
$\bullet$ \textbf{Subcase 4.2:} $n_1=m_1$, $n_2=m_2$, $k_1=\l_1$, $k_2=\l_2$

Using Lemma~\ref{lemma:sumestimate}, we have 
\begin{align*}
\text{RHS of} \,\, \eqref{big8} &\sim \bigg( \sum_{n_1,n_2}|\B(n_1,n_2)|^{2} \bigg)^{2} \\
& \les \bigg( \sum_{n} \frac{1}{\jb{n}^{2(1-s)}} \sum_{n=n_1+n_2} \frac{1}{\jb{n_1}^{2\al}}\frac{1}{\jb{n_2}^{2\al}} \bigg)^{2} \\
& \les \bigg( \sum_{n}\frac{1}{\jb{n}^{2(1-s)}}\frac{1}{\jb{n}^{4\al-1}}\bigg)^{2} <\infty,
\end{align*}
provided $\al>\tfrac 14$ and $s<2\al$.

\medskip

\noi
$\bullet$ \textbf{Subcase 4.3:} $n_1=m_1$, $n_2=\l_2$, $k_1=m_2$, $k_2=\l_1$  

In this case, we have 
\begin{align*}
\text{RHS of} \,\, \eqref{big8} \sim \sum_{n_1,n_2}&\frac{1}{\jb{n_1}^{2\al}\jb{n_2}^{2\al}\jb{n_1+n_2}^{1-s}} \sum_{k_2} \frac{1}{\jb{k_2}^{2\al}}\frac{1}{\jb{k_2+n_2}^{1-s}} \\
&\times \sum_{k_1} \frac{1}{\jb{k_1}^{2\al}\jb{k_1+k_2}^{1-s}\jb{k_1+n_1}^{1-s}}.
\end{align*}
For the innermost summation, we apply Cauchy-Schwarz and Lemma~\ref{lemma:sumestimate} twice to get 
\begin{align*}
 \sum_{k_1} \frac{1}{\jb{k_1}^{2\al}\jb{k_1+k_2}^{1-s}\jb{k_1+n_1}^{1-s}} \les \frac{1}{\jb{k_2}^{\frac 12 -s+\al}}\frac{1}{\jb{n_1}^{\frac 12-s+\al}},
\end{align*}
provided $s<\tfrac 12+\al$. Inserting this bound back into the above and using Lemma~\ref{lemma:sumestimate} to sum in $k_2$ provided $s<\tfrac 14 +\tfrac 32 \al$, we have 
\begin{align*}
\sum_{n_1,n_2}&\frac{1}{\jb{n_1}^{\frac{1}{2}-s+3\al}\jb{n_2}^{2\al}\jb{n_1+n_2}^{1-s}} \sum_{k_2} \frac{1}{\jb{k_2}^{\frac 12 -s+3\al}}\frac{1}{\jb{k_2+n_2}^{1-s}} \\
& \les \sum_{n_1} \frac{1}{\jb{n_1}^{\frac{1}{2}-s+3\al} }\sum_{n_2} \frac{1}{\jb{n_2}^{\frac 12-2s+5\al}}\frac{1}{\jb{n_1+n_2}^{1-s}}.
\end{align*}
Using Lemma~\ref{lemma:sumestimate} to sum in $n_2$ provided $s<\tfrac 16 +\tfrac 53 \al$, we bound the above by 
\begin{align*}
\sum_{n_1} \frac{1}{\jb{n_1}^{\frac{1}{2}-s+3\al} }\frac{1}{\jb{n_1}^{\frac 12 -3s+5\al}} =\sum_{n_1} \frac{1}{\jb{n_1}^{8\al+1-4s}}<\infty
\end{align*}
as long as $s<2\al$. 

Collating all the cases, we see that for $\tfrac 14 <\al\leq \tfrac 12$, the worst regularity restriction is indeed $s<2\al$. 

By slightly modifying the above arguments and using \eqref{rhomvt} and the uniform (in $k$ and $n$) of $\ft{\rho_{k\hphantom{'}}}(n)$, we also obtain
\begin{align}
\E[ \| \NN(z_{k})-\NN(z_{k'}) \|_{L^{4}_{T}H^{s}_{x}}^{4} ] \les C_{T,s,\al}k^{-4\ta} \label{Bcauchy}
\end{align}
for some small $\ta>0$. 
From \eqref{Bcauchy}, we have $\NN(z_{k})$ converges to $\NN(z)$ as $k\rightarrow \infty$  in $L^{4}(\O; L^{4}_{T}H^{s}_{x})$. In particular, taking $k'\rightarrow 0$ in \eqref{Bcauchy}, we get
\begin{align}
\E[ \|\NN(z_{M})-\NN(z)\|_{L^{4}_{T}H^{s}_{x}}^{4}] \les M^{-4\ta}, \label{BM}
\end{align}
where $M\geq 1$ is dyadic. Thus by a Borel-Cantelli argument with \eqref{BM}, we have $\NN(z_{M})$ converges to $\NN(z)$ as $M\rightarrow \infty$, for $M$ dyadic, almost surely in  $L^{4}_{T}H^{s}_{x}$.
\end{proof}

At this stage, we do not know how to show the almost sure convergence of $\NN(z_{k})$ along $k\in \N$. If we did have convergence of the full sequence, we would then obtain analogues of the results in Theorem~\ref{thm:asnorminf} and Theorem~\ref{thm:smoothapp} for random initial data of the form \eqref{newdata}. We note that this does not cause an issue for the global well-posedness argument in (iii) below as we only require the almost sure convergence of a subsequence (which we take to be dyadic).

\medskip

\noi 
\underline{$\bullet$ \textbf{(II):}}
 The key difference here is we weaken the assumption \eqref{assumptions} to $z_{2}\in L^{2}_{t}H^{s}_{x}([0,T]\times \T)$. Then, for the fixed point argument in the proof of Proposition~\ref{prop:detlocal}, we apply Cauchy-Schwarz to bound the $z_2$ term as follows:
\begin{align*}
\bigg\| \int_{0}^{t} S(t-t')z_{2}(t') dt'\bigg\|_{L^{\infty}_{T}H^{s}_{x}} & \les T^{\frac 12} \|z_{2}\|_{L^{2}_{T}H^{s}_{x}}.
\end{align*}
Hence, for the analogue of the almost sure local well-posedness result of Theorem~\ref{Thm:ASLWP} for random initial data of the form~\eqref{newdata}, we put $z_{2}=\NN(z)$ and use Lemma~\ref{LEM:NZ}. This implies almost sure existence of solutions below $L^{2}(\T)$ for BBM~\eqref{BBMD} with random initial data of the form~\eqref{newdata}, provided $\al>\tfrac 14$. 

\medskip

\noi 
\underline{$\bullet$ \textbf{(III):}}
We now describe the analogue of the almost sure global well-posedness result of Theorem~\ref{Thm:ASGWP}. 
All that is necessary is to obtain the following analogue of Proposition~\ref{prop:bdk}, except here we now replace the smoothed initial value problem~\eqref{vepseq2} by the smoothed initial value problem with dyadic $M\geq 1$:
\begin{align}
\begin{cases}
i\dt v_M = \vp(D_x) \big(v_M + \frac 12 v_{M}^2 + z_{M} v_M  \big)+\frac{1}{2} \NN(z_M) \\
v_M |_{t=0}=0,
\end{cases}
\label{vepseq3}
\end{align}

\begin{proposition}\label{prop:bd} Let $\al =\frac 12$ and $s<1$ sufficiently close to one. Given $T, \e>0$, there exist $\tilde{\O}_{T,\e}\subset \O$ such that 
\begin{align*}
\prob( (\tilde{\O}_{T,\e})^{c}) <\e,
\end{align*}
a dyadic integer $M_{0}=M_{0}(T,\e)$ and a finite constant $C(T,\e)>0$ such that the following bound holds: 
\begin{align*}
\sup_{ \substack{M \geq M_0 \\ M \textup{dyadic}}} \sup_{t\in [0,T]}\|v_{M}(t)\|_{H^{s}(\T)} \leq C(T,\e),
\end{align*}
for every solution $v_{M}^{\o}$ to \eqref{vepseq3} with $\o \in \tilde{\O}_{T,\ep}$.
\end{proposition}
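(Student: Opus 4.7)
The plan is to adapt the proof of Proposition~\ref{prop:bdk} to the non-Gaussian setting with three key modifications. First, we will iterate along the dyadic subsequence $M$, since Lemma~\ref{LEM:NZ} provides the almost sure convergence $\NN(z_M) \to \NN(z)$ in $L^4_T H^{2\al-}$ only along such subsequences (the Borel--Cantelli step invoked there is not summable along general $k$). Second, the moment estimates underlying Proposition~\ref{prop:stochobjects} (for the linear solution $z$) and Lemma~\ref{lemma:izintmoment} (for $\|Iu_0^\o\|_{L^2}$) extend verbatim to the non-Gaussian setting via the subgaussian square-function bound \eqref{almostgauss}. Third, and most importantly, Lemma~\ref{LEM:NZ} furnishes only an $L^4_T H^{2\al-}$ bound on $\NN(z)$ rather than the $L^\infty_T$ bound available in Proposition~\ref{prop:bdk}; this forces us to replace the nonlinear Gronwall step of Subsection~\ref{subsec:bdk}.

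I would first derive the modified energy estimate \eqref{energy} for $v_M$ solving \eqref{vepseq3}. Since that derivation relies only on Lemmas~\ref{lemma:Iv2commutator},~\ref{lemma:IvIzcommutator} and \ref{lemma:for4}, all of which are purely deterministic, it transfers without change, provided we keep the term $C_{s,\al} N^{1-2\al+}\int_0^t \|\NN(z_M)(t')\|_{H^{2\al-}} E^{\frac 12}(Iv_M)(t')\, dt'$ in its integrated form. I would then construct the favorable event $\tilde \O_{T,\ep}$ using Chebyshev's inequality and Egorov's theorem so that on it all of the following bounds hold simultaneously for every dyadic $M \geq M_0 = M_0(T,\ep)$:
\begin{align*}
\|z\|_{C_T W^{\al-\frac 12 -, r}} \leq K, \qquad \|\NN(z_M)\|_{L^4_T H^{2\al-}} \leq 1+K, \qquad \|Iu_0^\o\|_{L^2} \leq \Lambda \phi_{2\al}^{\frac 12}(N),
\end{align*}
with $K = K(T,\ep)$ and $\Lambda = \Lambda(\ep,s)$ chosen as in the proof of Proposition~\ref{prop:bdk}.

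Setting $G(t) := E^{\frac 12}(Iv_M)(t)$ and absorbing the cubic $N^{-\frac 32+}E^{\frac 32}$ contribution on the bootstrap interval $\{E(Iv_M) \leq C_{K,T,s,\al}N^{2-}\}$, the energy inequality reduces (after a standard regularisation $G_\e := \sqrt{G^2+\e}$ and $\e \to 0$) to the linear ODE
\begin{align*}
G'(t) \leq \tfrac{A}{2} G(t) + \tfrac{B}{2} \|\NN(z_M)(t)\|_{H^{2\al-}},
\end{align*}
with $A \sim \Lambda \phi_{2\al}^{\frac 12}(N)$ and $B \sim N^{1-2\al+}$. Integrating with $G(0)=0$ and applying H\"older's inequality in time with exponents $(4, \tfrac 43)$ yields
\begin{align*}
G(t) \leq C(1+K)\, B\, A^{-\frac 34} e^{At/2}.
\end{align*}
At $\al = \frac 12$ we have $A \sim \Lambda (\log N)^{\frac 12}$ and $B \sim N^{0+}$, and inverting the exit-time relation as in Subsection~\ref{subsec:bdk} gives $\bar T_M \gtrsim (\log N)^{\frac 12}/\Lambda$. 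Choosing $N = N(T,\ep)$ sufficiently large (in analogy with~\eqref{Nchoice}) then forces $\bar T_M \geq 2T$ uniformly in dyadic $M \geq M_0$, and the desired bound on $\|v_M\|_{H^s}$ follows via~\eqref{hstoIh1}.

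The principal obstacle is the reduced moment control on $\NN(z)$: the Gaussian argument leans on the $L^\infty_T H^{2\al-}$ bound to invoke the nonlinear Gronwall Lemma~\ref{gronwall}, whereas here only $L^4_T$ control is available. The substitute is the linear ODE for $G$ above, which avoids $L^\infty$ norms entirely; reassuringly it yields the same qualitative lower bound $\bar T_M \gtrsim (\log N)^{\frac 12}$, so the choice of $N$ and the conclusion proceed as in Subsection~\ref{subsec:bdk}.
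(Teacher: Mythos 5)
Your proposal is correct and follows the paper's architecture almost exactly: the paper likewise restricts to the dyadic subsequence because Lemma~\ref{LEM:NZ} only gives almost sure convergence of $\NN(z_M)$ along dyadic $M$, builds $\tilde\O_{T,\e}$ from Egorov's theorem (for the uniform-in-$M\geq M_0$ bound $\|\NN(z_M)\|_{L^4_{2T}H^s}\leq 1+K$) together with the Chebyshev tail from the fourth-moment bound and the extension of Lemma~\ref{lemma:izintmoment} via \eqref{almostgauss}, and re-estimates only the forcing term (II) by H\"older in time against $\|\NN(z_M)\|_{L^4_T H^{s-1-}}$. The one place you genuinely diverge is the final Gronwall step. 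The paper applies H\"older so as to land back in the integral form \eqref{cgron}, producing a constant $c\sim N^{1-2\al+}K$ plus a term $b\int_0^t E^{2/3}$, and then invokes the nonlinear Gronwall Lemma~\ref{gronwall} with $\g=\tfrac23$. You instead pass to the differential inequality for $G=E^{1/2}$ (legitimate here since $v_M$ is smooth, with the regularisation $\sqrt{E+\e}$ handling the division by $G$), keep the forcing $\|\NN(z_M)(t)\|_{H^{2\al-}}$ explicit, solve by integrating factor, and only then apply H\"older with exponents $(4,\tfrac43)$ to get $G(t)\les (1+K)BA^{-3/4}e^{At/2}$. Both routes give the same exit time $\cj T\ges (\log N)^{1/2}/\Lambda$ at $\al=\tfrac12$ and hence the same choice of $N(T,\e)$; yours has the mild advantage of bypassing the nonlinear Gronwall lemma entirely, at the cost of needing the differential rather than integral form of the energy estimate. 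Either is acceptable.
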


\begin{proof}

With $T>0$ fixed, we define 
\begin{align*}
\Sigma_{\text{conv},T}=\{ \o\in \O\,:\,(z_{M}^{\o},\NN(z_{M}^{\o}))\rightarrow (z^{\o},\NN(z^{\o})) \,\,\text{in} \,\,  C_{2T}&W^{\al-\frac 12-, r}\times L^{4}_{2T}H^{s} \\
& \text{as}\, M\rightarrow \infty, \,M \,\text{dyadic} \},
\end{align*}
where $r=r(s,\al)$ is as in Subsection~\ref{subsec:bdk}. Note that \eqref{znongas} and Lemma~\ref{LEM:NZ} imply $\prob(\Sigma_{\text{conv},T})=1$. With $K>0$ fixed, we define 
\begin{align*}
\O_{K,T,\al}=\{ \o\in \Sigma_{\text{conv},T}\, :\, \|z\|_{C_{2T}W^{\al-\frac 12-,p}_{x}}+\|\NN(z)\|_{L^{4}_{2T}H^{s}_{x}}\leq K\}.
\end{align*}
By Egoroff's theorem, for any $\e>0$, there exists $\O_{\e}\subset \Sigma_{\text{conv},T}$ with $\prob( \Sigma_{\text{conv},T} \setminus \O_{\e})<\tfrac{\e}{3}$, such that $\NN(z_{M}^{\o})$ converges uniformly to $\NN(z^{\o})$ as $M\rightarrow \infty$ in $L^{4}_{2T}H^{s}_{x}$ for every $\o\in \O_{\e}$.
Hence, there exists $M_0= M_0(T,\e)$ such that for every $M\geq M_0$, we have
\begin{align*}
\| \NN(z^{\o}_{M})\|_{L^{4}_{2T}H^{s}_{x}}\leq 1+K 
\end{align*}
for every $\o\in \O_{K,T,\al, \e}:=\O_{\e}\cap \O_{K,T,\al}$. Lemma~\ref{LEM:NZ} implies
\begin{align*}
\prob( \O_{K,T,\al, \e}^{c}) \leq \frac{C_{T,s,\al}}{K^{4}}+\frac{\e}{3},
\end{align*}
and hence with $K=K(\e,T,s,\al)$ large enough, we have $\prob( \O_{K,T,\al,\e}^{c})<\tfrac{2\e}{3}$.

We now obtain an analogue of \eqref{energy} for the growth of the modified energy $E(Iv_{M})(t)=:E_{M}(t)$. 
We first note that the result of Lemma~\ref{lemma:izintmoment} holds for the initial data~\eqref{newdata} in view of \eqref{almostgauss}.
The only modification we make is in estimating the term (II):
\begin{align*}
\int_{0}^{t}\int_{\T}(\dx Iv_M) I(\P_{\neq 0}(z_{M}^{2})) dx dt'.
\end{align*}
Namely, by Cauchy-Schwarz and H\"{o}lder's inequalities, we have 
\begin{align*}
\bigg\vert \int_{0}^{t}\int_{\T}(\dx Iv_M)& I(\P_{\neq 0}(z_{M}^{2})) dx dt' \bigg\vert  \\
& \leq \int_{0}^{t} \|I(\P_{\neq 0}(z_{M}^{2}))\|_{L^{2}_{x}}E^{\frac{1}{2}}(t')\,dt' \\
& \leq \|I(\P_{\neq 0}(z_{M}^{2}))\|_{L^{4}_{T}L^{2}_{x}}\bigg( \int_{0}^{t} E^{\frac{2}{3}}(t')\,dt'\bigg)^{\frac{3}{4}}\\
& \les N^{1-2\al+}\|\P_{\neq 0}(z_{M}^{2})\|_{L^{4}_{T}H^{s-1-}_{x}}\bigg[ 1+ \int_{0}^{t} E^{\frac{2}{3}}(t')\,dt'\bigg].
\end{align*}
Then, applying the same ideas as in the proof of Proposition~\ref{prop:bdk} we obtain an inequality of the form \eqref{cgron} (the analogue of \eqref{ode}) with $\g=\tfrac 23$ and $c\sim N^{1-2\al+}K$. We then apply Lemma~\ref{gronwall} and we complete the argument as in the proof of Proposition~\ref{prop:bdk} provided $\al=\tfrac{1}{2}$, with $\tilde{\O}_{T,\e}:=\tilde{\O}_{T,\e}:=\O_{K(\e,T),T,\frac 12 ,\e}\cap \O_{\Lambda,N}$ and $\O_{\Lambda,N}$ defined in \eqref{Olambda}.

\end{proof}

\section{Tail estimates on random variables}\label{app:Garsia}

In this appendix, we state some standard results which allows us to prove tail estimates of random variables using estimates on the moments of differences as in, for example, \eqref{253}. Our reference for this appendix is~\cite[Appendix A.2 and A.3]{FV}.

\begin{theorem}[Garsia-Rudemich-Rumsey inequality,~{\cite[Theorem A.1]{FV}}] 
Let $(E,d)$ be a metric space and $f\in C([0,T];E)$. Let $\Psi$ and $P$ be continuous strictly increasing functions on $[0,\infty)$ with $P(0)=\Psi(0)=0$ and $\Psi(x)\rightarrow \infty$ as $x\rightarrow \infty$. Suppose 
\begin{align*}
\int_{0}^{T} \int_0^T \Psi\bigg(\frac{d(f(t'),f(t))}{P(|t-s|)}  \bigg)dt'dt \leq F. 
\end{align*}
Then for any $0\leq s<t\leq T$, we have 
\begin{align*}
d(f(t'),f(t))\leq 8 \int_{0}^{t-t'}\Psi^{-1}\bigg( \frac{4F}{x^{2}}\bigg) dP(x).
\end{align*}
\end{theorem}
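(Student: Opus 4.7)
The plan is to follow the classical Garsia--Rudemich--Rumsey argument via a bisection/averaging scheme. Fix $0\leq t'<t\leq T$ and set $h:=t-t'$. Introduce the ``conditional density''
\[
I(u):=\int_{t'}^{t}\Psi\!\left(\frac{d(f(u),f(v))}{P(|u-v|)}\right)dv,\qquad u\in[t',t],
\]
and observe by Fubini that $\int_{t'}^{t} I(u)\,du \leq F$. The strategy is to build a sequence of ``good'' evaluation points, telescope the distance along the sequence, and convert a resulting geometric sum into the stated integral.

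The first step is to construct inductively a decreasing sequence $t=u_{-1}>u_0>u_1>\cdots$ converging to $t'$, with geometric gaps $h_n:=u_{n-1}-u_n$ comparable to $2^{-n}h$. At stage $n$, the point $u_n$ is chosen in a left sub-interval of length proportional to $h_n$ so that two bounds hold simultaneously: first, an averaged control of $I$, namely $I(u_n)\leq C F/h_n$, and second, a pointwise control
\[
\Psi\!\left(\frac{d(f(u_{n-1}),f(u_n))}{P(u_{n-1}-u_n)}\right)\leq \frac{C\,I(u_{n-1})}{h_n}.
\]
The existence of such a $u_n$ is guaranteed by a Markov-type argument: the sub-interval contains points failing each individual inequality on a set of measure at most half its length, so the complement of the union is nonempty and, by continuity of $f$, contains an admissible $u_n$.

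Applying $\Psi^{-1}$ (which is well-defined and increasing by the hypotheses on $\Psi$) and combining the two inequalities yields
\[
d(f(u_{n-1}),f(u_n))\leq P(u_{n-1}-u_n)\,\Psi^{-1}\!\left(\frac{C'F}{h_{n-1}h_n}\right).
\]
Using continuity of $f$ to ensure $f(u_n)\to f(t')$, telescoping $d(f(t'),f(t))\leq \sum_{n\geq 0} d(f(u_{n-1}),f(u_n))$, and replacing $h_{n-1}h_n$ by $h_n^2$ (up to constants dictated by the bisection) produces a sum of the form $\sum_n [P(h_{n-1})-P(h_n)]\Psi^{-1}(4F/h_n^2)$, which is a Riemann-type lower approximation to $\int_0^{h}\Psi^{-1}(4F/x^2)\,dP(x)$ up to the factor $8$. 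A symmetric construction near $t'$ (or a single one-sided argument using the monotonicity of $\Psi^{-1}(4F/x^2)dP(x)$) handles the boundary contribution.

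The main obstacle is the \emph{simultaneous} control of the two quantities at each stage of the iteration. A single Markov inequality easily bounds either $I(u_n)$ or the $\Psi$-evaluation at $u_n$ individually; the delicate point is to verify that the two ``bad'' sets together still leave room for a good point in the chosen sub-interval, which is what produces the factor $4$ inside $\Psi^{-1}$ and the overall constant $8$ in the final bound. Once this step is secured, the conversion of the geometric sum into the stated integral against $dP$ is routine and independent of the specific choices of $\Psi$ and $P$, since it uses only monotonicity of $\Psi^{-1}$ and of $P$.
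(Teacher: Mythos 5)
First, a point of reference: the paper does not actually prove this theorem --- it is imported verbatim from \cite[Theorem A.1]{FV} and used as a black box in Appendix~\ref{app:Garsia} --- so there is no in-paper argument to measure your sketch against. Judged against the standard Garsia--Rodemich--Rumsey proof (which is what \cite{FV} contains), your outline has the right architecture: the density $I(u)$, the Fubini bound $\int_{t'}^{t}I(u)\,du\leq F$, the inductive selection of good points by discarding two Markov-bad sets whose total measure is less than that of the ambient subinterval, the combination of the two resulting bounds after applying $\Psi^{-1}$, and the telescoping of $d(f(t'),f(t))$ along the sequence. You also correctly identify the simultaneous control at each stage as the crux and describe the correct mechanism for it.

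There is, however, one step that fails as written for a \emph{general} increasing $P$. You take the gaps $h_n=u_{n-1}-u_n$ to decay geometrically in length, $h_n\sim 2^{-n}h$. The final step then asks you to dominate $\sum_n\,[P(h_{n-1})-P(h_n)]\,\Psi^{-1}(4F/h_n^2)$ by $\int_0^{h}\Psi^{-1}(4F/x^2)\,dP(x)$. Since the integrand is decreasing in $x$ and you evaluate it at the left endpoint $h_n$ of the block $[h_n,h_{n-1}]$, this sum is an \emph{upper} Riemann sum for the integral, not a ``lower approximation'' as you write, so it is not automatically controlled by the integral. The classical repair shifts indices, bounding the $n$-th term by a constant times $\int_{h_{n+1}}^{h_n}\Psi^{-1}(4F/x^2)\,dP(x)\geq [P(h_n)-P(h_{n+1})]\,\Psi^{-1}(4F/h_n^2)$, and this requires the increment comparison $P(h_{n-1})-P(h_n)\leq C\,[P(h_n)-P(h_{n+1})]$. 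With length-halving and an arbitrary continuous increasing $P$ this can fail (take $P$ nearly flat on $[h_{n+1},h_n]$ and steep on $[h_n,h_{n-1}]$). This is precisely why the standard proof bisects in the \emph{$P$-values}, choosing the scales so that $P(h_{n+1})=\tfrac12 P(h_n)$, which makes consecutive increments comparable with constant $2$ and yields the advertised factors $4$ and $8$. For the only $P$ the paper uses, $P(x)=x^{\beta+1/q}$, your length-halving would survive with a $(\beta,q)$-dependent constant, but to prove the theorem as stated you must halve $P$. Two smaller corrections: the existence of the good point $u_n$ is pure measure theory (two bad sets of total measure strictly less than the subinterval cannot cover it); continuity of $f$ is not what rescues you there, it is only needed at the end to pass to the limit $f(u_n)\to f(t')$.
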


Putting $\Psi(x)=x^{q}$ and $P(x)=x^{\beta+\frac 1q}$ for $\beta>\frac 1q$, we obtain the following useful corollary.

\begin{corollary}
Suppose $q>1$ and $\beta >\frac 1q$.
Then for $0\leq t'<t\leq T$, we have
\begin{align*}
d(f(t'),f(t))^{q}\leq C(\beta,q)^{q}|t-t'|^{q\beta-1}\iint_{[t',t]}\frac{d(f(u),f(v))^{q}}{|u-v|^{q\beta+1}}dudv,
\end{align*}
where 
\begin{align*}
C(\beta,q):=32^{q}\bigg(\frac{q\beta+1 }{q\beta-1}\bigg)^{q}.
\end{align*}
In particular, with $\g =\beta-\frac 1q$, we have
\begin{align}
\begin{split}
\|f\|_{\dot{C}^{\g}([0,T];E)}&:=\sup_{0\leq t'<t\leq T}\frac{ d(f(t),f(t'))}{|t-t'|^{\g}} \\
& \leq C(\beta,q)^{\frac 1q} \bigg( \int_{0}^{T}\int_{0}^{T} \frac{d(f(u),f(v))^{q}}{|u-v|^{q\beta+1}}dudv   \bigg)^{\frac 1q}.
\end{split} \label{holder}
\end{align}
\end{corollary}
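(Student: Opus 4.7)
The plan is to apply the Garsia-Rudemich-Rumsey (GRR) inequality stated immediately above with the specific choices
\begin{align*}
\Psi(x)=x^{q} \qquad \text{and} \qquad P(x)=x^{\beta+\frac{1}{q}}.
\end{align*}
Both are continuous, strictly increasing on $[0,\infty)$, vanish at the origin, and $\Psi(x)\to\infty$, so the hypotheses of GRR are met. I would apply the theorem on the sub-interval $[t',t]\subset [0,T]$ rather than on all of $[0,T]$ (the proof of GRR is unaffected by this replacement), with
\begin{align*}
F:=\iint_{[t',t]^{2}} \frac{d(f(u),f(v))^{q}}{|u-v|^{q\beta+1}}\,du\,dv,
\end{align*}
which is precisely the double integral appearing in the GRR hypothesis, with equality.

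The key step is to evaluate the right-hand side of the GRR conclusion for this choice. Using $\Psi^{-1}(y)=y^{1/q}$ and $dP(x)=(\beta+\tfrac{1}{q})x^{\beta+1/q-1}\,dx$, the integrand becomes a constant multiple of $F^{1/q}\, x^{\beta-1/q-1}$. This is exactly where the hypothesis $\beta>\tfrac{1}{q}$ enters: it is the condition needed for integrability at the origin, and evaluating the integral produces a factor $(t-t')^{\beta-1/q}/(\beta-1/q)$. Combining with the prefactor $(\beta+1/q)$ coming from $dP$ yields the ratio $\tfrac{q\beta+1}{q\beta-1}$, and raising the resulting estimate to the $q$-th power while absorbing numerical constants via $8^{q}\cdot 4\leq 32^{q}$ recovers the first displayed inequality with the stated constant $C(\beta,q)$.

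For the ``in particular'' assertion, I would set $\gamma=\beta-\tfrac{1}{q}$, divide the previous inequality by $|t-t'|^{q\beta-1}=|t-t'|^{q\gamma}$, take $q$-th roots, extend the integral from $[t',t]^{2}$ to $[0,T]^{2}$, and take supremum over $0\leq t'<t\leq T$. This directly yields the H\"{o}lder-seminorm bound \eqref{holder}. I do not expect any real obstacle here: the only substantive computation is the integral estimate above, which is entirely elementary, and the rest of the argument is bookkeeping of the numerical constants in the GRR statement.
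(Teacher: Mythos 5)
Your proposal is correct and is exactly the argument the paper intends: the paper obtains this corollary by the same substitution $\Psi(x)=x^{q}$, $P(x)=x^{\beta+\frac{1}{q}}$ into the Garsia--Rudemich--Rumsey inequality (applied on the subinterval $[t',t]$), without writing out the elementary integral computation that you supply. The only remark worth making is that your computation produces the constant $8^{q}\cdot 4\big(\tfrac{q\beta+1}{q\beta-1}\big)^{q}\leq 32^{q}\big(\tfrac{q\beta+1}{q\beta-1}\big)^{q}=C(\beta,q)$, i.e.\ the first displayed inequality should read $C(\beta,q)$ rather than $C(\beta,q)^{q}$ (a typo in the statement, as confirmed by the factor $C(\beta,q)^{\frac{1}{q}}$ in \eqref{holder}), and your derivation is consistent with the corrected form.
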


We use \eqref{holder} to help estimate the moments of $C^{\g}([0,T];E)$-norms of random variables. In applying Kolmogorov's continuity criterion, one shows a difference estimate like 
\begin{align}
\E [ d(f(t),f(t'))^{q}] \leq K(\eta,q)|t-t'|^{1+\eta}, \label{kolm1}
\end{align}
where $\eta>0$. 

We then conclude from \eqref{kolm1} that the process $f$ belongs to $C^{\frac{\eta}{q}-\g}([0,T];E)$ for any $\g <\frac{\eta}{q}$ almost surely. Using \eqref{kolm1} in \eqref{holder}, we have 
\begin{align}
\E[ \|f\|_{\dot{C}^{\g}([0,T];E)}^{q}] &\leq K(\eta,q)C(\beta,q)\int_{0}^{T}\int_{0}^{T} \frac{|u-v|^{1+\eta}}{|u-v|^{q\beta+1}}dudv \notag \\
& = K(\eta,q)C(\beta,q)\frac{2T^{2+\eta-q\beta}}{(1+\eta-q\beta)(2+\eta-q\beta)}, \label{kolm2}
\end{align}
provided $q\beta -\eta<1$.
Thus we require:
\begin{align*}
\beta \in \bigg( \frac{1}{q}, \frac{\eta}{q}+\frac{1}{q}\bigg) \qquad \text{and} \qquad \g<\frac{\eta}{q}.
\end{align*}
We then use \eqref{kolm2} to obtain appropriate tail estimates. 

As an example, in \eqref{zdiff} we obtained
\begin{align*}
\E[ \|z(t)-z(t')\|_{W^{s_1,p}(\T)}^{q}] \les q^{\frac q2}|t-t'|^{q}
\end{align*}
 for $q\geq 2$.
Thus from \eqref{kolm2} we get 
\begin{align*}
\E[ \|z\|_{\dot{C}^{\g}([0,T];W^{s_1,p}(\T))}^{q}]\leq \frac{2C(\beta,q)q^{\frac q2}}{q(1-\beta)[1+q(1-\beta)]}T^{q(1-\beta)+1},
\end{align*}
provided $\g<1-\frac 1q$ and $\beta \in \big(\tfrac{1}{q}, 1\big)$. To convert this into a tail estimate on $ \|z\|_{\dot{C}^{\g}([0,T]; W^{s_1,p}(\T))}$, Chebyshev's inequality implies 
\begin{align}
\prob( \|z\|_{\dot{C}^{\g}([0,T]; W^{s_1,p}(\T))}>\ld) \leq \frac{D(\beta,q)}{\ld^{q}}T^{q(1-\beta)+1}, \label{kolm3}
\end{align}
where  
\begin{align*}
D(\beta,q)&=\frac{32^{q}\big(\frac{q\beta+1}{q\beta-1}\big)^{q}q^{\frac q2}}{q(1-\beta)[1+q(1-\beta)]}  \leq \frac{32^{q}\big(\frac{q\beta+1}{q\beta-1}\big)^{q}q^{\frac q2}}{q(1-\beta)}
\end{align*}
We notice that since $q\geq 2$, we can choose $\beta=\tfrac{1}{4}+\tfrac{1}{q}$, which allows for the loose bound  
\begin{align*}
D(\beta,q)\leq (Cq)^{\frac{q}{2}}.
\end{align*}
Now, we optimise in $q$ the bound on the right hand side \eqref{kolm3} and we obtain the exponential tail estimate 
\begin{align}
\prob( \|z\|_{\dot{C}^{\g}([0,T]; W^{s_1,p}(\T))}>\ld) \leq \exp(-c\ld^{2}T^{-\frac{3}{4}}). \label{exptail}
\end{align}

\end{appendix}

\begin{ackno}\rm
J.\,F. is grateful to his PhD advisors Tadahiro Oh and Oana Pocovnicu for suggesting this topic and for their guidance and support. The author would like to thank Nikolay Tzvetkov and Leonardo Tolomeo for helpful discussions and encouragement. The author would also like to thank Martin Hairer for asking a question which initiated Remark~\ref{remark: gaussianity}.
J.\,F.~was supported by The Maxwell Institute Graduate School in Analysis and its
Applications, a Centre for Doctoral Training funded by the UK Engineering and Physical
Sciences Research Council (grant EP/L016508/01), the Scottish Funding Council, Heriot-Watt
University and the University of Edinburgh.
J.\,F. also acknowledges support from Tadahiro Oh's ERC starting grant no. 637995 “ProbDynDispEq”.
\end{ackno}


\begin{thebibliography}{99}




\bibitem{Model1}
A.\,A.~Alazman, J.\,P.~Albert, J.\,L.~Bona, M.~Chen, J.~Wu, 
{\it Comparisons between the BBM equation and a Boussinesq system,}
 Adv. Differential Equations 11 (2006) 121--166.


\bibitem{BBM}
T.\,B.~Benjamin, J.\,L.~Bona, J.\,J.~Mahony,
 {\it Model equations for long waves in nonlinear dispersive systems,} 
Phil. Trans. R. Soc. Lond. A (1972) 272 47--78.


\bibitem{BOP3}
 \'{A}.~B\'{e}nyi, T.~Oh, O.~Pocovnicu,
  {\it Higher order expansions of the probabilistic 
Cauchy theory of the cubic nonlinear Sch\"{o}dinger equation on $\R^{3}$,} 	
 Trans. Amer. Math. Soc. Ser. B 6 (2019), 114--160.


\bibitem{BOP4}
 \'{A}.~B\'{e}nyi, T.~Oh, O.~Pocovnicu,
  {\it On the probabilistic Cauchy theory for nonlinear dispersive PDEs,} 	
Landscapes of Time-Frequency Analysis. 1--32, Appl. Numer. Harmon. Anal., Birkhäuser/Springer, Cham, 2019.


\bibitem{BCS1}
J.\,L.~Bona, M.~Chen, J.-C.~Saut, 
{\it Boussinesq equations and other systems for small- amplitude long waves in nonlinear dispersive media. I. Derivation and linear theory}, J. Non- linear Sci. 12 (2002), no. 4, 283--318.

\bibitem{BCS2}
J.\,L.~Bona, M.~Chen, J.-C.~Saut, 
{\it Boussinesq equations and other systems for small-amplitude long waves in nonlinear dispersive media. II. The nonlinear theory}, Nonlinearity 17 (2004), no. 3, 925--952.

\bibitem{Model3}
J.\,L.~Bona, T.~Colin, D.~Lannes, 
{\it Long wave approximations for water waves,}
 Arch. Ration. Mech. Anal. 178 (2005), 373--410.

\bibitem{BonaNormInf}
 J.\,L.~Bona, M.~Dai,
 {\it Norm Inflation for the BBM equation,}
  J. Math. Anal. Appl. 446 (2016), 879--885.
  
\bibitem{Model2}
J.\,L.~Bona, W.\,G.~Pritchard, L.\,R.~Scott, 
{\it An evaluation of a model equation for water waves},
 Philos. Trans. R. Soc. Lond.
Ser. A Math. Phys. Eng. Sci. 302 (1981), 457--510.

\bibitem{BonaTzvet}
J. L.~Bona, N.~Tzvetkov, 
 {\it Sharp well-posedness results for the BBM equation,}
  Discrete Contin. Dyn. Syst.  (2007), no. 23, 1241--1252.

\bibitem{Bourgain2}
 J.~Bourgain, 
 {\it Periodic nonlinear Schr\"{o}dinger equation and invariant measures,}
  Comm. Math. Phys. 166 (1994), no. 1, 1--26.

\bibitem{Bourgain1}
 J.~Bourgain, 
 {\it Invariant measures for the 2D-defocusing nonlinear Schr\"{o}dinger equation,}
  Comm. Math. Phys. 176 (1996), no. 2, 421--445.


 \bibitem{BTlocal}
  N.~Burq, N.~Tzvetkov, 
 {\it Random data Cauchy theory for supercritical wave equations, I: Local theory,} Invent. Math. 173 (2008), no. 3, 449–475.
 
 \bibitem{BTglobal}
 N.~Burq, N.~Tzvetkov, 
 {\it Random data Cauchy theory for supercritical wave equations. II. A global existence
result,} Invent. Math. 173 (2008), no. 3, 477--496.
 

\bibitem{CP}
A.~Choffrut, O.~Pocovnicu,
{\it Ill-posedness of the cubic nonlinear half-wave equation and other fractional NLS on the real line,} 
Int. Math. Res. Not. IMRN 2018, no. 3, 699--738.

 \bibitem{Christ1}
 M.~Christ, 
 {\it Power series solution of a nonlinear Schr\"odinger equation,} Mathematical aspects of nonlinear
 dispersive equations, 131--155, Ann. of Math. Stud., 163, Princeton Univ. Press, Princeton, NJ,
 2007.  
 
 
 \bibitem{Iteam1}
 J.~Colliander, M.~Keel, G.~Staffilani, H.~Takaoka, T.~Tao,
 {\it Global well-posedness for
Schr\"{o}dinger equations with derivative}, SIAM J. Math. Anal., 33 (2001), pp. 649--669.
 
 \bibitem{Iteam2}
  J.~Colliander, M.~Keel, G.~Staffilani, H.~Takaoka, T.~Tao, 
  {\it Sharp global well-posedness for
KdV and Modified KdV on $\R$ and $\T$ }, J. Amer. Math. Soc. 16 (2003), no. 3, 705--749.
 
\bibitem{CollOh}
 J.~Colliander, T.~Oh,
 {\it Almost sure well-posedness of the cubic nonlinear Schr\"{o}dinger equation below $L^{2}(\T)$,}
  Duke Math. J. 161 (2012), no. 3 , 367--414.

\bibitem{dpd}
 G.~Da Prato, A.~Debussche, 
 {\it Two-dimensional Navier-Stokes equations driven by a space-time white
noise}, J. Funct. Anal. 196 (2002), no. 1, 180--210.

\bibitem{desuzzoni1}
A.-S.~de Suzzoni,
{\it Continuity of the flow of the Benjamin-Bona-Mahony equation on probability measures,}
Discrete Contin. Dyn. Syst. 35 (2015), no. 7, 2905--2920.

\bibitem{desuzzoni2}
A.-S.~de Suzzoni,
{\it Wave turbulence for the BBM equation: stability of a Gaussian statistics under the flow of BBM,}
Comm. Math. Phys. 326 (2014), no. 3, 773--813.

\bibitem{desuzzoni3}
A.-S.~de Suzzoni, N.~Tzvetkov,
{\it On the propagation of weakly nonlinear random dispersive waves, }
Arch. Ration. Mech. Anal. 212 (2014), no. 3, 849--874.

\bibitem{Dragomir}
S.\,S.~Dragomir, {\it Some Gronwall type inequalities and applications,} Nova
Science Publishers, Inc., Hauppauge, NY, 2003.

\bibitem{FH}
P.\,K.~Friz, M.~Hairer, 
{\it A course on rough paths. 
With an introduction to regularity structures}. Universitext. Springer, Cham, 2014. xiv+251 pp.

\bibitem{FV}
P.\,K.~Friz, N.~Victoir, 
{\it Multidimensional stochastic processes as rough paths. 
Theory and applications. }
Cambridge Studies in Advanced Mathematics, 120. Cambridge University Press, Cambridge, 2010. xiv+656 pp.

\bibitem{GTV}
J.~Ginibre, Y.~Tsutsumi, G.~Velo,
{\it On the Cauchy problem for the Zahkharov system,}
J. Funct. Anal. 151 (1997), no. 2, 384--436.

\bibitem{GKO}
 M.~Gubinelli, H.~Koch, T.~Oh, {\it Renormalization of the two-dimensional stochastic nonlinear wave equations,}
 Trans. Amer. Math. Soc. 370 (2018), no 10, 7335--7359.

\bibitem{GKOT}
 M.~Gubinelli, H.~Koch, T.~Oh, L.~Tolomeo,
  {\it Global dynamics for the two-dimensional stochastic nonlinear wave equations,}
 preprint.
 
 \bibitem{GPburger}
 M.~Gubinelli, N.~Perkowski, {\it Probabilistic Approach to the Stochastic Burgers Equation}, In: Eberle A., Grothaus M., Hoh W., Kassmann M., Stannat W., Trutnau G. (eds) Stochastic Partial Differential Equations and Related Fields. SPDERF 2016. Springer Proceedings in Mathematics \& Statistics, vol 229. Springer, Cham.
 
 \bibitem{GPnotes}
 M.~Gubinelli, N.~Perkowski,
{\it Lectures on singular stochastic PDEs, }
Ensaios Matemáticos [Mathematical Surveys], 29. Sociedade Brasileira de Matemática, Rio de Janeiro, 2015. 89 pp.
 
\bibitem{HairerLec}
 M.~Hairer,
 {\it An Introduction to Stochastic PDEs,}
  http://www.hairer.org/notes/SPDEs.pdf, 2009.

\bibitem{IO}
T.~Iwabuchi, T.~Ogawa,
{\it Ill-posedness for the nonlinear Schr\"odinger equation with quadratic non-linearity in low dimensions,}
Trans. Amer. Math. Soc. 367 (2015), no. 4, 2613--2630.

 \bibitem{Janson}
   S.~Janson, 
{\it Gaussian Hilbert spaces}, Cambridge University Press, 1997.

\bibitem{Kishimoto}
N.~Kishimoto,
{\it A remark on norm inflation for nonlinear Schrödinger equations},
Commun. Pure Appl. Anal. 18 (2019) 1375--1402.
 
  
  \bibitem{mckean}
 H.\,P.~McKean, 
 {\it Statistical mechanics of nonlinear wave equations. IV. Cubic Schr\"odinger}, Comm. Math.
Phys. 168 (1995), no. 3, 479--491. {\it Erratum: Statistical mechanics of nonlinear wave equations. IV. Cubic
Schr\"odinger,} Comm. Math. Phys. 173 (1995), no. 3, 675.
  
\bibitem{Nelson}
   E.~Nelson,  
   {\it A quartic interaction in two dimensions}, 1966 Mathematical Theory of Elementary Particles
(Proc. Conf., Dedham, Mass., 1965) pp. 69--73 M.I.T. Press, Cambridge, Mass.
   
  \bibitem{HiroInflation} 
T.~Oh, 
{\it A remark on norm inflation with general initial data for the cubic nonlinear Schr\"odinger equations in negative Sobolev spaces},
 Funkcial. Ekvac. 60 (2017) 259--277.
  
\bibitem{HiroSZEGOKDV}
 T.~Oh,
 {\it Remarks on nonlinear smoothing under randomization for the periodic KdV and the cubic Szeg\"{o} equation,}
   Funkcial. Ekvac. 54 (2011), no. 3, 335--365.
    
 \bibitem{OOT}
 T.~Oh, M.~Okamoto, N.~Tzvetkov,   
    {\it Uniqueness and non-uniqueness of the Gaussian free field evolution 
under the two-dimensional Wick ordered cubic wave equation,} 
preprint.
  
   \bibitem{OPT}
 T.~Oh, O.~Pocovnicu, N.~Tzvetkov, 
 {\it  Probabilistic local well-posedness of the cubic nonlinear wave equation in negative Sobolev spaces,}
	arXiv:1904.06792 [math.AP].


  \bibitem{Panthee}
  M.~Panthee,
 {\it  On the ill-posedness result for the BBM equation,} Discrete Contin. Dyn. Syst. 30 (2011), 253--259. 
  
   \bibitem{Peregrine}
 D.\,H.~Peregrine,
 {\it  Calculations of the development of an undular bore,} J. Fluid Mech. 25 (1966), 321--330. 


\bibitem{Roumegoux}
D.~Roum\'egoux,
{\it A symplectic non-squeezing theorem for BBM equation}, 
Dyn. Partial Differ.
Equ., 7 (4) (2010), 289-–305.

	
 \bibitem{ThomTzvet}
 L.~Thomann, N.~Tzvetkov,  
 {\it Gibbs measure for the periodic derivative nonlinear Schr\"{o}dinger
equation,} 
Nonlinearity 23, no. 11 (2010), 2771--2791.

\bibitem{Leonardo}
 L.~Tolomeo,
  {\it Global well-posedness of the two-dimensional
stochastic nonlinear wave equation on an unbounded domain,}
 preprint.
 
   \bibitem{Tzv1}
N.~Tzvetkov, 
 {\it Random data wave equations}, 
  	arXiv:1704.01191 [math.AP].

	
\bibitem{Tzv3}
N.~Tzvetkov,
{\it Quasi-invariant Gaussian measures for one dimensional Hamiltonian PDE’s,}
Forum
Math. Sigma 3 (2015), e28, 35 pp.

 \bibitem{WangBBM}
 M.~Wang,
 {\it Sharp global well-posedness of the BBM equation in $L^{p}$ type Sobolev spaces,}
  Discrete Contin. Dyn. Sys. 36 (2016), no. 10, 5763--5788.

  \bibitem{Xia}
B.~Xia, 
 {\it Generic ill-posedness for wave equation of power type on 3D torus}, 
  	arXiv:1507.07179 [math.AP].








\end{thebibliography}
\end{document}